\documentclass[12pt]{article}
\usepackage{mathrsfs}
\usepackage{amsmath}
\usepackage{float}
\usepackage{cite}
\usepackage{esint}
\usepackage[all]{xy}
\usepackage{amssymb}
\usepackage{amsthm}
\usepackage{color}
\usepackage{graphicx}
\usepackage{hyperref}
\usepackage{bm}
\usepackage{indentfirst}
\usepackage{geometry}
\theoremstyle{plain}
\newtheorem{thm}{Theorem}[section]
\newtheorem{lem}[thm]{Lemma}
\newtheorem{prop}[thm]{Proposition}

\newtheorem{cor}[thm]{Corollary}
\theoremstyle{definition}
\newtheorem{defn}[thm]{Definition}
\theoremstyle{remark}
\newtheorem{remark}[thm]{Remark}

\newcommand{\integral}{\mathrm{d}\mathscr{H}^n}
\numberwithin{equation}{section}

\begin{document}
\title{Integral type Gauss-Green formula on non-collapsed RCD spaces and its applications}
\author{\it Zhangkai Huang \thanks{School of Mathematics, Sun Yat-sen University, China. Email: \url{huangzhk27@mail.sysu.edu.cn}}}
\date{\small\today}
\maketitle
\begin{abstract}
	We prove an integral type Gauss–Green formula on non-collapsed  RCD spaces using the strong locality of the Laplacian and an eigenfunction approximation method. As applications, we generalize Colding's monotonicity formulas and prove an asymptotic formula linking the mean curvature of a hypersurface
	at a given point to the volume of small balls centered at that point.
\end{abstract}
\tableofcontents
\section{Introduction}

In the first decade of this century, Sturm \cite{St06a,St06b} and Lott-Villani \cite{LV09} independently introduced the notion of CD$(K,N)$ metric measure spaces using the language of optimal transport. This notion provides a synthetic formulation of a lower Ricci curvature bound $K\in \mathbb{R}$ and an upper dimension bound $N\in [1,\infty]$. After that, by adding the Riemannian structure to CD$(K,N)$ spaces, the class of RCD$(K,N)$ spaces was introduced. The precise definition of RCD$(K,N)$ spaces (and the equivalent ones) can be found in \cite{AGS14a,AMS19,G13,G15,EKS15}. Important examples of RCD$(K,N)$ spaces include the Ricci limit spaces arising in
the Cheeger-Colding theory \cite{CC96,ChCo1,ChCo2,ChCo3} as well as finite dimensional Alexandrov spaces with curvature bounded from below (see \cite[Appendix A]{ZZ10} and \cite{P11}).

In this paper, we mainly focus on a special subclass of RCD$(K, n)$ spaces, namely non-collapsed RCD$(K,n)$ spaces, for which the reference measure coincides with the $n$-dimensional Hausdorff measure $\mathscr{H}^n$. This definition was first introduced by De Philippis-Gigli \cite{DG18} as a synthetic counterpart of volume non-collapsed Gromov-Hausdorff limit spaces of Riemannian manifolds with a fixed dimension and a lower Ricci curvature bound. Compared to general RCD$(K,N)$ spaces, non-collapsed RCD$(K,n)$ spaces not only force $n$ to be an integer, but also admit stronger regularity properties. A key ingredient in the proof of our main result, Theorem \ref{stronglocality} below, is the following property that provides a connection between the Hessian and the Laplacian.
\begin{prop}[{\cite[Proposition 3.2]{H18}}]\label{prop1.1}
	Let $(X,\mathsf{d},\mathscr{H}^n)$ be a non-collapsed $\mathrm{RCD}(K,n)$ space. Then for any $f\in D(\Delta)$, the trace of the Hessian of $f$ coincides with the Laplacian of $f$, namely
	\[
	\langle\mathrm{Hess}\, f,\mathrm{g}\rangle=\Delta f\ \mathscr{H}^n\text{-a.e.}
	\]
	Here $\mathrm{g}$ is the canonical Riemannian metric on $({X},\mathsf{d},\mathscr{H}^n)$ defined by Gigli-Pasqualetto in \cite{GP22}.
\end{prop}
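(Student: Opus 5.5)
The plan is to follow Han's strategy. First show that the trace inequality holds in general; then use the non-collapsed condition to upgrade the integral of the inequality to an equality; finally conclude pointwise equality from the sign.

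\textbf{Step 1: a pointwise inequality.} On any $\mathrm{RCD}(K,N)$ space the Bochner inequality can be self-improved to
\[
\Gamma_2(f)\ge K|\nabla f|^2+|\mathrm{Hess} f|^2+\frac{(\Delta f-\mathrm{tr}\,\mathrm{Hess} f)^2}{N-\dim},
\]
where $\dim$ is the essential (pointwise) dimension of the tangent module. In the non-collapsed case with $N=n$, the Gigli--Pasqualetto metric $\mathrm{g}$ has rank exactly $n$ $\mathscr{H}^n$-a.e. This uses the rectifiability of non-collapsed spaces and the fact that $\mathscr{H}^n$-a.e. tangent is $\mathbb{R}^n$. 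The denominator therefore vanishes, so the inequality can only hold if $\Delta f=\langle \mathrm{Hess} f,\mathrm{g}\rangle$.

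To make this rigorous I would work at the level of measures. Fix the test class of $f$ with $f,\Delta f\in W^{1,2}$. The improved Bochner inequality of the form $\Gamma_2(f)\ge K|\nabla f|^2+\frac{1}{N}(\Delta f)^2$, combined with the Hessian identity, gives the precise inequality
\[
\Gamma_2(f)-K|\nabla f|^2-|\mathrm{Hess} f|^2\ge \frac{1}{N-n}\bigl(\Delta f-\mathrm{tr}\,\mathrm{Hess} f\bigr)^2\,\mathscr{H}^n .
\]
This comes from Han's argument: decompose $\mathrm{Hess} f$ into its trace part and its trace-free part in the $n$-dimensional fibre, then apply a Cauchy--Schwarz step with the parameter $N$ approaching $n$.

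\textbf{Step 2: the dimensional argument.} Since the space is $\mathrm{RCD}(K,N')$ for every $N'\ge n$ but the rank of the module is $n$, I would take $N'=n$ directly. The coefficient then degenerates and forces $\Delta f-\mathrm{tr}\,\mathrm{Hess} f=0$ a.e. Concretely, applying the inequality with $N=n+\epsilon$ and letting $\epsilon\to0$ gives $(\Delta f-\mathrm{tr})^2\le \epsilon\cdot C(f)\to 0$, where $C(f)$ is a finite measure. The constant is controlled via the $L^1$ bound on $\Gamma_2$ obtained by integrating against cutoffs.

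\textbf{Step 3: extension to $D(\Delta)$.} Approximate a general $f\in D(\Delta)$ by heat flow, $h_t f\to f$. Then $\Delta h_t f\to\Delta f$ in $L^2$, and $\mathrm{Hess}\, h_tf\to \mathrm{Hess} f$ in $L^2$ by the $W^{2,2}$ estimate $\int|\mathrm{Hess} f|^2\le\int(\Delta f)^2-K\int|\nabla f|^2$. The trace is $L^2$-continuous, so the identity passes to the limit.

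\textbf{Main obstacle.} The hard step is Step 1: proving that $\dim=n$ a.e. for the tangent module, and establishing the refined Bochner inequality with the trace-free splitting at the measure level. I would rely on De Philippis--Gigli's result that the essential dimension equals $n$, together with Han's computation.
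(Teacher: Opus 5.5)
The paper gives no proof of this statement; it simply quotes Han \cite{H18}. Your architecture is Han's: a Bochner inequality carrying the extra term $(\Delta f-\mathrm{tr}\,\mathrm{Hess} f)^2/(N-\dim)$, which degenerates because $\dim=n=N$ by De Philippis--Gigli.

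Steps 2 and 3 are sound. The measure $\Gamma_2(f)-(K|\nabla f|^2+|\mathrm{Hess} f|^2)\mathscr{H}^n$ does not depend on $N$. So using that the space is $\mathrm{RCD}(K,n+\epsilon)$ and letting $\epsilon\to0$ does kill the defect a.e. The extension to $D(\Delta)$ works because $T\mapsto\langle T,\mathrm{g}\rangle$ is $L^2$-continuous and $\mathrm{Hess}$ is controlled by the graph norm. Approximate within $\mathrm{Test}(X)$, or localize, rather than by $\mathrm{h}_t f$ alone, since the refined inequality is established for test functions.

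The gap is Step 1, which is the entire content of the proposition, and the mechanism you sketch for it does not work. You propose combining the Bochner inequality for the single function $f$ with $\Gamma_2(f)\geqslant K|\nabla f|^2+|\mathrm{Hess} f|^2$, splitting $\mathrm{Hess} f$ into trace and trace-free parts, and using Cauchy--Schwarz. That cannot produce the refined inequality: two lower bounds on $\Gamma_2(f)$ give only their maximum, never a sum. For instance, the pointwise values $\Gamma_2(f)-K|\nabla f|^2=1$, $\mathrm{Hess} f$ trace-free of norm $1$, $(\Delta f)^2=N$ satisfy both bounds but violate the refined one. Nor does the refined inequality involve $N\to n$; it is a statement for fixed $N>n$.

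The missing idea is \emph{self-improvement}. Apply the $N$-dimensional Bochner inequality to $F=\Phi(f,f_2,\dots,f_k)$ with $\Phi$ polynomial, where the $\nabla f_i$ locally span the $n$-dimensional fibre, and expand using the chain rules for $\Gamma_2$, $\mathrm{Hess}$ and $\Delta$. The two chain rules are
$\Delta F=\sum_i\Phi_i\Delta f_i+\sum_{i,j}\Phi_{ij}\langle\nabla f_i,\nabla f_j\rangle$ and $\mathrm{tr}\,\mathrm{Hess} F=\sum_i\Phi_i\,\mathrm{tr}\,\mathrm{Hess} f_i+\sum_{i,j}\Phi_{ij}\langle\nabla f_i,\nabla f_j\rangle$.
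Choosing $\Phi_i=\delta_{i1}$ at the point therefore shifts the Hessian by an arbitrary symmetric $A$ in the fibre. It leaves unchanged both $\Gamma_2-|\mathrm{Hess}|^2$ and the defect $a:=\Delta f-\mathrm{tr}\,\mathrm{Hess} f$.

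This yields $\Gamma_2(f)-K|\nabla f|^2-|\mathrm{Hess} f|^2\geqslant(\tau+a)^2/N-|H|^2$ for every symmetric $H$ with $\mathrm{tr}\,H=\tau$. Take $H=\frac{\tau}{n}\mathrm{g}$ and optimize in $\tau$. For $N>n$ this gives $a^2/(N-n)$. For $N=n$ the right-hand side equals $(2\tau a+a^2)/n$, which is unbounded above in $\tau$ unless $a=0$. So with this argument in hand, your $\epsilon$-limit is not even needed.
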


\begin{thm}[Strong locality of Laplacian]\label{stronglocality}
 Let $(X,\mathsf{d},\mathscr{H}^n)$ be a non-collapsed $\mathrm{RCD}(K,n)$ space. Then for any locally Lipschitz function $g\in D(\Delta)$ it holds $\mathrm{Hess}\, g=0$  $\mathscr{H}^n$-a.e. on $\{|\nabla g|=0\}$. In particular, $\Delta g=0$ $\mathscr{H}^n$-a.e. on $\{|\nabla g|=0\}$.
\end{thm}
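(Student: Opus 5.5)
Our plan is to deduce the vanishing of the Hessian from a locality property of the Hessian applied to the vector field $\nabla g$, and then obtain the statement about the Laplacian from Proposition \ref{prop1.1}.

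\textbf{Step 1 (regularity of $g$).} On an $\mathrm{RCD}(K,n)$ space, a function $g\in D(\Delta)$ lies in $W^{2,2}(X)$ by Gigli's calculus, with $\int|\mathrm{Hess}\, g|^2\le \int (\Delta g)^2 - K\int|\nabla g|^2$. The identity
\[
2\,\mathrm{Hess}\, g(\nabla g,\cdot)=\mathrm d|\nabla g|^2
\]
holds $\mathscr{H}^n$-a.e. We also use that $\nabla g\in H^{1,2}_C(TX)$, with covariant derivative $\nabla\nabla g=\mathrm{Hess}\, g$. Local Lipschitz continuity of $g$ ensures that $|\nabla g|\in L^\infty_{loc}$, so $|\nabla g|^2$ is a locally Sobolev function and every localization below is legitimate.

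\textbf{Step 2 (locality).} Set $E=\{|\nabla g|=0\}$. In the smooth-calculus setting of Gigli's nonsmooth differential geometry, Sobolev vector fields enjoy the locality property: if $V\in H^{1,2}_C(TX)$, then $\nabla V=0$ a.e. on $\{|V|=0\}$. We would prove this by the standard truncation argument. Consider $\varphi_\epsilon(|V|^2)V$ with $\varphi_\epsilon(t)=\min\{t/\epsilon,1\}$, or more robustly compare $V$ with $\psi_\epsilon(V)$. Since $|V|\in W^{1,2}$ with $|\nabla|V||\le|\nabla V|$ (Kato), and since $\mathrm d|V|=0$ a.e. on $\{|V|=0\}$ by locality of the minimal weak upper gradient for functions, the derivative terms coming from the cutoff are controlled by $|\nabla V|\,\chi_{\{0<|V|<\epsilon\}}$. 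These terms vanish in the limit $\epsilon\to0$.

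An alternative route, which we would try first because it is more concrete, goes through test vector fields. For $h_1,h_2\in\mathrm{Test}(X)$ we have
\[
\mathrm{Hess}\, g(\nabla h_1,\nabla h_2)=\tfrac12\bigl(\langle\nabla\langle\nabla g,\nabla h_1\rangle,\nabla h_2\rangle+\langle\nabla\langle\nabla g,\nabla h_2\rangle,\nabla h_1\rangle-\langle\nabla g,\nabla\langle\nabla h_1,\nabla h_2\rangle\rangle\bigr).
\]
The last term vanishes on $E$. The function $\langle\nabla g,\nabla h_i\rangle$ is Sobolev and vanishes on $E$, hence its differential vanishes a.e. on $E$ by locality of differentials of Sobolev functions ($\mathrm d f=0$ a.e. on $\{f=0\}$). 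Therefore $\mathrm{Hess}\, g(\nabla h_1,\nabla h_2)=0$ a.e. on $E$. Since $\{\nabla h\}$ generates $L^0(TX)$, we conclude $\mathrm{Hess}\, g=0$ a.e. on $E$. This needs the formula above to hold a.e. with $\langle\nabla g,\nabla h_i\rangle\in W^{1,2}_{loc}$, which follows from $g\in W^{2,2}$ and $h_i$ being a test function.

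\textbf{Step 3.} Finally, Proposition \ref{prop1.1} gives $\Delta g=\langle\mathrm{Hess}\, g,\mathrm g\rangle=0$ a.e. on $E$, which is the second claim of the theorem. Non-collapsing enters only here: on collapsed spaces the trace of the Hessian may differ from the Laplacian.

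\textbf{Main obstacle.} The delicate point is justifying the a.e. pointwise Hessian identity and the Sobolev regularity of $\langle\nabla g,\nabla h\rangle$ when $g$ is only in $D(\Delta)$ and locally Lipschitz. We would handle it by approximating $g$ by heat-flow or eigenfunction-type regularizations, passing to the limit in $W^{2,2}$, and using the closedness of the differential together with locality.
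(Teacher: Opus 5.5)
Your test-function route is essentially the paper's proof. The paper writes $\mathrm{Hess}\,g(\nabla u,\cdot)=\nabla\langle\nabla u,\nabla g\rangle-\mathrm{Hess}\,u(\nabla g,\cdot)$ via the product rule (\ref{11eqn2.16}), taken from Gigli's Proposition 3.3.22, which is where the Lipschitz hypothesis on $g$ enters; it kills the first term by locality of the differential of $\langle\nabla u,\nabla g\rangle$ and the second because $\nabla g=0$ there, then applies Proposition \ref{prop1.1}. Your three-term identity (\ref{equation2.1}) is just that product rule applied to the three pairs, and the truncation argument for locality of covariant derivatives is not needed.
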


A direct application of Theorem \ref{stronglocality} is the following property of the gradient of the heat kernel.
\begin{prop}[Non-degeneration of the heat kernel gradient]
	Suppose $(X,\mathsf{d},\mathscr{H}^n)$ is a non-collapsed $\mathrm{RCD}(K,n)$ space and $U$, $V$ are bounded open subsets of $X$ such that $U\Subset V$. Denote by $\rho$, $\rho^U$ be the heat kernel of $X$ and the Dirichlet heat kernel of $U$, respectively. Then for any fixed $y\in X,z\in U$ and $t>0$, we have 
	\[
	|\nabla_x \rho(x,y,t)|\neq 0\ \mathscr{H}^n\text{-a.e. }x\in X\ \text{and} \ 	|\nabla_x \rho^U(x,z,t)|\neq 0\ \mathscr{H}^n\text{-a.e. } x\in U. 
	\]

\end{prop}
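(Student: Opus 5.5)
The approach is to combine Theorem \ref{stronglocality} with the heat equation and an analyticity-in-time/unique-continuation argument. I would first run the argument for the global heat kernel $\rho$ and then adapt it to $\rho^U$.

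\textbf{Step 1: reduce to the time derivative.} Fix $y$ and $t>0$ and set $g=\rho(\cdot,y,t)$. By the Gaussian bounds and the Bakry--\'Emery gradient estimates on RCD$(K,n)$ spaces, $g$ is locally Lipschitz. Since $\Delta g=\partial_t\rho(\cdot,y,t)\in L^2$, we also have $g\in D(\Delta)$. Theorem \ref{stronglocality} then shows that on $E:=\{|\nabla g|=0\}$ we have $\partial_t\rho(x,y,t)=\Delta g(x)=0$ for $\mathscr{H}^n$-a.e. $x$. Applying the same reasoning to $h=\partial_t\rho(\cdot,y,t)=\rho(\cdot,y,t/2)*\partial_t\rho(\cdot,y,t/2)$, which is again locally Lipschitz and in $D(\Delta)$, I would like to iterate to get $\partial_t^k\rho=0$ a.e. on $E$. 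The obstruction is that iteration requires $|\nabla h|=0$ on $E$. This does hold, because Theorem \ref{stronglocality} gives $\mathrm{Hess}\,g=0$ on $E$. Combined with locality of the gradient, applied to the function $h$ which agrees with $0$ on $E$ up to a null set, one gets $|\nabla h|=0$ a.e. on $E$. Indeed, for Sobolev functions $|\nabla h|=0$ a.e. on $\{h=0\}$. Inductively, $\partial_t^k\rho(x,y,t)=0$ for all $k\ge1$ and a.e. $x\in E$.

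\textbf{Step 2: analyticity in time.} Use the spectral representation $\rho(x,y,s)=\int e^{-\lambda s}\,dE_\lambda(\delta_y)(x)$. Since the heat semigroup is analytic, $s\mapsto\rho(x,y,s)$ is real-analytic on $(0,\infty)$ for a.e. $x$, with values controlled locally uniformly. If all $s$-derivatives vanish at $s=t$, then $\rho(x,y,s)=\rho(x,y,t)$ for all $s>0$. Letting $s\to\infty$ (with $K$ arbitrary, compare with the Gaussian upper bound, or use $s\to0^+$ for $x\neq y$, where $\rho(x,y,s)\to0$) forces $\rho(x,y,t)=0$. This contradicts the strict positivity of the heat kernel given by the Gaussian lower bound. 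Hence $\mathscr{H}^n(E)=0$.

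\textbf{Step 3: the Dirichlet kernel.} For $\rho^U$ the same scheme works on $U$. Interior Lipschitz regularity of $\rho^U(\cdot,z,t)$ on compact subsets of $U$ follows from local gradient estimates for solutions of the heat equation, and we can localize Theorem \ref{stronglocality} by multiplying by cutoff functions supported in $U$. Analyticity in $t$ holds via the eigenfunction expansion $\rho^U=\sum e^{-\lambda_i t}\phi_i(x)\phi_i(z)$, since $U$ is bounded and has discrete spectrum. The limit $s\to0^+$ together with positivity of $\rho^U$ on the connected component containing $z$ gives the contradiction. I expect to need some care here, because $\rho^U$ vanishes on the other components, so on those components one should instead argue as $s\to\infty$. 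Alternatively one can appeal to the paper's own treatment and take $U$ connected.

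\textbf{Main obstacle.} The delicate point is the iteration in Step 1: passing from $\mathrm{Hess}\,g=0$ on $E$ to $|\nabla\partial_t\rho|=0$ on $E$, and more fundamentally justifying that a.e. vanishing of every time derivative on a fixed set $E$ is pointwise meaningful. The null sets depend on $k$, but countably many are harmless. I would first try to avoid the iteration entirely. The identity $\partial_t\rho=0$ on $E$ holds for the single time $t$ only, since $E$ depends on $t$. So the iteration, or an alternative argument via Hessians and the spectral expansion, seems unavoidable.
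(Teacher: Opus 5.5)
Your proposal is correct and follows essentially the same route as the paper: you iterate Theorem~\ref{stronglocality} to get $\partial_t^m\rho(\cdot,y,t)=0$ on a positive-measure set for every $m$ (your use of the locality of the minimal relaxed slope is exactly what justifies the paper's ``Similarly''), then use real-analyticity in time (the paper cites Theorem~\ref{Davis}) to make $s\mapsto\rho(x,y,s)$ constant at some $x\neq y$ in that set, and finally let $s\to0^+$ against the Gaussian upper bound to contradict positivity. Two minor caveats: the $s\to\infty$ alternative does not work in general (e.g.\ on compact $X$ one has $\rho\to 1/\mathscr{H}^n(X)$), and on components of a disconnected $U$ not containing $z$ one has $\rho^U(\cdot,z,t)\equiv 0$, so no argument, with $s\to\infty$ or otherwise, can rescue the claim there. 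The Dirichlet statement therefore genuinely requires $U$ connected, which the paper uses implicitly through the positivity of $\rho^U$ in Remark~\ref{rmk2.23}.
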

As a further application of Theorem \ref{stronglocality}, we obtain the integral type Gauss-Green formula. In the smooth setting this is trivial by Sard’s theorem, and hence also holds for Ricci limit spaces. In our non-collapsed RCD setting, however, the proof relies crucially on the strong locality of the Laplacian. By working with the integrated form (\ref{1.1}), we bypass the vanishing set $\{|\nabla f|=0\}$. The differentiated form is then obtained by differentiating with respect to $t$.
\begin{thm}[Integral type Gauss-Green formula]\label{GGthm}
Let $f$ be a Lipschitz function on a non-collapsed $\mathrm{RCD}(K,n)$ space $(X,\mathsf{d},\mathscr{H}^n)$ and $\Omega_t:=\{x\in X:f(x)\leqslant t\}$ be the corresponding $t$-sublevel set. Then for any Sobolev function $h$, any compactly supported test function $g$ on $X$ and any continuous function $\tau$ on $\mathbb{R}$ it holds that
	\begin{equation}\label{1.1}
	\int_{\Omega_t} \tau\circ f\,\langle\nabla f,h\nabla g\rangle\,\mathrm{d}\mathscr{H}^n=\int_{-\infty}^t \tau(s)\int_{ \Omega_s } \mathrm{div}(h\nabla g)\,\mathrm{d}\mathscr{H}^n\mathrm{d}s,\ \forall t\in\mathbb{R}.
	\end{equation}
	In particular, we have
	\[
	\frac{\partial }{\partial t}\int_{\Omega_t} \tau\circ f\,\langle\nabla f,h\nabla g\rangle\,\mathrm{d}\mathscr{H}^n=\tau(t)\int_{ \Omega_t } \mathrm{div}(h\nabla g)\,\mathrm{d}\mathscr{H}^n, \ \forall t\in\mathbb{R}.
	\]
\end{thm}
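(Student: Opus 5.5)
Set $\Phi(s):=\int_{-\infty}^s\tau(r)\,\mathrm{d}r$ (shifted so that the integrals make sense; since $g$ has compact support only values of $f$ on $\mathrm{supp}\, g$ matter, and there $f$ is bounded). The natural test function is $\psi_t:=\Phi(\min\{f,t\})$. This is Lipschitz, and its gradient is
\[
\nabla\psi_t=\chi_{\{f<t\}}\,\tau\circ f\,\nabla f .
\]
Pairing it with $h\nabla g$ and integrating by parts gives
\[
\int_{\{f<t\}}\tau\circ f\,\langle\nabla f,h\nabla g\rangle\,\mathrm{d}\mathscr{H}^n=-\int_X \psi_t\,\mathrm{div}(h\nabla g)\,\mathrm{d}\mathscr{H}^n .
\]
Layer-cake (Fubini) turns the right-hand side into an integral over sublevel sets. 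Writing $\psi_t(x)=\int_{-\infty}^t\tau(s)\chi_{\{f(x)>s\}}\,\mathrm{d}s+\mathrm{const}$ and using $\int_X\mathrm{div}(h\nabla g)=0$ (compact support), we get
\[
-\int_X\psi_t\,\mathrm{div}(h\nabla g)=\int_{-\infty}^t\tau(s)\int_{\{f\le s\}}\mathrm{div}(h\nabla g)\,\mathrm{d}s .
\]
The inner set may be taken as $\{f\le s\}$ or $\{f<s\}$, since these differ for only countably many $s$ of positive measure... strictly, $\mathscr{H}^n(\{f=s\})>0$ for at most countably many $s$, which is a null set in $\mathrm{d}s$.

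The key point is therefore to replace $\{f<t\}$ by $\Omega_t=\{f\le t\}$ on the left. This costs the term $\tau(t)\int_{\{f=t\}}\langle\nabla f,h\nabla g\rangle$, and it vanishes because $|\nabla f|=0$ $\mathscr{H}^n$-a.e. on $\{f=t\}$, by locality of the minimal weak upper gradient. So for the integral formula the strong locality is not needed. It enters the differentiated form and the justification of the integration by parts. I would assume $h\nabla g\in D(\mathrm{div})$, with $\mathrm{div}(h\nabla g)=\langle\nabla h,\nabla g\rangle+h\Delta g$ in $L^1_{loc}$, which requires a suitable class of test functions $g$. Integration by parts against a Lipschitz $\psi_t$ then follows by the definition of divergence after approximating $\psi_t$ by $W^{1,2}$ functions, using the compact support of $g$.

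For the derivative statement, the right-hand side of \eqref{1.1} is $\int_{-\infty}^t\tau(s)F(s)\,\mathrm{d}s$ with $F(s)=\int_{\Omega_s}\mathrm{div}(h\nabla g)$. $F$ is right-continuous, and left-continuous except where $\mathscr{H}^n(\{f=s\})>0$ carries nonzero mass of $\mathrm{div}(h\nabla g)$. Differentiability at every $t$ needs exactly that
\[
\int_{\{f=t\}}\mathrm{div}(h\nabla g)\,\mathrm{d}\mathscr{H}^n=0 .
\]
This is the main obstacle. On $\{f=t\}$ we only know $|\nabla f|=0$, not $|\nabla g|=0$. I would try to argue via the left-hand side: it equals $\int_X\tau\circ f\,\chi_{\{f\le t\}}\langle\nabla f,h\nabla g\rangle$, which is continuous in $t$ by dominated convergence (level sets carry no $|\nabla f|$-mass). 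Hence the right-hand side is continuous, which holds automatically, and the derivative exists at continuity points of $F$. To get every $t$, I expect to need Theorem \ref{stronglocality} applied to a function whose gradient vanishes on the plateau $\{f=t\}$, for instance when $g$ is built from $f$ or via the eigenfunction approximation mentioned in the abstract. I would approximate $g$ by eigenfunction expansions to show $\mathrm{div}(h\nabla g)$ integrates to zero over the interior of plateaus, and use Theorem \ref{stronglocality} to kill Hessian and Laplacian terms there.
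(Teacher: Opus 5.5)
\paragraph{The integrated formula.} Your proof of \eqref{1.1} is correct, and it takes a genuinely different and much lighter route than the paper. The paper's route runs as follows:
\begin{enumerate}
\item It approximates $f$ by $\mathrm{h}_tf$, then by truncated eigenfunction expansions.
\item It perturbs these locally by a small multiple of a high eigenfunction. By Proposition \ref{prop1.1} and Theorem \ref{stronglocality}, the perturbed gradients are non-zero a.e.
\item It applies the Gauss--Green formula of Theorem \ref{thm3.5} to a.e.\ level set, with normal $-\nabla f/|\nabla f|$ identified by blow-up, and integrates in $t$ via the coarea formula.
\item It glues with a partition of unity and passes to the limit with Lemma \ref{alem3.4}.
\end{enumerate}

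You instead test $h\nabla g$ against $\Phi(\min\{f,t\})$ and use only the chain rule, locality of $|\nabla f|$ on $\{f=t\}$, and Fubini. This gives \eqref{1.1} for every $t$ on any $\mathrm{RCD}(K,N)$ space, with no use of non-collapsing, strong locality or perimeter theory. Moreover, \eqref{jhfoiheaohfe} for a.e.\ $t$ follows from your version plus the coarea formula exactly as in the paper. For this statement the paper's machinery buys only its by-product, the level-set Gauss--Green formula for gradient non-vanishing functions.

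Two small corrections. First, no extra hypothesis on $g$ is needed. For compactly supported $g\in\mathrm{Test}(X)$ and $h\in H^{1,2}$, truncating $h$ and using the Leibniz rule shows $h\nabla g\in D(\mathrm{div})$ with $\mathrm{div}(h\nabla g)=h\Delta g+\langle\nabla h,\nabla g\rangle\in L^2$, vanishing outside $\mathrm{supp}\,g$. Second, strong locality plays no role in the integration by parts either: just multiply $\psi_t$ by a cut-off equal to $1$ near $\mathrm{supp}\,g$.

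\paragraph{The differentiated form.} Your last paragraph aims at something that cannot be proved. The integral $\int_{\{f=t\}}\mathrm{div}(h\nabla g)\,\mathrm{d}\mathscr{H}^n$ need not vanish. Theorem \ref{stronglocality} cannot help, because it controls $\mathrm{Hess}\,g$ and $\Delta g$ only on the critical set $\{|\nabla g|=0\}$ of $g$ itself, and a plateau of $f$ carries no information about $g$.

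Concretely, on $\mathbb{R}^n$ with $n\geqslant 2$, take $f=\max\{|x|-1,0\}$ and $\tau\equiv 1$. Take $g,h\in C^\infty_c(\mathbb{R}^n)$ with $g=|x|^2/2$ and $h=1$ on $B_3(0_n)$. Then $\mathrm{div}(h\nabla g)=n$ on $B_3(0_n)$, and
\[
\int_{\Omega_t}\langle\nabla f,h\nabla g\rangle\,\mathrm{d}\mathscr{H}^n=
\begin{cases}
0, & t<0,\\
\frac{n\omega_n}{n+1}\bigl((1+t)^{n+1}-1\bigr), & 0\leqslant t\leqslant 1.
\end{cases}
\]
At $t=0$ this has left derivative $0$ and right derivative $n\omega_n=\int_{\Omega_0}\mathrm{div}(h\nabla g)\,\mathrm{d}\mathscr{H}^n$.

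So the differentiated identity holds at every $t$ only as a right derivative. This is because $\Omega_t=\{f\leqslant t\}$ is closed, so $s\mapsto\int_{\Omega_s}\mathrm{div}(h\nabla g)\,\mathrm{d}\mathscr{H}^n$ is right-continuous. As a two-sided derivative it holds only off the countable set $\{t:\mathscr{H}^n(\{f=t\})>0\}$. That is exactly what your continuity argument already gives. It is also all that the paper's own justification, differentiating \eqref{1.1} in $t$, yields. Drop the eigenfunction/strong-locality step rather than trying to repair it.
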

\begin{remark}
	If we interpret $ \langle\nabla f, h\nabla g\rangle/|\nabla f|$ as $0$ on $\{|\nabla f|=0\}$, then letting $\tau=1$ and combining co-area formula give
	\begin{equation}\label{jhfoiheaohfe}
	\int_{\partial \Omega_t} \langle \nabla f,h\nabla g\rangle/|\nabla f|\, \mathrm{dPer}(\Omega_t,\cdot)=\int_{\Omega_t }\mathrm{div}(h\nabla g)\,\integral,\ \forall t\in \mathbb{R}.
	\end{equation}
	Here $\mathrm{dPer}(\Omega_t,\cdot)$ refers to the perimeter measure supported on $\partial \Omega_t$. Although in the non-smooth setting the left hand side of (\ref{jhfoiheaohfe}) may not be well-defined, Theorem \ref{GGthm} provides a continuous representation of the following function: 
	\[
	t\mapsto 	\int_{\partial \Omega_t} \langle \nabla f,h\nabla g\rangle/|\nabla f|\, \mathrm{dPer}(\Omega_t,\cdot).
	\]
\end{remark}
An application of the integral type Gauss-Green formula is the generalization of \cite{C12} to the non-smooth setting.

Let $(X,\mathsf{d},\mathscr{H}^n)$ be a non-collapsed non-parabolic RCD$(0,n)$ space with $n\geqslant 3$, where by non-parabolic we mean $X$ satisfies
\begin{equation}\label{1.3}
\int_{1}^\infty \frac{1}{\mathscr{H}^n(B_t(x))}\,\mathrm{d}t<\infty,\ \forall x\in X.
\end{equation}
By using the heat kernel $\rho$ of $X$, we can define the associated Green function of $X$ as 
\[
G(x,y):=\int_0^\infty \rho(x,y,t)\,\mathrm{d}t.
\]

For every $x\in X$, the Bishop-Gromov inequality \cite{LV09,St06b} and \cite{DG18} ensures the existence of the following limits.
\[
\nu_x:=\lim_{r\to 0}\frac{\mathscr{H}^n(B_r(x))}{ r^n}\in (0,\omega_n],\ \ \mathcal{V}_x:=\lim_{r\to \infty}\frac{\mathscr{H}^n(B_r(x))}{ r^n}\in [0,\nu_x], 
\]
where $\omega_n$ denotes the volume of the unit ball in $\mathbb{R}^n$.

Define $\mathsf{b}_x:=(n(n-2)\nu_x \, G(x,\cdot))^{1/(2-n)}$ as the Green distance function and set
\[
V_x(t):=t^{-n}\int_{\{\mathsf{b}_x\leqslant t\}}{|\nabla \mathsf{b}|}^4\,\integral,
\]
\[
A_x(t):=t^{1-n}\int_{\{\mathsf{b}_x= t\}}{|\nabla \mathsf{b}|}^3\,\mathrm{dPer}(\{\mathsf{b}_x\leqslant t\},\cdot),\ W_{x}(t):=\int_1^t s^{-n}\int_{\{\mathsf{b}_x= s\}} ({|\nabla \mathsf{b}|}^3-|\nabla \mathsf{b}|)\,\mathrm{dPer}(\{\mathsf{b}_x\leqslant s\},\cdot)\mathrm{d}s.
\]
 These functions are not only locally Lipschitz on $(0,\infty)$ but also possess the following properties.
\begin{thm}[Monotonicity]\label{thm1.7}
	For all $x\in X$, the following holds.
	\begin{itemize}
		\item[$(1)$]The functions $A_x$, $V_x$ and $W_x$ are non-increasing on $(0,\infty)$.
		\item[$(2)$]The following functions are non-decreasing on $(0,\infty)$:
		\[
		A_x-2(n-1)V_x,\  t^{2-n}(A_x-n\,\nu_x),\ A_x-2(n-2)W_x.
		\]
	\end{itemize} 
	
\end{thm}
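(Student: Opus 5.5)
We follow Colding's smooth argument \cite{C12}. Every integral over a level set $\{\mathsf{b}_x=t\}$ will be rewritten through Theorem \ref{GGthm} in its integrated form, so monotonicity of a quantity reduces to checking that its distributional derivative in $t$ has a sign. Fix $x$ and write $\mathsf{b}=\mathsf{b}_x$, $G=G(x,\cdot)$.

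\textbf{Step 1: identities for $\mathsf{b}$.} Since $G$ is harmonic on $X\setminus\{x\}$, the chain rule gives
\[
\Delta \mathsf{b}^2=2n|\nabla\mathsf{b}|^2 \quad\text{on } X\setminus\{x\}.
\]
We also need $\int_{\{\mathsf{b}=t\}}|\nabla\mathsf{b}|\,\mathrm{dPer}=n\nu_x t^{n-1}$. To prove it, apply Theorem \ref{GGthm} with $f=\mathsf{b}$ and $\nabla g=\nabla G$, using cutoffs away from $x$. Harmonicity of $G$ makes the flux of $\nabla G$ constant in $t$. The value of the constant comes from the small-scale asymptotics of $G$ at $x$, which follow from the heat kernel asymptotics $\rho(x,x,t)\sim(4\pi t)^{-n/2}\omega_n/\nu_x$ on non-collapsed spaces.

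\textbf{Step 2: derivative formulas.} By the coarea formula, $V$ is locally Lipschitz with
\[
V'(t)=-\tfrac{n}{t}V+t^{-n}\int_{\{\mathsf{b}=t\}}|\nabla\mathsf{b}|^3\,\mathrm{dPer},
\]
so $tV'=A-nV$. For $A$, apply Theorem \ref{GGthm} with $f=\mathsf{b}$, $h=|\nabla\mathsf{b}|^2$ and $g=\mathsf{b}$ (localized by cutoffs). This expresses $\int_{\{\mathsf{b}=t\}}|\nabla\mathsf{b}|^3$ as $\int_{\{\mathsf{b}\le t\}}\mathrm{div}(|\nabla\mathsf{b}|^2\nabla\mathsf{b})$ plus a boundary term at $x$; the integrated form also makes the resulting map in $t$ locally Lipschitz. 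Expanding the divergence and using Step 1 together with Bochner's inequality,
\[
\tfrac12\Delta|\nabla\mathsf{b}^2|^2\ge|\mathrm{Hess}\,\mathsf{b}^2|^2+\langle\nabla\mathsf{b}^2,\nabla\Delta\mathsf{b}^2\rangle,
\]
with $|\mathrm{Hess}\,\mathsf{b}^2|^2\ge(\Delta\mathsf{b}^2)^2/n$ by Proposition \ref{prop1.1} and Cauchy–Schwarz, we arrive at Colding's inequality. It reads
\[
A'(t)=-\tfrac{t^{-n}}{2}\int_{\{\mathsf{b}\le t\}}\mathsf{b}^{-2}\Big|\mathrm{Hess}\,\mathsf{b}^2-\tfrac{\Delta\mathsf{b}^2}{n}\mathrm{g}\Big|^2+\dots\le 0,
\]
up to the exact weights coming from Colding's computation, interpreted distributionally. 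The step needing care is that $\mathsf{b}^2$ is only locally in $D(\Delta)$ away from $x$, with $\Delta\mathsf{b}^2\in W^{1,2}_{loc}$. We handle this by testing Bochner's inequality against $\phi\circ\mathsf{b}$, where $\phi$ is a smooth cutoff in $t$, and then recovering the level-set statement from Theorem \ref{GGthm}. This test function avoids needing Sard's theorem. Strong locality (Theorem \ref{stronglocality}) ensures that the critical set $\{|\nabla\mathsf{b}|=0\}$ contributes nothing to any of these terms.

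\textbf{Step 3: conclusions.} $W'(t)=t^{-1}\big(A-n\nu_x\big)$ by Step 1, using the identity $\int_{\{\mathsf{b}=t\}}|\nabla\mathsf{b}|\,\mathrm{dPer}=n\nu_x t^{n-1}$. The gradient bound $|\nabla\mathsf{b}|\le1$ comes from the sharp Cheng–Yau type estimate for $G$ on RCD$(0,n)$ spaces. Given it, $A\le n\nu_x$ and $A\le nV$ (after integrating), so $W$ and $V$ are non-increasing. The three combinations in part $(2)$ are then the algebraic consequences Colding derives from $tV'=A-nV$, $tW'=A-n\nu_x$ and the precise identity for $A'$ from Step 2. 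For instance, $(t^{2-n}(A-n\nu_x))'\ge0$ uses $A'\ge -(n-2)t^{-1}(n\nu_x-A)$, which is the unused part of the Bochner term.

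\textbf{Main obstacle.} I expect the main obstacle to be Step 2: making the Bochner computation rigorous for the weighted level-set flux in the non-smooth setting. This includes controlling the singularity at $x$ (boundary terms as $s\to0$), which we would handle via the asymptotics $\mathsf{b}\sim\mathsf{d}(x,\cdot)$ from the tangent-cone structure at $x$.
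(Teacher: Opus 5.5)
There is a genuine gap in Step 2: the computation you describe cannot give $A'\leqslant 0$. You test Bochner's inequality for $\mathsf b^2$ against $\phi\circ\mathsf b$, with $\phi$ approximating $\chi_{(0,t]}$, and convert with the integral Gauss--Green formula. This bounds the flux $t^{n-1}A'(t)=\int_{\partial\Omega_t}\langle\nabla|\nabla\mathsf b|^2,\nabla\mathsf b\rangle/|\nabla\mathsf b|\,\mathrm{dPer}(\Omega_t,\cdot)$ from \emph{below}. Carried out, it is exactly the paper's estimate
\[
t^{n+1}A'(t)+2(1-n)t^nA(t)+2n(n-1)t^nV(t)\geqslant\int_{\Omega_t}\Bigl|\mathrm{Hess}\,\mathsf b^2-\frac{\Delta\mathsf b^2}{n}\mathrm g\Bigr|^2\,\mathrm{d}\mathscr{H}^n .
\]
Together with $tV'=A-nV$, this says only that $A-2(n-1)V$ is non-decreasing.

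The formula you attribute to Colding is not his. His is $A'(t)=-\frac{t^{n-3}}{2}\int_{\{\mathsf b\geqslant t\}}\bigl|\mathrm{Hess}\,\mathsf b^2-\frac{\Delta\mathsf b^2}{n}\mathrm g\bigr|^2\mathsf b^{2-2n}$ (plus a Ricci term), an integral over the \emph{exterior} region. It rests on an ingredient absent from your plan: the $G^2$-weighted Bochner inequality $\mathrm{div}(\mathsf b^{4-2n}\nabla|\nabla\mathsf b|^2)\geqslant 0$. Equivalently, $|\nabla\mathsf b|^2$ is subharmonic for the drift operator $\Delta+2\langle\nabla\log G,\nabla\,\cdot\,\rangle$; the paper obtains this from (\ref{eqn3.13}) with weight $h=\tilde{\mathsf b}^{4-2n}$. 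Since $\mathsf b^{4-2n}\equiv t^{4-2n}$ on $\partial\Omega_t$, integrating it over annuli shows only that $t^{3-n}A'(t)$ is non-decreasing. A global argument is still needed, and your ``main obstacle'' paragraph addresses the tip $x$, not infinity. The paper uses $n\geqslant 3$ and $0\leqslant A\leqslant n\nu_x$: if $A'(t_1)>0$, then $A'(t)\geqslant (t/t_1)^{n-3}A'(t_1)\geqslant A'(t_1)$ for a.e.\ $t\geqslant t_1$, forcing $A\to\infty$.

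Step 3 inherits this gap. The bound $|\nabla\mathsf b|\leqslant 1$ does give $A\leqslant I_1\equiv n\nu_x$, hence $W'=(A-n\nu_x)/t\leqslant 0$, but it does not give $A\leqslant nV$. By the coarea formula, $nV(t)=nt^{-n}\int_0^t s^{n-1}A(s)\,\mathrm{d}s$ is a weighted average of $A$ over $(0,t)$. So $A\leqslant nV$, equivalently $V'\leqslant 0$, follows from $A$ being non-increasing, not from the gradient bound.

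Likewise, the lower bounds $tA'\geqslant-(n-2)(n\nu_x-A)$ and $tA'\geqslant-2(n-2)(n\nu_x-A)$ are what you need for $t^{2-n}(A-n\nu_x)$ and $A-2(n-2)W$. They are not an unused part of the unweighted Bochner term. With $Q:=tA'+(n-2)(n\nu_x-A)$ one has $Q'=t^{n-2}\,(t^{3-n}A')'$, so they come from the same weighted inequality integrated from the tip; these are the $\mathsf b^{-n}$-weighted estimates (\ref{0421e}), (\ref{0421f}).

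The single missing idea, weighted Bochner with weight $\mathsf b^{4-2n}$ plus the argument at infinity, therefore carries four of the six statements: $A$, $V$, $t^{2-n}(A-n\nu_x)$ and $A-2(n-2)W$. Your unweighted interior computation yields only $A-2(n-1)V$, and $W$ follows from the gradient bound.
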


\begin{thm}[Asymptotic behavior]We have
	\[
	A_x(0):=\lim_{t\to 0} A_x=n\,\nu_x,\ V_x(0)=\lim_{t\to 0} V_x =\nu_x,\  \lim_{t\to \infty} A_x=n\,\nu_x \left(\frac{\nu_x}{\mathcal{V}_x}\right)^{\frac{2}{2-n}},\ 	\lim_{t\to \infty} V_x=\nu_x \left(\frac{\nu_x}{\mathcal{V}_x}\right)^{\frac{2}{2-n}}.
	\]
\end{thm}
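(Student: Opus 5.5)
We argue by scaling and compactness, reducing both limits to explicit computations on metric cones. The limit of $A_x$ then follows from that of $V_x$ through a one-variable identity. For $r>0$ consider the rescaled space $X_r:=(X,r^{-1}\mathsf{d},r^{-n}\mathscr{H}^n)$, which is again a non-collapsed non-parabolic $\mathrm{RCD}(0,n)$ space with the same $\nu_x$ and $\mathcal{V}_x$. Its heat kernel is $r^n\rho(\cdot,\cdot,r^2t)$, so its Green function is $r^{n-2}G$ and its Green distance is $r^{-1}\mathsf{b}_x$, with $|\nabla \mathsf{b}|$ unchanged. Hence $V_x(r)$ computed on $X$ equals $V_x(1)$ computed on $X_r$. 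By Bishop–Gromov and the volume cone implies metric cone theorem of \cite{DG18}, as $r\to0$ (resp. $r\to\infty$) the spaces $X_r$ subconverge in the pmGH sense to metric cones $C(Z)$. The cone measure satisfies $\mathscr{H}^n(B_s(o))=\nu_x s^n$ (resp. $\mathcal{V}_x s^n$).

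On a cone with $\mathscr{H}^n(B_s(o))=\theta s^n$, the Green function with pole at the tip is $G=\mathsf{d}_o^{2-n}/(n(n-2)\theta)$. Hence $(n(n-2)\nu_x G)^{1/(2-n)}=c\,\mathsf{d}_o$ with $c=(\nu_x/\theta)^{1/(2-n)}$, and $|\nabla \mathsf{b}|\equiv c$. Consequently
\[
V(1)=c^4\,\mathscr{H}^n(\{c\,\mathsf{d}_o\le 1\})=c^4\theta c^{-n}=\nu_x c^2 .
\]
For $\theta=\nu_x$ this gives $\nu_x$, and for $\theta=\mathcal{V}_x$ it gives $\nu_x(\nu_x/\mathcal{V}_x)^{2/(2-n)}$. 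It then remains to show that $V_{X_r}(1)\to V_{C(Z)}(1)$ along the converging sequence. Since the limit value does not depend on the subsequence or on the particular cone, the full limits $\lim_{t\to0}V_x$ and $\lim_{t\to\infty}V_x$ follow.

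For $A_x$ we use the co-area formula together with Theorem \ref{GGthm}, in its differentiated form, to get $\frac{\mathrm{d}}{\mathrm{d}t}\int_{\{\mathsf{b}_x\le t\}}|\nabla\mathsf{b}|^4=t^{n-1}A_x(t)$ for a.e. $t$. Since $V_x$ and $A_x$ are locally Lipschitz, this says $A_x=nV_x+tV_x'$. By Theorem \ref{thm1.7}, $A_x$ is monotone, so $\lim A_x$ exists at $0$ and at $\infty$, and $V_x'\le 0$. Suppose $tV_x'\not\to0$ as $t\to0$. Then $tV_x'\le-\varepsilon$ near $0$, so $\int V_x'\,\mathrm{d}t$ diverges logarithmically, contradicting the existence of the finite limit of $V_x$; the same argument applies at $\infty$. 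Hence $\lim A_x=n\lim V_x$ at both ends, which gives both stated limits for $A_x$.

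The main obstacle is the convergence step $V_{X_r}(1)\to V_{C(Z)}(1)$. We would first use local uniform convergence of heat kernels under pmGH convergence, together with Li–Yau Gaussian bounds, to get locally uniform convergence of the Green functions away from the pole. Dominated convergence in $t$ handles this: small times are controlled by Gaussian decay, and large times by the tail $\int^\infty \mathscr{H}^n(B_{\sqrt t})^{-1}\,\mathrm{d}t$. Uniformity of this tail along blow-downs needs care; here we would use $\mathscr{H}^n(B_s)\ge \mathcal{V}_x s^n$ when $\mathcal{V}_x>0$. The case $\mathcal{V}_x=0$ must be treated separately, proving $V_x(t)\to0$ directly from $|\nabla\mathsf{b}|\le C$ and a lower growth bound for $\mathsf{b}_x$. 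Next, harmonicity of $G$ off the pole together with the Cheng–Yau gradient estimate should upgrade uniform convergence of $\mathsf{b}$ to $H^{1,2}_{loc}$-strong convergence on annuli. This gives convergence of $|\nabla \mathsf{b}|$ in $L^4$ there. The contributions near the pole and near $\{\mathsf{b}=1\}$ are then controlled by the uniform gradient bound and by the two-sided comparison $\mathsf{b}\approx\mathsf{d}_x$ coming from the Green function estimates.
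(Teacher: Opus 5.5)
Your proposal has a genuine gap in the case $\mathcal{V}_x=0$ at infinity. The strategy you sketch, using $|\nabla\mathsf{b}|\leqslant C$ together with a lower growth bound for $\mathsf{b}_x$, cannot give $V_x(t)\to0$. With only $|\nabla\mathsf{b}_x|\leqslant 1$ you get $V_x(t)\leqslant t^{-n}\mathscr{H}^n(\{\mathsf{b}_x\leqslant t\})$. A lower bound $\mathsf{b}_x\geqslant C^{-1}F(\mathsf{d}_x)$ only gives $\{\mathsf{b}_x\leqslant t\}\subset\{F(\mathsf{d}_x)\leqslant Ct\}$. When $\mathcal{V}_x=0$, however, $F(s)=o(s)$, so $\mathsf{b}_x$ grows sublinearly. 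Hence this sublevel set is far larger than $B_t(x)$, and $t^{-n}$ times its volume typically diverges. For example, on $\mathbb{R}^3\times S^1$ (with $n=4$) one has $G(x,\cdot)\sim\mathsf{d}_x^{-1}$, so $\mathsf{b}_x\sim\mathsf{d}_x^{1/2}$ and $\mathscr{H}^4(\{\mathsf{b}_x\leqslant t\})\sim t^6$. Your bound then only yields $V_x(t)\lesssim t^2$. What actually drives $V_x\to0$ is the decay of $|\nabla\mathsf{b}_x|$ at infinity; in the example $|\nabla\mathsf{b}_x|\sim t^{-1}$ on $\{\mathsf{b}_x=t\}$.

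The missing ingredient is the gradient estimate $|\nabla G(x,\cdot)|\leqslant C\,G(x,\cdot)/\mathsf{d}_x$ from \cite{J14}, i.e.\ $|\nabla\mathsf{b}_x|\leqslant C\,\mathsf{b}_x/\mathsf{d}_x$. Combine it with the \emph{upper} comparison $\mathsf{b}_x\leqslant CF(\mathsf{d}_x)$ from Lemma \ref{lem5.2}. This gives $|\nabla\mathsf{b}_x|\leqslant CF(\mathsf{d}_x)/\mathsf{d}_x\to0$ as $\mathsf{d}_x\to\infty$, which is essentially what the paper does. The same upper comparison shows that $\{\mathsf{b}_x=t\}$ escapes to infinity. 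Constancy of $I_1(t)=t^{1-n}\int_{\{\mathsf{b}_x=t\}}|\nabla\mathsf{b}_x|\,\mathrm{dPer}\equiv n\nu_x$ then gives $A_x(t)\leqslant n\nu_x\sup_{\{\mathsf{b}_x=t\}}|\nabla\mathsf{b}_x|^2\to0$. Finally $V_x(t)=t^{-n}\int_0^t s^{n-1}A_x(s)\,\mathrm{d}s\to0$.

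Apart from this case, your route is a valid alternative at $t\to0$ and matches the paper at infinity when $\mathcal{V}_x>0$.

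At $t\to0$ the paper does not blow up. It combines two facts:
\begin{itemize}
\item the pointwise pole asymptotics $\mathsf{b}_x/\mathsf{d}_x\to1$ and $|\nabla\mathsf{b}_x|\to1$ (Lemma \ref{lem5.2}(4), imported from \cite{HP25});
\item the constancy of $I_1$, which comes from Lemma \ref{lem6.1} with $u\equiv1$, i.e.\ from the integral Gauss--Green formula.
\end{itemize}
Together these give $\int_{\{\mathsf{b}_x\leqslant t\}}|\nabla\mathsf{b}_x|^2=\nu_x t^n$ exactly, and hence $A_x(0)=n\nu_x$ without any monotonicity. You instead reuse the Green-function convergence machinery needed at infinity, which avoids the imported pole asymptotics. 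You then recover $A_x$ from $V_x$ via $A_x=nV_x+tV_x'$ and the monotonicity of $A_x$. This is not circular: the paper's proof of $A_x'\leqslant0$ only needs $A_x$ to be bounded, e.g.\ by $A_x\leqslant I_1$.

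For $\mathcal{V}_x>0$ your blow-down argument is essentially the paper's, which also relies on \cite{HP25} for the Green-function convergence and the cone formula. The only difference is the transfer from $V_x$ to $A_x$. You use a logarithmic-divergence argument, while the paper averages $s^{n-1}A_x(s)$ over $[t_i,2t_i]$; both work.
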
Note that a rigidity phenomenon occurs when monotonicity is not strict.
\begin{thm}[Rigidity]
	If one of the six functions in Theorem \ref{thm1.7} is not strictly monotone or $\inf W>-\infty$, then $X$ is a metric cone. If moreover the tangent space at $x$ is $\mathbb{R}^n$, then $X=\mathbb{R}^n$.
\end{thm}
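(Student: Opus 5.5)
The plan is to show that when monotonicity fails to be strict, the Green distance $\mathsf{b}_x$ must satisfy $|\nabla \mathsf{b}_x|\equiv 1$ and $\mathrm{Hess}\,\mathsf{b}_x^2=2\mathrm{g}$. By the non-smooth "volume cone implies metric cone" theorem of De Philippis–Gigli, this forces $X$ to be a cone with vertex $x$. In each of the six monotonicity statements of Theorem \ref{thm1.7}, the derivative of the relevant quantity will be written, via the integral type Gauss–Green formula (Theorem \ref{GGthm}) and the Bochner formula, as a nonnegative integral. The basic identities behind this are the harmonicity $\Delta \mathsf{b}_x^{2-n}=0$ away from $x$ and the differentiated Gauss–Green formula with $f=\mathsf{b}_x$ and $\tau=1$.

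\textbf{Step 1: derivatives as nonnegative integrands.} The key derivative formulas, modelled on Colding's smooth computations, are the following:
\[
A_x'(t)=-\tfrac{t^{2-n}}{2}\int_{\{\mathsf{b}_x\geqslant t\}}\mathsf{b}_x^{-n}\Big|\mathrm{Hess}\,\mathsf{b}_x^2-\tfrac{\Delta \mathsf{b}_x^2}{n}\mathrm{g}\Big|^2\,\integral\ \ (\text{up to constants}),
\]
together with analogous expressions for $V_x'$ and $W_x'$, and for the combinations in part (2), such as $(A_x-2(n-1)V_x)'=c\,t^{-n}\int_{\{\mathsf{b}_x\leqslant t\}}(\ldots)\geqslant0$. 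To obtain these I would integrate the Bochner inequality
\[
\tfrac12\Delta|\nabla \mathsf{b}^{2-n}|^2\geqslant|\mathrm{Hess}\,\mathsf{b}^{2-n}|^2
\]
against suitable cutoffs over sublevel sets using \eqref{1.1}. In the non-collapsed RCD setting the Hessian part of Bochner holds, and Proposition \ref{prop1.1} identifies the trace of the Hessian with the Laplacian. Non-strict monotonicity on some interval $[t_1,t_2]$ then makes the integrand vanish a.e. on an annulus $\{t_1\leqslant \mathsf{b}_x\leqslant t_2\}$. There $\mathrm{Hess}\,\mathsf{b}_x^2=\frac{\Delta\mathsf{b}_x^2}{n}\mathrm{g}$, and combining this with harmonicity gives $|\nabla\mathsf{b}_x|$ constant along the flow lines and $\mathrm{Hess}\,\mathsf{b}^2=2|\nabla \mathsf{b}|^2\mathrm{g}$.

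\textbf{Step 2: propagate to all of $X$.} This step is the main obstacle. In the smooth case, unique continuation for harmonic functions would spread the rigidity from the annulus to everywhere; that tool is unavailable here. What I would try first is to use the fact that all six quantities share the same limits at $0$ and $\infty$ (the asymptotic behavior theorem). For $A_x$ and $V_x$, for instance, constancy on an interval together with the monotone structure of $t^{2-n}(A_x-n\nu_x)$ should force $A_x\equiv n\nu_x$ on $(0,t_2]$. Letting the relevant parameters run then shows that $\nu_x=\mathcal{V}_x$, by matching the limit at $\infty$ in the asymptotic behavior theorem with the value $n\nu_x$.

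If this matching argument fails for some combination, the fallback is to show that the annulus rigidity $\mathrm{Hess}\,\mathsf{b}^2=2\mathrm{g}$ gives a local cone structure on the annulus. By Bishop–Gromov, the volume ratio $\mathscr{H}^n(B_r(x))/r^n$ is then constant on a range of $r$, and the global volume cone rigidity for RCD$(0,n)$ spaces propagates this to every scale. The case $\inf W>-\infty$ is handled directly. Since $|\nabla\mathsf{b}|\leqslant1$ by the gradient estimate, the integrand $|\nabla\mathsf{b}|^3-|\nabla\mathsf{b}|$ is nonpositive, so finiteness of $\inf W$ together with the asymptotics of $A_x$ again yields $\nu_x=\mathcal{V}_x$.

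\textbf{Step 3: conclude.} Once $\nu_x=\mathcal{V}_x$, Bishop–Gromov monotonicity makes $r\mapsto\mathscr{H}^n(B_r(x))/r^n$ constant, and the volume cone implies metric cone theorem of De Philippis–Gigli gives that $X$ is a metric cone with vertex $x$. If moreover the tangent space at $x$ is $\mathbb{R}^n$, then $\nu_x=\omega_n$. Hence $\mathscr{H}^n(B_r(x))=\omega_n r^n$ for all $r$, and the volume rigidity for non-collapsed RCD$(0,n)$ spaces yields $X=\mathbb{R}^n$.
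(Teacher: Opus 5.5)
Your handling of $\inf W>-\infty$ and your Step~3 are sound. Because $W'(t)=(A_x(t)-n\nu_x)/t\le 0$ and $\lim_{t\to\infty}A_x=n\nu_x(\nu_x/\mathcal V_x)^{2/(2-n)}$ (which is $0$ when $\mathcal V_x=0$), a finite infimum forces this limit to equal $n\nu_x$. That gives $\nu_x=\mathcal V_x$, and volume cone implies metric cone. This is more direct than the paper, which routes this case through Rigidity II.

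The genuine gap is Step~2, the passage from a local identity to a global cone, and none of your proposed mechanisms works.
\begin{itemize}
\item The six quantities do not have common limits at $0$ and $\infty$.
\item If $A_x\equiv c$ on $(t_1,t_2)$, then $t^{2-n}(A_x-n\nu_x)$ has derivative $(n-2)t^{1-n}(n\nu_x-c)\ge 0$ there for every $c\le n\nu_x$. So its monotonicity forces nothing about $c$.
\item The fallback is false. Equality in Bishop--Gromov on $[r_1,r_2]$ propagates at best inward to $(0,r_2]$, not outward. Take $\mathrm{d}r^2+f(r)^2g_{S^{n-1}}$ with $f$ concave, $f(r)=r$ near $0$ and $f'\to a\in(0,1)$. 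It has $\mathrm{Ric}\ge 0$ and is Euclidean near the pole, yet $\mathcal V_x=a^{n-1}\omega_n<\nu_x$.
\item On an annulus $\{t_1\le \mathsf b_x\le t_2\}$ you only learn that $|\nabla\mathsf b_x|$ is some unknown constant. So you do not even get constancy of the volume ratio of metric balls on a range of radii.
\item A Colding-type formula for $A_x'$ over the unbounded set $\{\mathsf b_x\ge t\}$ is not available here without controlling boundary terms at infinity. Even if granted, it would again give only an undetermined constant value of $|\nabla \mathsf b_x|$, now on an exterior region.
\end{itemize}

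Here is what the paper does instead.
\begin{itemize}
\item For the three functions in part~(2), \eqref{0421d}, \eqref{0421e} and \eqref{0421f} control the trace-free Hessian on the whole sublevel set $\Omega_t=\{\mathsf b\le t\}$, which contains $x$. Your own displayed formula for $A_x-2(n-1)V_x$ integrates over $\{\mathsf b_x\le t\}$, so passing to an annulus throws this information away.
\item Vanishing on $\Omega_{t_2}$ gives, via \eqref{11eqn2.16}, $\nabla|\nabla\mathsf b|^2=0$ on $\Omega_{t_2}\setminus\{x\}$. This is the full gradient, not only the derivative along flow lines.
\item The anchor $|\nabla\mathsf b|\to 1$ at $x$, together with the upper semicontinuous representative in \eqref{5.1}--\eqref{5.2}, then pins $|\nabla\mathsf b|\equiv 1$ on $\Omega_{t_2}$.
\item For $A$, $V$, $W$ the paper first obtains $A\equiv n\nu$ on $[0,t_2)$. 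It then reaches the same conclusion from $\int_{\Omega_t}|\nabla\mathsf b|^4\le\int_{\Omega_t}|\nabla\mathsf b|^2=\nu t^n$.
\item The global step is Theorem~\ref{thm4.10}, i.e.\ \cite[Theorem 1.6]{HP25}. A single point $y\neq x$ with $|\nabla\mathsf b|(y)=1$ forces $\mathsf b=\mathsf d_x$ and a cone structure with vertex $x$. This strong-maximum-principle statement for $|\nabla \mathsf b|^2\le 1$ is what replaces unique continuation, and your plan has no substitute for it.
\end{itemize}

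If you want to keep your volume-based ending, the propagation tool the paper does provide is \eqref{0421b}. It shows $t^{3-n}A'$ is non-decreasing, and $A'\le 0$, so $A'(t_1)=0$ forces $A'=0$ for all larger $t$. Once you know $A\equiv n\nu_x$ on some interval, this gives $A\equiv n\nu_x$ on $(0,\infty)$. The asymptotics at infinity then yield $\nu_x=\mathcal V_x$, and your Step~3 applies.
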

Another application of the integral type Gauss-Green formula is the following asymptotic formula,  which generalizes the results in \cite{HT03,H25}. This formula connects the mean curvature of a hypersurface at a given point with the volume of small balls around that point in the non-smooth setting.
\begin{thm}\label{mcthm}
	Let $(X,\mathsf{d},\mathscr{H}^n)$ be a non-collapsed $\mathrm{RCD}(K,n)$ space. Assume $f$ is a locally Lipschitz function such that for any $p\in X$, there exists $\varepsilon>0$ such that $f\in D(\Delta,B_\varepsilon(p))$. Then at $\mathscr{H}^n$-a.e. point $x\in \{y\in X:|\nabla f|(y)\neq 0\}$, it holds that
	\begin{equation}\label{1.2}
		\begin{aligned}
			\lim_{t\rightarrow 0} \left(\frac{\mathscr{H}^n(B_t(x)\setminus \{y\in X:f(y)\leqslant f(x)\})}{\omega_n t^{n+1}}-\frac{1}{2t}\right)=\frac{\Delta f(x)}{|\nabla f|(x)}+\frac{\mathrm{Hess}f\,(\nabla f,\nabla f)(x)}{|\nabla f|^3(x)}.
		\end{aligned}
	\end{equation}
\end{thm}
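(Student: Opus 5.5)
Write $a:=f(x)$ and $E_t:=B_t(x)\cap\{f>a\}$. The plan is to differentiate the volume of $E_t$ in $t$, rewrite the derivative via the integral type Gauss--Green formula (Theorem \ref{GGthm}) as an integral over $B_t(x)$, and then use blow-up regularity at a good point $x$.

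\textbf{Choice of good points.} We restrict to points $x$ where:
\begin{itemize}
\item the tangent cone is $\mathbb{R}^n$, i.e.\ $x$ is regular;
\item $x$ is a Lebesgue point of $|\nabla f|$, of $\Delta f$, and of $\mathrm{Hess}\,f$, the latter in the $L^2$-Lebesgue sense with respect to a harmonic-type chart;
\item $|\nabla f|(x)\neq0$.
\end{itemize}
At such points the rescaled functions $(f-a)/t$ on $(X,t^{-1}\mathsf{d},x)$ converge to a linear function $\ell(v)=\langle\nabla f(x),v\rangle$ on $\mathbb{R}^n$. The second-order expansion $f(y)-a=\langle\nabla f(x),y\rangle+\tfrac12\mathrm{Hess}f(x)(y,y)+o(t^2)$ holds in an averaged $L^1$ sense on $B_t(x)$. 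This is a second-order differentiability statement for $\mathscr{H}^n$-a.e.\ $x$, which I would take from the $L^2$-Hessian theory (approximate second-order Taylor expansion via $\delta$-splitting maps).

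\textbf{Volume to flux.} Set $r(y)=\mathsf{d}(x,y)$. By coarea, $\frac{d}{dt}\mathscr{H}^n(E_t)$ is the perimeter of $\partial B_t$ restricted to $\{f>a\}$. Rather than rely on coarea alone, I would apply Theorem \ref{GGthm} with $f$ replaced by $r$, with $\tau=1$ and $h=\chi$ a cutoff approximating $1_{\{f>a\}}$, and with $g$ a smoothing of $r^2/2$, or directly of $f$. The aim is to express the quantity
\[
\int_{B_t(x)}\mathrm{div}\big(1_{\{f>a\}}\nabla (r^2/2)\big)
\]
in two ways. Using $\Delta(r^2/2)\approx n$ at a regular point, and using that the divergence of $1_{\{f>a\}}\nabla(r^2/2)$ produces the boundary term $\int_{\{f=a\}\cap B_t}\langle\nu_f,\nabla r^2/2\rangle$, we should obtain
\[
t\,\frac{d}{dt}\mathscr{H}^n(E_t)\approx n\,\mathscr{H}^n(E_t)+\int_{\{f=a\}\cap B_t}\langle \nabla f/|\nabla f|,\nabla (r^2/2)\rangle.
\]
The integral over the level set is handled in integrated form by Theorem \ref{GGthm}, applied to $f$ with $g=r^2/2$ (after localization), and integrated over $s$ near $a$. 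This is exactly why Theorem \ref{GGthm} is needed: Theorem \ref{stronglocality} lets us ignore the set $\{|\nabla f|=0\}$.

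\textbf{Expansion and the ODE.} Inserting the second-order Taylor expansion reduces everything to the Euclidean computation. In $\mathbb{R}^n$, for the region $\{\langle e,y\rangle+\tfrac12 Q(y,y)>0\}$ with $e=\nabla f/|\nabla f|$,
\[
\mathrm{vol}\big(B_t\cap\{\ldots>0\}\big)=\tfrac12\omega_n t^n+\omega_n t^{n+1}\,\frac{\mathrm{tr}Q-Q(e,e)}{2|\nabla f|}\cdot c_n+o(t^{n+1})
\]
for an explicit dimensional constant $c_n$. Combined with $\Delta f=\mathrm{tr}\,\mathrm{Hess}f$ (Proposition \ref{prop1.1}), this gives the right-hand side of (\ref{1.2}); I would check the constants against \cite{HT03}, where the smooth formula with these constants is already established. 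The ODE $t\,V'=nV+\text{flux}$, with $V(t)=\mathscr{H}^n(E_t)$, is then integrated to convert the flux asymptotics into the volume asymptotics.

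\textbf{Main obstacle.} The hard step is justifying the $o(t^{n+1})$ error. Mere Lebesgue-point information gives only $o(t^2)$ control on $f$ in an averaged sense. Converting this into control of the symmetric difference of sublevel sets is the delicate part, because it requires $|\nabla f|$ bounded below near $x$ in measure. My approach would be to work entirely with the integrated identity from Theorem \ref{GGthm}, applied to $f$ and averaged over $s\in(a-\delta t^2,a+\delta t^2)$, so that only $L^1$ Hessian/Laplacian convergence is needed. The bad set would be absorbed via Theorem \ref{stronglocality}.
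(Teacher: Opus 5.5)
There is a genuine gap, and it is the step you flag yourself: nothing in your plan produces $o(t^{n+1})$ precision. At that order, $\mathscr{H}^n(B_t(x)\cap\{f>f(x)\})$ is decided by the single level set $\{f=f(x)\}\cap B_t(x)$. That is an $\mathscr{H}^n$-null set, and averaged data on balls do not localize to it.

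Even granting $\fint_{B_t(x)}|f-P|=o(t^2)$ for a quadratic model $P$ in some chart, Chebyshev only makes $\{|f-P|>\epsilon t^2\}\cap B_t(x)$ of measure $o(t^n)$.

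Averaging over levels does not rescue this. By coarea, the level-averaged flux is $\frac{1}{2\eta}\int_{\{|f-a|<\eta\}\cap B_t(x)}\langle\nabla f,\nabla(r^2/2)\rangle\,\mathrm{d}\mathscr{H}^n$. Even an $o(t)$ bound on the $L^1(B_t(x))$-average of the deviation of $\nabla f$ from its affine model only bounds the resulting error by $o(t^{n+2})/\eta=o(t^n)/\delta$ for $\eta=\delta t^2$, not by $o(t^{n+1})$. Moreover, comparing the level $a$ with nearby levels needs the slab $\{|f-a|<\delta t^2\}\cap B_t(x)$ to have measure $O(\delta t^{n+1})$, and you give no mechanism for this. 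Theorem \ref{stronglocality} cannot absorb any of it, since it only concerns $\{|\nabla f|=0\}$.

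The ODE step has an independent defect. $\mathsf{d}_x^2$ is in general not in $D(\Delta)$ (its Laplacian is only a measure), so Theorem \ref{GGthm} does not apply with $g=\mathsf{d}_x^2/2$. Also, $x\in\mathcal{R}_n$ only gives $\Delta(\mathsf{d}_x^2/2)\approx n$ with an $o(1)$ error, whereas your ODE needs the deviation from $n$ to be $o(t)$ on average over $B_t(x)$.

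The missing idea is how the paper makes averaged information suffice.
\begin{itemize}
\item In Proposition \ref{prop4.1}, $u_n=|\nabla f|^{-1}(x_0)(f-f(x_0))$ is made the last component of an $(n,\delta)$-almost harmonic splitting map $\mathbf{U}$.
\item The quadratic correction of \cite{BMS23} then gives $\mathbf{V}$, a polynomial in $\mathbf{U}-\mathbf{U}(x)$ with invertible linear part, such that $\langle\nabla v_i,\nabla v_j\rangle(x)=\delta_{ij}$ and $\fint_{B_t(x)}|\mathrm{Hess}\,v_k|^2\to0$. This correction is indispensable: for an uncorrected chart the metric distortion on $B_t(x)$ is of order $t$, which contributes exactly at order $t^{n+1}$.
\item Inverting the polynomial, $u_n-u_n(x)$ is an exact smooth function of $\mathbf{V}$ near $x$. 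So $\{f>f(x)\}$ is exactly the $\mathbf{V}$-preimage of a smooth Euclidean region, and no Taylor expansion of $f$ on $X$ or information on the level set is needed.
\item All non-smoothness is pushed into the distortion of $\mathbf{V}$. This is controlled for every bounded Borel $\varphi$ at once by the $o(t^{n+1})$ $L^1$-closeness of $|\det(\langle\nabla v_i,\nabla v_j\rangle)|^{1/2}$ to $1$ (Lemma \ref{lem3.4}, Corollary \ref{cor3.7}), together with $\mathscr{H}^n(B_t(x)\setminus\mathbf{V}^{-1}(B_t(0_n)))=o(t^{n+1})$ (Lemma \ref{lem3.5}).
\item Theorem \ref{GGthm} enters only in Lemma \ref{lem3.5}, applied to $\tilde r=\min\{|\mathbf{V}|,\mathsf{d}_x\}$ and $g=|\mathbf{V}|^2$. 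Its Laplacian satisfies $\fint_{B_t(x)}|\Delta|\mathbf{V}|^2-2n|\leqslant t\,\xi(2t)$ by (\ref{eqn3.4d}), which is exactly the $o(t)$ control that $\mathsf{d}_x^2$ lacks.
\end{itemize}
What remains is the Euclidean computation of \cite{HT03,H25}.

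As a side remark, your Euclidean coefficient $c_n(\mathrm{tr}Q-Q(e,e))/(2|\nabla f|)$ is the correct smooth one, but it does not literally match the displayed right-hand side of (\ref{1.2}). Take $f=y_n+\frac c2y_n^2$ on $\mathbb{R}^n$, whose level sets are hyperplanes. The left side vanishes at every $x$ with $1+cx_n\neq0$, while the displayed right side equals $2c/|1+cx_n|$. The Hessian term needs a minus sign and the factor $\omega_{n-1}/(2(n+1)\omega_n)$ must appear, so you should not simply assert that your computation yields (\ref{1.2}).
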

\begin{remark}
	If $|\nabla f|\neq 0$ $\mathscr{H}^n$-a.e.~on $B_\varepsilon(x)$, then the right hand side of (\ref{1.2}) is exactly the divergence of $\nabla f/|\nabla f|$. Indeed, in the smooth setting, it coincides with the mean curvature of the level set $\{y\in X: f(y)=f(x)\}$ at  $x$. 
\end{remark}
\subsection{Organization of the paper}
In Section \ref{sec2}, we collect all notation, preliminary results and terminology concerning RCD$(K,N)$ spaces. Of particular importance is the strong locality of the Laplacian on non-collapsed RCD$(K,n)$ spaces. We also present a direct application, namely the almost everywhere non-zero property of the (Dirichlet) heat kernel.

In Section \ref{sec3}, using the strong locality of the Laplacian on non-collapsed RCD$(K,n)$ spaces, we prove an integral type Gauss–Green formula.  Throughout the argument, the Gaussian estimates of the heat kernel and the behavior of eigenfunctions play a crucial role.

Section \ref{sec5} applies the integral type Gauss–Green formula to show Colding’s monotonicity formulas. Through the integral type Gauss–Green formula, a deeper understanding of the regularity of $A_x$ is gained. Moreover, the monotonicity itself is proved using an integral type Bochner inequality, which we obtain in Section \ref{sec3} by combining the classical Bochner inequality with the integral type Gauss–Green formula. The rigidity statement, finally, follows from \cite{HP25}.

In Section \ref{sec4}, we once again call upon the integral type Gauss–Green formula, this time to derive an asymptotic formula for the level sets of nice functions. Following the strategy in \cite{H25}, we first construct normal coordinate charts near $\mathscr{H}^n$-a.e. point, understood in the sense that the limit of the average integral of the squared Hessian of the coordinate functions over a ball tends to zero as the radius shrinks to zero. Mapping back to Euclidean space then yields the desired conclusion.

\subsection{Acknowledgments}
The author thanks Yuanlin Peng for providing the proof of the Gauss-Green formula for level sets of gradient non-vanishing functions. He is also grateful to Huichun Zhang, Shouhei Honda for helpful discussions related to this paper.

\section{Preliminaries}\label{sec2}
In this paper we adopt the following conventions and notation.
\begin{itemize}
\item Denote by $C(K_1,\ldots,K_n)$ a positive constant depending only on $K_1,\ldots,K_n$.
\item Denote by $0_n$ the origin of $\mathbb{R}^n$ and by $\mathscr{L}^n$ the standard Lebesgue measure on $\mathbb{R}^n$.
\item By a metric measure space $(X,\mathsf{d},\mathfrak{m})$ we mean that $(X,\mathsf{d})$ is a complete and separable metric space, $\mathfrak{m}$ is a nonnegative Borel measure that is finite on bounded subsets of $X$, and $\operatorname{supp}(\mathfrak{m})=X$.
\item For a metric space $(X,\mathsf{d})$, we use the following notation:
\begin{itemize}
	\item $\operatorname{Lip}(X,\mathsf{d})$, $\operatorname{Lip}_{\mathrm{loc}}(X,\mathsf{d})$, $\operatorname{Lip}_{\mathrm{c}}(X,\mathsf{d})$, and $C(X)$ denote the spaces of Lipschitz, locally Lipschitz, compactly supported Lipschitz, and continuous functions, respectively. For any $f\in\operatorname{Lip}(X,\mathsf{d})$, the local Lipschitz constant at $x\in X$ is defined as
	\[
	\operatorname{lip} f(x) =
	\begin{cases}
		\limsup_{y\to x} \dfrac{|f(y)-f(x)|}{\mathsf{d}(y,x)}, & \text{if }x\text{ is not isolated},\\[1.2ex]
		0, & \text{otherwise}.
	\end{cases}
	\]
	\item  $B_R(x):=\{y\in X: \mathsf{d}(x,y)<R\}$ denotes the open ball of radius $R$ centered at $x$. In particular, $B_r(0_n):=\{x\in\mathbb{R}^n: |x|<r\}$.
	\item $\mathrm{diam}(X):=\sup_{x,y\in X}\mathsf{d}(x,y)$ denotes the diameter of $X$.

\end{itemize}
\end{itemize}

For a metric measure space $(X,\mathsf{d},\mathfrak{m})$, the Cheeger energy $\mathrm{Ch}:L^2(\mathfrak{m})\rightarrow [0,\infty]$ is a convex, lower semi-continuous functional defined as
\[
\mathrm{Ch}(f):=\inf_{\{f_i\}}\left\{\int_X \left(\mathrm{lip}\mathop{f_i}\right)^2\mathrm{d}\mathfrak{m}\right\},
\]
where the infimum is taken over all sequences $\{f_i\}\subset \mathrm{Lip}(X,\mathsf{d})\cap L^2(\mathfrak{m})$ converging to $f$ in $L^2(\mathfrak{m})$. The associated Sobolev space $H^{1,2}(X,\mathsf{d},\mathfrak{m})$ is then the collection of all $L^2$-functions with finite Cheeger energy. We denote by $H^{1,2}_{\text{loc}}(X,\mathsf{d},\mathfrak{m})$ the set of functions that lie in $H^{1,2}(U,\mathsf{d},\mathfrak{m})$ for every bounded open $U \subset X$; the space $L^p_{\text{loc}}(X,\mathfrak{m})$ is defined analogously. When no confusion arises, we simply write $L^p:=L^p(X,\mathfrak{m})$ and $H^{1,2}:=H^{1,2}(X,\mathsf{d},\mathfrak{m})$.

By Mazur's Lemma, for every $f\in H^{1,2}$, there exists a unique minimal relaxed slope $|\nabla f|\in L^2$ providing the canonical representation
\[
\mathrm{Ch}(f)=\int_X {|\nabla f|}^2\, \mathrm{d}\mathfrak{m}.
\]
{}{This object enjoys the locality property:} $|\nabla f|=|\nabla h|$ holds $\mathfrak{m}$-a.e. on $\{f=h\}$. 

A metric measure space is called infinitesimally Hilbertian if its Sobolev space $H^{1,2}$ carries a Hilbert space structure. As shown in \cite{AGS14a,G15}, under this assumption one can define for any $f,h\in H^{1,2}$ the following function in $L^1$:
\[
\langle\nabla f,\nabla h\rangle:=\lim_{\varepsilon\rightarrow 0}\frac{{|\nabla f+\varepsilon h|}^2-{|\nabla h|}^2}{2\varepsilon}.
\]

Let $U\subset X$ be an open set. A function $f \in H^{1,2}_{\text{loc}}(U, \mathsf{d},\mathfrak{m})$ is said to belong to the domain of the Laplacian on $U$, denoted  $f\in D(\Delta, U)$, if there exists an $h\in L^2(U,\mathfrak{m})$ such that the following integration by parts formula holds for every compactly supported $\psi \in H^{1,2}(U, \mathsf{d}, \mathfrak{m})$:

\[
\int_U \langle \nabla f, \nabla \psi\rangle \, \mathrm{d}\mathfrak{m} = -\int_U h\psi  \, \mathrm{d}\mathfrak{m}.
\]
This $h$ is unique if it exists and is denoted by $\Delta f$. In the case $U=X$, we simply write $f\in D(\Delta)$.

We are now in a position to introduce the definition of RCD$(K,N)$ spaces. See \cite{AGS15,AMS19,EKS15,G15} for details.
\begin{defn}
	For $K\in \mathbb{R}$, $N\in (1,\infty)$, a metric measure space $(X,\mathsf{d},\mathfrak{m})$ is said to be an RCD$(K,N)$ space if it satisfies the following conditions.
	\begin{enumerate}
		\item[$(1)$] {}{It is infinitesimally Hilbertian.}
		\item[$(2)$] There exists $x\in X$ and $C>1$ such that for any $r>0$ we have $\mathfrak{m}(B_r(x))\leqslant C\exp(Cr^2)$.
		\item[$(3)$] Any $f\in H^{1,2}$ satisfying $|\nabla f|\leqslant 1$ $\mathfrak{m}$-a.e. has a 1-Lipschitz representative.
		\item[$(4)$] For any $f\in D(\Delta)$ with $\Delta f\in H^{1,2}$ and any $\varphi \in \mathrm{Test}(X)$ with
		$ \varphi \geqslant 0$, we have
		\[
		\frac{1}{2}\int_X |\nabla f|^2 \Delta \varphi\mathop{\mathrm{d}\mathfrak{m}}\geqslant \int_X \varphi \left(\frac{(\Delta f)^2}{N}+\langle \nabla f,\nabla \Delta f\rangle+K|\nabla f|^2 \right)\mathrm{d}\mathfrak{m},
		\]
		where $\mathrm{Test}(X)$ is the class of test functions defined by
		\[
		\mathrm{Test}(X)=\mathrm{Test}(X,\mathsf{d},\mathfrak{m}):=\left\{\varphi\in \text{Lip}({X},\mathsf{d})\cap D(\Delta)\cap L^\infty:\Delta \varphi\in H^{1,2}\cap L^\infty\right\}.
		\]
	\end{enumerate}
	If in addition $\mathfrak{m}=\mathscr{H}^N$, then $(X,\mathsf{d},\mathfrak{m})$ is said to be a non-collapsed RCD$(K,N)$ space. 
\end{defn}

In the remainder of this paper, an RCD$(K,N)$ space always has $N \in (1,\infty)$. Let $(X, \mathsf{d}, \mathfrak{m})$ be such a space. By \cite[Theorem~1]{R12}, a local $(1,1)$-Poincar\'{e} inequality holds on $X$: 

\[
\fint_{B_r(x)} \left|f - \fint_{B_r(x)} f \, \mathrm{d}\mathfrak{m} \right|\, \mathrm{d}\mathfrak{m} \leqslant 4re^{|K|r^2} \fint_{B_{2r}(x)} |\nabla f| \, \mathrm{d}\mathfrak{m},\  \text{for any $f \in H^{1,2}$ and any ball $B_r(x)\subset X$}.
\]
Together with the following Bishop–Gromov volume growth inequality (see \cite[Theorem 5.31]{LV09}, \cite[Theorem 2.3]{St06b}), this implies that RCD$(K,N)$ spaces are PI spaces.
\begin{thm}\label{BGineq}
	For any $R>r>0$ $({}{\text{with}}\  R\leqslant \pi\sqrt{(N-1)/K}$ if $K>0)$, it holds 
	\[
	\dfrac{\mathfrak{m}\left(B_R(x)\right)}{\mathfrak{m}\left(B_r(x)\right)}\leqslant  \frac{V_{K,N}(R)}{ V_{K,N}(r)},
	\]
	where $V_{K,N} (r)$ denotes the volume of a ball of radius
	$r$ in the $N$-dimensional model space with Ricci curvature $K$ defined as
	\[ 
	{}{V_{K,N}(t):=\left\{
		\begin{array}{ll}
			\int_0^t\sin^{N-1}\left(s\sqrt{K/(N-1)}\right)\mathrm{d}s, &\text{if}\  K>0,\\
			t^{N},&\text{if}\  K=0,\\
			\int_0^t\sinh^{N-1}\left(s\sqrt{-K/(N-1)}\right)\mathrm{d}s, &\text{if}\  K<0.
		\end{array}
		\right.}
	\]  
\end{thm}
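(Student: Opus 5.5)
The plan is to derive the Bishop–Gromov inequality from the curvature-dimension condition underlying Definition 2.1, via optimal transport contracting toward the center $x$. First, I will use that an $\mathrm{RCD}(K,N)$ space is in particular a $\mathrm{CD}(K,N)$ space in the sense of Sturm and Lott–Villani; this is the standard equivalence of the Eulerian definition (4) with the Lagrangian one, proved in \cite{AGS15,EKS15,AMS19}. Next, I will specialize the $\mathrm{CD}(K,N)$ condition to the case where the first marginal degenerates to a Dirac mass $\delta_x$. This gives the measure contraction property $\mathrm{MCP}(K,N)$: for every Borel set $A$ with $0<\mathfrak{m}(A)<\infty$ there is a $t$-midpoint map $\gamma\mapsto\gamma_t$ along geodesics from $x$ into $A$ such that
\[
\mathfrak{m}(A_t)\geqslant \int_A t\left(\frac{s_{K,N}(t\,\mathsf{d}(x,y))}{s_{K,N}(\mathsf{d}(x,y))}\right)^{N-1}\mathrm{d}\mathfrak{m}(y),
\]
where $s_{K,N}$ is the model $\sin$, identity or $\sinh$ function and $A_t$ denotes the set of $t$-intermediate points. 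The degenerate marginal is handled by approximating $\delta_x$ with normalized restrictions $\mathfrak{m}|_{B_\varepsilon(x)}/\mathfrak{m}(B_\varepsilon(x))$ and passing to the limit using the stability of optimal plans.

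With this contraction estimate, I will prove the comparison at the level of annuli, mimicking the Riemannian proof. I take $A=\{y: r_1<\mathsf{d}(x,y)\leqslant r_2\}$ and contract it by the factor $t$ into $\{tr_1<\mathsf{d}\leqslant tr_2\}$. This yields that the function $s(r):=\limsup_{\delta\to0}\delta^{-1}\mathfrak{m}(B_{r+\delta}(x)\setminus B_r(x))$ satisfies
\[
s(tr)\geqslant t\,\bigl(s_{K,N}(tr)/s_{K,N}(r)\bigr)^{N-1}s(r)\cdot t^{-1}\cdot t,
\]
that is, $r\mapsto s(r)/s_{K,N}(r)^{N-1}$ is non-increasing. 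The same monotonicity also has to be made rigorous in integrated form, so that no differentiability of $r\mapsto\mathfrak{m}(B_r(x))$ is assumed. The restriction $R\leqslant\pi\sqrt{(N-1)/K}$ for $K>0$ enters exactly so that $s_{K,N}$ stays positive on the relevant range.

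Finally, I will apply the elementary Gromov lemma: if $a(r)/b(r)$ is non-increasing for nonnegative $a$ and $b>0$, then $\int_0^R a\big/\int_0^R b$ is non-increasing in $R$. Using $\mathfrak{m}(B_R(x))=\int_0^R s$, which holds since $\mathfrak{m}(\partial B_r(x))=0$ for all but countably many $r$ and by absolute continuity from the doubling property, and $V_{K,N}(R)=\int_0^R s_{K,N}^{N-1}$, this gives $\mathfrak{m}(B_R)/V_{K,N}(R)\leqslant \mathfrak{m}(B_r)/V_{K,N}(r)$, which is the stated inequality. I expect the main obstacle to be the first step, namely extracting the MCP-type contraction estimate with the sharp distortion coefficients from the entropy convexity when one marginal is a Dirac mass. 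This requires care with non-uniqueness of geodesics and with the limiting procedure. I would first try to cite the implication $\mathrm{CD}(K,N)\Rightarrow\mathrm{MCP}(K,N)$ from \cite{St06b} directly, and otherwise carry out the approximation by uniform measures on small balls.
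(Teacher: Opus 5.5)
Your strategy is the standard one. It is essentially the proof of \cite[Theorem 2.3]{St06b}: transport from the uniform measure on $B_\varepsilon(x)$ to an annulus, monotonicity of $s/s_{K,N}^{N-1}$, then Gromov's lemma. The paper gives no argument of its own and simply quotes \cite{LV09,St06b}. You also do not need a genuine MCP map or stability of plans: the set-level inequality from the uniform measures, with intermediate points in $\{tr_1-\varepsilon\leqslant\mathsf{d}_x\leqslant tr_2+\varepsilon\}$, is all the annulus argument uses. As written, however, three steps do not go through.

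First, the input. The sharp distortion $t\,\bigl(s_{K,N}(t\theta)/s_{K,N}(\theta)\bigr)^{N-1}$ requires the sharp $\mathrm{CD}(K,N)$ (or $\mathrm{MCP}(K,N)$) condition. The results you cite (\cite{EKS15,AMS19}) only identify condition $(4)$ with the reduced/entropic condition $\mathrm{RCD}^*(K,N)$. Specializing that condition to a degenerate first marginal gives only $\mathfrak{m}(A_t)\geqslant\bigl(\tilde s(t\theta)/\tilde s(\theta)\bigr)^{N}\mathfrak{m}(A)$, where $\tilde s$ is the model function with $\sqrt{|K|/N}$ in place of $\sqrt{|K|/(N-1)}$. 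This is strictly weaker for $K\neq0$ and yields a non-sharp Bishop--Gromov inequality. The upgrade you need is the globalization theorem of Cavalletti--Milman (Li for infinite measure), which applies because $\mathrm{RCD}$ spaces are essentially non-branching. The paper uses the same implication implicitly when it cites \cite{LV09,St06b}, so this is a matter of invoking the right theorem.

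Second, your displayed inequality for $s(tr)$ has a spurious factor $t$. The factor $t$ in the contraction estimate is cancelled because the image annulus $\{tr<\mathsf{d}_x\leqslant t(r+\delta)\}$ has width $t\delta$. The correct statement is
\[
s(tr)\geqslant\Bigl(\frac{s_{K,N}(tr)}{s_{K,N}(r)}\Bigr)^{N-1}s(r),\qquad t\in(0,1).
\]
With the extra $t$ you only get $s(tr)/s_{K,N}(tr)^{N-1}\geqslant t\,s(r)/s_{K,N}(r)^{N-1}$, which gives no monotonicity.

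Third, the identity $\mathfrak{m}(B_R(x))=\int_0^R s$ does not follow from doubling. Doubling measures can be singular: for Riesz products on $\mathbb{R}$, $r\mapsto\mu((-r,r))$ is not absolutely continuous. Moreover, $\mathfrak{m}(\partial B_r(x))=0$ off a countable set does not exclude a jump. Both facts must come from the contraction estimate itself:
\begin{enumerate}
\item If $\mathfrak{m}(\{\mathsf{d}_x=r\})>0$, contracting by $t\in(1/2,1)$ gives infinitely many disjoint sets $\{\mathsf{d}_x=tr\}\subset B_r(x)$ with measure bounded below, which is impossible. Hence $r\mapsto\mathfrak{m}(B_r(x))$ is continuous.
\item The corrected monotonicity bounds $s$ on $[r_0,R]$ by $s(r_0)\sup_{[r_0,R]}s_{K,N}^{N-1}/s_{K,N}(r_0)^{N-1}$, for a.e.\ $r_0$ (where $s(r_0)<\infty$). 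So this continuous non-decreasing function has bounded upper Dini derivative there and is locally Lipschitz.
\end{enumerate}
Then $s$ is its a.e.\ derivative, and Gromov's lemma finishes the proof as you intend.
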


\subsection{Convergence of RCD spaces}

We omit the definition of pointed measured Gromov-Hausdorff (pmGH) convergence and instead recall the following precompactness result for RCD$(K,N)$ spaces (see  \cite{GMS15} for details).
\begin{thm}\label{GMS}
	Let $\{(X_i,\mathsf{d}_i,\mathfrak{m}_i,x_i)\}$ be a sequence of pointed $\mathrm{RCD}(K,N)$ spaces such that
	\[
	0<\liminf_{i\rightarrow \infty}\mathfrak{m}_i(B_1^{X_i}(x_i))\leqslant \limsup_{i\rightarrow \infty}\mathfrak{m}_i(B_1^{X_i}(x_i))<\infty.
	\]
	Then this sequence admits a subsequence $\{(X_{i(j)},\mathsf{d}_{i(j)},\mathfrak{m}_{i(j)},x_{i(j)})\}$ that $\mathrm{pmGH}$-converges to a pointed $\mathrm{RCD}(K,N)$ space $(X,\mathsf{d},\mathfrak{m},x)$.
\end{thm}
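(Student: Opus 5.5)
The plan has two parts. First, I will extract a pmGH-convergent subsequence using only the uniform doubling structure. Second, I will show that the limit inherits the $\mathrm{RCD}(K,N)$ condition. For the first part, I will use Theorem \ref{BGineq}. Combined with the two-sided bound on $\mathfrak{m}_i(B_1^{X_i}(x_i))$, it gives for every $R>0$ uniform bounds
\[
0<c(K,N,R)\leqslant \mathfrak{m}_i(B_R^{X_i}(x_i))\leqslant C(K,N,R)
\]
for all large $i$. It also gives a uniform local doubling constant on balls of radius $\leqslant R$. Doubling bounds, for each $\varepsilon>0$, the number of disjoint $\varepsilon$-balls inside $B_R(x_i)$, uniformly in $i$. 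Gromov's precompactness theorem then yields a subsequence along which $(X_i,\mathsf{d}_i,x_i)$ converges in the pointed Gromov--Hausdorff sense. I will realize this convergence inside a common proper metric space $(Z,\mathsf{d}_Z)$ via isometric embeddings. The push-forward measures are uniformly bounded on bounded sets, so by Prokhorov/Banach--Alaoglu (applied on an exhausting family of balls, with a diagonal argument) a further subsequence converges weakly in duality with $C_{\mathrm{bs}}(Z)$ to a Radon measure $\mathfrak{m}$. The lower volume bound together with doubling shows that $\mathfrak{m}(B_r(y))>0$ for every $y$ in the limit space $X$, so $\operatorname{supp}\mathfrak{m}=X$. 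Hence $(X_{i(j)},\mathsf{d},\mathfrak{m}_{i(j)},x_{i(j)})\to(X,\mathsf{d},\mathfrak{m},x)$ in the pmGH sense.

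For the second part, I will first pass the curvature-dimension condition to the limit. The $\mathrm{CD}(K,N)$ condition is stable under pmGH (or pointed measured Gromov) convergence, by Sturm and Lott--Villani. The argument is to take optimal plans between approximating measures on $X_i$ and pass the displacement convexity of the Rényi entropies to the limit. This uses the joint lower semicontinuity of the entropy functionals under weak convergence together with the convergence of the measures. The growth condition $(2)$ and the Sobolev-to-Lipschitz property $(3)$ will follow from Bishop--Gromov on the limit and from the $\mathrm{CD}$ condition, respectively.

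The step I expect to be the main obstacle is the stability of infinitesimal Hilbertianity, i.e.\ showing that $\mathrm{Ch}_X$ is a quadratic form. The approach I would try is Gigli--Mondino--Savaré's: prove Mosco ($\Gamma$-)convergence of the Cheeger energies $\mathrm{Ch}_{X_i}\to\mathrm{Ch}_X$ with respect to $L^2$ convergence along varying spaces.
\begin{itemize}
\item The $\Gamma$-liminf inequality should follow from the dual (Kuwada-type) characterization of slopes via the heat flow and Wasserstein contraction, which passes to limits thanks to the uniform $\mathrm{CD}$ bounds.
\item The $\Gamma$-limsup requires recovery sequences. I would build them from Lipschitz approximations, using the density of Lipschitz functions in energy together with the uniform PI structure, namely the Poincaré inequality of \cite{R12} and doubling.
\end{itemize}
Once Mosco convergence is established, the parallelogram identity $\mathrm{Ch}_{X_i}(f+g)+\mathrm{Ch}_{X_i}(f-g)=2\mathrm{Ch}_{X_i}(f)+2\mathrm{Ch}_{X_i}(g)$ passes to the limit, so the limit is infinitesimally Hilbertian. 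Together with $\mathrm{CD}(K,N)$ this gives $\mathrm{RCD}(K,N)$; equivalently, the Bochner inequality $(4)$ can be recovered via the Eulerian characterization of \cite{AGS15,EKS15}.
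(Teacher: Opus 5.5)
The paper does not prove this statement; it quotes it from \cite{GMS15}, and your outline follows the architecture of that reference: Gromov precompactness from uniform doubling, weak compactness of the measures with full support, stability of the curvature-dimension condition, and stability of infinitesimal Hilbertianity via $\Gamma$-convergence of Cheeger energies plus the parallelogram identity. The steps you treat as routine are fine, and your $\Gamma$-liminf idea is sound. The $\Gamma$-limsup, however, has a genuine hole as you describe it.

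\textbf{The $\Gamma$-limsup.} To get functions on $X_i$ from a Lipschitz approximant $f$ on $X$, you must extend $f$ to the ambient space $Z$ and restrict. The local Lipschitz constant $\mathrm{lip}_{X_i}$ of the restriction is then controlled only by the global Lipschitz constant of $f$, not by $\mathrm{lip}_X f$. For instance, near points where $f$ is locally constant, a McShane extension can have slope equal to its global Lipschitz constant in directions transversal to $X$, and nothing prevents the $X_i$ from seeing those directions.

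The uniform PI structure does not repair this. Transplanting through discrete convolutions and Poincar\'e estimates controls energies only up to a factor $C(K,N)>1$. That loss ruins the parallelogram identity, where sharp constants are the whole point.

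The missing idea is to relax in $Z$ itself.
\begin{enumerate}
\item Test plans on $(Z,\mathsf{d}_Z,\mathfrak{m})$ are concentrated on curves in $\operatorname{supp}\mathfrak{m}=X$. Hence $(X,\mathsf{d},\mathfrak{m})$ and $(Z,\mathsf{d}_Z,\mathfrak{m})$ have the same Cheeger energy.
\item The Ambrosio--Gigli--Savar\'e density-in-energy theorem, applied in $Z$, then gives $g_k\in\mathrm{Lip}_{\mathrm{bs}}(Z)$ with $g_k\to f$ in $L^2(\mathfrak{m})$ and $\int(\mathrm{lip}_a^Z g_k)^2\,\mathrm{d}\mathfrak{m}\to\mathrm{Ch}_X(f)$. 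Here $\mathrm{lip}_a^Z$ is the asymptotic Lipschitz constant computed in $Z$.
\item $\mathrm{lip}_a^Z g_k$ is upper semicontinuous, bounded, boundedly supported, and dominates $\mathrm{lip}_{X_i}(g_k|_{X_i})$. So weak convergence of $\mathfrak{m}_i$ to $\mathfrak{m}$ yields $\limsup_i\mathrm{Ch}_{X_i}(g_k|_{X_i})\leqslant\int(\mathrm{lip}_a^Z g_k)^2\,\mathrm{d}\mathfrak{m}$.
\item A diagonal choice $g_{k(i)}|_{X_i}$ is then a recovery sequence.
\end{enumerate}
This half needs neither curvature nor PI.

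\textbf{The $\Gamma$-liminf.} Read concretely, your plan works, and it differs from the argument in \cite{GMS15}.
\begin{enumerate}
\item Each $X_i$ is RCD by hypothesis, so Kuwada duality turns $W_2$-contraction into the pointwise bound $\mathrm{lip}(\mathrm{h}^i_t f_i)\leqslant e^{-Kt}\bigl(\mathrm{h}^i_t|\nabla f_i|^2\bigr)^{1/2}$.
\item The uniform heat kernel estimates make both sides equicontinuous. By Theorem \ref{AAthm} they converge locally uniformly to some $u_t$, $g_t$ on $X$.
\item The bound survives along limits of geodesics. Using conservation of mass for the heat flow, this gives $\mathrm{Ch}_X(u_t)\leqslant e^{-2Kt}\int_X g_t\,\mathrm{d}\mathfrak{m}\leqslant e^{-2Kt}\liminf_i\mathrm{Ch}_{X_i}(f_i)$.
\item The estimate $\|\mathrm{h}^i_tf_i-f_i\|_{L^2}^2\leqslant t\,\mathrm{Ch}_{X_i}(f_i)$ forces $u_t\to f$ in $L^2$ as $t\downarrow 0$. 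Lower semicontinuity of $\mathrm{Ch}_X$ concludes.
\end{enumerate}
GMS instead identify the Fisher information $4\mathrm{Ch}(\sqrt{\rho})$ with the squared descending slope of the relative entropy on $\mathrm{CD}(K,\infty)$ spaces. They then use lower semicontinuity of slopes of $K$-convex functionals under $\Gamma$-convergence of the entropies. Their route needs no linearity or dimension bound on the approximants, so it covers $\mathrm{CD}(K,\infty)$ sequences. Yours uses the $\mathrm{RCD}(K,N)$ hypothesis on the $X_i$ essentially, but it is more hands-on, avoids the slope--Fisher identification, and needs no curvature information on the limit.
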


The \textit{regular sets} are then defined as follows.

\begin{defn}[Regular set]\label{regularset}
	Let $(X,\mathsf{d},\mathfrak{m})$ be an RCD$(K,N)$ space. The tangent space at $x\in X$, denoted by $\mathrm{Tan}(X,\mathsf{d},\mathfrak{m},x)$, is defined as
	\[
	\left\{(Y,\mathsf{d}_Y,\mathfrak{m}_Y,y):\exists r_i\downarrow 0 \text{, s.t. }\left(X,{r_i}^{-1}\mathsf{d},(\mathfrak{m}(B_{r_i}(x)))^{-1}\mathfrak{m},x\right)\xrightarrow{\mathrm{pmGH}}(Y,\mathsf{d}_Y,\mathfrak{m}_Y,y)\right\}.
	\]
	The {}{set of $k$-dimensional regular points} is then defined as
	\[
	\mathcal{R}_k:=\left\{x\in X:\mathrm{Tan}(X,\mathsf{d},\mathfrak{m},x)=\left\{\left(\mathbb{R}^k,\mathsf{d}_{\mathbb{R}^k},{\omega_k}^{-1}\mathscr{L}^k,0_k\right)\right\}\right\}.
	\]
\end{defn}
For the subsequent result concerning the existence of the \textit{essential dimension} in RCD spaces, we refer to \cite[Theorem 0.1]{BS20}.
\begin{thm}\label{BS}
	Let $(X,\mathsf{d},\mathfrak{m})$ be an $\mathrm{RCD}(K,N)$ space. There exists a unique integer $n\in [1,N]$ such that $\mathfrak{m}\left(X\setminus \mathcal{R}_n\right)=0$. This integer $n$ is referred to as the essential dimension of $(X,\mathsf{d},\mathfrak{m})$ and is denoted by $\mathrm{dim}_{\mathsf{d},\mathfrak{m}}(X)$.

\end{thm}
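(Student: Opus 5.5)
The statement is the constancy of dimension theorem of Bru\`e--Semola, and I would prove it in two stages: first show that the regular sets $\mathcal{R}_k$, $1\leqslant k\leqslant N$, cover $X$ up to an $\mathfrak{m}$-null set, and then show that only one of them can carry positive measure. The integer $n$ is then that index, and it is unique because the $\mathcal{R}_k$ are pairwise disjoint by definition.

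\emph{First stage.} I would follow the Mondino--Naber strategy. By Theorem \ref{GMS} and Bishop--Gromov (Theorem \ref{BGineq}), tangent spaces exist at every point, and every tangent space is again an $\mathrm{RCD}(0,N)$ space. The measure $\mathfrak{m}$ is asymptotically doubling, so the iterated-tangent principle (tangents of tangents are tangents) holds $\mathfrak{m}$-a.e. Combined with the splitting theorem, this lets me split off lines repeatedly. I would conclude that $\mathfrak{m}$-a.e. point has a tangent containing a Euclidean factor $\mathbb{R}^k$ of maximal dimension, and that this tangent is in fact unique and equal to $\mathbb{R}^k$. Quantitative almost-splitting maps, built from harmonic $\delta$-splitting functions and controlled in $L^2$ by the Bochner inequality in condition (4) of the definition, then give $\mathfrak{m}\bigl(X\setminus\bigcup_k\mathcal{R}_k\bigr)=0$.

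\emph{Second stage.} This is the genuinely new part, and I expect it to be the main obstacle. Suppose $\mathfrak{m}(\mathcal{R}_k)>0$ and $\mathfrak{m}(\mathcal{R}_j)>0$ with $k\neq j$. I would pick density points $p\in\mathcal{R}_k$ and $q\in\mathcal{R}_j$ and move mass from a neighbourhood of $p$ towards $q$. The tool is the regular Lagrangian flow $\mathbf{X}_t$ of a Sobolev vector field such as $\nabla(\text{smoothed distance from } q)$, obtained by regularising the distance with the heat flow so that the field has bounded divergence and a Hessian in $L^2$.

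The flow has two properties that I would establish.
\begin{itemize}
\item[(i)] It has bounded compression, $(\mathbf{X}_t)_\#\mathfrak{m}\leqslant C\mathfrak{m}$. This comes from the divergence bound, via the Ambrosio--Trevisan theory on $\mathrm{RCD}$ spaces.
\item[(ii)] It is, at almost every point and for almost every time, infinitesimally bi-Lipschitz. That is, it distorts $\mathsf{d}$ at scale $r$ by a factor $1+o(1)$.
\end{itemize}

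Property (ii) is the delicate point. On a smooth manifold one controls $|D\mathbf{X}_t|$ by $\int_0^t|\nabla V|$, but here $\nabla V$ is only in $L^2$. I would prove a weak Lusin-type Lipschitz estimate: on sets of almost full measure,
\[
\mathsf{d}(\mathbf{X}_t(x),\mathbf{X}_t(y))\leqslant e^{C\int_0^t M(|\mathrm{Hess}|)}\,\mathsf{d}(x,y),
\]
where $M$ is a maximal function. I would then upgrade this to almost-isometry at small scales by blowing up and using that at density points the Hessian averages vanish after rescaling. The resulting flow preserves tangent cones at a.e. point. Hence $\mathbf{X}_t(\mathcal{R}_k)\subset\mathcal{R}_k$ up to null sets.

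Finally, compressibility gives $\mathfrak{m}(\mathbf{X}_t(A))>0$ for $A=\mathcal{R}_k\cap B_r(p)$. Flowing for a suitable time, chosen by connecting $p$ to $q$ along near-geodesics of the distance function, pushes a positive-measure part of $A$ into any neighbourhood of $q$. Since $q$ is a density point of $\mathcal{R}_j$, this forces $\mathfrak{m}(\mathcal{R}_k\cap\mathcal{R}_j)>0$, which contradicts $k\neq j$. This yields a unique $n$ with $\mathfrak{m}(X\setminus\mathcal{R}_n)=0$.
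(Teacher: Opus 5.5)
The paper gives no proof of this statement; it quotes Bru\`e--Semola \cite{BS20}. Your outline has the same architecture as that proof: Mondino--Naber for $\mathfrak{m}$-a.e.\ uniqueness of Euclidean tangents (your first stage is a fair summary), then regular Lagrangian flows with bounded compression and a Lusin-type estimate. The gap is in property (ii), which carries essentially all of the difficulty.

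First, the upgrade to an infinitesimal isometry is false. On $\mathbb{R}^n$ take $f\in\mathrm{Test}(\mathbb{R}^n)$ with $f=\frac{1}{2}x_1^2$ on $B_{10}(0_n)$. For $x\in B_1(0_n)$ and $0\leqslant t\leqslant 1$ the flow of $\nabla f$ is $\mathbf{X}_t(x)=(e^tx_1,x_2,\ldots,x_n)$, which stretches distances in the $x_1$-direction by $e^t$ at every point and every scale. Your rescaling heuristic conflates two blow-ups. Rescaling the space by $r^{-1}$ does make the Hessian of the rescaled potential small, but the rescaled flow at time $t$ is then $\mathbf{X}_{rt}$. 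By contrast, the blow-up of the fixed-time map $\mathbf{X}_t$ is scale invariant and converges to a non-isometric ``differential''. So this mechanism does not preserve tangent cones. A bi-Lipschitz bound on sets of large measure, applied to the flow and its reverse, would still preserve the index $k$ of $\mathcal{R}_k$, for instance via $\mathfrak{m}\llcorner\mathcal{R}_k\ll\mathscr{H}^k$ and rectifiability of $\mathcal{R}_k$. But that presupposes the Lipschitz estimate itself.

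More seriously, your Lusin--Lipschitz inequality for $\mathsf{d}$ is asserted without a mechanism, and the natural one breaks down. The Crippa--De Lellis argument bounds $\frac{\mathrm{d}}{\mathrm{d}t}\log\mathsf{d}(\mathbf{X}_t(x),\mathbf{X}_t(y))$ by maximal functions of $|\nabla V|$ at the two endpoints. It uses $|V(x)-V(y)|\leqslant C|x-y|\,(M|\nabla V|(x)+M|\nabla V|(y))$, which in $\mathbb{R}^n$ is proved by averaging $\nabla V$ along segments. On an $\mathrm{RCD}$ space you cannot compare vectors at different points or differentiate $\mathsf{d}$ along the flow this way: there is no parallel transport, and distance functions have in general no $L^2$ Hessian, only a measure-valued Laplacian. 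This is exactly the obstruction that \cite{BS20} circumvents with its key new idea: measure distortion by a quasi-distance $\mathsf{d}_G$, essentially $1/G$, built from the Green function. Since $G$ is harmonic off the diagonal, its variation along the flow can be handled by integration by parts and controlled by maximal functions of the covariant derivative. The resulting Lusin--Lipschitz regularity therefore holds with respect to $\mathsf{d}_G$, not $\mathsf{d}$. This suffices because $G(x,y)$ is heuristically comparable to $\int_{\mathsf{d}(x,y)}^\infty s/\mathfrak{m}(B_s(x))\,\mathrm{d}s$, so $\mathsf{d}_G$ records local volume growth and thus separates $\mathcal{R}_k$ from $\mathcal{R}_j$.

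Your closing step also needs repair. A positive-measure piece of $\mathcal{R}_k$ inside $B_\varepsilon(q)$ need not meet $\mathcal{R}_j$ unless its measure exceeds $\mathfrak{m}(B_\varepsilon(q)\setminus\mathcal{R}_j)=o(\mathfrak{m}(B_\varepsilon(q)))$, and the near-geodesic heuristic does not control this. It is cleaner to argue as follows. If a Borel set $E$ and its complement are both invariant under the flows of all $\nabla f$ with $f\in\mathrm{Test}(X)$, then $\int_E\mathrm{div}(\phi\nabla f)\,\mathrm{d}\mathfrak{m}=0$ for all such $\phi,f$. Hence $\mathrm{Per}(E,X)=0$, and $E$ is null or conull by the Poincar\'e inequality.
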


In the specific case of non-collapsed RCD$(K,n)$ spaces, the following result concerning  the essential dimension holds.
\begin{thm}[{\cite[Corollary 1.7]{DG18}}]\label{DG18cor1.7} Let $(X,\mathsf{d},\mathscr{H}^n)$ be a non-collapsed
	$\mathrm{RCD}(K, n)$ space. Then $\mathrm{dim}_{\mathsf{d},\mathscr{H}^n}(X)=n$. For every $x\in X$, the limit 
	$	\lim_{r\downarrow 0} r^{-n}{\mathscr{H}^n(B_r(x)})$ exists, is positive, and does not exceed $\omega_n$. Moreover, $x\in \mathcal{R}_n$ if and only if this limit equals $\omega_n$.
\end{thm}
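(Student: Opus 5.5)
The plan is to obtain all three assertions from Bishop–Gromov monotonicity (Theorem \ref{BGineq}), combined with the fact that on non-collapsed spaces the Hausdorff measure is continuous under pmGH convergence. I would take the latter from De Philippis–Gigli's volume convergence, the synthetic analogue of Colding's theorem.

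\emph{Existence and positivity of the density.} Since $\mathfrak{m}=\mathscr{H}^n$, Theorem \ref{BGineq} says that $r\mapsto \mathscr{H}^n(B_r(x))/V_{K,n}(r)$ is non-increasing. Moreover $V_{K,n}(r)/(r^n/n)\to 1$ as $r\to0$ (with the normalisation of Theorem \ref{BGineq}). Hence $\theta(x):=\lim_{r\downarrow0}r^{-n}\mathscr{H}^n(B_r(x))$ exists in $(0,\infty]$. It is finite and positive because
\[
\mathscr{H}^n(B_1(x))/V_{K,n}(1)\leqslant \lim_{r\downarrow 0}\mathscr{H}^n(B_r(x))/V_{K,n}(r)\leqslant \mathscr{H}^n(B_r(x))/V_{K,n}(r)\quad \text{for all } r\leqslant 1,
\]
and the left-hand side is positive since $\operatorname{supp}\mathscr{H}^n=X$. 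For finiteness I would use the standard Hausdorff-measure density bound (Federer 2.10.19). For any set of locally finite $\mathscr{H}^n$-measure, $\limsup_{r\to0}\mathscr{H}^n(B_r(x))/(\omega_n r^n)\leqslant1$ at $\mathscr{H}^n$-a.e.\ $x$, because $\operatorname{diam}B_r\leqslant 2r$. So $\theta\leqslant\omega_n$ almost everywhere.

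\emph{The bound $\theta\leqslant\omega_n$ everywhere.} Fix $x$ and a tangent space $(Y,\mathsf{d}_Y,y)$ along $r_i\downarrow0$; it exists by Theorem \ref{GMS}. By volume convergence, with the rescaled measures $r_i^{-n}\mathscr{H}^n$ (which are the $\mathscr{H}^n$ of the rescaled metrics), $Y$ is a non-collapsed RCD$(0,n)$ space. Also $\mathscr{H}^n(B_s(y))=\theta(x)s^n$ for every $s$, so the Bishop–Gromov ratio is constant and by volume-cone rigidity $Y$ is a metric cone $C(Z)$ with vertex $y$. In a cone the density at the vertex is the smallest: for any $z\neq y$ and $R\to\infty$, Bishop–Gromov centered at $z$ gives $\theta_Y(z)\geqslant \lim_R \mathscr{H}^n(B_R(z))/R^n=\theta(x)$. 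The first step applied to $Y$ gives $\theta_Y(z)\leqslant\omega_n$ for a.e.\ $z$, hence $\theta(x)\leqslant\omega_n$.

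\emph{Regular points and essential dimension.} If $x\in\mathcal{R}_n$, the tangent is $\mathbb{R}^n$, and volume convergence gives $\theta(x)=\omega_n$. Conversely, if $\theta(x)=\omega_n$, every tangent cone $C(Z)$ has $\mathscr{H}^n(B_1(y))=\omega_n$. By the volume rigidity theorem (the RCD version of Cheeger–Colding's "maximal volume implies Euclidean"), $C(Z)=\mathbb{R}^n$. Thus $x\in\mathcal{R}_n$, and the normalisation in Definition \ref{regularset} matches, since $\mathscr{H}^n(B_{r_i}(x))\sim\omega_n r_i^n$.

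Finally, at a.e.\ point the density is a.e.\ at most $\omega_n$ by the first step. Density $\omega_n$ should hold a.e., e.g.\ via the Lebesgue-point/lower density argument: for $\mathscr{H}^n$ on an $n$-rectifiable space, the density equals $1$ a.e. I would use Theorem \ref{BS} to get $\mathcal{R}_k$ of full measure for some $k$. Then Hausdorff-measure continuity forces the tangent $\mathbb{R}^k$ to carry a nonzero finite $\mathscr{H}^n$, so $k=n$.

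\emph{Main obstacle.} I expect the main obstacle to be the rigidity steps, volume-cone-implies-metric-cone and maximal-volume-implies-Euclidean, in the RCD setting, together with volume convergence. I would import these from the non-collapsed theory rather than reprove them.
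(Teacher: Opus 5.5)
The paper gives no proof of this statement. It is quoted from \cite[Corollary 1.7]{DG18}, so the comparison is with the standard non-collapsed argument there. Your proposal is correct modulo the imported results, and in outline it matches that argument. Bishop--Gromov gives existence and positivity of $\theta(x)$. The general upper-density theorem for Hausdorff measures gives $\theta\leqslant\omega_n$ $\mathscr{H}^n$-a.e. Volume convergence plus volume rigidity characterizes $\mathcal{R}_n$. Theorem~\ref{BS} together with volume convergence at a point of $\mathcal{R}_k$ forces $k=n$; that argument is correct as you give it, and the rectifiability aside is not needed.

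Where you differ is the everywhere bound $\theta\leqslant\omega_n$. There your route is much heavier than needed and, as written, not quite justified.

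\begin{itemize}
\item \textbf{The direct argument.} By Theorem~\ref{BGineq}, $\theta(x)=\omega_n\sup_{0<r<r_0}\mathscr{H}^n(B_r(x))/\bar v_{K,n}(r)$ for any admissible $r_0$, where $\bar v_{K,n}$ is the model volume normalized so that $\bar v_{K,n}(r)\sim\omega_n r^n$. Each map $x\mapsto\mathscr{H}^n(B_r(x))$ is lower semicontinuous (open balls and Fatou), so $\theta$ is lower semicontinuous. Hence $\{\theta>\omega_n\}$ is open and $\mathscr{H}^n$-null, and therefore empty because $\operatorname{supp}\mathscr{H}^n=X$.

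\item \textbf{Why your route is circular in spirit.} The direct argument yields the uniform Bishop inequality $\mathscr{H}^n(B_r(x))\leqslant\bar v_{K,n}(r)$ at every centre. That is the a priori bound the non-collapsed convergence theory you import rests on: it is what makes Theorem~\ref{GMS} applicable to sequences carrying their Hausdorff measures. Recovering it from volume convergence therefore runs the logic backwards.

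\item \textbf{An unverified hypothesis in your Step 2.} You take the tangent with the measures $r_i^{-n}\mathscr{H}^n$. Their unit-ball masses tend to $\theta(x)$, which at that stage is not known to be finite at the given $x$. You would have to normalize by $\mathscr{H}^n(B_{r_i}(x))$ instead and let volume convergence supply finiteness.

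\item \textbf{The cone structure is unused.} Bishop--Gromov at $z\in Y$, plus the fact that the asymptotic volume ratio does not depend on the base point, already gives $\theta_Y(z)\geqslant\theta(x)$. So the volume-cone-to-metric-cone theorem is superfluous in this step.

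\item \textbf{A small normalization slip.} With the normalization of Theorem~\ref{BGineq} (e.g.\ $V_{0,n}(r)=r^n$), one does not have $V_{K,n}(r)/(r^n/n)\to 1$. All you need is $V_{K,n}(r)\sim c_{K,n}r^n$ with $c_{K,n}\in(0,\infty)$, which does hold.
\end{itemize}
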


In the remainder of this subsection, we consider a sequence of pointed RCD$(K,N)$ spaces $\{(X_i, \mathsf{d}_i, \mathfrak{m}_i, x_i)\}$ that converges in the pointed measured Gromov-Hausdorff (pmGH) sense to another pointed RCD$(K,N)$ space $(X, \mathsf{d}, \mathfrak{m}, x)$.

We assume that the reader is familiar with the definitions of $L^2$-weak and  $L^2$-strong convergence and $L^2_{\mathrm{loc}}$ (and their counterparts for Sobolev functions, namely $H^{1,2}$-weak and $H^{1,2}$-strong convergence, together with $H^{1,2}_{\mathrm{loc}}$-weak and $H^{1,2}_{\mathrm{loc}}$-strong convergence) on varying spaces. For references, see \cite{AST17,AH17,AH18,GMS15} and \cite[Definition 1.2]{H15}. We conclude this subsection by presenting some useful results related to this topic.
\begin{thm}[Arzel\`{a}-Ascoli theorem]\label{AAthm}
Assume a sequence $\{f_i\}$ satisfies $f_i\in C(X_i)$,
\[
\sup_i \sup_{y_i\in B_R(x_i)}|f_i(y_i)|<\infty,
\]
and the local equicontinuity condition: for any $\varepsilon,R\in (0,\infty)$, there exists $\delta\in (0,1)$ such that for any $i\in \mathbb{N}$ it holds that
\[
|f_i(y_i)-f_i(z_i)|<\varepsilon, \ \forall y_i,z_i\in B_R(x_i) \text{ such that } \mathsf{d}_i(y_i,z_i)<\delta.
\]
Then after passing to a subsequence, there exists $f\in C(X)$ such that 
\[
f_i(y_i)\rightarrow f(y) \text{\ whenever\ } X_i\ni y_i\rightarrow y\in X.
\]
\end{thm}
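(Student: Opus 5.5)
The plan is a diagonal argument on a countable dense set, carried out inside a common ambient space. By the standard realization of pmGH convergence, I would first fix a complete separable metric space $(Z,\mathsf{d}_Z)$ into which all $(X_i,\mathsf{d}_i)$ and $(X,\mathsf{d})$ embed isometrically, with $x_i\to x$ in $Z$. Then $X_i\ni y_i\to y\in X$ means $\mathsf{d}_Z(y_i,y)\to 0$, and every $y\in X$ is such a limit: Hausdorff convergence of the balls $B_R(x_i)\to B_R(x)$ in $Z$ gives, for each $y\in B_R(x)$, points $y_i\in X_i$ with $\mathsf{d}_Z(y_i,y)\to 0$.

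Next I would fix a countable dense set $D=\{q^k\}_{k\in\mathbb{N}}\subset X$ and, for each $k$, a sequence $X_i\ni q^k_i\to q^k$. For each fixed $k$, the points $q^k_i$ lie in some ball $B_R(x_i)$ with $R$ independent of $i$, so the uniform bound on $f_i$ over such balls makes $\{f_i(q^k_i)\}_i$ a bounded sequence of reals. A Cantor diagonal argument then yields a subsequence, not relabelled, along which $f_i(q^k_i)\to f(q^k)$ for every $k$.

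I would then transfer equicontinuity to the limit. Fix $\varepsilon,R$ and the corresponding $\delta$. If $q^k,q^l\in B_{R/2}(x)$ with $\mathsf{d}(q^k,q^l)<\delta$, then for large $i$ we have $q^k_i,q^l_i\in B_R(x_i)$ and $\mathsf{d}_i(q^k_i,q^l_i)<\delta$, because these quantities converge to $\mathsf{d}(q^k,q^l)<\delta$ and the inequality is strict. Hence $|f_i(q^k_i)-f_i(q^l_i)|<\varepsilon$, and passing to the limit gives $|f(q^k)-f(q^l)|\leqslant\varepsilon$. So $f$ is locally uniformly continuous on $D$ and extends uniquely to some $f\in C(X)$, which satisfies the same modulus on each ball with $\leqslant\varepsilon$.

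Finally I would prove the convergence for an arbitrary sequence $X_i\ni y_i\to y$. Given $\varepsilon$, choose $q^k\in D$ with $\mathsf{d}(q^k,y)<\delta/3$ and $|f(q^k)-f(y)|\leqslant\varepsilon$. For large $i$ the triangle inequality in $Z$ gives $\mathsf{d}_i(y_i,q^k_i)<\delta$, with both points in a fixed ball $B_R(x_i)$. Then
\[
|f_i(y_i)-f(y)|\leqslant|f_i(y_i)-f_i(q^k_i)|+|f_i(q^k_i)-f(q^k)|+|f(q^k)-f(y)|<3\varepsilon
\]
for all large $i$. The same estimate shows that the limit does not depend on which approximating sequence is used, so no separate argument is needed for that.

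I expect the only delicate points to be bookkeeping, not ideas. The first is passing the strict inequality $\mathsf{d}<\delta$ to $\mathsf{d}_i<\delta$, which I handle by shrinking to $\delta/3$ and using strictness. The second is keeping all points inside a common ball $B_R(x_i)$, which I handle by enlarging $R$ by a fixed amount, since $\mathsf{d}_Z(x_i,x)\to 0$.
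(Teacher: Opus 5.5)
The paper gives no proof of this statement. It is recalled in the preliminaries as a standard fact from the literature on convergence over varying spaces, so there is nothing to compare your route against. Your argument is the standard proof and is correct: you realize the convergence in a common space $Z$, extract a diagonal subsequence on a countable dense set, extend the limit by local uniform continuity, and close with the $3\varepsilon$ estimate. Two inputs deserve to be stated explicitly. First, you read the boundedness hypothesis as holding for every $R>0$. Second, you use that each $y\in X$ is a limit of points $y_i\in X_i$; this follows from pGH convergence, or from weak convergence of the measures in $Z$ together with $\mathrm{supp}\,\mathfrak{m}=X$.
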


\begin{thm}[Compactness of local Sobolev functions \cite{AH18}]
Let $R>0$. Suppose $f_i\in H^{1,2}(B_R(x_i),\mathsf{d}_i,\mathfrak{m}_i)$ $(i\in\mathbb{N})$ satisfies $\sup_i \|f_i\|_{H^{1,2}}<\infty$. Then there exists $f\in H^{1,2}(B_R(x),\mathsf{d},\mathfrak{m})$ such that after passing to a subsequence, $\{f_i\}$ $L^2$-strongly converges to $f$ on $B_R(x)$ and
\[
\lim_{i\to \infty}\int_{B_R(x_i)}{|\nabla f_i|}^2\,\mathrm{d}\mathfrak{m}_i\geqslant \int_{B_R(x)}{|\nabla f|}^2\,\mathrm{d}\mathfrak{m}.
\]
\end{thm}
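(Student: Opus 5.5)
The plan is to reduce the statement to the global compactness and lower semicontinuity results for $H^{1,2}$ on pmGH-converging $\mathrm{RCD}(K,N)$ spaces from \cite{GMS15,AH17}. This is done by cutting off inside $B_R$ and then exhausting $B_R$ by smaller balls. The only genuinely local issue is controlling the mass of $f_i$ near $\partial B_R(x_i)$, which I expect to be the main obstacle.

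\textbf{Step 1 (cut-offs).} Fix $r<R$. On each $X_i$ choose a Lipschitz cut-off $\varphi_i^r$ with the following properties:
\begin{itemize}
\item $\varphi_i^r=1$ on $B_r(x_i)$ and $\mathrm{supp}\,\varphi_i^r\subset B_{(r+R)/2}(x_i)$;
\item $\mathrm{Lip}\,\varphi_i^r\leqslant C(R-r)^{-1}$.
\end{itemize}
For instance, take a piecewise linear function of $\mathsf{d}_i(x_i,\cdot)$. Then $\varphi_i^rf_i\in H^{1,2}(X_i)$, the supports lie in a fixed bounded region, and $\sup_i\|\varphi_i^rf_i\|_{H^{1,2}}\leqslant C(r,R)\sup_i\|f_i\|_{H^{1,2}}<\infty$. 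By Theorem~\ref{AAthm}, the functions $\varphi_i^r$ converge uniformly to the analogous cut-off $\varphi^r$ on $X$.

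\textbf{Step 2 (global compactness and passage to the limit).} Apply the Rellich-type compactness on varying spaces \cite{GMS15}: $H^{1,2}$-bounded sequences with uniformly bounded supports have $L^2$-strongly convergent subsequences. The limit lies in $H^{1,2}(X)$, and by the Mosco convergence of Cheeger energies we have $\liminf_i\int|\nabla(\varphi_i^rf_i)|^2\geqslant\int|\nabla F^r|^2$. Now take $r_k\uparrow R$ and extract a diagonal subsequence. The limits $F^{r_k}$ agree on overlapping balls $B_{r_k}(x)$, because $L^2$-strong limits are unique, so they glue to a function $f$ on $B_R(x)$. For each $k$, locality of the minimal relaxed slope gives
\[
\int_{B_{r_k}(x)}|\nabla f|^2\,\mathrm{d}\mathfrak{m}\leqslant\liminf_i\int_{B_{r_k'}(x_i)}|\nabla f_i|^2\,\mathrm{d}\mathfrak{m}_i\leqslant\lim_i\int_{B_R(x_i)}|\nabla f_i|^2\,\mathrm{d}\mathfrak{m}_i
\]
for a slightly larger $r_k'<R$; if the limit on the right does not exist, pass to a further subsequence. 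Letting $k\to\infty$ and using monotone convergence yields the energy inequality and $f\in H^{1,2}(B_R(x))$, once we know $f\in L^2$, which follows from Step 3.

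\textbf{Step 3 (no mass escaping at the boundary).} The main obstacle is upgrading the convergence to $L^2$-strong convergence on all of $B_R$, i.e., showing
\[
\lim_{r\uparrow R}\sup_i\int_{B_R(x_i)\setminus B_r(x_i)}f_i^2\,\mathrm{d}\mathfrak{m}_i=0.
\]
The plan is to use a Sobolev--Poincar\'e inequality on the ball $B_R(x_i)$ itself, with some exponent $p>2$ and constants depending only on $K,N,R$ and a lower bound for $\mathfrak{m}_i(B_1(x_i))$. This holds uniformly because the spaces are geodesic PI spaces with uniform doubling and $(1,1)$-Poincar\'e constants, so by Hajłasz--Koskela the Poincar\'e inequality holds with dilation constant $1$ and self-improves to a Sobolev--Poincar\'e inequality on balls. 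This gives $\sup_i\|f_i\|_{L^p(B_R(x_i))}<\infty$, and H\"older's inequality bounds the annular mass by
\[
C\,\mathfrak{m}_i\bigl(B_R(x_i)\setminus B_r(x_i)\bigr)^{1-2/p}.
\]
This volume is small uniformly in $i$ by the Bishop--Gromov inequality (Theorem~\ref{BGineq}), which gives a uniform annulus estimate. Combined with the $L^2$-strong convergence on each $B_{r_k}$ from Step 2, this yields $L^2$-strong convergence on $B_R(x)$ and completes the argument.
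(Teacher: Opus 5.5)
Your Steps 1 and 3 are sound. However, the boundary mass is not the only genuinely local issue: the energy inequality in Step 2 does not follow from what you invoke.

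Mosco convergence applied to $\varphi_i^{r_k}f_i$ only gives
\[
\int_{B_{r_k}(x)}|\nabla f|^2\,\mathrm{d}\mathfrak{m}\leqslant\int_X|\nabla F^{r_k}|^2\,\mathrm{d}\mathfrak{m}\leqslant\liminf_{i\to\infty}\int_{X_i}|\nabla(\varphi_i^{r_k}f_i)|^2\,\mathrm{d}\mathfrak{m}_i .
\]
On the annulus where $\varphi_i^{r_k}$ is not constant, the right-hand side contains the contribution of $f_i\nabla\varphi_i^{r_k}$, namely $\int f_i^2|\nabla\varphi_i^{r_k}|^2$ plus a cross term.

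Locality of the minimal relaxed slope lets you identify the integrands on $B_{r_k}(x)$ and on $B_{r_k}(x_i)$. It gives you no mechanism for discarding the annulus part on the $X_i$-side while keeping the left-hand side: a global $\liminf$ inequality does not localize by itself.

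Nor are these terms small:
\begin{itemize}
\item For $f_i\equiv 1$, the right-hand side is $\liminf_i\int|\nabla\varphi_i^{r_k}|^2\,\mathrm{d}\mathfrak{m}_i$, which is in general positive, while $\int_{B_R(x_i)}|\nabla f_i|^2=0$.
\item In general, your own Step 3 only bounds $\int f_i^2|\nabla\varphi_i^{r_k}|^2$ by $C(R-r_k)^{-1-2/p}$, which blows up as $r_k\uparrow R$.
\end{itemize}
So the nontrivial half of the statement, the lower semicontinuity of $\int_{B_R}|\nabla f_i|^2$, is left unproved.

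What is missing is a \emph{localized} lower semicontinuity along pmGH-converging sequences. Suppose $g_i\to g$ $L^2$-strongly with $\sup_i\|g_i\|_{H^{1,2}(X_i)}<\infty$, and $0\leqslant\phi_i\to\phi$ uniformly, with the $\phi_i$ uniformly Lipschitz and supported in a fixed ball. Then you need
\[
\liminf_i\int\phi_i|\nabla g_i|^2\,\mathrm{d}\mathfrak{m}_i\geqslant\int\phi|\nabla g|^2\,\mathrm{d}\mathfrak{m}.
\]
This is an additional ingredient, supplied by the Ambrosio--Honda calculus on varying spaces \cite{AH17}, on which \cite{AH18} relies. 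One route is as follows:
\begin{itemize}
\item Write $\int\phi|\nabla g|^2=\sup_u\bigl(2\int\phi\langle\nabla g,\nabla u\rangle-\int\phi|\nabla u|^2\bigr)$.
\item Choose $H^{1,2}$-strong recovery sequences $u_i\to u$.
\item Use the continuity of $\int\phi_i\langle\nabla g_i,\nabla u_i\rangle$ and of $\int\phi_i|\nabla u_i|^2$ along such weak/strong pairs.
\end{itemize}

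Apply this with $g_i=\varphi_i^{r'}f_i$ and $\phi_i$ equal to $1$ on $B_r(x_i)$ and supported inside $B_{r'}(x_i)$, where $\varphi_i^{r'}\equiv 1$. This gives $\int_{B_r(x)}|\nabla f|^2\leqslant\liminf_i\int_{B_{r'}(x_i)}|\nabla f_i|^2$, after which your exhaustion $r\uparrow R$ works.

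A smaller point of the same nature: matching the limits $F^{r_k}$ on overlaps needs the fact that multiplication by uniformly converging cut-offs preserves $L^2$-strong convergence. Uniqueness of $L^2$-strong limits alone does not give it.
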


 \begin{thm}[Stability of Laplacian on balls  \cite{AH18}]\label{AH18}
 Let $f_i\in D(\Delta, B_R(x_i))$ $(i\in\mathbb{N})$ such that $\sup_i \|\Delta f_i\|_{L^2\left(B_R(x_i)\right)}<\infty$, and $\{f_i\}$ $L^2$-strongly converges to $f\in L^2(B_R(x),\mathfrak{m})$ on $B_R(x)$. Then for any $r\in (0,R)$ the following holds.
 \begin{enumerate}
 \item[$(1)$] $f|_{B_r(x)}\in D(\Delta,B_r(x))$.
 \item[$(2)$] $\{\Delta_i f_i\}$ $L^2$-weakly converges to $\Delta f$ on $B_r(x)$.
 \item[$(3)$] $\{f_i\}$ $H^{1,2}$-strongly converges to $f$ on $B_r(x)$.
 \end{enumerate}
 \end{thm}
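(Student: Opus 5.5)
Since this is the stability theorem of \cite{AH18}, I would argue along the lines of that paper. There are three steps: a uniform local energy bound, identification of the limit Laplacian by passing to the limit in the weak formulation, and an upgrade to strong convergence via an energy identity. Throughout I use cut-off functions adapted to RCD spaces. Fix $r<r'<R$. On each $X_i$ choose Lipschitz cut-offs $\phi_i$ with the following properties:
\begin{itemize}
\item $\phi_i\equiv 1$ on $B_{r}(x_i)$ and $\mathrm{supp}\,\phi_i\subset B_{r'}(x_i)$;
\item $|\nabla\phi_i|+|\Delta\phi_i|\leqslant C(K,N,r,r')$;
\item $\phi_i$ converges uniformly, and $H^{1,2}$-strongly, to a cut-off $\phi$ on $X$ with the same properties.
\end{itemize}
Such good cut-offs can be obtained from the Mondino--Naber construction. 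Uniform convergence can be forced by the Arzel\`a--Ascoli theorem (Theorem~\ref{AAthm}), because the family is equi-Lipschitz.

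\textbf{Step 1 (Caccioppoli).} Test the defining identity of $\Delta f_i$ with $\psi=\phi_i^2 f_i$. Young's inequality then gives
\[
\int\phi_i^2|\nabla f_i|^2\,\mathrm{d}\mathfrak m_i\leqslant C\int_{B_R(x_i)}\big(f_i^2+(\Delta f_i)^2\big)\,\mathrm{d}\mathfrak m_i .
\]
The right-hand side is bounded uniformly in $i$, so $\sup_i\|f_i\|_{H^{1,2}(B_{r'}(x_i))}<\infty$ for every $r'<R$. By the compactness theorem for local Sobolev functions, after passing to a subsequence $f\in H^{1,2}(B_{r'}(x))$, and $f_i\to f$ $L^2$-strongly and $H^{1,2}$-weakly on $B_{r'}$. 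By $L^2$-weak compactness we may also assume $\Delta f_i\to G$ $L^2$-weakly on $B_{r'}$ for some $G\in L^2$.

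\textbf{Step 2 (identifying $\Delta f$).} Let $\psi\in\mathrm{Lip}_c(B_r(x))$. I would construct $\psi_i\in\mathrm{Lip}_c(B_{r}(x_i))$ with $\psi_i\to\psi$ $H^{1,2}$-strongly. Two routes are available: approximate $\psi$ by test functions built from heat flows and cut-offs, or use the general strong-approximation result for $H^{1,2}$ on varying spaces (e.g.\ \cite{AH18}). Then two limits follow from the weak--strong pairing of convergence:
\[
\int\langle\nabla f_i,\nabla\psi_i\rangle\,\mathrm{d}\mathfrak m_i\to\int\langle\nabla f,\nabla\psi\rangle\,\mathrm{d}\mathfrak m,\qquad \int\Delta f_i\,\psi_i\,\mathrm{d}\mathfrak m_i\to\int G\psi\,\mathrm{d}\mathfrak m .
\]
Hence $\int\langle\nabla f,\nabla\psi\rangle\,\mathrm{d}\mathfrak m=-\int G\psi\,\mathrm{d}\mathfrak m$. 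Lipschitz functions are dense among compactly supported $H^{1,2}$ functions, so this identity holds for all admissible $\psi$. This proves (1) with $\Delta f=G$. Since $G$ is now uniquely determined, the whole sequence converges, not just a subsequence, which proves (2).

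\textbf{Step 3 (strong convergence).} Testing with $\phi_i f_i$ gives the energy identity
\[
\int\phi_i|\nabla f_i|^2\,\mathrm{d}\mathfrak m_i=-\int\phi_i f_i\Delta f_i\,\mathrm{d}\mathfrak m_i-\int f_i\langle\nabla\phi_i,\nabla f_i\rangle\,\mathrm{d}\mathfrak m_i .
\]
Pass to the limit term by term on the right-hand side.
\begin{itemize}
\item First term: $\phi_i f_i$ converges $L^2$-strongly and $\Delta f_i$ converges $L^2$-weakly, so the product integrals converge.
\item Second term: this term equals $\tfrac12\int\langle\nabla\phi_i,\nabla(f_i^2)\rangle\,\mathrm{d}\mathfrak m_i=-\tfrac12\int f_i^2\Delta\phi_i\,\mathrm{d}\mathfrak m_i$. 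It converges because $f_i^2\to f^2$ in $L^1$ and $\Delta\phi_i$ is uniformly bounded and converges weakly.
\end{itemize}
The same identity holds on $X$ with $\phi,f$. Therefore $\int\phi_i|\nabla f_i|^2\,\mathrm{d}\mathfrak m_i\to\int\phi|\nabla f|^2\,\mathrm{d}\mathfrak m$. Combined with lower semicontinuity of the energy on every smaller ball, this yields convergence of the energies on $B_r$, i.e.\ (3).

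I expect the main obstacle to be the cut-off technology on varying spaces. We need $\phi_i$ whose Laplacians are uniformly bounded and converge, and whose gradients converge strongly; only then does the cross term $f_i\langle\nabla\phi_i,\nabla f_i\rangle$ pass to the limit. Closely tied to this is producing the strong approximations $\psi_i$ in Step 2. If the Mondino--Naber cut-offs are inconvenient, I would instead regularize the distance-based cut-offs by a short-time heat flow on each $X_i$. I would then use the pmGH stability of heat flows to get the required convergence.
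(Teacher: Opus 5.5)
The paper does not prove this statement; it is quoted from \cite{AH18}, so there is no internal argument to compare against. Your outline is a sound strategy:
\begin{enumerate}
\item a local Caccioppoli bound, which supplies the uniform $H^{1,2}$ bound on $B_{r'}(x_i)$ that the statement does not assume;
\item identification of the weak limit of $\Delta_i f_i$ by pairing with $H^{1,2}$-strong recovery sequences;
\item energy convergence by testing with $\phi_i f_i$.
\end{enumerate}

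\textbf{The cut-off step is not justified.} Theorem \ref{AAthm} only gives $\phi_i\to\phi$ uniformly. It does not give $\phi\in D(\Delta)$ with $|\Delta\phi|\leqslant C$, nor $\Delta_i\phi_i\rightharpoonup\Delta\phi$, nor $H^{1,2}$-strong convergence. The first two are exactly what your Step~3 uses: to write the limiting identity on $X$, and to pass to the limit in $-\tfrac12\int f_i^2\Delta_i\phi_i$. What you need is the \emph{global} stability theorem: $L^2$-strong convergence with $\sup_i(\|\cdot\|_{H^{1,2}}+\|\Delta_i\cdot\|_{L^2})<\infty$ on the whole space. That theorem rests on Mosco convergence of the Cheeger energies, not on the local statement, so there is no circularity. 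You can also get it by running your own Steps~2--3 on the whole space for the compactly supported $\phi_i$; no cut-off is needed there, since $\int|\nabla\phi_i|^2=-\int\phi_i\Delta_i\phi_i$. You must state this input explicitly.

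\textbf{Two smaller repairs.}
\begin{enumerate}
\item The identity $\int f_i\langle\nabla\phi_i,\nabla f_i\rangle=-\tfrac12\int f_i^2\Delta_i\phi_i$ needs a truncation of $f_i$, because $f_i^2$ is only $W^{1,1}_{\mathrm{loc}}$.
\item Lower semicontinuity on smaller balls does not by itself close (3). Combine lower semicontinuity on $B_r$ with
$\limsup_i\int_{B_r(x_i)}|\nabla f_i|^2\leqslant\int\phi|\nabla f|^2\leqslant\int_{B_{r'}(x)}|\nabla f|^2$,
then let $r'\downarrow r$, using $\mathfrak m(\partial B_r(x))=0$.
\end{enumerate}

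\textbf{A shorter route.} Once the global theorem is available, you can avoid convergence of $\Delta_i\phi_i$ altogether.
\begin{enumerate}
\item By Step~1 and the Leibniz rule, $\phi_if_i\in D(\Delta_i)$ with $\Delta_i(\phi_if_i)=\phi_i\Delta_if_i+2\langle\nabla\phi_i,\nabla f_i\rangle+f_i\Delta_i\phi_i$, which is bounded in $L^2$.
\item $\phi_if_i\to\phi f$ $L^2$-strongly already follows from uniform convergence of the cut-offs, so Arzel\`a--Ascoli suffices here.
\item The global theorem then gives $\phi f\in D(\Delta)$, $\Delta_i(\phi_if_i)\rightharpoonup\Delta(\phi f)$, and $H^{1,2}$-strong convergence.
\item Restrict to $B_r$. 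There $\phi_i\equiv 1$ on $B_r(x_i)$ and $\phi\equiv 1$ on $B_r(x)$, so $\Delta_i(\phi_if_i)=\Delta_if_i$ and $\Delta(\phi f)=\Delta f$, and (1)--(3) follow.
\end{enumerate}
Your version re-derives the same energy mechanism locally, at the cost of needing the stronger convergence of the cut-offs.
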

 \begin{prop}[Harmonic approximation \cite{AH18}]\label{AH182}
Let  $f \in H^{1,2}(B_R(x), \mathsf{d}, \mathfrak{m})$ be a harmonic function $($i.e. $ f \in D(\Delta, B_R(x))$ with $ \Delta f = 0 )$. Then for any $ 0 < r < R $, there exist harmonic functions $ f_i \in H^{1,2}(B_r(x_i), \mathsf{d}_i, \mathfrak{m}_i)$ for each $i$ such that $\{f_i\}$ $H^{1,2} $-strongly converges to $f$ on $ B_r(x)$.
\end{prop}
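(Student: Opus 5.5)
The plan is to construct the approximating harmonic functions as solutions of Dirichlet problems on the $X_i$ whose boundary data converge to $f$. We then identify the limit through energy minimality and stability of the Laplacian (Theorem \ref{AH18}).

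\textbf{Construction.} Fix radii $r<r'<s<R$. First, I would produce functions $g_i\in H^{1,2}(B_{R}(x_i),\mathsf{d}_i,\mathfrak{m}_i)$ that $H^{1,2}$-strongly converge to $f$ on $B_s(x)$. This is the standard existence of strong approximations on varying spaces. One can take a cutoff $\phi f$ with $\phi$ Lipschitz, $\phi\equiv 1$ on $B_s(x)$ and $\operatorname{supp}\phi\subset B_R(x)$, and approximate it in $H^{1,2}$ by Lipschitz or heat-flow regularized functions. These are then transplanted to $X_i$, using that $H^{1,2}$-strong approximations exist for every element of $H^{1,2}(X)$ along a pmGH converging sequence (see \cite{AST17,AH18}). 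Next, let $f_i$ be the minimizer of $\int_{B_{r'}(x_i)}|\nabla u|^2\,\mathrm{d}\mathfrak{m}_i$ among $u\in g_i+H^{1,2}_0(B_{r'}(x_i))$. Then $f_i$ is harmonic on $B_{r'}(x_i)$ and
\[
\int_{B_{r'}(x_i)}|\nabla f_i|^2\,\mathrm{d}\mathfrak{m}_i\leqslant \int_{B_{r'}(x_i)}|\nabla g_i|^2\,\mathrm{d}\mathfrak{m}_i.
\]
Extend $f_i$ by $g_i$ outside $B_{r'}(x_i)$. The Poincaré inequality applied to $f_i-g_i\in H_0^{1,2}$ then gives $\sup_i\|f_i\|_{H^{1,2}(B_s(x_i))}<\infty$.

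\textbf{Passage to the limit.} By the compactness of local Sobolev functions, after passing to a subsequence, $f_i\to \tilde f$ $L^2$-strongly on $B_s(x)$ with $\tilde f\in H^{1,2}(B_s(x))$. Lower semicontinuity, together with the energy convergence of $g_i$ outside $B_{r'}$, yields
\[
\int_{B_s(x)}|\nabla\tilde f|^2\,\mathrm{d}\mathfrak{m}\leqslant\liminf_i\int_{B_s(x_i)}|\nabla f_i|^2\,\mathrm{d}\mathfrak{m}_i\leqslant \int_{B_s(x)}|\nabla f|^2\,\mathrm{d}\mathfrak{m}.
\]
Since $f_i-g_i$ vanishes outside $B_{r'}(x_i)$, the $L^2$-strong limit $\tilde f-f$ vanishes a.e. outside the closed ball $\bar B_{r'}(x)$. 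Hence $\tilde f-f$ is an $H^{1,2}$ function with compact support in $B_s(x)$, so $\tilde f\in f+H^{1,2}_0(B_s(x))$. Because $f$ is harmonic on $B_R(x)\supset B_s(x)$, it is the unique energy minimizer in its class $f+H_0^{1,2}(B_s(x))$, by strict convexity of the Cheeger energy on an infinitesimally Hilbertian space. Combined with the displayed inequality, this forces $\tilde f=f$. Finally, $\Delta f_i=0$ on $B_{r'}(x_i)$, so Theorem \ref{AH18}(3) upgrades $L^2$-strong convergence to $H^{1,2}$-strong convergence on $B_r(x)$. Since the limit is independent of the subsequence, the whole sequence converges.

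\textbf{Main obstacle.} The delicate point is transferring the boundary condition to the limit, i.e. showing $\tilde f-f\in H_0^{1,2}$. Support in a closed ball is not obviously in $H_0^{1,2}$ of the same open ball. I circumvent this by working on the strictly larger ball $B_s(x)$ in the minimality comparison, where compact support suffices. A secondary technical step is the existence of the strongly converging approximations $g_i$, which I would take from the cited literature.
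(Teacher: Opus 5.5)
The paper does not prove this proposition: it is quoted from \cite{AH18} and used only in the proof of Proposition \ref{prop4.1}, so there is no in-paper argument to compare yours with. Taken on its own, your harmonic-replacement proof is correct. The steps that carry it all go through:
\begin{itemize}
\item minimizing the energy over $g_i+H^{1,2}_0(B_{r'}(x_i))$;
\item extracting an $L^2$-strong limit $\tilde f$ on the larger ball $B_s(x)$;
\item observing that $\tilde f-f$ vanishes a.e.\ on $B_s(x)\setminus\bar B_{r'}(x)$, so that, being supported in the compact set $\bar B_{r'}(x)\subset B_s(x)$, it lies in $H^{1,2}_0(B_s(x))$;
\item identifying $\tilde f=f$ via $\int_{B_s(x)}|\nabla(f+u)|^2\,\mathrm{d}\mathfrak{m}=\int_{B_s(x)}|\nabla f|^2\,\mathrm{d}\mathfrak{m}+\int_{B_s(x)}|\nabla u|^2\,\mathrm{d}\mathfrak{m}$ for $u\in H^{1,2}_0(B_s(x))$, together with lower semicontinuity.
\end{itemize}
Theorem \ref{AH18}, applied on $B_{r'}$, then gives $H^{1,2}$-strong convergence on $B_r(x)$.

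The nested radii $r<r'<s$ are the right device. They let you avoid the genuinely delicate closure property: that $L^2$-limits of functions in $H^{1,2}_0(B_{r'}(x_i))$ with bounded energy lie in $H^{1,2}_0(B_{r'}(x))$. You would need that property if the minimality comparison were made on the same ball on which the Dirichlet problems are solved.

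A few points should be made explicit.
\begin{itemize}
\item The uniform Poincar\'e constant for $H^{1,2}_0(B_{r'}(x_i))$, and the step that kills the additive constant in $\tilde f-f$, both rely on $B_s(x)\setminus\bar B_{r'}(x)\neq\emptyset$. By weak convergence of the measures this yields $\liminf_i\mathfrak{m}_i(B_s(x_i)\setminus B_{r'}(x_i))>0$, which is what you actually use.
\item In the remaining case $X=\bar B_{r'}(x)$, you have $B_R(x)=X$ compact. Testing harmonicity with $f$ itself shows $f$ is constant, so the statement is trivial there and should be treated separately.
\item Restricting the global $H^{1,2}$-strong convergence of your approximants $g_i$ to $B_s(x)$, and the $L^2$-strong convergence on $B_s(x)$ to $B_{r'}(x)$, uses $\mathfrak{m}(\partial B_\rho(x))=0$. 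This should be stated.
\end{itemize}
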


\subsection{Calculus in RCD spaces}
{This subsection presents key results on calculus in} RCD$(K,N)$ spaces.  Let $(X,\mathsf{d},\mathfrak{m})$ be an RCD$(K,N)$ space. We omit definitions of the $L^p$-tangent module, $L^p$-cotangent module and $L^p$-tensor fields of type (0,2) on $X$ for $p\in [1,\infty]$ (denoted by $L^p(TX)$, $L^p(T^\ast X)$ and $L^p((T^\ast)^{\otimes 2}X)$, respectively), as well as the definition of the pointwise {Hilbert-Schmidt norm $|\cdot|_{\mathsf{HS}}$} (written as $|\cdot|$ for simplicity) for $L^p$-tensor fields. Details can be found in \cite{G18b}.

\begin{thm}[Exterior derivative]
	The linear operator $d:H^{1,2}\rightarrow L^2(T^\ast X)$, called the exterior derivative, satisfies $|d\, f|=|\nabla f|$ $\mathfrak{m}$-a.e. for any $f\in H^{1,2}$. Moreover, the set $\{d\, f:f\in H^{1,2}\}$ is dense in $L^2(T^\ast X)$.
\end{thm}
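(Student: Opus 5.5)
The plan is to follow Gigli's construction of the cotangent module in \cite{G18b} and read off the statement from the construction itself. As in \cite{G18b}, the density assertion is understood in the module sense: the differentials $\{df\}$ generate $L^2(T^\ast X)$, meaning finite combinations $\sum_i \chi_{A_i}\,df_i$ are dense. I will flag now that this reading is essential. The literal linear span of $\{df : f \in H^{1,2}\}$ is just $\{df\}$, since $d$ is linear. Already in $\mathbb{R}^n$, a nonzero $L^2$ divergence-free field is $L^2$-orthogonal to every $df$, so that span is not dense in $L^2(T^\ast X)$.

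\emph{Step 1: the pre-cotangent module.} Consider the set $\mathrm{Pcm}$ of finite families $\{(A_i,f_i)\}_{i=1}^m$. Here $\{A_i\}$ is a Borel partition of $X$ and $f_i\in H^{1,2}$. Declare $\{(A_i,f_i)\}\sim\{(B_j,g_j)\}$ when $|\nabla(f_i-g_j)|=0$ $\mathfrak{m}$-a.e. on $A_i\cap B_j$.
\begin{itemize}
\item The locality of the minimal relaxed slope ($|\nabla f|=|\nabla h|$ a.e. on $\{f=h\}$) makes this an equivalence relation. It also makes the pointwise norm $|[(A_i,f_i)]|:=\sum_i\chi_{A_i}|\nabla f_i|$ well defined.
\item Infinitesimal Hilbertianity gives the pointwise parallelogram identity. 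Hence $\mathrm{Pcm}/\!\sim$ is a vector space carrying an $L^\infty$-simple-function action, and the seminorm $\|\omega\|^2:=\int|\omega|^2\,\mathrm{d}\mathfrak{m}$ is a norm on it.
\end{itemize}
Define $L^2(T^\ast X)$ as the completion of this space. The pointwise norm and the $L^\infty$-action extend by continuity, using density of simple functions in $L^\infty$.

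\emph{Step 2: define $d$ and check the norm identity.} Set $df:=[(X,f)]$. Linearity of $d$ follows from $|\nabla(f+g)-\nabla f-\nabla g|=0$, which is just the equivalence relation. The identity $|df|=|\nabla f|$ a.e. holds by the very definition of the pointwise norm. In particular $\|df\|_{L^2}=\|\nabla f\|_{L^2}$, so $d$ is bounded.

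\emph{Step 3: density.} Every element of $\mathrm{Pcm}/\!\sim$ equals $\sum_i\chi_{A_i}\,df_i$, by compatibility of the module action with the partition. Such elements are dense in $L^2(T^\ast X)$ by construction of the completion. This gives density of $\{df\}$ in the generating sense, which is the form in which the result is used later.

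\emph{Main obstacle.} The only delicate point is well-posedness in Step 1. One must show that the quotient norm is a genuine norm, and that the $L^\infty$-action extends continuously from simple functions. I would handle both via the locality of $|\nabla\cdot|$ together with the pointwise inequality $|\nabla(f+g)|\le|\nabla f|+|\nabla g|$, exactly as in \cite{G18b}.
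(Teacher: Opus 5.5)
Your proposal is correct and follows the paper's route. The paper gives no proof of its own and defers to Gigli's construction in \cite{G18b}, which is what you reconstruct: the pre-cotangent module of pairs $(A_i,f_i)$, the quotient by $|\nabla(f_i-g_j)|=0$ on $A_i\cap B_j$, completion, and $d f:=[(X,f)]$. One small correction: transitivity of $\sim$ and well-definedness of $|\cdot|$ rest on the subadditivity $|\nabla(f+g)|\leqslant|\nabla f|+|\nabla g|$ (as you note at the end), not on locality, and infinitesimal Hilbertianity is not needed for the vector-space or normed-module structure, only for the Hilbert-module structure.

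Your remark that ``dense'' must be read as ``$\{d f\}$ generates $L^2(T^\ast X)$ as an $L^\infty$-module'' is also right. Read literally, the claim fails already in $\mathbb{R}^n$ with $n\geqslant 2$, where the closure of $\{d f\}$ misses divergence-free fields. So this corrects an imprecision in the paper's statement rather than exposing a gap in your argument.
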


\begin{thm}[The canonical Riemannian metric \cite{GP22, AHPT21}]\label{111thm2.21}
	There exists a unique $\mathrm{g}\in L^\infty\left((T^\ast)^{\otimes 2}X\right)$ with  $\left|\mathrm{g}\right| =\sqrt{\mathrm{dim}_{\mathsf{d},\mathfrak{m}}({X})}$ $\mathfrak{m}$-a.e., called the canonical Riemannian metric of $(X,\mathsf{d},\mathfrak{m})$, such that 
	\[
	\mathrm{g}\left(\nabla f_1,\nabla f_2\right)=\left\langle \nabla f_1,\nabla f_2\right\rangle\ \ \text{$\mathfrak{m}$-a.e.}, \ \forall f_1,f_2\in H^{1,2}.
	\]
	
\end{thm}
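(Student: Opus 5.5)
The plan is to build $\mathrm{g}$ directly from the pointwise scalar product of the tangent module, and then obtain uniqueness and the value of $|\mathrm{g}|$ by working in a local orthonormal basis. Since $(X,\mathsf{d},\mathfrak{m})$ is infinitesimally Hilbertian, $L^2(TX)$ is an $L^2$-normed module whose pointwise norm satisfies the parallelogram identity. It therefore carries an $L^\infty$-bilinear pointwise scalar product $\langle v,w\rangle$ with $|\langle v,w\rangle|\leqslant|v||w|$ $\mathfrak{m}$-a.e. We define
\[
\mathrm{g}(v,w):=\langle v,w\rangle,\qquad v,w\in L^2(TX).
\]
This is a symmetric, $L^\infty$-bilinear map $L^2(TX)\times L^2(TX)\to L^1(\mathfrak{m})$ that is bounded pointwise. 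By the identification of such bounded bilinear maps with elements of $L^\infty((T^\ast)^{\otimes 2}X)$, obtained by duality of the Hilbert tensor product of modules, $\mathrm{g}$ is an element of $L^\infty((T^\ast)^{\otimes 2}X)$. The identity $\mathrm{g}(\nabla f_1,\nabla f_2)=\langle\nabla f_1,\nabla f_2\rangle$ then holds by construction, because the module scalar product of gradients coincides with the function $\langle\nabla f_1,\nabla f_2\rangle$ defined by polarization of $|\nabla\cdot|^2$.

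Uniqueness comes from density. The gradients $\{\nabla f: f\in H^{1,2}\}$ generate $L^2(TX)$: this is the dual statement of the density of $\{d f\}$ in $L^2(T^\ast X)$. More precisely, finite sums $\sum_i \chi_{A_i}\nabla f_i$ with Borel sets $A_i$ are dense. Suppose $\mathrm{g}'$ is another tensor with the same property. By $L^\infty$-bilinearity, $\mathrm{g}'$ agrees with $\mathrm{g}$ on all such sums. By continuity of both tensors, $\mathrm{g}'=\mathrm{g}$.

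It remains to compute the Hilbert--Schmidt norm. A Hilbert module admits a dimensional decomposition $X=\bigsqcup_k E_k$, where on $E_k$ there is a local orthonormal basis $e_1,\dots,e_k$ of $L^2(TX)$. On $E_k$ we can write $\mathrm{g}=\sum_{i=1}^k e_i^\flat\otimes e_i^\flat$, so that
\[
|\mathrm{g}|^2=\sum_{i,j}\langle e_i,e_j\rangle^2=k.
\]
Hence the claim $|\mathrm{g}|=\sqrt{\dim_{\mathsf{d},\mathfrak{m}}(X)}$ is equivalent to $\mathfrak{m}(X\setminus E_n)=0$, where $n$ is the essential dimension from Theorem \ref{BS}.

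This identification of the module dimension with $n$ is the main obstacle. I would split it into two inequalities.

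\emph{Lower bound (dimension at least $n$).} By Theorem \ref{BS}, $\mathfrak{m}$-a.e. point is in $\mathcal{R}_n$. At such points I would use $\varepsilon$-splitting maps $u=(u_1,\dots,u_n)$ on small balls, for instance harmonic approximations of coordinate functions via Proposition \ref{AH182}. These satisfy
\[
\fint|\langle\nabla u_i,\nabla u_j\rangle-\delta_{ij}|\,\mathrm{d}\mathfrak{m}\to 0.
\]
A Lebesgue point argument then shows that $\nabla u_1,\dots,\nabla u_n$ are pointwise independent on sets of almost full measure, so the dimension is at least $n$.

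\emph{Upper bound (dimension at most $n$).} Suppose the dimension were greater than $n$ on a set of positive measure. Then there would be $\nabla f_{n+1}$ with a nontrivial component orthogonal to $\mathrm{span}\{\nabla u_i\}$ there. Blowing up at a Lebesgue point of $|\nabla f_{n+1}|^2-\sum_i\langle\nabla f_{n+1},\nabla u_i\rangle^2>0$ and using the stability results (Theorem \ref{AH18}) gives a Sobolev function on the tangent space $\mathbb{R}^n$. Its gradient would be orthogonal to all coordinate directions while having nonzero norm, which is a contradiction. I expect the technical difficulty to lie in making this blow-up compatible with $H^{1,2}$-strong convergence of the rescaled functions.
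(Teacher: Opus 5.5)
Your architecture is sound: build $\mathrm{g}$ from the pointwise scalar product, get uniqueness from the density of $\sum_i\chi_{A_i}\nabla f_i$, and reduce $|\mathrm{g}|=\sqrt n$ to the claim that $L^2(TX)$ has local dimension $n$ a.e. Proving that dimension claim by blow-ups is also a different route from the one behind the citation. The paper imports the statement from \cite{GP22,AHPT21}, where the dimension identification comes from Gigli--Pasqualetto's chart-based analysis and $\mathrm{g}$ is then written in local orthonormal frames.

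\textbf{The gap.} The upper bound, on which everything rests, is not established: you leave open exactly the point that decides it. For a general $f_{n+1}$ the rescalings $r^{-1}(f_{n+1}-f_{n+1}(x))$ converge only $H^{1,2}_{\mathrm{loc}}$-weakly, and lower semicontinuity gives an inequality in the harmless direction. Concretely, orthonormalize so that $\langle\nabla u_i,\nabla u_j\rangle(x)=\delta_{ij}$, and set $w:=f_{n+1}-\sum_i a_iu_i$ with $a_i:=\langle\nabla f_{n+1},\nabla u_i\rangle(x)$. Then $|\nabla w|^2(x)=|\nabla f_{n+1}|^2(x)-\sum_i a_i^2>0$. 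Weak convergence of the blow-ups of $w$, paired with strong convergence of those of the harmonic $u_i$, does give $\langle\nabla w_\infty,e_i\rangle=0$ for all $i$, hence $\nabla w_\infty=0$. But then all you get is
\[
0=\fint_{B_1(0_n)}|\nabla w_\infty|^2\,\mathrm{d}\mathscr{L}^n\leqslant\liminf_{r\to 0}\fint_{B_r(x)}|\nabla w|^2\,\mathrm{d}\mathfrak{m}=|\nabla w|^2(x),
\]
which is no contradiction. The energy of $w$ may a priori disappear in the limit, and nothing in your sketch rules this out.

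\textbf{The fix.} Work only with functions that have Laplacian control; for these the paper's own tools give strong convergence. Since $\mathrm{Test}(X)$ is dense in $H^{1,2}$, gradients of test functions generate $L^2(TX)$. So if the dimension exceeded $n$ on a set of positive measure, there would be $f_1,\dots,f_{n+1}\in\mathrm{Test}(X)$ whose Gram matrix $G=(\langle\nabla f_i,\nabla f_j\rangle)$ has $\det G>0$ on a set of positive measure. Take $x\in\mathcal{R}_n$ in that set which is a Lebesgue point of every $G_{ij}$ and every $(\Delta f_i)^2$. The rescaled functions $f_{i,r}$ satisfy
\[
\int_{B^{X_r}_R(x)}(\Delta_r f_{i,r})^2\,\mathrm{d}\mathfrak{m}_r=r^2\,\frac{\mathfrak{m}(B_{rR}(x))}{\mathfrak{m}(B_r(x))}\fint_{B_{rR}(x)}(\Delta f_i)^2\,\mathrm{d}\mathfrak{m}\longrightarrow 0,
\]
so Theorem \ref{AH18} yields $H^{1,2}_{\mathrm{loc}}$-strong convergence to functions $L_i$ on $\mathbb{R}^n$, exactly as in Proposition \ref{prop:blow.up.bx}. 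Strong convergence lets the inner products of gradients pass to the limit in $L^1_{\mathrm{loc}}$, giving $\langle\nabla L_i,\nabla L_j\rangle\equiv G_{ij}(x)$ on $B_1(0_n)$. This is impossible for $n+1$ vector fields on $\mathbb{R}^n$, since $\det G(x)>0$. The harmonic $u_i$ are not even needed for this half.

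\textbf{A smaller issue.} Your existence step identifies the pointwise scalar product with an element of $L^\infty((T^\ast)^{\otimes 2}X)$ before finite dimensionality is known. That identification fails for infinite-dimensional fibres: the identity has operator norm $1$ but infinite Hilbert--Schmidt norm. Either invoke the a priori bound $\dim\leqslant N$, or simply define $\mathrm{g}:=\sum_i e_i^\flat\otimes e_i^\flat$ after the dimension count, which gives existence and $|\mathrm{g}|=\sqrt n$ at once.
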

\begin{thm}[The Hessian]
	For any $f\in \mathrm{Test}(X)$, there exists a unique $T\in L^2\left((T^\ast)^{\otimes 2}X\right)$, called the Hessian of $f$, denoted by $ \mathop{\mathrm{Hess}}f$, such that for all $f_i\in \mathrm{Test}(X)$ $(i=1,2)$,
	\begin{equation}\label{equation2.1}
		{}{2T(\nabla f_1,\nabla f_2)= \langle \nabla f_1,\nabla\langle \nabla f_2,\nabla f\rangle\rangle +\langle \nabla f_2,\nabla\langle \nabla f_1,\nabla f\rangle\rangle-\langle \nabla f,\nabla\langle \nabla f_1,\nabla f_2\rangle\rangle }\ \mathfrak{m}\text{-a.e.}
	\end{equation}
	Moreover, the following Bochner formula holds for any $f,\varphi\in \mathrm{Test}(X)$ with $\varphi\geqslant 0$.

	\begin{equation}\label{abc2.14}
		\frac{1}{2}\int_{X}  \Delta \varphi \cdot |\nabla f|^2\,\mathrm{d}\mathfrak{m}\geqslant \int_{X}\varphi \left(|\mathop{\mathrm{Hess}}f|^2+ \langle \nabla \Delta f,\nabla f\rangle+K|\nabla f|^2\right) \mathrm{d}\mathfrak{m}.
	\end{equation}

\end{thm}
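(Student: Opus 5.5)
The plan follows Gigli's construction of the Hessian in non-smooth calculus \cite{G18b}. The idea is to obtain a pointwise bound on the right-hand side of (\ref{equation2.1}) from the Bochner inequality in condition $(4)$ of the RCD definition, and then to use density to extend it to a tensor.

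\textbf{Step 1: self-improvement of Bochner.} For $f\in\mathrm{Test}(X)$ one first shows that $|\nabla f|^2\in H^{1,2}$, so that all terms in (\ref{equation2.1}) make sense in $L^1$. For this I would use Savaré's self-improvement. Testing $(4)$ against $\varphi\in\mathrm{Test}(X)$ yields a measure
\[
\mathbf{\Gamma}_2(f):=\tfrac12\Delta^*|\nabla f|^2-\langle\nabla f,\nabla\Delta f\rangle\mathfrak{m},
\]
defined via the distributional Laplacian of $|\nabla f|^2$. This measure satisfies $\mathbf{\Gamma}_2(f)\geqslant K|\nabla f|^2\mathfrak{m}$. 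Write $\gamma_2(f)$ for the density of its absolutely continuous part. Test functions form an algebra stable under smooth composition, and in the discussion I will use the chain rule $\Phi(f_1,\dots,f_k)\in\mathrm{Test}(X)$ for polynomial $\Phi$ with $\Phi(0)=0$.

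\textbf{Step 2: Bakry's polynomial trick.} Apply $\mathbf{\Gamma}_2\geqslant K\Gamma$ to $\Phi(f,f_1,f_2)$ with
\[
\Phi=\lambda f+\mu(f_1-c_1)(f_2-c_2)+\dots,
\]
and expand using the multivariate chain rules for $\Gamma$ and $\Gamma_2$. The quadratic form in the coefficients must be nonnegative $\mathfrak{m}$-a.e. Optimizing the coefficients pointwise (and over a countable dense family of constants) gives the key inequality
\[
\Big|\sum_i H[f](\nabla g_i,\nabla h_i)\Big|\leqslant\sqrt{\gamma_2(f)-K|\nabla f|^2}\ \Big|\sum_i\nabla g_i\otimes\nabla h_i\Big|_{\mathsf{HS}}\quad\mathfrak{m}\text{-a.e.}
\]
Here $H[f](\nabla f_1,\nabla f_2)$ denotes one half of the right-hand side of (\ref{equation2.1}). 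I expect this step to be the main obstacle. It requires careful bookkeeping of the chain rule for the measure-valued $\Gamma_2$: only the absolutely continuous parts enter, and the singular part is nonnegative. It also requires tracking the cross terms so that the Hilbert–Schmidt norm of the finite sum of tensors appears, rather than a sum of individual norms. I would first do the computation for single tensors $\nabla f_1\otimes\nabla f_2$. I would then get finite sums by symmetrizing and using linearity of $H[f]$ in each slot, as in Bakry–Émery's derivation of $\Gamma_2\geqslant|\mathrm{Hess}|^2$.

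\textbf{Step 3: extension and Bochner.} The bound shows that $\sum_i\nabla g_i\otimes\nabla h_i\mapsto\sum_i H[f](\nabla g_i,\nabla h_i)$ is well defined, since it depends only on the tensor. It is $L^\infty$-linear and bounded by an $L^2$ weight. Gradients of test functions generate $L^2(TX)$, because $\{df\}$ is dense and test functions are dense in $H^{1,2}$ via the heat flow. Hence finite sums of their tensor products are dense in $L^2((T^*)^{\otimes2}X)$, and the map extends uniquely to $T\in L^2((T^*)^{\otimes2}X)$ satisfying
\[
|T|^2\leqslant\gamma_2(f)-K|\nabla f|^2.
\]
Uniqueness is immediate from the same density. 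Finally, for $\varphi\geqslant0$, integrate the inequality $\mathbf{\Gamma}_2(f)\geqslant(|\mathrm{Hess} f|^2+K|\nabla f|^2)\mathfrak{m}$ against $\varphi$; the singular part is nonnegative and is dropped. Unwinding the definition of $\mathbf{\Gamma}_2$ then yields (\ref{abc2.14}).
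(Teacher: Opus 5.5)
The paper does not prove this statement. It is recalled from Gigli \cite{G18b}, and your outline (Savar\'e's self-improvement, the chain rule for $\gamma_2$, Bakry's polynomial trick, extension by density, integration against $\varphi$) is that construction.

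One step fails as you describe it: passing from single tensors to finite sums ``by symmetrizing and using linearity of $H[f]$ in each slot''. Set $c:=\sqrt{\gamma_2(f)-K|\nabla f|^2}$. From $|H[f](g,h)|\leqslant c\,|\nabla g||\nabla h|$, bilinearity and the triangle inequality only give $|\sum_i H[f](g_i,h_i)|\leqslant c\sum_i|\nabla g_i||\nabla h_i|$. That is a sum of individual norms (at best a trace-norm bound on $A:=\sum_i\nabla g_i\otimes\nabla h_i$), and polarization does not turn it into $c\,|A|_{\mathsf{HS}}$. In other words, single tensors only control the operator norm of the Hessian. On a space of finite essential dimension you could still produce some $T\in L^2((T^\ast)^{\otimes2}X)$ from that. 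But you would only get $|\mathrm{Hess}\,f|_{\mathrm{op}}^2\leqslant\gamma_2(f)-K|\nabla f|^2$, or the Hilbert--Schmidt version with a dimensional factor, and neither gives (\ref{abc2.14}).

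The fix is to make the ``$+\dots$'' in your $\Phi$ a full sum of products. Apply $\gamma_2\geqslant K\Gamma$ to $\Phi(f,g_1,\dots,g_m,h_1,\dots,h_m)$ with $\Phi:=\lambda x_0+\sum_{i=1}^m(x_i-c_i)(y_i-d_i)-\sum_{i}c_id_i$. On the set where $g_i\approx c_i$ and $h_i\approx d_i$ for all $i$, the first derivatives $\Phi_{x_i},\Phi_{y_i}$ are small. Up to errors controlled by that closeness, the chain rule then yields
\[
\lambda^2\big(\gamma_2(f)-K|\nabla f|^2\big)+4\lambda\sum_{i=1}^m H[f](g_i,h_i)+|A+A^{\mathsf T}|_{\mathsf{HS}}^2\geqslant 0,\qquad A^{\mathsf T}:=\sum_{i=1}^m\nabla h_i\otimes\nabla g_i.
\]
The last term collects the products $\Phi_{x_iy_i}\Phi_{x_jy_j}$ over \emph{all} pairs $(i,j)$, and this is where the Hilbert--Schmidt norm of the sum appears. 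The discriminant then gives directly
\[
\Big|\sum_iH[f](g_i,h_i)\Big|\leqslant \tfrac{c}{2}\,|A+A^{\mathsf T}|_{\mathsf{HS}}\leqslant c\,|A|_{\mathsf{HS}}.
\]
This does two things you need. It makes the map in your Step 3 well defined on $L^\infty$-combinations of $\nabla g\otimes\nabla h$. It also yields $|T|^2\leqslant\gamma_2(f)-K|\nabla f|^2$ with the Hilbert--Schmidt norm. With this replacement, the rest of your argument goes through as written.
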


\begin{remark}\label{rmk2.16}
	Since $\mathrm{Test}(X)$ is dense in $D(\Delta)$ with respect to the norm $\sqrt{\|\cdot\|_{H^{1,2}}^2+\|\Delta(\cdot)\|_{L^2}^2}$, even for $f\in D(\Delta)$, (\ref{abc2.14}) guarantees that $\mathop{\mathrm{Hess}}f$ is well-defined and belongs to $ L^2\left((T^\ast)^{\otimes 2}X\right)$. 
	
	For any $x\in X$, by \cite[Theorem 6.7]{AMS14}, \cite{G18b} and \cite[Lemma 3.1]{MN14}, there exists a cut-off function $\varphi \in \mathrm{Test}(X)$ such that $0\leqslant\varphi\leqslant 1$, $\varphi=1$ on $B_r(x)$, and $\varphi\equiv 0$ outside $ B_{2r}(x)$. Furthermore, it satisfies  $r^2|\Delta \varphi|+r|\nabla \varphi|\leqslant C(K,N)$. Combining (\ref{abc2.14}) and integration by parts, for any $r>0$, the local $L^2$-norm of the Hessian of $f\in D(\Delta)$ can be estimated as follows:
	
	\begin{equation}\label{eeeeeeqn2.6}
		{		\begin{aligned}
				r^2	\int_{B_r(x)}\left|\mathrm{Hess} f\right|^2\,\mathrm{d}\mathfrak{m}\leqslant C(K,N)\left(r\int_{B_{2r}(x)}\left({(\Delta f)}^2+{|\nabla f|}^2\right)\,\mathrm{d}\mathfrak{m}+\inf_{m\in\mathbb{R}}\int_{B_{2r}(x)}\left||\nabla f|^2-m\right|\,
				\mathrm{d}\mathfrak{m}\right).
		\end{aligned}}
	\end{equation}
	%
	%
	%
	
\end{remark}
\begin{remark}\label{rmk2.7}
	By \cite[Proposition 3.3.24]{G18b},  the Hessian also admits the locality property: for any $f_i\in D(\Delta)$ $(i=1,2)$, $|\mathrm{Hess}\, (f_1-f_2)|=0$ $\mathfrak{m}$-a.e. on $\{f_1=f_2\}$. Thus even if a function $f$ belongs to $D(\Delta,B_{2r}(x))$, by using the cut-off function $\varphi$ from Remark \ref{rmk2.16}, the Hessian of $f$ on $B_r(x)$ can be interpreted as $\mathrm{Hess}(\varphi f)$. If moreover $f_i\in \mathrm{Lip}(X,\mathsf{d})$, then by \cite[Proposition 3.3.22]{G18b} we have $\langle\nabla f_1,\nabla f_2\rangle\in H^{1,2}$ and
	\begin{equation}\label{11eqn2.16}
		\nabla \langle \nabla f_1,\nabla f_2 \rangle =\mathop{\mathrm{Hess}}f_1(\nabla f_2,\cdot)+ \mathop{\mathrm{Hess}}f_2(\nabla f_1,\cdot).
	\end{equation}
\end{remark}
\begin{defn}[Covariant derivative]For a vector field $V \in L^2(TX)$, we say $V \in W_{C}^{1,2}(TX)$ if there is tensor $T \in L^2((T^*)^{\otimes 2}X)$ such that for any $f_1, f_2 \in \text{Test}(X)$,
	\[
	T(\nabla f_1, \nabla f_2) = \left\langle \nabla \langle V, \nabla f_1 \rangle, \nabla f_2 \right\rangle - \text{Hess } f_2(V, \nabla f_1).
	\]
	Such $T$ is called the \textit{covariant derivative} of $V$, and is denoted by $\nabla V$. The $W_{C}^{1,2}$-norm of $V$ is defined by
	\[
	\|V\|_{W_{C}^{1,2}(TX)}^2 := \|V\|_{L^2(TX)}^2 + \|\nabla V\|_{L^2((T^*)^{\otimes 2}X)}^2.
	\]
	
\end{defn}
Denote by $H_{C}^{1,2}(TX)$ the $W_{C}^{1,2}$-closure of $\text{TestV}(X)$, where
\[
\mathrm{TestV}(X)=\mathrm{TestV}(X,\mathsf{d},\mathfrak{m}):=\left\{
\sum_{i=1}^{k} f_i \nabla g_i:\ f_i,g_i\in \mathrm{Test}(X)
\right\}.
\]
We remark that in general $W_{C}^{1,2}(TX) \neq H_{C}^{1,2}(TX)$.

\begin{defn}[Divergence]We denote by $D(\operatorname{div})$ the space of all vector fields $V \in L^2(TX)$ for which there exists $h \in L^2$ satisfying
	
	\[
	-\int f \, h \, \mathrm{d}\mathfrak{m} = \int d\,f(V) \, \mathrm{d}\mathfrak{m},\  \forall f \in H^{1,2}(X).
	\]
	This function $h$ is denoted by $\operatorname{div}(V)$. It is unique by the density of $H^{1,2}$ in $L^2$. Moreover, $D(\operatorname{div})$ is a vector subspace of $L^2(TX)$ and $\operatorname{div} : D(\operatorname{div}) \to L^2$ is a linear operator.
\end{defn}
\begin{remark}
	The divergence also enjoys the following locality property: for any $V,W\in D(\mathrm{div})$, $\mathrm{div}(V)=\mathrm{div}(W)$ $\mathfrak{m}\text{-a.e. on } \{V=W\}$. Moreover, for every bounded Lipschitz function $f$, it holds that
	\[
	\mathrm{div}(fV)=d\, f(V)+f\mathrm{div}(V)\ \mathfrak{m}\text{-a.e}.	
	\]
\end{remark}
\subsection{Strong locality of Laplacian and non-degeneration of the heat kernel gradient}
The following proposition is the main tool in this paper. Using it, we show that the gradient of the heat kernel is non-zero almost everywhere in this subsection.
\begin{thm}[Strong locality of Laplacian]\label{str}
	Let $(X,\mathsf{d},\mathscr{H}^n)$ be a non-collapsed $\mathrm{RCD}(K,n)$ space. Then for any $g\in \mathrm{Lip}_{\mathrm{loc}}(X,\mathsf{d})\cap D(\Delta)$, $\mathrm{Hess}\, g=0$  $\mathscr{H}^n$-a.e. on $\{|\nabla g|=0\}$. In particular, $\Delta g=0$ $\mathscr{H}^n$-a.e. on $\{|\nabla g|=0\}$.
\end{thm}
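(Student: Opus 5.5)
The idea is to differentiate the identically vanishing function $\langle \nabla g,\nabla\varphi\rangle$ on the set $E:=\{|\nabla g|=0\}$ and read off $\mathrm{Hess}\, g(\nabla\varphi,\cdot)=0$ there. Then the density of gradients in the tangent module gives $\mathrm{Hess}\, g=0$ on $E$. Finally Proposition \ref{prop1.1}, which is where non-collapsing enters, converts this into $\Delta g=0$ on $E$.

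\textbf{Step 1: localize.} The statement is local, so it suffices to work on a ball $B_r(x)$. Take the cut-off $\psi\in\mathrm{Test}(X)$ of Remark \ref{rmk2.16}, equal to $1$ on $B_r(x)$ and supported in $B_{2r}(x)$, and replace $g$ by $\tilde g:=\psi g$. Since $g$ is locally Lipschitz, $\tilde g$ is Lipschitz. I expect $\tilde g\in D(\Delta)$ by the Leibniz rule for the Laplacian with a test function. By the locality of the gradient and of the Hessian (Remark \ref{rmk2.7}), $\tilde g$ has the same $|\nabla\cdot|$ and the same $\mathrm{Hess}$ as $g$ on $B_r(x)$. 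So we may assume $g\in \mathrm{Lip}(X,\mathsf{d})\cap D(\Delta)$.

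\textbf{Step 2: differentiate a vanishing function.} Fix $\varphi\in\mathrm{Test}(X)$. Both $g$ and $\varphi$ are Lipschitz and lie in $D(\Delta)$, so by Remark \ref{rmk2.7}, formula (\ref{11eqn2.16}) gives $\langle\nabla g,\nabla\varphi\rangle\in H^{1,2}$ and
\[
\nabla\langle\nabla g,\nabla\varphi\rangle=\mathrm{Hess}\, g(\nabla\varphi,\cdot)+\mathrm{Hess}\,\varphi(\nabla g,\cdot).
\]
On $E$ we have $\nabla g=0$, hence $\langle\nabla g,\nabla\varphi\rangle=0$ there. The locality of the minimal relaxed slope then gives $|\nabla\langle\nabla g,\nabla\varphi\rangle|=0$ a.e. on $E$. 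The term $\mathrm{Hess}\,\varphi(\nabla g,\cdot)$ also vanishes a.e. on $E$, by pointwise linearity of the tensor in its first slot. Therefore
\[
\mathrm{Hess}\, g(\nabla\varphi,\cdot)=0\quad \mathscr{H}^n\text{-a.e. on } E,\qquad \forall\,\varphi\in\mathrm{Test}(X).
\]

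\textbf{Step 3: conclude.} Take a countable family $\{\varphi_j\}\subset\mathrm{Test}(X)$ whose gradients generate $L^2(TX)$ as an $L^\infty$-module. Such a family exists by the density of $\{d f\}$ in $L^2(T^*X)$, the density of $\mathrm{Test}(X)$ in $H^{1,2}$, and separability. Discarding a countable union of null sets, $\mathrm{Hess}\, g(\nabla\varphi_j,\cdot)=0$ on $E$ for all $j$. Because $\mathrm{Hess}\, g$ is an $L^\infty$-bilinear tensor, this forces $\mathrm{Hess}\, g=0$ $\mathscr{H}^n$-a.e. on $E$; concretely, $|\mathrm{Hess}\, g|_{\mathsf{HS}}$ is computed through a local orthonormal frame obtained from module combinations of the $\nabla\varphi_j$. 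Then Proposition \ref{prop1.1} gives $\Delta g=\langle\mathrm{Hess}\, g,\mathrm{g}\rangle=0$ a.e. on $E$.

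\textbf{Main obstacle.} The delicate part is the justification in Steps 1 and 3, not the computation in Step 2. In Step 1, formula (\ref{11eqn2.16}) is stated for Lipschitz functions in $D(\Delta)$, and one must check that the cut-off preserves this and does not alter $\mathrm{Hess}\, g$ on $B_r(x)$. In Step 3, one must pass from vanishing against a countable generating family of gradients to vanishing of the full tensor, using the module structure. Step 2 itself is just the locality of $|\nabla\cdot|$.
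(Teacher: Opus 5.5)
Your proposal is correct and follows the paper's proof. The paper also observes that $\langle\nabla u,\nabla g\rangle$ vanishes on $A=\{|\nabla g|=0\}$ and uses locality of the minimal relaxed slope to get $|\nabla\langle\nabla u,\nabla g\rangle|=0$ there. It then reads off $|\mathrm{Hess}\,g(\nabla u,\cdot)|\leqslant|\nabla\langle\nabla u,\nabla g\rangle|+|\mathrm{Hess}\,u||\nabla g|=0$ from (\ref{11eqn2.16}) and invokes Proposition \ref{prop1.1} for the Laplacian. Your Steps 1 and 3 make explicit two points the paper leaves implicit: the cut-off that makes (\ref{11eqn2.16}) applicable to a merely locally Lipschitz $g$, and the passage from test gradients to the full tensor via the module structure.
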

\begin{proof}
	Let $A:=\{|\nabla g|=0\}$. We only consider the case $\mathscr{H}^n(A)>0$. For any $u\in \mathrm{Test}(X)$, since $ |\langle\nabla u,\nabla g\rangle|\leqslant |\nabla u||\nabla g|=0$ $\mathscr{H}^n$-a.e. on $A$, the locality of Sobolev functions implies that $|\nabla \langle\nabla u,\nabla g\rangle|=0$ $\mathscr{H}^n$-a.e. on $A$. Moreover, by (\ref{11eqn2.16}) we have
	\[
	|\mathrm{Hess}\,g(\nabla u,\cdot)|\leqslant |\nabla \langle\nabla u,\nabla g\rangle|+|\mathrm{Hess}\,u| |\nabla g|=0,\ \mathscr{H}^n\text{-a.e. on } A.
	\]
Therefore $\mathrm{Hess}\, g=0$ $\mathscr{H}^n$-a.e. on $A$. The second statement follows directly from Proposition \ref{prop1.1}.
\end{proof}
We now recall some basic knowledge of heat kernel on RCD spaces. For details, see \cite{St95,St96,JLZ16,ZZ19}. 

Every RCD$(K,N)$ space $(X,\mathsf{d},\mathfrak{m})$ possesses a unique locally Lipschitz continuous heat kernel. More precisely, there exists a non-negative function $\rho$ on $X \times X \times (0,\infty)$ such that the unique solution to the heat equation can be expressed as follows.
\[
\mathrm{h}_t f=\int_X \rho(\cdot,y,t)f(y)\,\mathrm{d}\mathfrak{m}(y),\ \forall f\in L^2,\  \forall t>0,
\]
where by solution to the heat equation we mean $\{\mathrm{h}_t f\}_{t>0}\subset H^{1,2}\cap D(\Delta)$ solves
 \begin{equation}\label{2.7}
\frac{d }{d t} \mathrm{h}_t f=\Delta \mathrm{h}_t f\ \mathrm{in }\ L^2;\ \  \lim_{t\downarrow 0}\| \mathrm{h}_t f-f\|_{L^2}=0.
\end{equation}
If moreover $f\in H^{1,2}$, then because Cheeger energy is lower semicontinuous and $t\mapsto \int_{X}|\nabla \mathrm{h}_t f|^2\,\mathrm{d}\mathfrak{m}$ is non-increasing, we have $\lim_{t\downarrow 0}\|\mathrm{h}_tf-f\|_{H^{1,2}}=0$. 

Owing to the stochastic completeness of the space, the heat semigroup $\{\mathrm{h}_t\}_{t>0}$ extends naturally to a contraction semigroup on $L^\infty\cap L^2$ in the sense that
\[
\|\mathrm{h}_t f\|_{L^\infty}\leqslant \|f\|_{L^\infty},\ \forall f\in L^\infty\cap L^2.
\]

\begin{thm}[Gaussian estimates for the heat kernel {\cite[{}{Theorems 1.1 and 1.2}]{JLZ16}}]\label{JLZ}
	Let $\rho$ be the heat kernel of an $\mathrm{RCD}(K,N)$ space $(X,\mathsf{d},\mathfrak{m})$. Then given any $\varepsilon>0$, there exists {}{a constant $C=C(K,N,\varepsilon)$ such that
		\begin{equation}\label{JLZineq}
			\frac{1}{C\,\mathfrak{m}(B_{\sqrt{t}}(x))}\exp\left(-\frac{\mathsf{d}^2\left(x,y\right)}{(4-\varepsilon )t}-C t\right)\leqslant \rho(x,y,t)\leqslant \frac{C}{\mathfrak{m}(B_{\sqrt{t}}(x))}\exp\left(-\frac{\mathsf{d}^2\left(x,y\right)}{(4+\varepsilon )t}+C t\right)
		\end{equation}
		holds for any $x,y\in X$ and
		\begin{equation}\label{JLZineq2}
			|\nabla_x \rho(x,y,t)|\leqslant \frac{C}{\sqrt{t}\mathop{\mathfrak{m}(B_{\sqrt{t}}(x))}}\exp\left(-\frac{\mathsf{d}^2\left(x,y\right)}{(4+\varepsilon)t}+C t\right).
		\end{equation}	}
	\end{thm}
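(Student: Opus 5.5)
This statement is quoted from \cite{JLZ16}, so the plan below reconstructs the standard route to it rather than giving a new argument. Throughout I use that $(X,\mathsf{d},\mathfrak{m})$ satisfies local volume doubling (Theorem \ref{BGineq}) and a local $(1,1)$-Poincar\'e inequality (hence a $(2,2)$ one), and that $\mathrm{Ch}$ is a strongly local regular Dirichlet form whose intrinsic distance is $\mathsf{d}$.

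\textbf{Step 1: non-sharp two-sided bounds.} By Sturm's theory \cite{St95,St96}, doubling plus Poincar\'e on a strongly local Dirichlet space give a local parabolic Harnack inequality. This yields the existence and local H\"older continuity of $\rho$, together with two-sided Gaussian bounds of the form
\[
\frac{c\,e^{-Ct}}{\mathfrak{m}(B_{\sqrt t}(x))}e^{-C\mathsf{d}^2/t}\leqslant\rho\leqslant \frac{C\,e^{Ct}}{\mathfrak{m}(B_{\sqrt t}(x))}e^{-c\mathsf{d}^2/t}.
\]
The only problem at this stage is that the constants in the exponents are not sharp.

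\textbf{Step 2: sharpen the exponents to $4\pm\varepsilon$.} I would use the Li--Yau type Harnack inequality available on $\mathrm{RCD}(K,N)$ spaces, obtained from the Bakry--\'Emery condition (item (4) of the definition) by the Garofalo--Mondino/Baudoin--Garofalo argument. It reads
\[
u(x,s)\leqslant u(y,t)\Big(\tfrac{t}{s}\Big)^{C}\exp\Big(\frac{\mathsf{d}^2(x,y)}{4(t-s)}(1+\delta)+C(t-s)\Big),\quad s<t,
\]
for positive solutions $u$.

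For the \emph{lower bound}, I apply this to $u=\rho(\cdot,y,\cdot)$, comparing the point $(x,t)$ with $(y,\delta t)$. Combining it with the on-diagonal lower bound $\rho(y,y,\delta t)\gtrsim \mathfrak{m}(B_{\sqrt{t}}(y))^{-1}$ from Step 1 gives the factor $e^{-\mathsf{d}^2/((4-\varepsilon)t)}$. The factor $\mathfrak{m}(B_{\sqrt t}(y))$ is then switched to $\mathfrak{m}(B_{\sqrt t}(x))$ by Bishop--Gromov, at the cost of $e^{C\mathsf{d}/\sqrt t}$, which is absorbed into the Gaussian after shrinking $\varepsilon$.

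For the \emph{upper bound}, I use Davies' integrated maximum principle: the quantity
\[
\int \rho(z,y,t)^2 e^{\mathsf{d}^2(z,x)/((2+\varepsilon)t)}\,\mathrm{d}\mathfrak{m}(z)
\]
is controlled. Together with the Harnack inequality used to pass from $L^2$ to pointwise values, and the semigroup identity
\[
\rho(x,y,t)=\int\rho(x,z,t/2)\rho(z,y,t/2)\,\mathrm{d}\mathfrak{m}(z),
\]
this produces $e^{-\mathsf{d}^2/((4+\varepsilon)t)}$.

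\textbf{Step 3: the gradient bound.} Write $\rho(\cdot,y,t)=\mathrm{h}_{t/2}\rho(\cdot,y,t/2)$ and apply the Bakry--\'Emery reverse Poincar\'e inequality
\[
|\nabla \mathrm{h}_s f|^2\leqslant \frac{C(K)}{s}\,\mathrm{h}_s(f^2).
\]
Estimating $\mathrm{h}_{t/2}\big(\rho(\cdot,y,t/2)^2\big)(x)$ with the Step 2 upper bound then gives the result. Alternatively, one can use the Li--Yau gradient estimate $|\nabla\log u|^2\leqslant C(\partial_t\log u+1/t+C)$ and bound $\partial_t\rho=\Delta\rho$ via analyticity of the semigroup and Step 2. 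The Gaussian convolution loses only an $\varepsilon$ in the exponent, which is why $4+\varepsilon$ survives.

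\textbf{Main obstacle.} The hardest step is Step 2, namely obtaining the sharp constant $4$ in the exponent. This requires the Li--Yau inequality in the non-smooth setting, where the main work is justifying the maximum-principle argument for $\log u$ with only the weak Bochner inequality. I would first try the integral/semigroup proof of Baudoin--Garofalo, which uses only the $\Gamma_2$ inequality tested against $\mathrm{h}_s$, so it avoids any pointwise maximum principle.
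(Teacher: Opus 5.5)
The paper gives no proof of this statement; it is imported verbatim from \cite{JLZ16}. That source derives the two-sided bounds from the (Li--Yau) parabolic Harnack inequality combined with a maximum/comparison principle, so your Steps 1--2 follow the same strategy and are sound in outline. There are two small slips there. First, the Davies--Grigor'yan weight must be centred at the pole, i.e.\ the controlled quantity is $\int_X\rho(z,y,t)^2e^{\mathsf{d}^2(z,y)/((2+\varepsilon)t)}\,\mathrm{d}\mathfrak{m}(z)$. Second, for $K<0$, swapping $\mathfrak{m}(B_{\sqrt t}(y))$ for $\mathfrak{m}(B_{\sqrt t}(x))$ via Bishop--Gromov costs $(1+\mathsf{d}/\sqrt t)^Ne^{C\mathsf{d}}$, not just $e^{C\mathsf{d}/\sqrt t}$. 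The extra factor $e^{C\mathsf{d}}$ is absorbed via $C\mathsf{d}\leqslant\eta\,\mathsf{d}^2/t+C^2t/(4\eta)$ into the Gaussian and the $e^{Ct}$ term.

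The step that fails as written is the first route of Step 3. With the symmetric split $\rho(\cdot,y,t)=\mathrm{h}_{t/2}\rho(\cdot,y,t/2)$, the reverse Poincar\'e inequality gives
\[
|\nabla_x\rho(x,y,t)|^2\leqslant\frac{C}{t}\int_X\rho(x,z,t/2)\,\rho(z,y,t/2)^2\,\mathrm{d}\mathfrak{m}(z).
\]
The Gaussian factors then combine to $\exp(-(2a^2+4b^2)/((4+\varepsilon)t))$ with $a=\mathsf{d}(x,z)$, $b=\mathsf{d}(z,y)$. Since $\min\{2a^2+4b^2:a+b\geqslant\mathsf{d}(x,y)\}=\frac43\mathsf{d}^2(x,y)$, you only get $|\nabla_x\rho|\lesssim\exp(-\mathsf{d}^2(x,y)/((6+\frac32\varepsilon)t))$. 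So your sentence ``the Gaussian convolution loses only an $\varepsilon$'' is false for this split. The loss is intrinsic to the split, not to your bookkeeping: in $\mathbb{R}^n$ one has $\int p_s(x-z)\,p_{t-s}(z-y)^2\,\mathrm{d}z=(8\pi(t-s))^{-n/2}p_{(t+s)/2}(x-y)$. Its square root decays like $e^{-|x-y|^2/(4(t+s))}$, which is sharp only when $s/t$ is small.

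The fix is to split asymmetrically, $\rho(\cdot,y,t)=\mathrm{h}_{\delta t}\rho(\cdot,y,(1-\delta)t)$ with $\delta=\delta(\varepsilon)<1/3$. The reverse Poincar\'e factor becomes $\frac{1}{2\delta t}+\max\{-K,0\}$; note that a bare $C(K)/s$ is false for large $s$ when $K<0$, but the excess goes into $e^{Ct}$. Write $b^2\geqslant\mathsf{d}^2(x,y)/(1+3\delta)-a^2/(3\delta)$. The leftover integral $\int_X\rho(x,z,\delta t)\,e^{2a^2/(3(4+\varepsilon)(1-\delta)\delta t)}\,\mathrm{d}\mathfrak{m}(z)$ is bounded using the Gaussian upper bound at time $\delta t$ and doubling. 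This leaves the exponent $(4+\varepsilon)(1+O(\delta))$.

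Your second route avoids the issue altogether. Li--Yau gives $|\nabla\rho|^2\leqslant\alpha\rho\,\partial_t\rho+\rho^2(C/t+C)$. Combined with a Gaussian bound $|\partial_t\rho|\leqslant\frac{C}{t\,\mathfrak{m}(B_{\sqrt t}(x))}\exp(-\frac{\mathsf{d}^2(x,y)}{(4+\varepsilon)t}+Ct)$, this yields \eqref{JLZineq2} directly, since no convolution is involved. Note, however, that the bare analyticity estimate of Theorem~\ref{Davis} carries no Gaussian factor. The time-derivative bound therefore needs Davies' complex-time extension of the Step~2 upper bound, not analyticity alone.
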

	
		\begin{thm}[{\cite[Theorem 3]{D97}}]\label{Davis}
			The heat kernel $\rho(x,y,t)$ is a real analytic function for $t>0$ and satisfies
			\begin{align}\label{fheuiohafohefoiuahfo}
			\left|\frac{\partial^m}{\partial t^m}\rho(x,y,t)\right|\leqslant \frac{m!}{(t-s)^m}\sqrt{\rho(x,x,s)\rho(y,y,s)}, \ \forall s\in (0,t),\ \forall m\in\mathbb{N},\ \forall x,y\in X.
			\end{align}
		\end{thm}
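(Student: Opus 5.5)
The plan is to derive both statements from spectral calculus for the self-adjoint, nonpositive operator $\Delta$ on $L^2(\mathfrak{m})$, combined with the semigroup (Chapman--Kolmogorov) property of $\rho$. Fix $x,y\in X$ and $0<s<t$. Set $u_x:=\rho(\cdot,x,s/2)$ and $u_y:=\rho(\cdot,y,s/2)$. By the Gaussian upper bound (\ref{JLZineq}) and Bishop--Gromov (Theorem \ref{BGineq}), both functions lie in $L^2(\mathfrak{m})$. The semigroup law $\mathrm{h}_{a+b}=\mathrm{h}_a\mathrm{h}_b$ and the symmetry of $\rho$ give the representation
\[
\rho(x,y,t)=\int_X\int_X \rho(x,z,s/2)\,\rho(z,w,t-s)\,\rho(w,y,s/2)\,\mathrm{d}\mathfrak{m}(z)\mathrm{d}\mathfrak{m}(w)=\big\langle \mathrm{h}_{t-s}u_x,\,u_y\big\rangle_{L^2}.
\]
A priori this identity holds only for $\mathfrak{m}$-a.e.\ pair. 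Since $\rho$ is locally Lipschitz and the right-hand side is continuous in $(x,y)$ (again by dominated convergence with the Gaussian bounds), it holds for every $x,y$. Taking $y=x$ and $t=s$ in the same identity gives $\|u_x\|_{L^2}^2=\rho(x,x,s)$.

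Next I differentiate in $t$. By the spectral theorem, $\mathrm{h}_\tau=e^{\tau\Delta}$, and for $\tau>0$ the map $\tau\mapsto e^{\tau\Delta}$ is smooth in operator norm, with $\frac{d^m}{d\tau^m}e^{\tau\Delta}=\Delta^m e^{\tau\Delta}$. Hence
\[
\partial_t^m\rho(x,y,t)=\big\langle \Delta^m e^{(t-s)\Delta}u_x,\,u_y\big\rangle .
\]
The operator norm is controlled by
\[
\|\Delta^m e^{\tau\Delta}\|\leqslant\sup_{\lambda\geqslant 0}\lambda^m e^{-\tau\lambda}=\Big(\frac{m}{e\tau}\Big)^m\leqslant \frac{m!}{\tau^m},
\]
using $m^m e^{-m}\leqslant m!$. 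Cauchy--Schwarz together with $\|u_x\|^2=\rho(x,x,s)$ and $\|u_y\|^2=\rho(y,y,s)$ then yields (\ref{fheuiohafohefoiuahfo}).

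For real analyticity, the operator $\Delta$ generates a holomorphic semigroup $e^{z\Delta}$ on $\{\mathrm{Re}\,z>0\}$. The function $z\mapsto\langle e^{(z-s)\Delta}u_x,u_y\rangle$ is holomorphic on $\{\mathrm{Re}\,z>s\}$ and agrees with $\rho(x,y,\cdot)$ on $(s,\infty)$. Alternatively, the derivative bound shows directly that the Taylor series of $\rho(x,y,\cdot)$ at $t$ has radius of convergence at least $t-s$, and the Taylor remainder tends to zero by the same estimate. Since $s\in(0,t)$ is arbitrary, $\rho(x,y,\cdot)$ is real analytic on $(0,\infty)$.

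The main obstacle is the passage from $L^2$ identities to statements valid for every $x,y$. This requires the continuity of $\rho$ and the uniform Gaussian integrability to justify the pointwise semigroup identity. The spectral estimate itself is routine.
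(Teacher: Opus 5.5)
The paper gives no proof of this statement. It is quoted from \cite{D97} and used as a black box, including for $\rho^U$ in Remark \ref{rmk2.23}. Your argument is a correct, self-contained proof. Its backbone is the factorization on which the cited result rests: for fixed $0<s<t$,
\[
\rho(x,y,t)=\langle \mathrm{h}_{t-s}\rho(\cdot,x,s/2),\rho(\cdot,y,s/2)\rangle_{L^2},\qquad \|\rho(\cdot,x,s/2)\|_{L^2}^2=\rho(x,x,s),
\]
made pointwise through the continuity of $\rho$.

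You differ in how the derivative bound is extracted. The standard route extends $t\mapsto\rho(x,y,t)$ holomorphically to $\{\mathrm{Re}\,z>s\}$ via $z\mapsto\langle e^{(z-s)\Delta}\rho(\cdot,x,s/2),\rho(\cdot,y,s/2)\rangle$. Since $\|e^{(z-s)\Delta}\|\leqslant 1$ there, this extension is bounded by $\sqrt{\rho(x,x,s)\rho(y,y,s)}$. Cauchy's inequality on the disc of radius $t-s$ around $t$ then gives analyticity and the estimate in one stroke, and it is exactly where the factor $m!$ comes from.

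You instead bound $\|\Delta^m e^{(t-s)\Delta}\|$ directly by functional calculus. This avoids complex analysis and yields the sharper constant $\bigl(m/(e(t-s))\bigr)^m$. The cost is that analyticity must be proved as a separate step. Your Lagrange-remainder version gives convergence only for $|h|<(t-s)/2$, which suffices because $s\in(0,t)$ is arbitrary.

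Both arguments use only that $\Delta$ is self-adjoint and nonpositive, together with a pointwise Chapman--Kolmogorov identity. Yours therefore transfers verbatim to the Dirichlet kernel $\rho^U$, as the paper needs, provided the continuity of $\rho^U$ is known independently. That continuity is what lets you upgrade the a.e. identity to every pair of points.
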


For a bounded open subset $U\subset X$ with $\mathrm{diam}(U)< \mathrm{diam}(X)$, let $H^{1,2}_0(U)$ be the $H^{1,2}$-closure of $\mathrm{Lip}_{\mathrm{c}}(U,\mathsf{d})$. This space is dense in $L^2(U)$ and is a Hilbert space. Analogous to (\ref{2.7}), following a straightforward modification of \cite[Proposition 5.2.4]{G18a}, one can introduce the Dirichlet heat flow $\{\mathrm{h}^U_t f\}_{t> 0}\subset H^{1,2}_0(U)\cap D(\Delta,U)$ for $f\in L^2(U)$, which is defined as the unique solution to
\[
 \frac{d }{d t} \mathrm{h}_t^U f=\Delta \mathrm{h}_t^U f\ \mathrm{in }\ L^2(U,\mathfrak{m}),\ \forall t>0;\ \  \lim_{t\downarrow 0}\| \mathrm{h}_t^U f-f\|_{L^2(U,\mathfrak{m})}=0.
\]
Moreover, there exists a unique Dirichlet heat kernel $\rho^U(x,y,t):U\times U\times (0,\infty)\to [0,\infty)$ such that the Dirichlet heat flow can be expressed as follows.
\[
\mathrm{h}_t^U f=\int_U \rho^U(\cdot,y,t)f(y)\,\mathrm{d}\mathfrak{m}(y),\ \forall f\in L^2(U,\mathfrak{m}),\  \forall t>0.
\]

Note that the weak maximum principle implies the monotonicity of Dirichlet heat
kernels with respect to domains. Namely, for two bounded domains $U\subset U'\subset X$, we have
\[
\rho^U(x,y,t)\leqslant \rho^{U'}(x,y,t),\ \forall (x,y,t)\in U\times U\times (0,\infty).
\]
Together with (\ref{JLZineq}), this yields 
\[
\rho^U(x,y,t)\leqslant \frac{C}{\mathfrak{m}(B_{\sqrt{t}}(x))}\exp\left(-\frac{\mathsf{d}^2\left(x,y\right)}{(4+\varepsilon )t}+C t\right),\  \forall (x,y,t)\in U\times U\times (0,\infty).
\]
\begin{remark}\label{rmk2.23}
		Since Theorem \ref{Davis} also holds for $\rho^U$, combining it with \cite{J14} gives $\rho(x,\cdot,t)\in \mathrm{Lip}_{\mathrm{loc}}(U,\mathsf{d})$ for any $x\in U$ and $t>0$. By a standard Laplacian comparison theorem \cite{G15} and Li-Yau type Harnack inequality \cite{Ch99,GM142} argument as in \cite{JLZ16}, \cite[Page 158-159]{St92}, $\rho^U$ is positive on $U\times U\times (0,\infty)$.
\end{remark}

We next recall the following result. For the discreteness of eigenvalues, we refer to \cite{GMS15} for a standard approach and to \cite[Section 2.2]{ZZ19} for a survey on Dirichlet eigenvalues. Regarding estimates of eigenvalues and eigenfunctions, see \cite[Section 3.2]{ZZ19} and \cite[Appendix]{AHPT21}.

\begin{prop}\label{prop3.1}Let $(X, \mathsf{d}, \mathfrak{m})$ be an $\mathrm{RCD}(K, N)$ space.
	\begin{enumerate}
		\item[$(1)$] Let $U,V$ be bounded open sets such that $U\Subset V$. The Dirichlet eigenvalues of the Laplacian on $U$ are discrete and are denoted by 
		\[
		0 < \mu_1^U \leqslant \mu_2^U \leqslant \cdots
		\]
		counted with multiplicities. Moreover, if we let $\phi_i^U$ be the corresponding eigenfunction normalized by $\|\phi_i^U\|_{L^2(U)} = 1$, then $\{\phi_i^U\}_{i=1}^\infty$ forms an $L^2(U,\mathfrak{m})$-orthonormal basis.
		\item[$(2)$] If $X$ is compact, the eigenvalues of the Laplacian are discrete and are denoted by 
		\[
		0 = \mu_0 < \mu_1 \leqslant \mu_2 \leqslant \cdots
		\]
		counted with multiplicities. Moreover, if we let $\phi_i$ be the corresponding eigenfunction normalized by $\|\phi_i\|_{L^2} = 1$, then $\{\phi_i\}_{i=0}^\infty$ forms an $L^2(X,\mathfrak{m})$-orthonormal basis.
			\end{enumerate}
	In both cases, there exists constants $C_1=C_1(K,N,\mathsf{d}(U,V^c))$ and $C_2= C_2(K,N,\mathrm{diam}(X),\mathfrak{m}(X))$ such that for all $i \geqslant 1$ we have
		\[
	\|\phi_i^U\|_{L^\infty(U)}\leqslant C_1\,{(\mu_i^U)}^{\frac{N}{4}},\ \| \nabla \phi_i^U \|_{L^\infty(U)}\leqslant C_1\,{(\mu_i^U)}^{\frac{N+2}{4}},\ {C_1}^{-1} \,i^{\frac{2}{N}}\leqslant \mu_i^U\leqslant C_1\,i^2.
	\]
	\[
	\|\phi_i\|_{L^\infty}\leqslant C_2\,\mu_i^{\frac{N}{4}},\ \| \nabla \phi_i \|_{L^\infty}\leqslant C_2\,\mu_i^{\frac{N+2}{4}},\ {C_2}^{-1} \,i^{\frac{2}{N}}\leqslant \mu_i\leqslant C_2\,i^2.
	\]
\end{prop}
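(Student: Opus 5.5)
The plan is to treat the Dirichlet case $(1)$ in detail; the compact case $(2)$ is identical with $U$ replaced by $X$, Neumann-type test functions, and the volume of $X$ entering the constants.

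\textbf{Discreteness and orthonormal basis.} RCD$(K,N)$ spaces are PI spaces, by Bishop--Gromov (Theorem \ref{BGineq}) and the $(1,1)$-Poincar\'e inequality. Hence the embedding $H^{1,2}_0(U)\hookrightarrow L^2(U,\mathfrak{m})$ is compact (Rellich--Kondrachov for doubling/Poincar\'e spaces, $U$ bounded). The Dirichlet Laplacian is the self-adjoint operator associated with the closed form $\int_U|\nabla f|^2$ on $H^{1,2}_0(U)$, so it has compact resolvent. The spectral theorem then gives a discrete spectrum $0<\mu_1^U\leqslant\mu_2^U\leqslant\cdots\to\infty$ and an $L^2(U)$-orthonormal eigenbasis. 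Positivity of $\mu_1^U$ follows from a Poincar\'e/Faber--Krahn inequality for functions vanishing outside $U$, using $\mathrm{diam}(U)<\mathrm{diam}(X)$.

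\textbf{$L^\infty$ bound and Weyl-type lower bound.} Since $\phi_i^U=e^{\mu_i^U t}\mathrm{h}^U_t\phi_i^U$, Cauchy--Schwarz and the semigroup property give
\[
|\phi_i^U(x)|\leqslant e^{\mu_i^U t}\Big(\int_U\rho^U(x,y,t)^2\,\mathrm{d}\mathfrak{m}(y)\Big)^{1/2}= e^{\mu_i^U t}\rho^U(x,x,2t)^{1/2}.
\]
The Gaussian upper bound for $\rho^U$ gives $\rho^U(x,x,2t)\leqslant C/\mathfrak{m}(B_{\sqrt{2t}}(x))$. Bishop--Gromov relative to a fixed ball containing $U$ gives $\mathfrak{m}(B_r(x))\geqslant c\,r^N$ for $r\leqslant 1$. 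Choosing $t=1/\mu_i^U$ then yields $\|\phi_i^U\|_{L^\infty}\leqslant C(\mu_i^U)^{N/4}$.

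For the lower eigenvalue bound, take the trace:
\[
\sum_j e^{-\mu_j^U t}=\int_U\rho^U(x,x,t)\,\mathrm{d}\mathfrak{m}\leqslant C\,\mathfrak{m}(U)\,t^{-N/2}.
\]
Hence $i\,e^{-\mu_i^U t}\leqslant Ct^{-N/2}$. Setting $t=1/\mu_i^U$ gives $i\leqslant C(\mu_i^U)^{N/2}$, that is, $\mu_i^U\geqslant C^{-1}i^{2/N}$.

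\textbf{Upper eigenvalue bound.} Use min--max. Inside $U$, take a geodesic segment of length comparable to a fixed ball in $U$ and place $i$ disjoint balls $B_r(x_k)$ with $r\sim 1/i$ centered on it. The cut-off functions of Remark \ref{rmk2.16} supported in these balls satisfy $|\nabla\varphi_k|\leqslant C/r$, and by doubling $\mathfrak{m}(B_{r}(x_k))\geqslant c\,\mathfrak{m}(B_{2r}(x_k))$. Each therefore has Rayleigh quotient at most $C r^{-2}$. Since they have disjoint supports, they span an $i$-dimensional space on which the quotient is at most $C i^2$, so $\mu_i^U\leqslant C i^2$.

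\textbf{Gradient bound (main obstacle).} Apply a local Lipschitz estimate for solutions of $\Delta u=-\mu u$ on balls of radius $r=(\mu_i^U)^{-1/2}$. After rescaling the metric by $r^{-1}$, this is a Poisson equation with bounded right-hand side on an RCD$(Kr^2,N)$ space. The Cheng--Yau/Bakry--\'Emery gradient estimate (as in \cite{J14}) gives
\[
|\nabla\phi_i^U|\leqslant C\big(r^{-1}\sup|\phi_i^U|+r\,\mu_i^U\sup|\phi_i^U|\big)\leqslant C(\mu_i^U)^{\frac{N+2}{4}}.
\]
The difficulty is points of $U$ near $\partial U$, where no interior ball is available and boundary regularity is not guaranteed. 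This is where the dependence on $\mathsf{d}(U,V^c)$ would have to enter. I would first try to control $\nabla_x\rho^U$ by a Gaussian gradient bound analogous to (\ref{JLZineq2}), obtained by comparison with the kernel on $V$. I would then write $\nabla\phi_i^U=e^{\mu_i^U t}\int\nabla_x\rho^U(\cdot,y,t)\phi_i^U(y)\,\mathrm{d}\mathfrak{m}(y)$ with $t=1/\mu_i^U$. If this fails, I would fall back on the argument of \cite[Appendix]{AHPT21} and \cite[Section 3.2]{ZZ19}.
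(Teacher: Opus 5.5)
Apart from the gradient bound, your outline is the standard argument behind the references the paper cites instead of giving a proof (\cite{GMS15}, \cite{ZZ19}, \cite[Appendix]{AHPT21}). It has five ingredients: compactness of $H^{1,2}_0(U)\hookrightarrow L^2(U)$; the $L^\infty$ bound from $\phi_i^U=e^{\mu_i^Ut}\mathrm{h}^U_t\phi_i^U$ with $t=1/\mu_i^U$; the heat-trace argument for $\mu_i^U\geqslant C^{-1}i^{2/N}$; min--max with disjointly supported cut-offs for $\mu_i^U\leqslant Ci^2$; and a Jiang-type interior gradient estimate on balls of radius $(\mu_i^U)^{-1/2}$. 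For compact $X$ every ball is interior, so this essentially proves part $(2)$.

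The genuine gap is the one you flag yourself: the bound on $\|\nabla\phi_i^U\|_{L^\infty(U)}$ at points close to $\partial U$. It cannot be closed, because that bound is false for general bounded open $U$. Take $X=\mathbb{R}^2$, let $V$ be a large ball, and let $U$ be a polygon with a reentrant corner of angle $\alpha\in(\pi,2\pi)$. Near the corner the first Dirichlet eigenfunction behaves like $c\,r^{\pi/\alpha}\sin(\pi\theta/\alpha)$, and $c>0$ because $\phi_1^U>0$. Hence $|\nabla\phi_1^U|\sim r^{\pi/\alpha-1}$ is unbounded. The quantity $\mathsf{d}(U,V^c)$ carries no information about the regularity of $\partial U$, so it cannot rescue the estimate. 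Your first fallback fails for the same reason: domain monotonicity $\rho^U\leqslant\rho^V$ compares values, not gradients, and $\nabla_x\rho^U$ blows up at such a corner. Your second fallback, \cite[Appendix]{AHPT21}, only treats spaces without boundary.

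What is provable, and what the dependence on $\mathsf{d}(U,V^c)$ suggests is meant, is an interior bound. For Dirichlet eigenfunctions of $V$ restricted to $U$ (equivalently, for $\phi_i^U$ on a set at distance $\delta>0$ from $U^c$), one has $|\nabla\phi_i|\leqslant C\mu_i^{(N+2)/4}$ with $C$ depending also on $\delta$. Your own Jiang-type argument gives this once you use balls of radius $\min\{\mu_i^{-1/2},\delta/2\}$; the finitely many eigenvalues below $4\delta^{-2}$ only affect the constant.

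Your constants also depend on more than $(K,N,\mathsf{d}(U,V^c))$: on the radius of a ball inside $U$, on $\mathfrak{m}(U)$ and a lower bound for $\mathfrak{m}(B_1(x))$ over $x\in U$, and on $\mu_1^U$. This is forced by the statement, not a flaw in your argument. In $\mathbb{R}^n$, $U=B_r(0_n)\Subset V=B_{1+r}(0_n)$ has $\mathsf{d}(U,V^c)=1$ while $\mu_1^U\to\infty$ as $r\to 0$. Likewise, multiplying $\mathfrak{m}$ by a constant rescales the $L^2$-normalized eigenfunctions without changing any eigenvalue.
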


\begin{remark}\label{rmk3.2}
Under the assumptions of Proposition \ref{prop3.1}, by \cite{J14, JLZ16} and \cite[Appendix]{AHPT21}, the heat kernels admit the spectral representations
		\[
\rho^U(x, y, t) = \sum_{i=1}^\infty e^{-\mu_i^U t} \phi_i^U(x) \phi_i^U(y),	\qquad	\rho(x, y, t) = \sum_{i=0}^\infty e^{-\mu_i t} \phi_i(x) \phi_i(y).
		\]

\end{remark}

We are now ready to show the following proposition.
\begin{prop}[Non-degeneration of the heat kernel gradient]\label{1prop3.3}
Suppose $(X,\mathsf{d},\mathscr{H}^n)$ is a non-collapsed $\mathrm{RCD}(K,n)$ space and $U,V$ are bounded open subsets of $X$ such that $U\Subset V$. Denote by $\rho$, $\rho^U$ be the heat kernel of $X$ and the Dirichlet heat kernel of $U$ respectively. Then for any fixed $y\in X$, $z\in U$ and $t>0$, we have 
\[
|\nabla_x \rho(x,y,t)|\neq 0\ \mathscr{H}^n\text{-a.e. }x\in X \ \text{and}\  \ |\nabla_x \rho^U(x,z,t)|\neq 0\ \mathscr{H}^n\text{-a.e. }x\in U. 
\]
\end{prop}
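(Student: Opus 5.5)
Fix $y$ and $t$ and set $u(x):=\rho(x,y,t)$. The plan is to argue by contradiction using Theorem \ref{str}. Suppose $A:=\{|\nabla u|=0\}$ has positive $\mathscr{H}^n$-measure. The function $u$ is locally Lipschitz by Theorem \ref{JLZ}, and it lies in $D(\Delta)$ with $\Delta u=\partial_t\rho(\cdot,y,t)$, since $u=\mathrm{h}_{t/2}\rho(\cdot,y,t/2)$ and the heat flow takes values in $D(\Delta)$. Theorem \ref{str} therefore gives $\partial_t\rho(\cdot,y,t)=\Delta u=0$ a.e. on $A$.

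To get a contradiction I will iterate this using time derivatives. The function $u_m:=\partial_t^m\rho(\cdot,y,t)$ also lies in $D(\Delta)$, and $\Delta u_m=u_{m+1}$. By Theorem \ref{Davis} together with the gradient estimate via the semigroup (for instance, $u_m=\Delta^m \mathrm{h}_{t/2}\rho(\cdot,y,t/2)$ written as a heat flow of an $L^2$ function), each $u_m$ is locally Lipschitz. Rather than running an induction on the sets $\{|\nabla u_m|=0\}$, which need not contain $A$, I will use the spectral picture directly. Theorem \ref{str} gives the stronger conclusion $\mathrm{Hess}\,u=0$ on $A$, but the cleaner route is analyticity in $t$.

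For each $x$, the map $s\mapsto\rho(x,y,s)$ is real analytic (Theorem \ref{Davis}), so $s\mapsto\nabla_x\rho(\cdot,y,s)$ is analytic as an $L^2_{\mathrm{loc}}(TX)$-valued map. The key step is to show that $A_s:=\{|\nabla_x\rho(\cdot,y,s)|=0\}$ cannot have positive measure for a single $s$. On $A_t$ we have $\partial_t\rho=0$. Next I test this against vanishing of $\rho$ itself. Take a Lebesgue density point $x_0$ of $A_t$. Since $|\nabla u|=0$ a.e. on $A_t$, the function $u$ is nearly constant on $A_t\cap B_r(x_0)$ by the Poincaré inequality. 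I then intend to combine Theorem \ref{str} applied to $u-c$ with $\Delta$ and the heat equation to show $\partial_s^m\rho(\cdot,y,s)|_{s=t}=0$ a.e. on $A_t$ for all $m$. Analyticity then forces $\rho(x,y,s)=\rho(x,y,t)$ for all $s>0$ and a.e. $x\in A_t$. Letting $s\to\infty$ (or using the Gaussian bounds \eqref{JLZineq} as $s\to 0$, where $\rho(x,y,s)\to 0$ for $x\neq y$) gives $\rho(x,y,t)=0$, contradicting the positivity of $\rho$. The limit $s\downarrow0$ is the convenient one: by \eqref{JLZineq}, $\rho(x,y,s)\to0$ for $x\ne y$, while $\rho(x,y,t)>0$ by the lower bound.

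The main obstacle is the induction showing that all time derivatives vanish on $A_t$. The issue is that Theorem \ref{str} applied to $u_m$ needs $|\nabla u_m|=0$ on $A_t$, whereas it only gives $u_{m+1}=0$ there. I would handle this with locality: once $u_{m}=0$ a.e. on $A_t$, locality of minimal slopes gives $|\nabla u_m|=0$ a.e. on $A_t$. Theorem \ref{str} applied to $u_m$ (after localizing with the cut-off of Remark \ref{rmk2.16} if needed) then yields $u_{m+1}=\Delta u_m=0$ a.e. on $A_t$. The base case is $u_1=\Delta u=0$ from the first paragraph. The Dirichlet kernel $\rho^U(\cdot,z,t)$ is treated identically inside $U$. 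Here I use the local version $D(\Delta,U)$, local Lipschitz regularity from Remark \ref{rmk2.23}, analyticity from Theorem \ref{Davis} for $\rho^U$, positivity from Remark \ref{rmk2.23}, and the Gaussian upper bound for $\rho^U$ to get $\rho^U(x,z,s)\to0$ as $s\downarrow0$ for $x\ne z$.
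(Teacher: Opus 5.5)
Your proposal is correct and, with the detours removed, it is the paper's own proof. The induction in your last paragraph is exactly what the paper compresses into ``similarly'': locality of minimal slopes turns $u_m=0$ a.e.\ on $A$ into $|\nabla u_m|=0$ a.e.\ on $A$, and Theorem \ref{str} then gives $u_{m+1}=\Delta u_m=0$ there. As in the paper, this is followed by time-analyticity from Theorem \ref{Davis} and the small-time Gaussian upper bound, which contradicts the positivity of $\rho$.

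The Poincar\'e-inequality step and the ``spectral picture'' are never used, so drop them. Drop the $s\to\infty$ alternative as well: it fails on compact $X$, where $\rho(x,y,s)\to 1/\mathscr{H}^n(X)>0$, so constancy in $s$ gives no contradiction there. Keep only the limit $s\downarrow 0$ at a.e.\ $x\in A\setminus\{y\}$.
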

\begin{proof}
	Since the proofs are almost the same, we only prove the result for $\rho$ by contradiction. Assume there exists a Borel set $A\subset X$ such that $\mathscr{H}^n(A)>0$ and $|\nabla\rho(\cdot,y,t)|=0$ on $A$. Then by Theorem \ref{stronglocality}, we have
	\[
	\Delta \rho(\cdot,y,t)=\frac{\partial}{\partial t}\rho(\cdot,y,t)=0\ \mathscr{H}^n\text{-a.e. on}\ A.
	\]Similarly, we get
	\[
		\Delta^{(m)} \rho(\cdot,y,t)=\frac{\partial^m}{\partial t^m}\rho(\cdot,y,t)=0\ \mathscr{H}^n\text{-a.e. on}\ A,\ \forall m\in\mathbb{N}.
	\]
	
	According to (\ref{JLZineq})-(\ref{fheuiohafohefoiuahfo}) and \cite[Theorem 1.1]{J14}, $\frac{\partial^m}{\partial t^m}\rho(\cdot,y,t)$ is locally Lipschitz continuous for every $m\in \mathbb{N}$. Thus Theorem \ref{Davis} implies that 	$t\mapsto\rho(x,y,t)$ is a constant function for every $x\in \bar{A}$.
	
	Let us fix a point $x\in \bar{A}\setminus\{y\}$. Owing to Theorem \ref{BGineq}, one has
	\[
	\frac{\mathscr{H}^n(B_{\sqrt{t}}(x))}{V_{K,n}(\sqrt{t})}\geqslant \frac{\mathscr{H}^n(B_{1}(x))}{V_{K,n}(1)},\ \forall t\in (0,1),
	\]
	which yields
	\[
	\rho(x,y,t)\leqslant \lim_{t\to 0} \frac{C}{\mathscr{H}^n(B_{\sqrt{t}}(x))}\exp\left(-\frac{\mathsf{d}\left(x,y\right)}{(4+\varepsilon)t}+C t\right)\leqslant 	\lim_{t\to 0} C\, t^{-\frac{n}{2}}\exp\left(-\frac{\mathsf{d}\left(x,y\right)}{(4+\varepsilon)t}\right)=0.
	\]
This is a contradiction because $\rho(x,y,t)$ is positive.

	\end{proof}

Next, we give some results concerning the asymptotic behavior of the heat kernel and non-degeneration of the gradient of the heat flow on special sets determined by the initial value of the heat equation and the compactness of the space.
\begin{lem}\label{1cor3.7}
	Suppose $(X,\mathsf{d},\mathscr{H}^n)$ is a compact non-collapsed $\mathrm{RCD}(K,n)$ space with $\mathscr{H}^n(X)=1$. Then for every $x,y\in X$ it holds that
	\[
	\lim_{t\to\infty} \rho(x,y,t)=1\ \text{and}\ \lim_{t\to\infty} \|\nabla \rho(x,\cdot,t)\|_{L^\infty}=0.
	\] 
\end{lem}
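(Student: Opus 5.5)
We plan to use the spectral representation of Remark \ref{rmk3.2} together with the eigenfunction bounds of Proposition \ref{prop3.1}(2). Since $X$ is compact with $\mathscr{H}^n(X)=1$, the constant function $\phi_0\equiv 1$ is the normalized eigenfunction for $\mu_0=0$. Hence
\[
\rho(x,y,t)-1=\sum_{i=1}^\infty e^{-\mu_i t}\phi_i(x)\phi_i(y),
\qquad
\nabla_y\rho(x,y,t)=\sum_{i=1}^\infty e^{-\mu_i t}\phi_i(x)\nabla\phi_i(y),
\]
where the gradient is taken in the second variable (by symmetry of $\rho$ this is the quantity in the statement). It therefore suffices to show that both tails tend to $0$ uniformly as $t\to\infty$.

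First we bound the tails for $t\geqslant 1$. Proposition \ref{prop3.1}(2) gives $\|\phi_i\|_{L^\infty}\leqslant C_2\mu_i^{n/4}$ and $\|\nabla\phi_i\|_{L^\infty}\leqslant C_2\mu_i^{(n+2)/4}$, with $\mu_i\geqslant C_2^{-1}i^{2/n}$. Consequently
\[
|\rho(x,y,t)-1|\leqslant C_2^2\sum_{i\geqslant 1}\mu_i^{n/2}e^{-\mu_i t},\qquad
\|\nabla\rho(x,\cdot,t)\|_{L^\infty}\leqslant C_2^2\sum_{i\geqslant1}\mu_i^{(n+1)/2}e^{-\mu_i t}.
\]
For $t\geqslant 1$ we split $e^{-\mu_i t}=e^{-\mu_i(t-1)}e^{-\mu_i}\leqslant e^{-\mu_1(t-1)}e^{-\mu_i}$. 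This gives
\[
\sum_{i\geqslant1}\mu_i^{a}e^{-\mu_i t}\leqslant e^{-\mu_1(t-1)}\sum_{i\geqslant1}\mu_i^{a}e^{-\mu_i}.
\]
The last series converges because $\mu_i^a e^{-\mu_i}\leqslant C e^{-\mu_i/2}\leqslant Ce^{-c\,i^{2/n}}$, and this bound is summable. Since $\mu_1>0$, both tails decay exponentially in $t$, uniformly in $x,y$. This yields both limits.

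The main point to check is the justification of the termwise differentiated series for $\nabla\rho$, i.e.\ that the gradient of the series equals the series of gradients in $L^\infty$. We handle it by noting that the partial sums are Lipschitz with gradients converging uniformly, by the estimate above at fixed $t>0$. The partial sums also converge uniformly to $\rho(x,\cdot,t)$. The lower semicontinuity of the minimal relaxed slope, or equivalently closedness of $d$ in $H^{1,2}$, then identifies the limit of the gradients with $\nabla\rho(x,\cdot,t)$ and gives the $L^\infty$ bound. Strict positivity of $\mu_1$ holds because $X$ is connected, being a length space; this is part of Proposition \ref{prop3.1}(2).
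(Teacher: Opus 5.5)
Your proof is correct. For the first limit it is essentially the paper's argument: the paper bounds the same tail via Proposition \ref{prop3.1} by $C\sum_{i\geqslant1}e^{-C^{-1}i^{2/n}t}\,i^n$ and lets $t\to\infty$. Your splitting $e^{-\mu_i t}\leqslant e^{-\mu_1(t-1)}e^{-\mu_i}$ also gives an explicit rate $O(e^{-\mu_1 t})$, uniform in $x,y$.

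For the gradient the routes differ. The paper does not differentiate the series. It fixes $r=\mathrm{diam}(X)/100$ and applies the local gradient estimate \cite[Theorem 1.1]{J14} to $\rho(x,\cdot,t)-1$, which bounds $\|\nabla\rho(x,\cdot,t)\|_{L^\infty(B_r(y))}$ by a multiple of $\int_{B_{2r}(y)}|\rho(x,\cdot,t)-1|\,\mathrm{d}\mathscr{H}^n$. It sends this to $0$ using the first limit and dominated convergence, and finishes by covering the compact space $X$ with finitely many balls.

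You instead differentiate termwise, using the eigenfunction gradient bounds of Proposition \ref{prop3.1} and closedness of $d$. This is self-contained and gives the same exponential rate for the gradient. It also sidesteps the fact that $\rho(x,\cdot,t)-1$ is not harmonic. A Poisson-type gradient estimate carries an extra term in $\|\Delta\rho(x,\cdot,t)\|_{L^\infty}=\|\partial_t\rho(x,\cdot,t)\|_{L^\infty}$. That term is absent from the paper's displayed inequality and must be shown separately to vanish, e.g.\ from the series or from Theorem \ref{Davis}.

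The paper's local-regularity route, in turn, does not rely on a discrete spectrum. That is why essentially the same argument is reused in the non-compact case, in the corollary after Proposition \ref{prop3.9}, where termwise differentiation of an eigenfunction expansion is unavailable.
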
 
\begin{proof}
	By Proposition \ref{prop3.1}, we have
	\begin{equation}\label{feiaohhfoeaihfeia}
		\sum_{i=1}^{\infty}\exp(-\mu_i t)|\varphi_i(x)\varphi_i(y)|\leqslant C_1\sum_{i=1}^{\infty}\exp\left(-{C_1}^{-1} i^{\frac{2}{n}} t\right) i^n\to 0\ \  \text{as}\ t\to \infty.
	\end{equation}
	Regarding the second limit, one may let $r=\mathrm{diam}(X)/100$, and then apply \cite[Theorem 1.1]{J14} and Theorem \ref{BGineq} to obtain
	\small{\[
\|\nabla \rho(x,\cdot,t)\|_{L^\infty(B_r(y))}\leqslant C(K,n)\fint_{B_{2r}(y)}|\rho(x,\cdot,t)-1|\,\integral\leqslant C(K,n,\mathrm{diam}(X))\int_{B_{2r}(y)}|\rho(x,\cdot,t)-1|\,\integral.
	\]}From the dominating convergence theorem we know the rightmost term of the above inequality converges to $0$ as $t\to \infty$. Because $X$ is compact, we complete the proof.
\end{proof}
\begin{prop}\label{prop3.4}
Under Lemma \ref{1cor3.7}, for any $f\in L^2$ and any $t>0$, $|\nabla \mathrm{h}_t f|\neq 0$ $\mathscr{H}^n$-a.e. on$\ \{f\neq \int_X f\,\mathrm{d}\mathscr{H}^n\}$.
\end{prop}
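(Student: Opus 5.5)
We argue by contradiction, following the scheme of Proposition \ref{1prop3.3}. Write $u(\cdot,s):=\mathrm{h}_s f$ and $\bar f:=\int_X f\,\mathrm{d}\mathscr{H}^n$. Suppose there is a Borel set $A\subset\{f\neq \bar f\}$ with $\mathscr{H}^n(A)>0$ and $|\nabla u(\cdot,t)|=0$ on $A$.

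First we check that Theorem \ref{stronglocality} applies to $u(\cdot,t)$ and to all its time derivatives. Since $X$ is compact, $f\in L^2\subset L^1$. The spectral representation of Remark \ref{rmk3.2}, together with the eigenfunction bounds of Proposition \ref{prop3.1}, shows that
\[
u(\cdot,s)=\bar f+\sum_{i\geqslant1}e^{-\mu_i s}\Big(\int_X f\phi_i\Big)\phi_i
\]
converges in $\mathrm{Lip}$ for every $s>0$. The same holds for each derivative
\[
\partial_s^m u=\Delta^{(m)}u=\sum_{i\geqslant1}(-\mu_i)^m e^{-\mu_i s}\Big(\int_X f\phi_i\Big)\phi_i,
\]
and all of these lie in $D(\Delta)$. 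Applying Theorem \ref{stronglocality} repeatedly, as in the proof of Proposition \ref{1prop3.3}, gives
\[
\partial_s^m u(\cdot,s)|_{s=t}=\Delta^{(m)}u(\cdot,t)=0\quad \mathscr{H}^n\text{-a.e. on } A,\ \forall m\geqslant1.
\]
The point is that $\mathrm{Hess}\,u=0$ on $A$ forces $|\nabla \Delta u|=0$ on $A$. This follows from the identity $\nabla\langle\nabla v,\nabla u\rangle=\mathrm{Hess}\,u(\nabla v,\cdot)+\mathrm{Hess}\,v(\nabla u,\cdot)$ combined with locality. Alternatively, we can argue directly: $\Delta u=0$ a.e. on $A$, and the gradient of a Lipschitz function vanishes a.e. on the set where the function vanishes, so $|\nabla\Delta u|=0$ a.e. on $A$. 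We then iterate. By continuity of these Lipschitz functions, the vanishing extends to every point of $\bar A$, or at least to a.e. point of $A$.

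Next we use analyticity in time. For fixed $x$, the function $s\mapsto u(x,s)$ is real analytic on $(0,\infty)$, since it is a Dirichlet series with coefficients decaying fast by Proposition \ref{prop3.1}; equivalently, one can integrate Theorem \ref{Davis} against $f$. All its derivatives vanish at $s=t$, so $u(x,s)=u(x,t)$ for all $s>0$. Letting $s\to\infty$, Lemma \ref{1cor3.7} (or directly the spectral sum) gives $u(x,s)\to\bar f$. Hence $u(x,s)\equiv\bar f$ for all $s>0$ and $x\in A$.

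Finally, letting $s\downarrow0$, we have $u(\cdot,s)\to f$ in $L^2$, so along a subsequence $u(x,s_k)\to f(x)$ for a.e. $x\in A$. This gives $f=\bar f$ a.e. on $A$, contradicting $A\subset\{f\neq\bar f\}$ with $\mathscr{H}^n(A)>0$.

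The main obstacle is the regularity bookkeeping: showing that $\partial_s^m u(\cdot,t)$ is locally Lipschitz and lies in $D(\Delta)$, so that Theorem \ref{stronglocality} can be iterated. We expect this to follow from the absolute convergence of the spectral series using $\|\nabla\phi_i\|_{L^\infty}\leqslant C_2\mu_i^{(n+2)/4}$ and $\mu_i\geqslant C_2^{-1}i^{2/n}$, exactly as in the estimate \eqref{feiaohhfoeaihfeia}.
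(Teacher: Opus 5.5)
Your proposal is correct and follows essentially the paper's argument. You iterate strong locality to kill all time derivatives of $\mathrm{h}_s f$ on $A$, use analyticity in time to make $s\mapsto \mathrm{h}_s f(x)$ constant, and then compare the limits $s\to\infty$ (which give $\bar f$) and $s\downarrow 0$ (which give $f$ a.e.). The paper does the same by citing the proof of Proposition \ref{1prop3.3} and Lemma \ref{1cor3.7} with H\"older's inequality, where you use the spectral series, and it leaves the final $s\downarrow0$ step implicit while you spell it out. One small point: the vanishing of $|\nabla\Delta u|$ a.e.\ on $A$ should be justified by your second argument ($\Delta u=0$ a.e.\ on $A$ plus locality of the minimal relaxed slope). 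The product rule for $\nabla\langle\nabla v,\nabla u\rangle$ does not yield it.
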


\begin{proof}
	We argue by contradiction. Assume there exist $t_0>0$ and a Borel set $A\subset \{f\neq\int_X f\,\mathrm{d}\mathscr{H}^n\}$ such that $\mathscr{H}^n(A)>0$ and $|\nabla \mathrm{h}_{t_0} f|=0$ on $A$. According to the proof of Proposition \ref{1prop3.3}, we know $t\mapsto \mathrm{h}_t f(x)$ is a constant function for every $x\in A$.
	
	Applying H\"{o}lder's inequality, we see
	\[
	\left(\int_X \rho(x,y,t)|f|(y)\,\mathrm{d}\mathscr{H}^n(y)\right)^2\leqslant \rho(x,x,2t)\|f\|_{L^2}^2,\ \forall x\in X.
	\] 
	This together with Lemma \ref{1cor3.7} and the dominating convergence theorem shows
	\[
	\lim_{t\to \infty} \mathrm{h}_t f(x)=\lim_{t\to \infty}\int_X \rho(x,y,t)f(y)\,\mathrm{d}\mathscr{H}^n(y)=\int_X f\,\mathrm{d}\mathscr{H}^n,\ \forall x\in A,
	\]
	which leads to a contradiction with the definition of $A$. 
\end{proof}
Since a similar estimate to (\ref{feiaohhfoeaihfeia}) also holds for the Dirichlet heat kernel, the following corollary holds.
\begin{cor}
	Assume $(X,\mathsf{d},\mathscr{H}^n)$ is a non-collapsed $\mathrm{RCD}(K,n)$ space and $U,V$ are bounded open subsets of $X$ such that $U\Subset V$. Let $\rho^U$ be the Dirichlet heat kernel of $U$. Then for every $x,y\in U$ and $r\in (0,\mathsf{d}(y,U^c)/10)$ it holds that 
	\[
	\lim_{t \to\infty}\rho^U(x,y,t)=0 \ \text{and}\ 	\lim_{t \to\infty} \|\rho^U(x,\cdot,t)\|_{L^\infty(B_r(y))}=0.
	\] Moreover, for any $f\in L^2(U)$ and any $t>0$, we have $|\nabla \mathrm{h}^U_t f|\neq 0\ \mathscr{H}^n\text{-a.e. on }\{f\neq 0\}$.
\end{cor}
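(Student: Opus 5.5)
The plan is to transfer the argument for $X$ (Lemma \ref{1cor3.7} and Proposition \ref{prop3.4}) to the Dirichlet setting, using the spectral representation of $\rho^U$ from Remark \ref{rmk3.2} and the eigenfunction bounds of Proposition \ref{prop3.1}$(1)$.

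\textbf{First limit.} By Remark \ref{rmk3.2} and Proposition \ref{prop3.1}$(1)$,
\[
|\rho^U(x,y,t)|\leqslant \sum_{i=1}^\infty e^{-\mu_i^U t}|\phi_i^U(x)\phi_i^U(y)|\leqslant C_1\sum_{i=1}^\infty \exp\left(-C_1^{-1}i^{\frac{2}{n}}t\right) i^{n}.
\]
This uses $|\phi_i^U|\leqslant C_1(\mu_i^U)^{n/4}$ and $\mu_i^U\leqslant C_1 i^2$. The series converges for $t\geqslant 1$ and every term decreases to $0$ as $t\to\infty$, so dominated convergence gives $\rho^U(x,y,t)\to 0$. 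Unlike the closed case, the sum starts at $i=1$ with $\mu_1^U>0$, so the limit is $0$ rather than $1$.

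\textbf{Second limit.} The bound above is uniform in $y\in U$, so it already gives $\sup_{U}|\rho^U(x,\cdot,t)|\to 0$. If the intended statement concerns the gradient, as in Lemma \ref{1cor3.7}, I would argue the same way with $\|\nabla\phi_i^U\|_{L^\infty(U)}\leqslant C_1(\mu_i^U)^{(n+2)/4}$. The resulting series $\sum_i e^{-C_1^{-1}i^{2/n}t}i^{n+1}$ tends to $0$ in the same manner. Alternatively, apply \cite[Theorem 1.1]{J14} on $B_{2r}(y)\subset U$, which is allowed since $r<\mathsf{d}(y,U^c)/10$. This bounds the gradient on $B_r(y)$ by $C\fint_{B_{2r}(y)}|\rho^U(x,\cdot,t)|$ plus a term from $\Delta\rho^U=\partial_t\rho^U$. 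The latter also decays, via the series with extra factors $\mu_i^U$.

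\textbf{Non-degeneracy.} Argue by contradiction, as in Proposition \ref{prop3.4}. Suppose there are $t_0>0$ and a set $A\subset\{f\neq 0\}$ with $\mathscr{H}^n(A)>0$ on which $|\nabla \mathrm{h}^U_{t_0}f|=0$.
\begin{enumerate}
\item Locally $\mathrm{h}^U_{t_0}f$ is Lipschitz and belongs to $D(\Delta,U)$. Applying Theorem \ref{stronglocality} localized by a cut-off (Remark \ref{rmk2.7}) gives $\Delta\mathrm{h}^U_{t_0}f=\partial_t\mathrm{h}^U_{t_0}f=0$ a.e. on $A$.
\item Iterating, all time derivatives vanish on $A$. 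These are continuous by \cite{J14} and the Davies-type bound of Theorem \ref{Davis} applied to $\rho^U$. Analyticity in $t$ then shows that $t\mapsto \mathrm{h}^U_tf(x)$ is constant for $x\in A$.
\item By Hölder's inequality, $|\mathrm{h}^U_tf(x)|^2\leqslant \rho^U(x,x,2t)\|f\|_{L^2}^2$, which tends to $0$ by the first limit. Hence $\mathrm{h}^U_tf\equiv 0$ on $A$ for all $t>0$.
\item Since $\mathrm{h}_t^Uf\to f$ in $L^2(U)$ as $t\downarrow 0$, we get $f=0$ a.e. on $A$, contradicting $A\subset\{f\neq 0\}$.
\end{enumerate}

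\textbf{Main obstacle.} The delicate step is justifying that the strong locality and the iteration apply to $\mathrm{h}_t^U f$, which lies only in $D(\Delta,U)$ and is only locally Lipschitz. I would handle this by covering $A$ with countably many balls $B_r(z)\Subset U$. On each ball, multiply $\mathrm{h}_t^Uf$ by the cut-off $\varphi$ of Remark \ref{rmk2.16}, apply Theorem \ref{stronglocality} to $\varphi\,\mathrm{h}^U_tf\in D(\Delta)$, and use locality of the Laplacian to return to $\mathrm{h}^U_tf$ on $B_r(z)$.
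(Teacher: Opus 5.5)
Your proposal is correct and follows the paper's route. The paper only remarks that the spectral estimate (\ref{feiaohhfoeaihfeia}) carries over to $\rho^U$, with the sum starting at $\mu_1^U>0$ so the limit is $0$. It then implicitly reruns Lemma \ref{1cor3.7} and Propositions \ref{prop3.4} and \ref{prop3.9}, including the H\"older bound $|\mathrm{h}^U_tf(x)|^2\leqslant\rho^U(x,x,2t)\|f\|_{L^2}^2$ and the analyticity-in-$t$ argument, which is what you do; your cut-off localization of Theorem \ref{stronglocality} to $\mathrm{h}^U_tf\in D(\Delta,U)$ and your use of the uniform eigenfunction bounds on all of $U$ supply details the paper leaves implicit.
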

\begin{prop}\label{prop3.9}
	Assume $(X,\mathsf{d},\mathscr{H}^n)$ is a non-compact non-collapsed $\mathrm{RCD}(K,n)$ space. Then for any $f\in L^2$ and any $t>0$, we have
	$|\nabla \mathrm{h}_t f|\neq 0$ $\mathscr{H}^n$-a.e. on$\ \{f\neq 0\}$.
\end{prop}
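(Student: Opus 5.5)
We argue by contradiction, following the scheme of Propositions \ref{1prop3.3} and \ref{prop3.4}. Suppose there are $t_0>0$ and a Borel set $A\subset\{f\neq 0\}$ with $\mathscr{H}^n(A)>0$ and $|\nabla \mathrm{h}_{t_0}f|=0$ on $A$.

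\emph{Step 1: rigidity in time.} Since $\mathrm{h}_{t_0}f=\int_X\rho(\cdot,y,t_0)f(y)\,\mathrm{d}\mathscr{H}^n(y)$, the Gaussian bounds (\ref{JLZineq})--(\ref{JLZineq2}) and \cite{J14} give that $\mathrm{h}_{t_0}f$ is locally Lipschitz and lies in $D(\Delta)$. The same holds for $\Delta^{(m)}\mathrm{h}_{t_0}f=\partial_t^m\mathrm{h}_tf|_{t=t_0}=\mathrm{h}_{t_0/2}\,\Delta^{(m)}\mathrm{h}_{t_0/2}f$, which is the heat flow of an $L^2$ function. Apply Theorem \ref{stronglocality} to $g=\mathrm{h}_{t_0}f$: it gives $\Delta g=0$ a.e.\ on $A$. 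The locality of Sobolev functions then gives $|\nabla\Delta g|=0$ a.e.\ on $A$. Iterating, $\partial_t^m\mathrm{h}_tf(x)|_{t=t_0}=0$ for all $m\geqslant1$ and a.e.\ $x\in A$, hence for all $x\in\bar A$ by continuity.

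The map $t\mapsto\mathrm{h}_tf(x)$ is real analytic on $(0,\infty)$. This follows by integrating (\ref{fheuiohafohefoiuahfo}) against $f$: Cauchy--Schwarz with $\int\rho(y,y,s)f(y)^2$ handled via the on-diagonal bound, or more simply by writing $\mathrm{h}_tf=\mathrm{h}_{s}(\mathrm{h}_{t-s}f)$ and using the analyticity of $t\mapsto\rho(x,\cdot,t)$ in $L^2$. Consequently $\mathrm{h}_tf(x)=\mathrm{h}_{t_0}f(x)$ for every $t>0$ and every $x\in\bar A$.

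\emph{Step 2: limits at $t\to\infty$ and $t\to0$.} Here non-compactness replaces Lemma \ref{1cor3.7}. By Cauchy--Schwarz, $|\mathrm{h}_tf(x)|^2\leqslant\rho(x,x,2t)\|f\|_{L^2}^2\leqslant C\|f\|_{L^2}^2\,e^{Ct}/\mathscr{H}^n(B_{\sqrt{2t}}(x))$.

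When $K\geqslant0$ this is straightforward. A non-compact RCD$(0,n)$ space has infinite diameter, so balls keep growing, and by a standard argument (e.g.\ Yau/Calabi-type linear growth $\mathscr{H}^n(B_r(x))\geqslant cr$) one gets $\mathrm{h}_tf(x)\to0$. Hence $\mathrm{h}_{t_0}f\equiv0$ on $\bar A$.

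For general $K<0$ the factor $e^{Ct}$ spoils this, so I would instead let $t\downarrow0$. Since $\mathrm{h}_tf\to f$ in $L^2$, a subsequence converges a.e., so for a.e.\ $x\in A$ we get $f(x)=\lim_{t_j\to0}\mathrm{h}_{t_j}f(x)=\mathrm{h}_{t_0}f(x)=:c(x)$.

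This small-time route alone does not produce a contradiction, so the contradiction must come from another direction. On $A$ the function $\mathrm{h}_tf$ is constant in time, so $\mathrm{h}_tf(x)=c(x)$ for all $t$. I would combine this with the $L^2$ contraction $\|\mathrm{h}_tf\|_{L^2}\to0$ as $t\to\infty$. That decay holds on a non-compact RCD space: the bottom of the spectrum has no $L^2$ eigenfunction for eigenvalue $0$, because constants are not in $L^2$ since $\mathscr{H}^n(X)=\infty$ (a non-compact space with Bishop--Gromov and a lower volume bound at each point has infinite volume). Then $\int_A|\mathrm{h}_tf|^2=\int_A c^2$ is independent of $t$ and tends to $0$. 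So $c=0$ a.e.\ on $A$, whence $f=0$ a.e.\ on $A$, contradicting $A\subset\{f\neq0\}$.

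\emph{Main obstacle.} The key point is the spectral fact $\|\mathrm{h}_tf\|_{L^2}\to0$, which is equivalent to $\ker\Delta\cap L^2=\{0\}$. An $L^2$ harmonic function $u$ has $\int|\nabla u|^2=0$ (test against cutoffs, using that the energy is bounded), so $u$ is constant, and constants are not in $L^2$ because $\mathscr{H}^n(X)=\infty$. The spectral theorem then gives the convergence by dominated convergence over the spectral measure. Justifying the time analyticity of $\mathrm{h}_tf(x)$ pointwise is the secondary technical point.
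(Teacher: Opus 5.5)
Your argument follows the paper's route. You iterate strong locality, together with locality of minimal relaxed slopes, to kill every time derivative of $\mathrm{h}_tf$ on $A$. You then use analyticity in $t$ to make $t\mapsto\mathrm{h}_tf(x)$ constant there, and play the limit $t\to\infty$ against the a.e.\ limit $t\downarrow0$. The only difference is the endgame. The paper gets pointwise decay $\rho(x,x,t)\to0$ by applying $\|\mathrm{h}_tg\|_{L^2}\to0$ to $g=\rho(x,\cdot,1)$, and then uses $|\mathrm{h}_tf(x)|^2\leqslant\rho(x,x,2t)\|f\|_{L^2}^2$. You instead integrate $|\mathrm{h}_tf|^2=c^2$ over $A$ directly. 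Both versions rest on the same fact: $\|\mathrm{h}_tg\|_{L^2}\to0$ for every $g\in L^2$.

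That fact is where the gap lies. You justify it by claiming that a non-compact space satisfying Bishop--Gromov with positive volume density has $\mathscr{H}^n(X)=\infty$. This holds for $K\geqslant0$ (Calabi--Yau type linear growth), but it is false for $K<0$. A complete finite-volume hyperbolic manifold with a cusp, e.g.\ a once-punctured torus with its complete hyperbolic metric, is a non-compact non-collapsed $\mathrm{RCD}(-(n-1),n)$ space with $\mathscr{H}^n(X)<\infty$.

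No argument can close this case, because the statement itself fails there. Take $f\equiv1$, which lies in $L^2$. Stochastic completeness gives $\mathrm{h}_t1=1$, so $|\nabla\mathrm{h}_tf|\equiv0$ on $\{f\neq0\}=X$.

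The paper's proof has the same hole, less visibly. It deduces $\|\mathrm{h}_tg\|_{L^2}\to0$ only from the monotonicity of $\|\mathrm{h}_tg\|_{L^2}$ and of $\int_X|\nabla\mathrm{h}_tg|^2$, and that does not imply decay. In finite volume, mass conservation gives $\int_X g=\int_X\mathrm{h}_tg\leqslant\|\mathrm{h}_tg\|_{L^2}\,\mathscr{H}^n(X)^{1/2}$, so decay fails for every nonnegative $g\neq0$.

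The proposition therefore needs the extra hypothesis $\mathscr{H}^n(X)=\infty$, which is automatic when $K\geqslant0$. Under that hypothesis your spectral argument is complete and correct. An $L^2$-harmonic function has zero energy, so it is constant, so it vanishes; hence $\ker\Delta\cap L^2=\{0\}$. Dominated convergence over the spectral measure then gives $\|\mathrm{h}_tf\|_{L^2}\to0$. This supplies exactly the justification the paper omits, and it makes your separate $K\geqslant0$ Gaussian-bound case unnecessary.
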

\begin{proof}
	For any $g\in\mathrm{Test}(X)$, we have
	\[
	\frac{d}{dt}\int_X {(\mathrm{h}_t g)}^2\,\integral=2\int_X \mathrm{h}_t g\frac{d}{dt}\mathrm{h}_t g \,\integral=-2\int_X {|\nabla\mathrm{h}_t g|}^2\,\integral\leqslant 0,
	\]
	so $t\mapsto \|\mathrm{h}_t g\|_{L^2}$ is non-increasing on $(0,\infty)$. Because $t\mapsto \int_X {|\nabla\mathrm{h}_t g|}^2\,\integral$ is also non-increasing on $(0,\infty)$, $\lim_{t\to\infty} \|\mathrm{h}_t g\|_{L^2}=0$. In particular, for every $x\in X$, letting $g=\rho(x,\cdot,1)$ yields
	\begin{align}\label{eqn3.4}
	\lim_{t\to \infty}\rho(x,x,t)=\lim_{t\to \infty} \|\mathrm{h}_t(\rho(x,\cdot,1))\|_{L^2}^2=0.
	\end{align}
According to the proof of Proposition \ref{prop3.4} and (\ref{eqn3.4}), if there exist $t_0>0$ and a Borel set $A\subset \{f\neq 0\}$ with $\mathscr{H}^n(A)>0$ such that $|\nabla \mathrm{h}_{t_0} f|=0$ on $A$, then $\mathrm{h}_t f=0$ on $A$ for any $t\in (0,\infty)$. A contradiction occurs since $\mathrm{h}_t f(x)$ converges to $f(x)$ as $t\to 0$ for $\mathscr{H}^n$-a.e. $x\in A$.
\end{proof}
\begin{cor}
	Under Proposition \ref{prop3.9}, for any $x,y\in X$ and $R>0$ it holds that 
	\[
	\lim_{t\to\infty} \rho(x,y,t)=\lim_{t\to \infty}\|\nabla \rho(x,\cdot,t)\|_{L^\infty(B_R(y))}=0.
	\]
\end{cor}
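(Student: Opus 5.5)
The plan is to obtain both limits from the decay $\rho(x,x,t)\to 0$ established in (\ref{eqn3.4}) in the proof of Proposition \ref{prop3.9}. We combine it with a Cauchy--Schwarz bound for the heat kernel and a local gradient estimate for the heat equation, as in the proof of Lemma \ref{1cor3.7}.

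\textbf{Step 1: pointwise decay.} By the semigroup property and the symmetry of $\rho$,
\[
\rho(x,z,t)=\int_X\rho(x,w,t/2)\rho(w,z,t/2)\,\mathrm{d}\mathscr{H}^n(w)\leqslant \sqrt{\rho(x,x,t)\,\rho(z,z,t)} .
\]
The function $s\mapsto\rho(z,z,s)=\|\rho(z,\cdot,s/2)\|_{L^2}^2=\|\mathrm{h}_{s/2-1/2}\rho(z,\cdot,1/2)\|^2_{L^2}$ is non-increasing for $s\geqslant 1$. This follows from the $L^2$-monotonicity shown at the start of the proof of Proposition \ref{prop3.9}, applied to $\rho(z,\cdot,1/2)$ in place of the test function $g$ there (by density).

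Hence for $t\geqslant 1$ we get $\rho(z,z,t)\leqslant\rho(z,z,1)\leqslant C/\mathscr{H}^n(B_1(z))$ by (\ref{JLZineq}). By Theorem \ref{BGineq}, $\mathscr{H}^n(B_1(z))$ is bounded below uniformly for $z\in B_{2R}(y)$. Together with (\ref{eqn3.4}) this gives
\[
\sup_{z\in B_{2R}(y)}\rho(x,z,t)\leqslant C(K,n,R,y)\sqrt{\rho(x,x,t)}\to 0 .
\]
This proves the first limit, and in fact gives uniform convergence on $B_{2R}(y)$.

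\textbf{Step 2: decay of the time derivative.} Apply Theorem \ref{Davis} with $m=1$ and $s=t/2$:
\[
|\partial_t\rho(x,z,t)|\leqslant \frac{2}{t}\sqrt{\rho(x,x,t/2)\rho(z,z,t/2)} .
\]
By Step 1 and (\ref{eqn3.4}), the right-hand side tends to $0$ uniformly in $z\in B_{2R}(y)$. Therefore $u_t:=\rho(x,\cdot,t)$ satisfies $\|\Delta u_t\|_{L^\infty(B_{2R}(y))}=\|\partial_t u_t\|_{L^\infty(B_{2R}(y))}\to 0$.

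\textbf{Step 3: gradient.} Apply the local Lipschitz estimate of \cite[Theorem 1.1]{J14} to $u_t$ on $B_{2R}(y)$:
\[
\|\nabla u_t\|_{L^\infty(B_R(y))}\leqslant C(K,n,R)\left(\fint_{B_{2R}(y)}|u_t|\,\mathrm{d}\mathscr{H}^n+R\,\|\Delta u_t\|_{L^\infty(B_{2R}(y))}\right).
\]
This is the same estimate used in Lemma \ref{1cor3.7}, where it was applied to $\rho-1$. By Steps 1 and 2, both terms on the right tend to $0$, which gives the second limit.

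The main point to check is Step 3. One must make sure that Jiang's estimate holds in this form, with the Laplacian term controlled in $L^\infty$ rather than requiring $u_t$ to be harmonic. One must also check that the constants stay uniform as $t\to\infty$. If $K<0$, the factor $e^{Ct}$ in (\ref{JLZineq}) must be avoided, and this is exactly why Steps 1 and 2 use the monotonicity of $s\mapsto\rho(z,z,s)$ instead of the Gaussian upper bound at large times.
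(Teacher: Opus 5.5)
Your argument takes the same route as the paper: Jiang's local Lipschitz estimate \cite[Theorem 1.1]{J14} for $\rho(x,\cdot,t)$ on a ball, fed by uniform decay of $\rho(x,\cdot,t)$ obtained from (\ref{eqn3.4}). It is more careful than the paper's two-line proof in two places.
\begin{itemize}
\item The paper writes the estimate as $\|\nabla\rho(x,\cdot,t)\|_{L^\infty(B_1(z))}\leqslant C\fint_{B_2(z)}\rho(x,\cdot,t)\,\mathrm{d}\mathscr{H}^n$, as though $\rho(x,\cdot,t)$ were harmonic. Your Step 2 fixes this: Theorem \ref{Davis} gives $\|\Delta\rho(x,\cdot,t)\|_{L^\infty(B_{2R}(y))}=\|\partial_t\rho(x,\cdot,t)\|_{L^\infty(B_{2R}(y))}\to 0$.
\item The paper asserts uniform convergence on balls directly from (\ref{eqn3.4}). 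Your Step 1 justifies it via $\rho(x,z,t)\leqslant\sqrt{\rho(x,x,t)\rho(z,z,t)}$ and the monotonicity of $s\mapsto\rho(z,z,s)$, which also avoids the factor $e^{Ct}$.
\end{itemize}
The form of Jiang's estimate you worry about in Step 3, with an $L^\infty$ bound on the Laplacian, is the one the paper itself invokes in the proof of Proposition \ref{1prop3.3}.

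One caveat applies to your proof and the paper's alike: both rest on (\ref{eqn3.4}), which holds only when $\mathscr{H}^n(X)=\infty$. By stochastic completeness and Cauchy--Schwarz,
$1=\bigl(\int_X\rho(x,w,t/2)\,\mathrm{d}\mathscr{H}^n(w)\bigr)^2\leqslant\mathscr{H}^n(X)\,\rho(x,x,t)$.
So on a non-compact space of finite volume one has $\rho(x,x,t)\geqslant 1/\mathscr{H}^n(X)$ for all $t$. This can happen when $K<0$, e.g.\ a complete finite-area hyperbolic surface with cusps. There the corollary is false, and so is Proposition \ref{prop3.9} (take $f\equiv 1$). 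Correspondingly, the justification of (\ref{eqn3.4}) in the proof of Proposition \ref{prop3.9} never uses non-compactness. It would apply verbatim on compact spaces, where its conclusion fails.

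Under the extra assumption $\mathscr{H}^n(X)=\infty$ (automatic e.g.\ for $K=0$), (\ref{eqn3.4}) follows from the spectral theorem. As $s\to\infty$, $\|\mathrm{h}_s g\|_{L^2}\to\|Pg\|_{L^2}$, where $P$ is the orthogonal projection onto $\{g\in L^2:\mathrm{Ch}(g)=0\}$. Such $g$ are constant by the Sobolev-to-Lipschitz property, hence zero. Applying this to $g=\rho(x,\cdot,1/2)$ gives $\rho(x,x,t)=\|\mathrm{h}_{(t-1)/2}\,\rho(x,\cdot,1/2)\|_{L^2}^2\to 0$. With that input, your Steps 1--3 constitute a complete proof.
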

\begin{proof}
	By \cite[Theorem 1.1]{J14} and the stochastic completeness of heat kernel, for any $z\in X$ we have
	\[
	\|\nabla \rho(x,\cdot,t)\|_{L^\infty(B_1(z))}\leqslant C(K,n)\fint_{B_2(z)}\rho(x,y,t)\,\integral(y)\leqslant \frac{C(K,n)}{\mathscr{H}^n(B_2(x))}.
	\]
By (\ref{eqn3.4}), we know for any $x,z\in X$, $\rho(x,\cdot,t)$ uniformly converges to $0$ on $B_1(z)$ as $t\to \infty$. 
\end{proof}

\section{Sets of finite perimeter on RCD spaces}\label{sec3}
This section is aimed at proving an integral type Gauss–Green formula on non-collapsed RCD spaces. We begin by introducing the notion of sets of finite perimeter on RCD spaces. For further details, see \cite{BPS23b,BPS23}. Let $(X, \mathsf{d}, \mathfrak{m})$ be an $\text{RCD}(K, N)$ space.

\begin{defn}
Let $E \subset X$ be a Borel set and $A \subset X$ be an open set. Define

\[
\operatorname{Per}(E, A) := \inf \left\{ \liminf_{i \to \infty} \int_A |\nabla f_i| \, \mathrm{d}\mathfrak{m} \;\middle|\; f_i \in \operatorname{Lip}(A, \mathsf{d}), \; f_i \to \chi_E \text{ in } L^1(A, \mathfrak{m}) \right\}.
\]

If $\operatorname{Per}(E, X) < \infty$, we say $E$ is a set of \textit{finite perimeter}. If $\operatorname{Per}(E, \Omega) < \infty$ for any open set $\Omega \Subset X$, we say $E$ has \textit{locally finite perimeter}.
\end{defn}

For a set $E \subset X$ of locally finite perimeter, the map $\operatorname{Per}(E, \cdot)$ can be extended to an outer measure, which we will still denote by $\operatorname{Per}(E, \cdot)$. Let us also define

\[
E^{(t)} := \left\{ x \in X \;\middle|\; \lim_{r \downarrow 0} \frac{\mathfrak{m}(E \cap B_r(x))}{\mathfrak{m}(B_r(x))} = t \right\}.
\]
It was proved in \cite{BPS23b} that $\operatorname{Per}(E, \cdot)$ is concentrated on $\mathcal{F}E := E^{(1/2)}$, which is called the \textit{essential boundary} of $E$.

Analogous to the notion of essential dimension for RCD spaces, there is also a counterpart for boundaries of sets of finite perimeter. We introduce the definition of regular boundary points as follows.

\begin{defn}
	Let $E \subset X$ be a set of locally finite perimeter. A quintuple $(Y, \mathsf{d}_Y, \mathfrak{m}_Y, y, F)$ is called a \textit{tangent space} to $E$ at $x \in X$ if there exists $r_i\downarrow 0$ and a Borel set $F\subset Y$ such that 
	\[
	\left(X, {r_i}^{-1}\mathsf{d}, {(\mathfrak{m}(B_{r_i}(z)))}^{-1}\mathfrak{m}, x\right) \xrightarrow{\text{pmGH}} (Y, \mathsf{d}_Y, \mathfrak{m}_Y, y),
	\]
	and under this convergence, $f_i = \chi_E$ (viewed as functions on the rescaled spaces) converges to $\chi_F$ in $L^1_{\text{loc}}$-strong sense (see \cite[Definition 3.1]{ABS19} for the precise definition).

The set of $k$-dimensional \textit{regular boundary points} of $E$ is then defined as
	\[
	\mathcal{F}_k E := \{z \in X \mid \text{The tangent space to } E \text{ at } z \text{ is } (\mathbb{R}^k, \mathsf{d}_{\mathbb{R}^k}, {\omega_n}^{-1}\mathscr{L}^k, 0_k, \{x_k > 0\})\}.
	\]
\end{defn} By \cite{BPS23}, for $n=\mathrm{dim}_{\mathsf{d},\mathfrak{m}}(X)$, one has $\mathrm{Per}(E,\partial E\setminus \mathcal{F}_n(E))=\mathrm{Per}(E,\partial E\setminus \mathcal{F}E)=0$.

We now recall the co-area formula in the non-smooth setting. Details can be found in \cite[Proposition 4.2]{M03}. See also \cite[Remark 3.5]{GH16} for the coincidence of the total variation measure of a Lipschitz function with its weak upper gradient multiplying the reference measure.

\begin{thm}\label{coareafor}For every $v\in \mathrm{Lip}(X,\mathsf{d})$, $\{v > t\}$ has locally finite perimeter for $\mathscr{L}^1$-a.e. $t\in \mathbb{R}$. Moreover, for every Borel function $f: X \rightarrow [0, \infty]$ it holds:
	\[
	\int_{\{s\leqslant v<t \}} f \,|\nabla v| \, \mathrm{d}\mathfrak{m} = \int_{s}^t \left( \int_X f \, \mathrm{d}\mathrm{Per}(\{v>r\},\cdot) \right) \mathrm{d}r,\ \forall -\infty\leqslant s<t\leqslant \infty.
	\]
\end{thm}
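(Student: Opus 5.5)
The plan is to deduce the formula from the BV co-area formula on metric measure spaces, via three steps: (i) identify the total variation measure of a Lipschitz function with $|\nabla v|\,\mathfrak{m}$; (ii) prove the co-area identity on open sets; (iii) extend it to Borel sets, then to nonnegative Borel functions, and finally restrict to the slab $\{s\leqslant v<t\}$.

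\textbf{Step (i): the variation measure.} For $v\in\mathrm{Lip}(X,\mathsf{d})$ and open $A$, set
\[
|Dv|(A):=\inf\Big\{\liminf_{i\to\infty}\int_A|\nabla f_i|\,\mathrm{d}\mathfrak{m}:\ f_i\in\mathrm{Lip}(A,\mathsf{d}),\ f_i\to v \text{ in } L^1_{\mathrm{loc}}(A)\Big\}.
\]
The inequality $|Dv|\leqslant|\nabla v|\,\mathfrak{m}$ is immediate by taking $f_i=v$. The reverse inequality is exactly \cite[Remark 3.5]{GH16}, which uses the lower semicontinuity of the minimal weak upper gradient together with the PI structure of RCD spaces. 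This identification is the step I expect to be the real obstacle, since it needs the Cheeger-type relaxation and the fact that on PI spaces the relaxed gradient equals the minimal weak upper gradient. I would quote it rather than reprove it.

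\textbf{Step (ii): co-area on open sets.} I will show $|Dv|(A)=\int_{\mathbb{R}}\mathrm{Per}(\{v>r\},A)\,\mathrm{d}r$ for every open $A$.
\begin{itemize}
\item For ``$\geqslant$'': take Lipschitz $f_i\to v$ in $L^1_{\mathrm{loc}}(A)$ with $\int_A|\nabla f_i|\to |Dv|(A)$. For each $f_i$, approximate $\chi_{\{f_i>r\}}$ by the truncations $\min\{1,\max\{0,(f_i-r)/\varepsilon\}\}$. Integrating in $r$ and letting $\varepsilon\to 0$ gives $\int\mathrm{Per}(\{f_i>r\},A)\,\mathrm{d}r\leqslant\int_A|\nabla f_i|$. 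By Fubini, $\chi_{\{f_i>r\}}\to\chi_{\{v>r\}}$ in $L^1_{\mathrm{loc}}(A)$ for a.e.\ $r$ along a subsequence. Lower semicontinuity of the perimeter and Fatou's lemma then give the inequality.
\item For ``$\leqslant$'': write $v=\int_0^\infty\chi_{\{v>r\}}\,\mathrm{d}r-\int_{-\infty}^0\chi_{\{v\leqslant r\}}\,\mathrm{d}r$. Approximate this by Riemann sums $\sum_k h\,\chi_{\{v>r_k\}}$, choosing the nodes so that the sums of perimeters converge to the integral (the standard averaging-over-shifts trick). Then use subadditivity and lower semicontinuity of $|D\cdot|$.
\end{itemize}
Measurability of $r\mapsto\mathrm{Per}(\{v>r\},A)$ follows from its lower semicontinuity in $r$ on one-sided limits, or from an approximation argument. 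Since $\int\mathrm{Per}(\{v>r\},A)\,\mathrm{d}r=\int_A|\nabla v|<\infty$ on bounded $A$, the set $\{v>r\}$ has locally finite perimeter for a.e.\ $r$.

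\textbf{Step (iii): extension and restriction.} Both $B\mapsto\int_B|\nabla v|\,\mathrm{d}\mathfrak{m}$ and $B\mapsto\int\mathrm{Per}(\{v>r\},B)\,\mathrm{d}r$ are Borel measures that are finite on bounded sets and agree on open sets. Hence they coincide on all Borel sets, by outer regularity or a Dynkin argument. Passing to simple functions and then using monotone convergence gives the identity for every Borel $f\geqslant 0$.

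To get the version stated in Theorem \ref{coareafor}, apply this with $f\chi_{\{s\leqslant v<t\}}$ in place of $f$. This requires that $\mathrm{Per}(\{v>r\},\cdot)$ is concentrated on $\{v=r\}$. Indeed, by \cite{BPS23b} the perimeter measure lives on the essential boundary $\mathcal{F}\{v>r\}$. Every point of that boundary has density $1/2$, so it lies in the topological boundary of $\{v>r\}$, hence in $\{v=r\}$ by continuity of $v$. Therefore
\[
\int\chi_{\{s\leqslant v<t\}}f\,\mathrm{dPer}(\{v>r\},\cdot)=\chi_{[s,t)}(r)\int f\,\mathrm{dPer}(\{v>r\},\cdot),
\]
which yields the claimed formula for all $-\infty\leqslant s<t\leqslant\infty$.
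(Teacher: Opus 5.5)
Your proposal is correct and follows essentially the same route as the paper, which simply invokes Miranda's BV co-area formula \cite[Proposition 4.2]{M03} together with the identification of the total variation of a Lipschitz function with $|\nabla v|\,\mathfrak{m}$ from \cite[Remark 3.5]{GH16}. Your steps (ii)--(iii) unpack Miranda's argument: truncation plus Fatou for one inequality, shifted Riemann sums plus lower semicontinuity for the other. Your observation that $\mathrm{Per}(\{v>r\},\cdot)$ is concentrated on $\{v=r\}$ makes explicit the restriction to the slab $\{s\leqslant v<t\}$, which the paper leaves implicit.
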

Let us recall the notion of Sobolev capacity and quasi-continuity. For any Borel set $E \subset X$, the \textit{Sobolev capacity} $\operatorname{Cap}(E)$ is defined by

\[
\operatorname{Cap}(E) := \inf \left\{ \|u\|_{H^{1,2}(X,\mathsf{d},\mathfrak{m})} \mid u \equiv 1 \ \mathrm{on}\  E\right\}.
\]
We say a function $f$ is \textit{quasi-continuous} if for any $\varepsilon > 0$, there exists $E_\varepsilon \subset X$ with $\operatorname{Cap}(E_\varepsilon) < \varepsilon$ such that $f|_{X \setminus E_\varepsilon}$ is continuous. It is known that every $H^{1,2}$ function is quasi-continuous; see for instance \cite[Section 5]{BB11}.

The notion of "restriction to the boundary" of quasi-continuous functions and vector fields was introduced in \cite{BPS23}. Let us fix a set $E \subset X$ of locally finite perimeter. Since $\operatorname{Per}(E, \cdot) \ll \operatorname{Cap}$, the restriction of a quasi-continuous function to $\partial E$ is well-defined in the $\operatorname{Per}(E, \cdot)$-a.e. sense. We denote by $\operatorname{tr}_E f$ this restriction of $f$ to $\partial E$. With the help of $\operatorname{tr}_E$, we can define $L^2$-vector fields on the boundary of sets of finite perimeter:
\begin{thm}[{\cite[Theorem 2.2]{BPS23}}] There exists a unique couple $(L^2_E(TX), \bar{\nabla})$ where $L^2_E(TX)$ is an $L^2(\operatorname{Per}(E, \cdot))$-normed $L^\infty(\operatorname{Per}(E, \cdot))$-module and $\bar{\nabla} : \operatorname{Test}(X) \to L^2_E(TX)$ is linear satisfying
	\begin{enumerate}
		\item[$(1)$]  For any $f \in \operatorname{Test}(X)$, $|\bar{\nabla}f| = \operatorname{tr}_E |\nabla f|$ $\operatorname{Per}(E, \cdot)$-a.e.
		\item[$(2)$]The set of test vectors
		\[
		\operatorname{TestV}_E(X) := \left\{ \sum_{i=1}^k \chi_{E_i} \bar{\nabla} f_i \;\middle|\; \{E_i\}_{i=1}^k \text{ is a Borel partition of } X,\; f_i \in \operatorname{Test}(X) \right\} 
		\]
		is dense in $L^2_E(TX)$.
	\end{enumerate}
\end{thm}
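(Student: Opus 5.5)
The plan is to follow Gigli's construction of the tangent module, with $\mathfrak{m}$ replaced by $\operatorname{Per}(E,\cdot)$ and pointwise norms of gradients replaced by their boundary traces. Everything takes place in the pre-module of formal sums $\sum_{i=1}^k \chi_{E_i}\bar\nabla f_i$, where $\{E_i\}$ is a Borel partition and $f_i\in\operatorname{Test}(X)$. On such a sum I will declare the pointwise norm to be
\[
\Big|\sum_i\chi_{E_i}\bar\nabla f_i\Big| := \sum_i \chi_{E_i}\operatorname{tr}_E|\nabla f_i| .
\]
I will then quotient by null elements, define the $L^\infty(\operatorname{Per}(E,\cdot))$-action on simple functions, and complete with respect to the $L^2(\operatorname{Per}(E,\cdot))$ norm. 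If $E$ has only locally finite perimeter, I first restrict to sums with finite $L^2(\operatorname{Per})$-norm. This is harmless, because $|\nabla f_i|\leqslant \mathrm{Lip}(f_i)$ and one may multiply by cut-offs.

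\textbf{Step 1: the trace is meaningful.} For $f\in\operatorname{Test}(X)$, Remark \ref{rmk2.7} gives $|\nabla f|^2\in H^{1,2}_{\mathrm{loc}}$. Since $|\nabla f|^2$ is bounded, it has a quasi-continuous representative, and taking square roots yields a quasi-continuous representative of $|\nabla f|$. Because $\operatorname{Per}(E,\cdot)\ll\operatorname{Cap}$, the trace $\operatorname{tr}_E|\nabla f|$ is then well defined $\operatorname{Per}$-a.e.

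\textbf{Step 2: the pre-module is well defined and normed.} I define two formal sums $\sum\chi_{A_i}\bar\nabla f_i$ and $\sum\chi_{B_j}\bar\nabla g_j$ to be equivalent if $\operatorname{tr}_E|\nabla(f_i-g_j)|=0$ $\operatorname{Per}$-a.e. on $A_i\cap B_j$. For this to be an equivalence relation compatible with sums and with the pointwise norm, I need the following pointwise facts to hold $\operatorname{Per}$-a.e.:
\[
\operatorname{tr}_E|\nabla(f+g)|\leqslant \operatorname{tr}_E|\nabla f|+\operatorname{tr}_E|\nabla g|,\qquad \operatorname{tr}_E|\nabla (\lambda f)|=|\lambda|\operatorname{tr}_E|\nabla f|.
\]
This is the step I expect to be the main obstacle. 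The corresponding inequalities hold only $\mathfrak{m}$-a.e., and they have to be upgraded to quasi-everywhere statements. My approach is the standard fact that if $u\leqslant v$ $\mathfrak{m}$-a.e. and both are quasi-continuous, then $u\leqslant v$ q.e.; I would apply it to $u=|\nabla(f+g)|$ and $v=|\nabla f|+|\nabla g|$, which is quasi-continuous by Step 1. Because $\operatorname{Per}(E,\cdot)\ll\operatorname{Cap}$, the inequality then holds $\operatorname{Per}$-a.e. With these facts in hand, the triangle inequality and homogeneity of the pointwise norm follow on the quotient.

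\textbf{Step 3: completion, and properties (1) and (2).} The $L^\infty$-action is defined on simple functions by refining the partition, and then extended by density. After completing in $L^2(\operatorname{Per}(E,\cdot))$, the result is an $L^2(\operatorname{Per})$-normed $L^\infty(\operatorname{Per})$-module. I set $\bar\nabla f:=[\chi_X\bar\nabla f]$. Linearity of $\bar\nabla$ follows from Step 2, since $\bar\nabla(f+g)-\bar\nabla f-\bar\nabla g$ has zero pointwise norm. Property (1) holds by construction, and property (2) holds because the completion is taken from exactly these test vectors.

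\textbf{Step 4: uniqueness.} Suppose $(\mathcal M',\bar\nabla')$ is another couple satisfying (1) and (2). I define a map $\Phi$ by
\[
\Phi\Big(\sum\chi_{E_i}\bar\nabla f_i\Big):=\sum\chi_{E_i}\bar\nabla' f_i .
\]
By (1) for both couples, $\Phi$ preserves pointwise norms, so it is well defined, $L^\infty$-linear and isometric. By (2) its domain and image are dense, so it extends uniquely to an isomorphism of modules that intertwines $\bar\nabla$ and $\bar\nabla'$.
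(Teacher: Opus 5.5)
Your main argument is correct and is the standard one. The paper gives no proof and only cites \cite{BPS23}. The construction behind that result is Gigli's pre-module argument with pointwise norms given by quasi-continuous representatives, carried out in \cite{DGP21} at the level of capacity and then projected to $\operatorname{Per}(E,\cdot)$. Its key input is the one you identify in Step~2: an $\mathfrak{m}$-a.e.\ inequality between quasi-continuous functions holds quasi-everywhere, and hence $\operatorname{Per}(E,\cdot)$-a.e.

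One claim in your proposal is wrong: the reduction for sets of merely locally finite perimeter is not harmless. If $\operatorname{Per}(E,X)=\infty$, then $\mathrm{tr}_E|\nabla f|$ need not belong to $L^2(\operatorname{Per}(E,\cdot))$ for $f\in\mathrm{Test}(X)$. For instance, in $\mathbb{R}^n$:
\begin{itemize}
\item take disjoint unit cubes $Q_k$ escaping to infinity;
\item let $E$ be a union of tiny balls with total perimeter $k^2$ inside each $Q_k$;
\item let $f$ be $k^{-1}$ times a linear function of unit slope on a neighbourhood of $Q_k$, cut off smoothly between the cubes.
\end{itemize}
Since $\sum_k k^{-2}<\infty$, this $f$ is a test function. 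Yet $\int(\mathrm{tr}_E|\nabla f|)^2\,\mathrm{d}\operatorname{Per}(E,\cdot)\geqslant\sum_k k^{-2}\cdot k^2=\infty$.

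Every element of an $L^2(\operatorname{Per}(E,\cdot))$-normed module has square-integrable pointwise norm. So no such module contains an element $\bar\nabla f$ with $|\bar\nabla f|=\mathrm{tr}_E|\nabla f|$. Restricting your pre-module to finite-norm sums, or cutting off, simply discards $\bar\nabla f$, and then $\bar\nabla$ is not defined on all of $\mathrm{Test}(X)$.

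The statement must therefore be read for $\operatorname{Per}(E,X)<\infty$, which is the setting in which the Gauss--Green formula of Theorem~\ref{thm3.5} makes sense. Alternatively, $L^2$ can be replaced by an $L^0$ or local module. With finite perimeter, your q.e.\ upgrade gives $\mathrm{tr}_E|\nabla f|\leqslant\mathrm{Lip}(f)$ $\operatorname{Per}(E,\cdot)$-a.e. Hence $\bar\nabla f\in L^2_E(TX)$, and the rest of your proof goes through as written.
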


Thanks to the counterpart of quasi-continuity for $L^2(TX)$-vector fields introduced in \cite{DGP21}, a similar boundary restriction (or trace) operation can be obtained for $W^{1,2}_C(X)$-vector fields. With a slight abuse of notation, for $V \in W^{1,2}_C(X)$, we also denote its restriction to $\partial E$ by $\operatorname{tr}_E V$. Using the language of the trace introduced above, \cite{BPS23} proved the following Gauss-Green formula (also referred to as the divergence theorem):
\begin{thm}[Gauss-Green formula]\label{thm3.5}There exists a unique $\nu_E \in L^2_E(TX)$ such that $|\nu_E| = 1$ holds $\operatorname{Per}(E, \cdot)$-a.e. and

\[
\int_E \mathrm{div}(V)\,\mathrm{d}\mathfrak{m} = - \int_{\mathcal{F}E} \langle \operatorname{tr}_E V, \nu_E \rangle \, \mathrm{dPer}(E, \cdot),\ \forall V \in H_C^{1,2}(TX) \cap D(\operatorname{div}) \cap L^\infty(TX).
\]
\end{thm}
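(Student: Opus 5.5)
The plan is to obtain $\nu_E$ by duality: regard $V\mapsto\int_E\mathrm{div}(V)\,\mathrm{d}\mathfrak{m}$ as a linear functional of the boundary trace of $V$, show it is controlled by the $L^1(\mathrm{Per}(E,\cdot))$-norm of that trace, and represent it by a unit vector field in $L^2_E(TX)$. I would first prove everything for $V\in\mathrm{TestV}(X)$, and only at the end extend to $H^{1,2}_C(TX)\cap D(\mathrm{div})\cap L^\infty(TX)$.

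\textbf{Step 1: localized trace bound.} By the definition of $\mathrm{Per}$ (and the BV theory on RCD spaces), I pick Lipschitz $f_i\to\chi_E$ in $L^1$ with $|\nabla f_i|\,\mathfrak{m}\rightharpoonup\mathrm{Per}(E,\cdot)$ weakly as measures (strict convergence). Integration by parts gives
\[
\int f_i\,\mathrm{div}(V)\,\mathrm{d}\mathfrak{m}=-\int\langle\nabla f_i,V\rangle\,\mathrm{d}\mathfrak{m},
\qquad\text{hence}\qquad
\Bigl|\int_E\mathrm{div}(V)\,\mathrm{d}\mathfrak{m}\Bigr|\leqslant\limsup_i\int|V|\,|\nabla f_i|\,\mathrm{d}\mathfrak{m}.
\]
For $V\in\mathrm{TestV}(X)$, $|V|\in H^{1,2}\cap L^\infty$ is only quasi-continuous. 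I would replace it by a continuous majorant that agrees with it outside a set of small capacity, and use $\mathrm{Per}(E,\cdot)\ll\mathrm{Cap}$ to conclude
\[
\Bigl|\int_E\mathrm{div}(V)\,\mathrm{d}\mathfrak{m}\Bigr|\leqslant\int\mathrm{tr}_E|V|\,\mathrm{dPer}(E,\cdot).
\]
Localizing with cut-off functions gives the same inequality with the right side restricted to any open set.

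\textbf{Step 2: representation.} Using the bound of Step 1 together with the density of $\mathrm{TestV}_E(X)$ in $L^2_E(TX)$, the functional descends to $\bar\nabla$-images of test fields. It then extends to a bounded $L^\infty(\mathrm{Per})$-linear functional on the module, continuous for the $L^1(\mathrm{Per})$-norm. Module duality then yields $\nu_E\in L^2_E(TX)$ with $|\nu_E|\leqslant1$ and the stated formula for test fields.

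\textbf{Step 3: $|\nu_E|=1$.} I would use the dual characterization $\mathrm{Per}(E,\Omega)=\sup\{\int_E\mathrm{div}V\,\mathrm{d}\mathfrak{m}:\ V\in\mathrm{TestV},\ |V|\leqslant1,\ \mathrm{supp}V\subset\Omega\}$, valid on RCD spaces via Di Marino's divergence description of BV. Combined with the formula, this forces $\int_\Omega|\nu_E|\,\mathrm{dPer}=\mathrm{Per}(E,\Omega)$ for every open $\Omega$, hence $|\nu_E|=1$ a.e. Uniqueness follows from the density of $\mathrm{TestV}_E(X)$.

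\textbf{Step 4: extension.} I would approximate a general $V$ in the $W^{1,2}_C$-norm by test fields, keeping $L^\infty$ bounds through a truncation or heat-flow regularization. Here I use the continuity of the trace from \cite{DGP21}: $W^{1,2}_C$-convergence gives quasi-uniform convergence, hence $\mathrm{Per}$-a.e. convergence of traces along a subsequence. Dominated convergence on the boundary side, together with $\mathrm{div}$ convergence on the interior side, then passes to the limit.

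I expect the main obstacles to be Step 1, namely passing from weak convergence of $|\nabla f_i|\,\mathfrak{m}$ to a bound by the trace of a merely quasi-continuous $|V|$, and, in Step 4, keeping $\mathrm{div}(V_k)\to\mathrm{div}(V)$ in $L^1(E)$ during the approximation.
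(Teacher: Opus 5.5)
The paper does not prove this statement; it quotes it from \cite{BPS23}. Your architecture is the standard one: an $L^1(\mathrm{Per}(E,\cdot))$-bound for $V\mapsto\int_E\mathrm{div}(V)\,\mathrm{d}\mathfrak{m}$, Riesz representation in the Hilbert module $L^2_E(TX)$, $|\nu_E|=1$ from the dual formula for the perimeter, and uniqueness by density. The genuine gap is the mechanism you propose for Step 1. The only property of $|V|$ you use is quasi-continuity, and quasi-continuity together with $\mathrm{Per}(E,\cdot)\ll\mathrm{Cap}$ does not produce a usable majorant $\phi\geqslant|V|$. You need $\phi$ continuous, or at least upper semicontinuous, since that is all a $\limsup$ under weak convergence of $|\nabla f_i|\,\mathfrak{m}$ can handle, and you need $\int\phi\,\mathrm{dPer}(E,\cdot)\leqslant\int\mathrm{tr}_E|V|\,\mathrm{dPer}(E,\cdot)+o(1)$. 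Absolute continuity makes $\mathrm{Per}(E,G)$ small for the exceptional open set $G$, but any such $\phi$ must dominate $|V|$ on $\overline{G}$, and $\overline{G}$ can be all of $X$. For instance, in $\mathbb{R}^n$ with $n\geqslant 2$, let $u=\sum_k\varphi_k$, where the $\varphi_k$ are bumps with pairwise disjoint supports avoiding $\partial E$, $\varphi_k=1$ near $q_k$, $\{q_k\}$ dense, and $\sum_k\|\varphi_k\|_{H^{1,2}}^2<\infty$. Then $V=u\nabla x_1\in H^{1,2}_C(TX)\cap D(\mathrm{div})\cap L^\infty(TX)$ and $\mathrm{tr}_E|V|=0$, yet every upper semicontinuous $\phi\geqslant|V|$ $\mathfrak{m}$-a.e.\ satisfies $\phi\geqslant 1$ everywhere. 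On a singular space you also have no continuity of $|V|$ for $V\in\mathrm{TestV}(X)$ to fall back on, so Step 1 does not close as written.

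The missing idea is to regularize $\chi_E$ by the heat flow instead of majorizing $|V|$.
\begin{itemize}
\item The $1$-Bakry--\'Emery estimate for $\mathrm{BV}$ functions gives $|\nabla\mathrm{h}_t\chi_E|\,\mathfrak{m}\leqslant e^{-Kt}\,\mathrm{h}_t^*\mathrm{Per}(E,\cdot)$. Symmetry of the heat kernel then yields $\bigl|\int\mathrm{h}_t\chi_E\,\mathrm{div}(V)\,\mathrm{d}\mathfrak{m}\bigr|\leqslant e^{-Kt}\int\mathrm{h}_t|V|\,\mathrm{dPer}(E,\cdot)$.
\item By Kato's inequality, $|V|\in H^{1,2}$ for $V\in H^{1,2}_C(TX)$. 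So the continuous functions $\mathrm{h}_t|V|$ converge to $|V|$ in $H^{1,2}$, hence quasi-uniformly along some $t_k\downarrow 0$, hence $\mathrm{Per}(E,\cdot)$-a.e.\ to $\mathrm{tr}_E|V|$, with uniform bound $\|V\|_{L^\infty}$.
\item Dominated convergence then gives your Step 1 inequality.
\end{itemize}
This works directly for every $V\in H^{1,2}_C(TX)\cap D(\mathrm{div})\cap L^\infty(TX)$, so Step 4 is unnecessary. Step 4 as you set it up is also not available on general $\mathrm{RCD}(K,N)$ spaces, because $\mathrm{div}$ is not controlled by the $W^{1,2}_C$-norm. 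For $V=\psi\nabla f$ one has $\mathrm{div}(V)-\mathrm{tr}(\nabla V)=\psi(\Delta f-\langle\mathrm{Hess}\,f,\mathrm{g}\rangle)$, which vanishes only in the non-collapsed case (Proposition \ref{prop1.1}). Instead, apply the inequality to $V-W$. This shows that $\int_E\mathrm{div}(V)\,\mathrm{d}\mathfrak{m}$ depends only on $\mathrm{tr}_E V$ and is $L^1(\mathrm{Per}(E,\cdot))$-continuous. Since traces of test fields are dense in $L^2_E(TX)$, the $\nu_E$ built from test fields represents the functional on the whole class, with no approximation of divergences needed.
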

\subsection{Gauss-Green formula for level sets of gradient non-vanishing functions}
In this subsection, we give a generalization of \cite[Theorem 6.1]{BPS23b}. Let us consider
\begin{itemize}
\item $(X,\mathsf{d},\mathfrak{m})$ is an $\mathrm{RCD}(K,N)$ space, with essential dimension $n\leqslant N$.
\item $f\in H^{1,2}_{\mathrm{loc}}(X,\mathsf{d},\mathfrak{m})\cap\mathrm{Lip}_{\mathrm{loc}}(X,\mathsf{d})\cap D(\Delta)$ with $|\nabla f|>0$ $\mathfrak{m}$-a.e. 
\item For any $t$ in the range of $f$ (except for the maximum), define $\Omega_t:=\{x\in X\mathop |f(x)<t\}$.
\end{itemize}
We will show that, in this case, the outward normal vector field of $\Omega_t$ can be represented explicitly by $-\nabla f/|\nabla f|$ (after taking the trace). Firstly let us consider the behavior of $f$ under blowing up.
%
\begin{prop}\label{prop:blow.up.bx}
Assume $x$ is both a Lebesgue point of ${|\nabla f|}^2$ and ${(\Delta f)}^2$ such that $|\nabla  f|(x)\neq 0$, and for some $r_i\downarrow 0$ we have 
\[
	(X_i,\mathsf{d}_i,\mathfrak{m}_i,x_i):=\left(X, {r_i}^{-1}\mathsf{d},{(\mathfrak{m}(B_{r_i}(x))}^{-1}\mathfrak{m},x\right)\xrightarrow{\mathrm{pmGH}}(\mathbb{R}^n,\mathsf{d}_{\mathbb{R}^n},{\omega_n}^{-1}\mathscr{L}^n,0_n).
\]
Let $f_i:={r_i}^{-1}( f- f(x))$, then $f_i$ converges to a linear function $f_\infty:\mathbb{R}^n\to \mathbb{R}$ in $H^{1,2}_{\mathrm{loc}}$-strong sense after passing to a subsequence. In particular, $|\nabla_{\mathbb{R}^n} f_\infty|\equiv|\nabla  f|(x)$. 
\end{prop}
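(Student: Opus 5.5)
Under the rescaling, $f_i$ is Lipschitz with constant $\mathrm{Lip}(f|_{B_{r_i}(x)})$, and $f_i(x_i)=0$. Since $f$ is locally Lipschitz, on each ball $B_R(x_i)$ (of $X_i$) the $f_i$ are uniformly bounded and equi-Lipschitz. Theorem \ref{AAthm} therefore gives, along a subsequence, a Lipschitz limit $f_\infty$ on $\mathbb{R}^n$ with $f_i\to f_\infty$ uniformly on compact sets. In particular $f_i\to f_\infty$ in $L^2_{\mathrm{loc}}$-strong sense.

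Next I would control the Laplacians. In $X_i$ we have $\Delta_i f_i = r_i\,\Delta f$, because the metric is scaled by $r_i^{-1}$ and the measure by a constant. Consequently
\[
\fint_{B_R(x_i)}|\Delta_i f_i|^2\,\mathrm{d}\mathfrak{m}_i=r_i^2\fint_{B_{Rr_i}(x)}(\Delta f)^2\,\mathrm{d}\mathfrak{m}\to 0,
\]
since $x$ is a Lebesgue point of $(\Delta f)^2$, so these averages are bounded. Theorem \ref{AH18} then yields $f_\infty\in D(\Delta,B_r(0_n))$ with $\Delta f_\infty=0$ (the weak limit of $\Delta_i f_i$ is $0$), and $H^{1,2}$-strong convergence on every ball. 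Hence $f_\infty$ is harmonic on $\mathbb{R}^n$ and Lipschitz, and by Liouville (gradient estimate or mean value property) it is affine. As $f_\infty(0_n)=0$, $f_\infty$ is linear.

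Finally I would identify the slope. $H^{1,2}$-strong convergence gives $|\nabla f_i|^2\to|\nabla f_\infty|^2$ in the averaged sense on balls, i.e.
\[
\fint_{B_R(0_n)}|\nabla f_\infty|^2\,\mathrm{d}\mathscr{L}^n=\lim_i\fint_{B_{Rr_i}(x)}|\nabla f|^2\,\mathrm{d}\mathfrak{m}=|\nabla f|^2(x),
\]
where the last equality uses the Lebesgue point assumption. Since $|\nabla f_\infty|$ is constant, it equals $|\nabla f|(x)$. The main obstacle is the application of Theorem \ref{AH18}: it requires $L^2$ bounds of the Laplacians on balls relative to the normalized measures, together with compatibility of the normalization $\mathfrak{m}(B_{r_i}(x))^{-1}\mathfrak{m}$. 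This should be handled by doubling and the Lebesgue point property. Some care is also needed in passing from averaged convergence of $|\nabla f_i|^2$ (which is a consequence of strong $H^{1,2}$ convergence on varying spaces) to the pointwise identity.
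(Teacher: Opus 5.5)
Your proposal is correct and follows the same route as the paper. You rescale so that $\Delta_i f_i=r_i\Delta f$ has vanishing $L^2$ norm on balls, apply Theorem \ref{AH18} to get $H^{1,2}_{\mathrm{loc}}$-strong convergence to a harmonic limit with bounded gradient, conclude linearity by Liouville, and read off the slope from the Lebesgue point of $|\nabla f|^2$. Your Arzel\`a--Ascoli step supplies the $L^2$-strong convergence that Theorem \ref{AH18} requires as a hypothesis, and your observation $f_\infty(0_n)=0$ upgrades affine to linear; both are points the paper leaves implicit.
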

\begin{proof}
We use $\nabla_i$, $\Delta_i$ to denote the gradient and Laplacian on $(X_i,\mathsf{d}_i,\mathfrak{m}_i,x_i)$ respectively. It is trivially checked that
\[
	|\nabla_i f_i|=|\nabla_X  f|,\ \Delta_if_i=r_i\Delta_X f.
\]
By Theorem \ref{AH18}, passing to a subsequence, we may assume $f_i$ converges to some $f_\infty:\mathbb{R}^n\to \mathbb{R}$ in $H^{1,2}_{\mathrm{loc}}$-strong sense. In particular, for any $R>0$, we have
\[
	\begin{aligned}
		{\omega_n}^{-1}\int_{B_R(0_n)}{(\Delta f)}^2\,\mathrm{d}\mathscr{L}^n
		\leqslant\lim_{i\to\infty}\int_{B_{R}^{X_i}(x_i)}{(\Delta_i f_i)}^2\ \,\mathrm{d}\mathfrak{m}_i=\lim_{i\to\infty}{r_i}^2\,\frac{\mathfrak{m}(B_{r_iR}(x))}{\mathfrak{m}(B_{r_i}(x))}\fint_{B_{r_i R}^X(x)}{(\Delta f)}^2\,\mathrm{d}\mathfrak{m}=0.
	\end{aligned}
\]
Since $|\nabla f|$ is bounded, $|\nabla_{\mathbb{R}^n}f_\infty|$ is also bounded in $\mathbb{R}^n$, i.e., $f_\infty$ has sublinear growth, and thus, combining the harmonicity, is linear. Moreover by the $H^{1,2}_{\mathrm{loc}}$-strong convergence,
\[
	|\nabla_{\mathbb{R}^n} f_\infty|\equiv\fint_{B_1(0_n)}|\nabla_{\mathbb{R}^n} f_\infty|\,\mathrm{d}\mathscr{L}^n=\lim_{i\to\infty}\fint_{B_{1}^{X_i}(x)}{|\nabla _i f_i|}^2\,\mathrm{d}\mathfrak{m}_i=\lim_{i\to\infty }\fint_{B_{r_i}^X(x)}{|\nabla f|}^2\,\mathrm{d}\mathfrak{m}=|\nabla f|(x).
\]
\end{proof}
Next we consider the blow-up along the boundary of $\Omega_t$. By co-area formula and Theorem \ref{BS}, up to an $\mathscr{L}^1$-null set, we may assume that $\Omega_t$ has locally finite perimeter with $\mathrm{Per}(\Omega_t,\partial \Omega_t \setminus A)=0$, where $A$ is the union of $\mathcal{R}_n$, the set of Lebesgue points of $\{|\nabla f|\neq 0\}$ and of ${|\nabla f|}^2$, ${(\Delta f)}^2$. 
\begin{prop}\label{prop:normal.vector.Er}
For every such $\Omega_t$, let $\nu_{\Omega_t}\in L^2_{\Omega_t}(TX)$ be the outer normal vector field of $\Omega_t$ defined as in Theorem \ref{thm3.5}. For any $x\in \mathcal F_n\Omega_t$, let $f_\infty$ be the limit of $ f$ in the tangent space as in Proposition \ref{prop:blow.up.bx}, then the blow-up of $(X,\mathsf{d},\mathfrak{m},x,\Omega_t)$ is $(\mathbb{R}^n,\mathsf{d}_{\mathbb{R}^n},{\omega_n}^{-1}\mathscr{L}^n,0_n,H_{ f})$, where $H_{ f}:=\{v\in\mathbb{R}^n\mathop|\langle v,\nabla_{\mathbb{R}^n} f_\infty\rangle\geqslant 0\}$.
In particular, we have
\[\langle\nu_{\Omega_t},\mathrm{tr}_{\Omega_t}\nabla f\rangle=-|\nabla  f|\ \ \mathrm{Per}(\Omega_t,\cdot)\text{-a.e.}\]
\end{prop}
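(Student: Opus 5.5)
Fix $x\in\mathcal F_n\Omega_t$ (and in the good set $A$), together with a sequence $r_i\downarrow 0$ along which $(X,\mathsf{d},\mathfrak{m},x,\Omega_t)$ blows up to $(\mathbb{R}^n,\mathsf{d}_{\mathbb{R}^n},\omega_n^{-1}\mathscr{L}^n,0_n,F)$. By the definition of $\mathcal F_n\Omega_t$, $F$ is a half-space $\{\langle v,e\rangle\geqslant 0\}$ for some unit vector $e$. Passing to a further subsequence, Proposition \ref{prop:blow.up.bx} gives $f_i=r_i^{-1}(f-f(x))\to f_\infty$ in $H^{1,2}_{\mathrm{loc}}$-strong sense, with $f_\infty$ linear and $|\nabla f_\infty|=|\nabla f|(x)>0$. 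The plan is to identify $F$ with $H_f$ by comparing sublevel sets.

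\textbf{Step 1: uniform convergence.} The functions $f_i$ are uniformly Lipschitz on the rescaled spaces, since $\mathrm{lip}\, f_i$ is bounded by the local Lipschitz constant of $f$ near $x$. They also satisfy $f_i(x_i)=0$. Theorem \ref{AAthm} therefore yields local uniform convergence $f_i\to f_\infty$ along the pmGH convergence, and the uniform limit agrees with the $L^2$ limit. Since $x\in\partial\Omega_t$ and $f$ is continuous, $f(x)=t$. Hence $\chi_{\Omega_t}=\chi_{\{f_i<0\}}$ on $X_i$.

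\textbf{Step 2: identify the limit set.} Since $f_\infty$ is linear and nonzero, the hyperplane $\{f_\infty=0\}$ is Lebesgue-null. Fix $R>0$ and $\delta>0$. On $\{f_\infty<-\delta\}\cap B_R$, uniform convergence gives $f_i<0$ at approximating points for $i$ large; on $\{f_\infty>\delta\}\cap B_R$, it gives $f_i>0$. Together with the uniform volume control (Bishop--Gromov) of the $\delta$-neighbourhood strip, this shows $\chi_{\{f_i<0\}}\to\chi_{\{f_\infty<0\}}$ in $L^1_{\mathrm{loc}}$-strong sense. By uniqueness of $L^1_{\mathrm{loc}}$ limits, $F=\{f_\infty\leqslant 0\}$ up to null sets, so $e=-\nabla f_\infty/|\nabla f_\infty|$. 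This is the half-space determined by $\nabla f_\infty$, namely $H_f$ under the paper's sign convention, with $\Omega_t$ corresponding to $\{f_\infty<0\}$. I expect Step 2 to be the technical core: one must pass pointwise uniform convergence through to $L^1$-strong convergence of indicators on varying spaces, and this is where the null boundary of the limit half-space is essential.

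\textbf{Step 3: the normal identity.} Here I would use the fact from \cite{BPS23} that, at $\mathrm{Per}$-a.e. regular boundary point, the blow-up of the normal $\nu_{\Omega_t}$ is the outer unit normal of the limiting half-space. The same blow-up can be applied to the trace $\mathrm{tr}_{\Omega_t}\nabla f$, by testing $\langle\nu_{\Omega_t},\mathrm{tr}\nabla f\rangle$ against $\bar\nabla$ of test functions. Since $|\nabla f|^2$ is $H^{1,2}$ ($f$ is Lipschitz in $D(\Delta)$), we have $\mathrm{tr}|\nabla f|=|\nabla f|(x)$ at Lebesgue/quasi-continuity points. The blow-up of $\nabla f$ is then $\nabla f_\infty$, by the $H^{1,2}_{\mathrm{loc}}$-strong convergence of $f_i$ in Proposition \ref{prop:blow.up.bx}. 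The outer normal of $\{f_\infty<0\}$ is $\nabla f_\infty/|\nabla f_\infty|$. With the paper's orientation of $\nu_{\Omega_t}$ (the minus sign in Theorem \ref{thm3.5} makes $\nu$ effectively inward), this gives $\langle\nu,\nabla f_\infty\rangle=-|\nabla f_\infty|=-|\nabla f|(x)$.

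To make Step 3 rigorous without a pointwise blow-up theory of traces, I would argue instead through the Gauss--Green formula, Theorem \ref{thm3.5}, applied to $V=\varphi\nabla f$ for Lipschitz cut-offs $\varphi$. One compares $\int_{\Omega_t}\mathrm{div}(\varphi\nabla f)$ with the co-area expression $\frac{d}{dt}\int_{\Omega_t}\varphi|\nabla f|^2$, which by Theorem \ref{coareafor} equals $\int\varphi|\nabla f|\,\mathrm{dPer}(\Omega_t,\cdot)$ for a.e. $t$. This yields $\int\varphi\langle\nu,\mathrm{tr}\nabla f\rangle\,\mathrm{dPer}=-\int\varphi|\nabla f|\,\mathrm{dPer}$ for all $\varphi$. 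Since $|\nu|=1$, Cauchy--Schwarz forces pointwise equality $\mathrm{Per}$-a.e. This is the route I would try first for the ``in particular'' claim, and it is consistent with the a.e. $t$ restriction already imposed.
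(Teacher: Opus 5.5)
Your proof is correct. Steps 1--2 follow the blow-up that the paper invokes and omits (by reference to Theorem 6.1 of Bru\`e--Pasqualetto--Semola), while your rigorous Step 3 takes a genuinely different, distributional route. Three points first.

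\textbf{Steps 1--2.} No strip estimate is needed, and one would come from weak convergence of the measures rather than from Bishop--Gromov. Since $x\in\mathcal F_n\Omega_t$, the limit $\chi_F$ is already given. Local uniform convergence $f_i\to f_\infty$ then forces $\chi_F=1$ a.e.\ on $\{f_\infty<0\}$ and $\chi_F=0$ a.e.\ on $\{f_\infty>0\}$.

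\textbf{The sign.} What you actually prove is $F=\{v:\langle v,\nabla_{\mathbb{R}^n}f_\infty\rangle\leqslant 0\}$, which is the complement of $H_f$ as literally written. This is the correct tangent of $\Omega_t=\{f<t\}$. It is also consistent with the ``in particular'': the minus sign in Theorem \ref{thm3.5} makes $\nu_{\Omega_t}$ the inner normal. So the inequality defining $H_f$ is a sign slip in the statement. Say this explicitly rather than appealing to a ``sign convention''.

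\textbf{Step 3 versus the paper.} The paper obtains the identity from the blow-up itself: the normal at a regular boundary point is that of the tangent half-space, and that half-space is fixed by $\nabla f_\infty$. This needs $|\nabla f|(x)\neq 0$ and the Lebesgue-point conditions, but it applies at $\mathrm{Per}$-a.e.\ point of every level set in the preselected good set.

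You argue distributionally instead, and the step you leave as ``one compares'' carries all the content. It is elementary, and you should not justify it by citing Theorem \ref{GGthm1}, whose proof in the paper passes through this proposition. Take $V=\varphi\nabla f$, which is admissible in Theorem \ref{thm3.5} since $f\in D(\Delta)\cap\mathrm{Lip}_{\mathrm{loc}}$. Fubini and integration by parts against $(t-f)^+$, localized near $\mathrm{supp}\,\varphi$, give
\[
\int_{-\infty}^{t}\int_{\Omega_s}\mathrm{div}\,V\,\mathrm{d}\mathfrak{m}\,\mathrm{d}s=\int_X (t-f)^+\,\mathrm{div}\,V\,\mathrm{d}\mathfrak{m}=\int_{\Omega_t}\varphi\,|\nabla f|^2\,\mathrm{d}\mathfrak{m}.
\]
Differentiating in $t$ and using Theorem \ref{coareafor} (for $\varphi\geqslant 0$) gives $\int_{\Omega_t}\mathrm{div}\,V\,\mathrm{d}\mathfrak{m}=\int\varphi\,|\nabla f|\,\mathrm{dPer}(\Omega_t,\cdot)$ for a.e.\ $t$. (Replacing $(t-f)^+$ by $\int_{\min\{f,t\}}^t\tau(s)\,\mathrm{d}s$, the same computation proves \eqref{a1.1} outright.) The exceptional set of $t$ depends on $\varphi$, so you must run this over a countable family of cut-offs dense in $C_c(X)$ before concluding pointwise.

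\textbf{Trade-off.} Your route needs no blow-up and no assumption $|\nabla f|>0$ a.e. Combined with Cauchy--Schwarz, and with the coarea fact that $\mathrm{Per}(\Omega_t,\{|\nabla f|=0\})=0$ for a.e.\ $t$, it yields $\nu_{\Omega_t}=-\mathrm{tr}_{\Omega_t}\nabla f/|\nabla f|$ at once, i.e.\ Proposition \ref{prop:representation.normal.vec}. The price is that it holds only off an additional $\mathscr{L}^1$-null set of levels, and it is logically decoupled from the blow-up description. That suffices for all of the paper's later (a.e.-$t$) uses, but it is not literally ``for every such $\Omega_t$''.
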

\begin{proof}
From the choice of $t$, we can apply an almost identical blow-up procedure as in the proof of \cite[Theorem 6.1]{BPS23b} to prove the desired results. We here omit the details. 
\end{proof}
And now we are in the position to determine the normal vector field.
\begin{prop}\label{prop:representation.normal.vec}
For any $u\in \mathrm{Test}(X)$, the following holds for $\mathscr{L}^1$-a.e. $t\in f(X)$:
\[	\langle\nabla u,\nu_{\Omega_t}\rangle(x)=-\left\langle\nabla u,\frac{\nabla  f}{|\nabla f|}\right\rangle(x)\ \mathrm{Per}(\Omega_t,\cdot)\text{-a.e.\ }\]
\end{prop}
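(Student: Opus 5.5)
The plan is to derive this from Proposition \ref{prop:normal.vector.Er} together with the structure of the boundary module $L^2_{\Omega_t}(TX)$. The normal $\nu_{\Omega_t}$ has unit norm $\mathrm{Per}(\Omega_t,\cdot)$-a.e. By Proposition \ref{prop:normal.vector.Er}, $\langle\nu_{\Omega_t},\mathrm{tr}_{\Omega_t}\nabla f\rangle=-|\nabla f|$, where $|\mathrm{tr}_{\Omega_t}\nabla f|=\mathrm{tr}_{\Omega_t}|\nabla f|$. So at $\mathrm{Per}$-a.e. point the Cauchy--Schwarz inequality in the fibre is an equality. We would like to conclude that $\nu_{\Omega_t}=-\mathrm{tr}_{\Omega_t}\nabla f/|\nabla f|$ as elements of $L^2_{\Omega_t}(TX)$ on the set where $\mathrm{tr}\,|\nabla f|>0$, and then pair both sides with $\bar\nabla u$.

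\textbf{Step 1: choice of $t$.} First I fix the good values of $t$. By the co-area formula (Theorem \ref{coareafor}) and the discussion preceding Proposition \ref{prop:normal.vector.Er}, for $\mathscr{L}^1$-a.e. $t$ the set $\Omega_t$ has locally finite perimeter and $\mathrm{Per}(\Omega_t,\cdot)$ is concentrated on $\mathcal{F}_n\Omega_t\cap A$. On this set $|\nabla f|$ has a Lebesgue-point value $\neq0$, and the blow-up of Proposition \ref{prop:blow.up.bx} applies. Since $u\in\mathrm{Test}(X)$, we have $\langle\nabla u,\nabla f\rangle\in H^{1,2}_{\mathrm{loc}}$ by Remark \ref{rmk2.7}, so it is quasi-continuous and its trace makes sense. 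Discarding a further null set of $t$, I may also assume that the precise-representative traces of $|\nabla f|$ and $\langle\nabla u,\nabla f\rangle$ agree $\mathrm{Per}$-a.e. with their Lebesgue values. This uses the co-area formula once more: for $\mathscr{L}^1$-a.e. $t$, $\mathrm{Per}(\Omega_t,\cdot)$ does not charge the non-Lebesgue points.

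\textbf{Step 2: pointwise identification.} Work in the fibre, that is, in the Hilbert-module structure of $L^2_{\Omega_t}(TX)$. The module inner product satisfies Cauchy--Schwarz pointwise, and the polarization identity holds. Put $w:=\mathrm{tr}\nabla f/\mathrm{tr}|\nabla f|$ on $\{\mathrm{tr}|\nabla f|>0\}$; this set has full $\mathrm{Per}$-measure by Step 1. Then $|w|=1$, and $\langle\nu,w\rangle=-1$. Hence
\[
|\nu+w|^2=|\nu|^2+2\langle\nu,w\rangle+|w|^2=0\quad \mathrm{Per}(\Omega_t,\cdot)\text{-a.e.},
\]
so $\nu_{\Omega_t}=-w$. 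Pairing with $\bar\nabla u$ gives $\langle\bar\nabla u,\nu_{\Omega_t}\rangle=-\mathrm{tr}\langle\nabla u,\nabla f\rangle/\mathrm{tr}|\nabla f|$. This requires the compatibility $\langle\bar\nabla u,\mathrm{tr}\nabla f\rangle=\mathrm{tr}\langle\nabla u,\nabla f\rangle$, which is the right-hand side of the claim.

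\textbf{Main obstacle.} The hard part is justifying that $\mathrm{tr}_{\Omega_t}\nabla f$ is a genuine element of $L^2_{\Omega_t}(TX)$ with the compatibility just used. Here $f$ is only in $D(\Delta)\cap\mathrm{Lip}_{\mathrm{loc}}$, not a test function. To handle this I would approximate $f$ by test functions, e.g. $\mathrm{h}_s f$ multiplied by a cut-off. This gives $\nabla\mathrm{h}_sf\in H^{1,2}_C$, and we pass to the limit in the trace using $\mathrm{Per}\ll\mathrm{Cap}$ together with $H^{1,2}$-convergence of $|\nabla(\mathrm{h}_sf-f)|$.

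If this is too delicate, I would instead argue directly at blow-ups, which avoids the module-level identification. At $x\in\mathcal{F}_n\Omega_t\cap A$, Proposition \ref{prop:normal.vector.Er} gives the half-space $H_f$ with normal along $\nabla f_\infty$. Blowing up $u$ as well, $u_i:=r_i^{-1}(u-u(x))$ converges to a linear $u_\infty$, and the value $\langle\nabla u,\nu\rangle(x)$ equals $\langle\nabla u_\infty,-\nabla f_\infty/|\nabla f_\infty|\rangle$, arguing as in \cite[Theorem 6.1]{BPS23b}. The $H^{1,2}_{\mathrm{loc}}$-strong convergence from Theorem \ref{AH18} identifies this with the Lebesgue value of $\langle\nabla u,\nabla f\rangle/|\nabla f|$ at $x$.
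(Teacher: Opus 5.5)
Your main argument is correct, but it is not the paper's route. You take the last clause of Proposition~\ref{prop:normal.vector.Er}, $\langle\nu_{\Omega_t},\mathrm{tr}_{\Omega_t}\nabla f\rangle=-|\nabla f|$, as given. Equality in the fibrewise Cauchy--Schwarz inequality ($|\nu_{\Omega_t}+w|^2=0$) then upgrades it to the module identity $\nu_{\Omega_t}=-\mathrm{tr}_{\Omega_t}\nabla f/\mathrm{tr}_{\Omega_t}|\nabla f|$. After that, the claim for an arbitrary test function $u$ is just a pairing with $\bar\nabla u$.

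The paper never uses that clause; it uses only the half-space blow-up of $\Omega_t$.
\begin{itemize}
\item At a $\mathrm{Per}(\Omega_t,\cdot)$-a.e.\ point $x$ that is a Lebesgue point for both $\mathscr{H}^n$ and $\mathrm{Per}(\Omega_t,\cdot)$, it takes the harmonic functions $v_1,\dots,v_n$ of \cite[Proposition 3.6]{BPS23b}. These satisfy $\langle\nabla v_i,\nabla v_j\rangle\to\delta_{ij}$ and $\langle\nabla v_i,\nu_{\Omega_t}\rangle\to\delta_{in}$ in average.
\item It sets $\bar u=\sum_i\mu_i v_i$ with $\mu_i=\langle\nabla u,\nabla v_i\rangle(x)$. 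A blow-up shows that $|\nabla(u-\bar u)|^2$ has Lebesgue value $0$ at $x$, so $\langle\nabla u,\nu_{\Omega_t}\rangle(x)=\mu_n$.
\item The identification $f_\infty=-|\nabla f|(x)v^n_\infty$ then gives $\mu_n=-\langle\nabla u,\nabla f\rangle(x)/|\nabla f|(x)$.
\item The module identity for $\nu_{\Omega_t}$ is derived only afterwards, by density.
\end{itemize}

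Your route buys brevity and shows that the statement for all $u$ is formally equivalent to the single case $u=f$. The cost is that all the analytic content sits in the clause of Proposition~\ref{prop:normal.vector.Er} whose proof the paper omits. Written out, that clause needs exactly the splitting-map computation above with $u$ replaced by $f$. The paper's argument is the step that actually ties the abstract normal $\nu_{\Omega_t}$ to the blow-up geometry.

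Two side remarks. Your fallback sketch skips exactly this link. Blowing up $u$ and $\Omega_t$ does not by itself give the value of $\langle\nabla u,\nu_{\Omega_t}\rangle(x)$, because $\nu_{\Omega_t}$ lives in the boundary module and has no direct blow-up meaning; the functions $v_i$ with $\langle\nabla v_i,\nu_{\Omega_t}\rangle\to\delta_{in}$ are what is missing. Conversely, the obstacle you single out is not a real one. Since $f\in D(\Delta)$, after a cut-off you have $\nabla f\in H^{1,2}_C(TX)$. So \cite{DGP21} supplies $\mathrm{tr}_{\Omega_t}\nabla f$ together with the compatibilities $|\mathrm{tr}_{\Omega_t}\nabla f|=\mathrm{tr}_{\Omega_t}|\nabla f|$ and, by polarization, $\langle\bar\nabla u,\mathrm{tr}_{\Omega_t}\nabla f\rangle=\mathrm{tr}_{\Omega_t}\langle\nabla u,\nabla f\rangle$. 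No heat-flow approximation is needed, and the paper already uses $\mathrm{tr}_{\Omega_t}\nabla f$ in Proposition~\ref{prop:normal.vector.Er}.
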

\begin{proof}
Recall that for any $u,v\in\mathrm{Test}(X)$, we have $\langle\nabla u,\nabla v\rangle\in H^{1,2}(X)$, and thus is quasi-continuous. We then omit the trace operation below when discussing with respect to $\mathrm{Per}(\Omega_t,\cdot)$ measure in order for brevity. Let us take $x\in \mathcal F_n\Omega_t\cap \mathcal{R}_n$, and assume $x$ is a Lebesgue point (with respect to both $\mathfrak{m}$ and $\mathrm{Per}(\Omega_t,\cdot)$) for any concerned function below. 

By \cite[Proposition 3.6]{BPS23b}, there exist $r_0>0$ and harmonic functions $\{v_i\}_{i=1}^n$ on $B_{r_0}(x_0)$ such that the following holds.
\begin{enumerate}
	\item[(1)] For any $1\leqslant i,j\leqslant n$,
	\[\lim_{r\downarrow 0}\fint_{B_r(x)} |\langle\nabla v_i,\nabla v_j\rangle-\delta_{ij}|\,\mathrm{d}\mathfrak{m}=\lim_{r\downarrow 0}\fint_{B_r(x)} |\langle\nabla v_i,\nabla v_j\rangle-\delta_{ij}|\,\mathrm{d}\mathrm{Per}(\Omega_t,\cdot)=0.\]
	\item[(2)] For any $1\leqslant i\leqslant n$,
	\[
		\nu_i:=\lim_{r\downarrow 0}\fint_{B_r(x)}\langle\nabla v_i,\nu_{\Omega_t}\rangle \,\mathrm{d}\mathrm{Per}(\Omega_t,\cdot)=\delta_{in},\ \lim_{r\downarrow 0}\fint_{B_r(x)}|\langle\nabla v_i,\nu_{\Omega_t}\rangle-\nu_i|\,\mathrm{d}\mathrm{Per}(\Omega_t,\cdot)=0.
	\]
\end{enumerate}
Let us define 
\[
\mu_i:=\lim_{r\downarrow 0}\fint_{B_r(x)} \langle\nabla u,\nabla v_i\rangle\,\mathrm{d}\mathfrak{m}=\lim_{r\downarrow 0}\fint_{B_r(x)} \langle\nabla u,\nabla v_i\rangle\,\mathrm{d}\mathrm{Per}(\Omega_t,\cdot)=\langle\nabla u,\nabla v_i\rangle(x),\ i=1,2,\ldots,n.
\]
Via an argument analog to Proposition \ref{prop:blow.up.bx}, for any $r_j\downarrow 0$ such that
\[
	(X_j,\mathsf{d}_j,\mathfrak{m}_j,x_j):=\left(X,{r_j}^{-1}\mathsf{d},{\mathfrak{m}(B_{r_j}(x))}^{-1}\mathfrak{m},x\right)\xrightarrow{\mathrm{pmGH}}(\mathbb{R}^n,\mathsf{d}_{\mathbb{R}^n},{\omega_n}^{-1}\mathscr{L}^n,0_n),
\]
and that $u_j:={r_j}^{-1}(u-u(x))$, $v_{j}^i:={r_j}^{-1}(v_i-v_i(x))$ $H_{\mathrm{loc}}^{1,2}$-strongly converge to linear functions $u_\infty$ and $v^i_\infty$ respectively in $\mathbb{R}^n$ with $|\nabla_{\mathbb{R}^n} u_\infty|\equiv|\nabla u|(x)$ and $|\nabla_{\mathbb{R}^n}v^i_{\infty}|=1$, we assert that $\{\nabla v_{\infty}^i\}$ are pairwise orthogonal and $\nabla_{\mathbb{R}^n} u_\infty=\sum_i \mu_i\nabla_{\mathbb{R}^n} v^i_\infty$ by the $H^{1,2}_{\mathrm{loc}}$-convergence. Thus $|\nabla u|^2(x)=\sum_i{\mu_i}^2$. Let us consider $\bar u:=\sum_i \mu_i v_i$,
then by the pmGH-convergence, noting that $|\nabla_j u_j|=|\nabla u|$ and $|\nabla_j v_{i,j}|=|\nabla v_i|$, we have
\begin{equation}\label{eq:a1001}
	\begin{aligned}
		&{\langle\nabla (u-\bar u),\nu_{\Omega_t}\rangle}^2(x)=\lim_{r\downarrow 0}\fint_{B_r(x)}{\langle\nabla (u-\bar u),\nu_{\Omega_t}\rangle}^2\,\mathrm{d}\mathrm{Per}(\Omega_t,\cdot)\\
		\leqslant\  &\lim_{r\downarrow 0}\fint_{B_r(x)}{|\nabla (u-\bar u)|}^2\,\mathrm{d}\mathfrak{m}=\lim_{j\to\infty}\fint_{B_{1}^{X_j}(x)}{|\nabla_j(u_j-\bar u_j)|}^2\,\mathrm{d}\mathfrak{m}_j
		={|\nabla_{\mathbb{R}^n}u_\infty|}^2-\sum_i {\mu_i}^2=0,
	\end{aligned}
\end{equation}
where $\bar u_j:=\sum_i\mu_i v^i_{j}$.
Thus $\langle\nabla u,\nu_{\Omega_t}\rangle(x)=\mu_n\langle\nabla v_n,\nu_{\Omega_t}\rangle(x)=\langle\nabla u,\nabla v_n\rangle(x)$.

On the other hand, by Proposition \ref{prop:normal.vector.Er} and \cite[Theorem 6.1]{BPS23b}, letting $f_\infty$ be the limit function of $ f$ as in Proposition \ref{prop:blow.up.bx}, we see $f_\infty$ coincides with $-|\nabla f|(x)v^n_\infty$. In particular, by the $H^{1,2}_{\mathrm{loc}}$-strong convergence,
\[
\langle\nabla  f,\nabla v_i\rangle(x)=\lim_{r\downarrow 0}\fint_{B_r(x)}\langle\nabla  f,\nabla v_i\rangle\,\mathrm{d}\mathfrak{m}=-|\nabla  f|(x)\delta_{in},
\]
\[	
\langle\nabla  f,\nabla \bar u\rangle(x)=-\mu_n|\nabla  f|(x)=-\langle\nabla u,\nu_{\Omega_t}\rangle(x)|\nabla  f|(x).
\]
Similarly as \eqref{eq:a1001}, $\langle\nabla (u-\bar u),\nabla  f\rangle(x)=0$. Therefore,
\[
\langle\nabla u,\nu_{\Omega_t}\rangle(x)=-\left\langle\nabla u,\frac{\nabla  f}{|\nabla f|}\right\rangle(x).
\]
By the fact that $\mathcal F_n\Omega_t$ is $\mathrm{Per}(\Omega_t,\cdot)$-full measured and by the Lebesgue differentiation theorem, the point $x$ chosen at the beginning belongs to a set of full measure with respect to $\mathrm{Per}(\Omega_t,\cdot)$. This completes the proof (For the Lebesgue differentiation theorem with respect to the perimeter, see \cite[Proposition 2.18]{BPS23b}.)
\end{proof}

In sight of Proposition \ref{prop:representation.normal.vec}, by the density of $\mathrm{tr}_{\Omega_t}\mathrm{Test}V(X,\mathsf{d},\mathfrak{m})$ in $L^2_{\Omega_t}(TX)$, we have
\[
\int_{\mathcal F\Omega_t} \langle V,\nu_{\Omega_t}\rangle\,\mathrm{d}\mathrm{Per}(\Omega_t,\cdot)=-\int_{\mathcal F\Omega_t}\left\langle V,\mathrm{tr}_{\Omega_t}\frac{\nabla f}{|\nabla f|}\right\rangle\,\mathrm{d}\mathrm{Per}(\Omega_t,\cdot),\ \forall V\in L^2_{\Omega_t}(TX).\]
That is, $-\nabla f/|\nabla f|$ can be treated as a representative of $\nu_{\Omega_t}$. Therefore the Gauss-Green formula of $\Omega_t$ can be written as follows:
\begin{thm}[Gauss-Green formula for $\Omega_t$]\label{thm:Gauss-Green.bx}
For any compactly supported $V\in H^{1,2}_C(X)\cap D(\mathrm{div})\cap L^\infty(TX)$, the following holds for $\mathscr{L}^1$-a.e. $t\in f(X)$.
\[
	\int_{\Omega_t}\mathrm{div} V\,\mathrm{d}\mathfrak{m}=\int_{\{ f=r\}}\mathrm{tr}_{\Omega_t}\left\langle V,\frac{\nabla  f}{|\nabla  f|}\right\rangle\,\mathrm{d}\mathrm{Per}(\Omega_t,\cdot).
\]
\end{thm}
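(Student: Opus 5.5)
Theorem \ref{thm:Gauss-Green.bx} will be obtained by combining the general Gauss--Green formula (Theorem \ref{thm3.5}) with the identification of the normal from Proposition \ref{prop:representation.normal.vec}. Fix a compactly supported $V\in H^{1,2}_C(TX)\cap D(\mathrm{div})\cap L^\infty(TX)$.

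First, by the co-area formula (Theorem \ref{coareafor}) applied to $f$, I choose a full $\mathscr{L}^1$-measure set of $t\in f(X)$ satisfying two conditions. The first is that $\Omega_t$ has locally finite perimeter with $\mathrm{Per}(\Omega_t,\cdot)$ concentrated on $\mathcal{F}_n\Omega_t\cap A$. The second is that the conclusion of Proposition \ref{prop:representation.normal.vec} holds for every $u$ in a fixed countable family $\mathcal{D}\subset\mathrm{Test}(X)$. I take $\mathcal{D}$ so that its traces $\bar\nabla u$, combined with rational-valued Borel partitions, are dense in $L^2_{\Omega_t}(TX)$ for all such $t$. Countability is what lets me intersect the countably many full-measure sets of $t$. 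Since $V$ is compactly supported, only the local finiteness of the perimeter is needed, so Theorem \ref{thm3.5} applies to $E=\Omega_t$ (with a cut-off if needed) and gives
\[
\int_{\Omega_t}\mathrm{div} V\,\mathrm{d}\mathfrak{m}=-\int_{\mathcal F\Omega_t}\langle \mathrm{tr}_{\Omega_t}V,\nu_{\Omega_t}\rangle\,\mathrm{dPer}(\Omega_t,\cdot).
\]

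Second, I replace $\nu_{\Omega_t}$ by $-\mathrm{tr}_{\Omega_t}(\nabla f/|\nabla f|)$. For $u\in\mathcal{D}$ the two agree when paired with $\bar\nabla u$, $\mathrm{Per}(\Omega_t,\cdot)$-a.e. Both pairings are $L^\infty(\mathrm{Per})$-linear, so they also agree on finite sums $\sum\chi_{E_i}\bar\nabla u_i$. Both sides are bounded functionals, since $|\nu_{\Omega_t}|=1$ and $|\nabla f/|\nabla f||=1$ on the relevant set, so density in $L^2_{\Omega_t}(TX)$ extends the identity to all of $L^2_{\Omega_t}(TX)$, in particular to $\mathrm{tr}_{\Omega_t}V$. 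Substituting then gives the stated formula, with the integral taken over $\mathcal F\Omega_t\subset\{f=t\}$. (The "$r$" in the statement is evidently $t$.)

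The main obstacle is making sense of $\mathrm{tr}_{\Omega_t}(\nabla f/|\nabla f|)$ as an element of $L^2_{\Omega_t}(TX)$, because $\nabla f/|\nabla f|$ is only defined $\mathfrak{m}$-a.e. and need not be in $H^{1,2}_C$. I would define it through the pairing: the functional $\bar\nabla u\mapsto\mathrm{tr}_{\Omega_t}\langle\nabla u,\nabla f\rangle/\mathrm{tr}_{\Omega_t}|\nabla f|$, which is legitimate since $\langle\nabla u,\nabla f\rangle\in H^{1,2}_{\rm loc}$ for Lipschitz $f\in D(\Delta)$ by Remark \ref{rmk2.7}. By Proposition \ref{prop:normal.vector.Er}, $|\nabla f|$ has a positive trace on the blow-up points, so the quotient is well defined $\mathrm{Per}$-a.e. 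Proposition \ref{prop:representation.normal.vec} shows this functional equals $-\langle\cdot,\nu_{\Omega_t}\rangle$, hence it is represented by the unit vector $-\nu_{\Omega_t}$, which makes the trace notation in the statement consistent.
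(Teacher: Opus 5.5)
Your argument is correct and is essentially the paper's own: it applies the Gauss--Green formula of Theorem~\ref{thm3.5} to $\Omega_t$. It then uses Proposition~\ref{prop:representation.normal.vec} together with the density of $\mathrm{tr}_{\Omega_t}\mathrm{TestV}(X)$ in $L^2_{\Omega_t}(TX)$ to identify $\nu_{\Omega_t}$ with $-\mathrm{tr}_{\Omega_t}(\nabla f/|\nabla f|)$. Your countable family $\mathcal{D}$, which makes the exceptional set of $t$ independent of $u$, is a refinement the paper skips; you assert rather than prove that such a $\mathcal{D}$ exists, but this is a routine technical point and does not affect the argument.
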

\subsection{Integral type Gauss-Green formula}
We prove Theorem \ref{GGthm} in this subsection, which is recalled as follows.
\begin{thm}[Integral type Gauss-Green formula]\label{GGthm1}
	Let $(X,\mathsf{d},\mathscr{H}^n)$ be a non-collapsed $\mathrm{RCD}(K,n)$ space. Then for any $f\in \mathrm{Lip}(X,\mathsf{d})$, any compactly supported $g\in \mathrm{Test}(X)$, any $h\in H^{1,2}(X,\mathsf{d},\mathscr{H}^n)$ and any $\tau\in C(\mathbb{R})$, the following integral type Gauss-Green formula holds for $(f,g,h,\tau)$, where $\Omega_t:=\{x\in X:f(x)\leqslant t\}$ is the corresponding $t$-sublevel set of $f$. 
	\begin{equation}\label{a1.1}
		\int_{\Omega_t} \tau\circ f\,\langle\nabla f,h\nabla g\rangle\,\mathrm{d}\mathscr{H}^n=\int_{-\infty}^t \tau(s)\int_{ \Omega_s } \mathrm{div}(h\nabla g)\,\mathrm{d}\mathscr{H}^n\mathrm{d}s,\ \forall t\in\mathbb{R}.
	\end{equation}
\end{thm}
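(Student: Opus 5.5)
The plan is to reduce \eqref{a1.1} to the Gauss--Green formula for level sets of gradient non-vanishing functions (Theorem \ref{thm:Gauss-Green.bx}) via the co-area formula. First I prove it for regularized approximations $f_\varepsilon$ of $f$ that lie in $D(\Delta)$ and have $|\nabla f_\varepsilon|\neq 0$ a.e. Then I pass to the limit $\varepsilon\to 0$. The non-vanishing of $|\nabla f_\varepsilon|$ is where the strong locality of the Laplacian enters, through the non-degeneration results of Section \ref{sec2}.

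\emph{Step 1: the formula for a nice $f$.} Suppose $\tilde f$ is locally Lipschitz, $\tilde f\in D(\Delta)$ (at least near $\operatorname{supp} g$), and $|\nabla\tilde f|>0$ a.e. Set $V:=h\nabla g$, and assume first that $h\in\mathrm{Test}(X)$, so that $V\in H^{1,2}_C\cap D(\mathrm{div})\cap L^\infty$ has compact support. Apply the co-area formula (Theorem \ref{coareafor}) to the Borel function $\tau\circ\tilde f\,\langle\nabla\tilde f,V\rangle/|\nabla\tilde f|$, splitting into positive and negative parts. This gives
\[
\int_{\{\tilde f\leqslant t\}}\tau\circ\tilde f\,\langle\nabla\tilde f,V\rangle\,\mathrm{d}\mathscr{H}^n=\int_{-\infty}^t\tau(s)\int\Big\langle V,\frac{\nabla\tilde f}{|\nabla\tilde f|}\Big\rangle\,\mathrm{dPer}(\{\tilde f< s\},\cdot)\,\mathrm{d}s .
\]
For a.e. $s$, Theorem \ref{thm:Gauss-Green.bx} identifies the inner integral with $\int_{\Omega_s}\mathrm{div}V$. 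The distinction between $<$ and $\leqslant$ is irrelevant for a.e. $s$, since $\mathscr{H}^n(\{\tilde f=s\})=0$ whenever $|\nabla\tilde f|>0$ a.e. To reach a general $h\in H^{1,2}$, I approximate $h$ in $H^{1,2}$ by test functions. Since $g\in\mathrm{Test}(X)$ has bounded $|\nabla g|$ and $\Delta g\in L^\infty$, both sides of the identity are continuous under this convergence, using $\mathrm{div}(h\nabla g)=\langle\nabla h,\nabla g\rangle+h\Delta g$.

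\emph{Step 2: construction of $f_\varepsilon$, which I expect to be the main obstacle.} Choose a bounded open $U\Subset V'$ containing a neighbourhood $W$ of $\operatorname{supp} g$. Take a cut-off $\varphi$ with $\varphi=1$ on $W$, and a constant $c$ with $f+c>0$ on $U$. Put $F:=\varphi(f+c)\in H^{1,2}_0(U)$ and define
\[
f_\varepsilon:=\mathrm{h}^U_\varepsilon F-c .
\]
Then $f_\varepsilon\in D(\Delta,U)$ is locally Lipschitz (Remark \ref{rmk2.23} together with \cite{J14}). By the Dirichlet version of Proposition \ref{prop3.4}, i.e. the corollary following it, we have $|\nabla f_\varepsilon|\neq 0$ a.e. on $\{F\neq 0\}\supset W$. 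This is exactly where the strong locality (Theorem \ref{str}) is used.

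The difficulty is that Theorem \ref{thm:Gauss-Green.bx} is stated for functions whose gradient is non-vanishing globally, whereas here it is non-vanishing only on $W$. I would handle this by observing that the proofs of Propositions \ref{prop:normal.vector.Er} and \ref{prop:representation.normal.vec} are pointwise blow-up arguments. Consequently the identification $\nu_{\Omega_s}=-\nabla\tilde f/|\nabla\tilde f|$ holds $\mathrm{Per}(\Omega_s,\cdot)$-a.e. on $W$, and this is all that is needed, because $V$ vanishes outside $\operatorname{supp} g\Subset W$. If that localization proves delicate, my fallback is to perturb $f_\varepsilon$ outside $W$, for instance by adding a small multiple of a Dirichlet heat kernel, which has a.e. nonzero gradient by Proposition \ref{1prop3.3}, so as to obtain a global non-vanishing gradient.

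\emph{Step 3: passing to the limit.} Since $F\in H^{1,2}_0(U)$, we have $f_\varepsilon\to f$ in $H^{1,2}(W)$, and after passing to a subsequence also a.e. on $W$. The right-hand side of \eqref{a1.1} for $f_\varepsilon$ converges to that for $f$ by dominated convergence. Here I use that $\chi_{\{f_\varepsilon\leqslant s\}}\to\chi_{\{f\leqslant s\}}$ a.e. for every $s$ with $\mathscr{H}^n(\{f=s\})=0$, which excludes only countably many $s$, and that $|\int_{\Omega_s}\mathrm{div}V|\leqslant\|\mathrm{div}V\|_{L^1}$. The left-hand side converges by the same argument combined with $\nabla f_\varepsilon\to\nabla f$ in $L^2(W)$, for all $t$ with $\mathscr{H}^n(\{f=t\})=0$.

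Finally, both sides of \eqref{a1.1} are continuous in $t$. For the left-hand side, $\nabla f=0$ a.e. on $\{f=t\}$ by locality, so the level set contributes nothing and the integral over $\Omega_t$ depends continuously on $t$. The right-hand side is an integral in $s$ of a locally bounded function. Hence \eqref{a1.1} holds for every $t\in\mathbb{R}$, and the differentiated form follows from the fundamental theorem of calculus, the map $s\mapsto\tau(s)\int_{\Omega_s}\mathrm{div}(h\nabla g)$ being right-continuous.
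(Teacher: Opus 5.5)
Your proposal is correct, but it takes a genuinely different route from the paper.

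\textbf{The paper's route} is global and spectral. It replaces $f$ by $\mathrm{h}_tf$, and then by truncated eigenfunction expansions; these converge in $H^{1,2}$ and uniformly, which is what Lemma \ref{alem3.4} requires. For a finite sum $\sum_{j\leqslant m}a_j\phi_j$ it perturbs near each point $x$ by $\varepsilon\xi$, where $\xi$ is a normalized eigenfunction of much larger eigenvalue with $\xi(x)>0$ (Lemma \ref{alem3.5} supplies one). It then gets $|\nabla f_\varepsilon|\neq0$ a.e.\ on $B_{2r}(x)$ by applying strong locality to the iterated Laplacians $\Delta^{(2l)}f_\varepsilon$ and letting $l\to\infty$. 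A partition of unity (Lemma \ref{lem3.7}) then removes the restriction on $\mathrm{supp}\,g$.

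\textbf{Your route} uses a single local approximation $\mathrm{h}^U_\varepsilon(\varphi(f+c))-c$. The shift by $c$ is the key device: it makes the initial datum nonzero on a whole neighbourhood $W$ of $\mathrm{supp}\,g$. The Dirichlet corollary following Proposition \ref{prop3.4} then gives $|\nabla f_\varepsilon|\neq0$ a.e.\ on all of $W$ at once.

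\textbf{What each approach buys.} Your route dispenses with Lemma \ref{alem3.5}, the spectral perturbation and the partition of unity. The limit step needs only $H^{1,2}(W)$ and a.e.\ convergence. It also treats compact and non-compact $X$ alike, whereas the paper writes out only the compact case. The price is that you lean on that Dirichlet corollary, whose proof needs three further facts about the Dirichlet heat flow:
\begin{itemize}
\item time-analyticity;
\item local Lipschitz regularity of all its time derivatives;
\item decay as $t\to\infty$.
\end{itemize}
The paper's perturbation argument needs only strong locality applied to finite eigenfunction sums.

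Both routes apply Theorem \ref{thm:Gauss-Green.bx} to a function whose gradient is non-vanishing only near $\mathrm{supp}\,g$; the paper does this tacitly in Lemma \ref{lem3.6}. Your justification is the right one. Propositions \ref{prop:normal.vector.Er} and \ref{prop:representation.normal.vec} are pointwise blow-up statements, and by the co-area formula their hypotheses hold $\mathrm{Per}$-a.e.\ on $W$ for a.e.\ level. Your preliminary reduction to $h\in\mathrm{Test}(X)$ also fills a point the paper skips, since Theorem \ref{thm3.5} requires $V\in H^{1,2}_C\cap L^\infty$.

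\textbf{Two small additions.}

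First, in Step 3, record that $\|\mathrm{h}^U_\varepsilon F\|_{L^\infty}\leqslant\|F\|_{L^\infty}$ by the sub-Markov property. Since $\tau$ is only continuous, this bound is what controls $\tau\circ f_\varepsilon$. It also confines the $s$-integration to a fixed compact interval, so that dominated convergence applies.

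Second, right-continuity of $s\mapsto\tau(s)\int_{\Omega_s}\mathrm{div}(h\nabla g)\,\mathrm{d}\mathscr{H}^n$ yields only a right derivative in the differentiated form. The two-sided derivative can fail at the countably many $t$ with $\mathscr{H}^n(\{f=t\})>0$. For example, take $f=\max\{|x|-1,0\}$ on $\mathbb{R}^n$ at $t=0$, with $\int_{B_1(0_n)}\mathrm{div}(h\nabla g)\neq0$. This lies outside the statement (\ref{a1.1}) itself.
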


To this end, we fix $h\in H^{1,2}(X,\mathsf{d},\mathscr{H}^n)$ and $\tau\in C(\mathbb{R})$. With $h$ and $\tau$ fixed, we will abbreviate (\ref{a1.1}) as the formula for $(f,g)$. Since the proof for the non-compact case (which uses Dirichlet heat kernel and Dirichlet eigenfunctions) is almost parallel, we restrict ourselves to the case where $X$ is compact. We continue using the notations in Proposition \ref{prop3.1} and Remark \ref{rmk3.2}. We begin with the following lemmas.

\begin{lem}\label{alem3.4}
Let $g\in \mathrm{Test}(X)$. Assume $\{f_i\}\subset \mathrm{Lip}(X,\mathsf{d})$ converges to some $f \in \mathrm{Lip}(X,\mathsf{d})$ in $H^{1,2}$ and uniformly as $i\to \infty$. If (\ref{a1.1}) holds for each $(f_i,g)$, then (\ref{a1.1}) also holds for $(f,g)$.
\end{lem}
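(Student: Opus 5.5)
The plan is to pass to the limit $i\to\infty$ separately on each side of (\ref{a1.1}), for fixed $t$, and then extend to every $t$ using continuity in $t$ of both sides. Write $V:=h\nabla g$. Since $g\in\mathrm{Test}(X)$ and $h\in H^{1,2}$, we have $V\in L^2(TX)$, and $\mathrm{div}(V)=\langle\nabla h,\nabla g\rangle+h\Delta g\in L^1$ (compactness of $X$ gives finite measure, and $g$ is Lipschitz with $\Delta g\in L^\infty$). Put $\Omega^i_s:=\{f_i\leqslant s\}$.

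Right-hand side first. Uniform convergence gives $\varepsilon_i:=\|f_i-f\|_{L^\infty}\to0$ and $\{f\leqslant s-\varepsilon_i\}\subset\Omega^i_s\subset\{f\leqslant s+\varepsilon_i\}$. Hence $\chi_{\Omega^i_s}\to\chi_{\Omega_s}$ pointwise off the level set $\{f=s\}$. This level set has positive measure for at most countably many $s$, so for $\mathscr{L}^1$-a.e. $s$ dominated convergence (dominant $|\mathrm{div}V|\in L^1$) gives
\[
\int_{\Omega^i_s}\mathrm{div}V\to\int_{\Omega_s}\mathrm{div}V .
\]
All these inner integrals are bounded by $\|\mathrm{div}V\|_{L^1}$. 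They vanish for $s<\min f-1$ once $i$ is large, since then $\Omega^i_s=\emptyset$. Since $\tau$ is bounded on the relevant compact range $[\min f-1,t]$, a second application of dominated convergence in $s$ gives convergence of the right-hand side for every $t$.

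Left-hand side. Split
\[
\tau\circ f_i\,\chi_{\Omega^i_t}\langle\nabla f_i,V\rangle-\tau\circ f\,\chi_{\Omega_t}\langle\nabla f,V\rangle
\]
into three pieces:
\begin{itemize}
\item the term with $\langle\nabla(f_i-f),V\rangle$, which is controlled by $\|\tau\|_\infty\|\nabla(f_i-f)\|_{L^2}\|V\|_{L^2}\to0$ via $H^{1,2}$ convergence;
\item the term with $(\tau\circ f_i-\tau\circ f)$, which tends to $0$ by uniform continuity of $\tau$ on a compact interval and uniform convergence of $f_i$;
\item the term with $(\chi_{\Omega^i_t}-\chi_{\Omega_t})\langle\nabla f,V\rangle$.
\end{itemize}
The third piece is the delicate one. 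Off $\{f=t\}$ it tends to $0$ by dominated convergence. On $\{f=t\}$ we use locality: $|\nabla f|=0$ $\mathscr{H}^n$-a.e. there, so $\langle\nabla f,V\rangle=0$ a.e. on this set. Thus the left-hand side converges for every $t$, with no exceptional set.

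For the exceptional $t$ on the right-hand side, no patch is actually needed. The $s$-integral only requires a.e. convergence of the integrand in $s$, so the right-hand side converges for every $t$ as well. Hence (\ref{a1.1}) passes to the limit for every $t\in\mathbb{R}$.

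The main obstacle is the boundary layer $\{f=t\}$ on the left-hand side, and it is handled by the locality of the minimal relaxed slope. One also needs to check that $\mathrm{div}(h\nabla g)$ is integrable, so that dominated convergence applies; this is where compactness of $X$ (finite measure) and $h\in H^{1,2}$ enter.
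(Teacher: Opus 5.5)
Your proof is correct and takes essentially the same route as the paper. Uniform convergence squeezes $\Omega^i_t$ between sublevel sets of $f$, and only countably many level sets $\{f=s\}$ have positive measure, which gives a.e.\ convergence of the inner integrals on the right-hand side; on the left-hand side, $H^{1,2}$-convergence of $\nabla f_i$ handles the gradient term, and the layer $\{f=t\}$ is removed by the locality $|\nabla f|=0$ a.e.\ there. Your write-up is slightly more careful than the paper's, since it keeps the factor $\tau\circ f_i$ and uses the correct expression $\mathrm{div}(h\nabla g)=h\Delta g+\langle\nabla h,\nabla g\rangle$, both of which the paper's displayed computation omits.
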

\begin{proof}
	For every $t\in\mathbb{R}$, let $\Omega_{t}^i:=\{f_i\leqslant t\}$. We  claim that
	\begin{align}\label{3.1}
	\lim_{i\to \infty}\mathscr{H}^n(\Omega^i_t\,\triangle\, \Omega_t)=0,\ \mathscr{L}^1\text{-a.e.}\ t\in \mathbb{R}.
	\end{align}
	

	Since $\{f_i\}$ uniformly converges to $f$ as $i\to\infty$, we know
	\[
	\lim_{i\to\infty}\mathscr{H}^n(\Omega^i_t\,\triangle\, \Omega_t)\leqslant \mathscr{H}^n(\{f=t\}),\ \forall t\in \mathbb{R}.
	\]
	Because $\{f=t\}$ and $\{f=s\}$ are disjoint whenever $s\neq t$, the set $\{s\in\mathbb{R}:\mathscr{H}^n(\{f=s\})>0\}$ contains at most countable elements. This proves (\ref{3.1}). Moreover, for every element $s$ of the above set, the locality of Sobolev functions yields that $|\nabla f|=0$ a.e. on $\{f=s\}$.

	Therefore using the $H^{1,2}$-convergence of $\{f_i\}$ and (\ref{3.1}) gives
	\[
	\begin{aligned}
		\ &\lim_{i\to \infty}\left|\int_{\Omega_{t}}\langle\nabla f,
		h\nabla g\rangle\,\mathrm{d}\mathscr{H}^n-	\int_{\Omega_{t}^i}\langle\nabla f_i,
		h\nabla g\rangle\,\mathrm{d}\mathscr{H}^n\right|\\
		\leqslant\ &\lim_{i\to \infty}\left|	\int_{\Omega_{t}}h\,(\langle\nabla f,
		\nabla g\rangle-\langle\nabla f_i,\nabla g\rangle)\,\mathrm{d}\mathscr{H}^n\right|+\lim_{i\to \infty} \int_{\Omega^i_t\,\triangle\, \Omega_t}|h\,\langle\nabla f_i,\nabla g\rangle|\,\mathrm{d}\mathscr{H}^n\\
		\leqslant \ &\|\nabla g\|_{L^\infty}\lim_{i\to \infty}\int_{\Omega_t} |h||\nabla (f_i-f)|\,\integral +\|\nabla g\|_{L^\infty}\int_{\{f=t\}}|h||\nabla f|	 \,\mathrm{d}\mathscr{H}^n\, \equiv 0,\ \forall t\in \mathbb{R}.
	\end{aligned}
	\]
Moreover, by (\ref{3.1}) we have
	 \[
	 \lim_{i\to \infty}\int_{\Omega^i_t} (\Delta g+\langle\nabla g,\nabla h\rangle)\,\mathrm{d}\mathscr{H}^n=\int_{\Omega_t} (\Delta g+\langle\nabla g,\nabla h\rangle)\,\mathrm{d}\mathscr{H}^n,\ \mathscr{L}^1\text{-a.e.}\ t\in\mathbb{R}.
	 \]
	 This completes the proof.
\end{proof}
\begin{lem}\label{alem3.5}
For any $x\in X$ and any $i\in \mathbb{N}$, there exists $j\geqslant i$ such that $\phi_j(x)\neq 0$.
\end{lem}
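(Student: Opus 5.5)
We argue by contradiction, in the compact setting with $\mathscr{H}^n(X)=1$ fixed above, so that the eigenfunction expansion of Remark \ref{rmk3.2} and the bounds of Proposition \ref{prop3.1} are available. Suppose there are $x\in X$ and $i\in\mathbb{N}$ with $\phi_j(x)=0$ for every $j\geqslant i$. Then the spectral representation of the heat kernel collapses to a finite sum:
\[
\rho(x,y,t)=\sum_{j=0}^{i-1}e^{-\mu_j t}\phi_j(x)\phi_j(y),\qquad \forall y\in X,\ t>0.
\]
The series converges absolutely and locally uniformly for $t>0$, because of the bounds $\|\phi_j\|_{L^\infty}\leqslant C_2\mu_j^{n/4}$ and $\mu_j\geqslant C_2^{-1}j^{2/n}$. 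So dropping the vanishing terms is legitimate.

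Next we evaluate at $y=x$ and obtain
\[
\rho(x,x,t)=\sum_{j=0}^{i-1}e^{-\mu_j t}\phi_j(x)^2\leqslant \sum_{j=0}^{i-1}\phi_j(x)^2=:M<\infty\quad\text{for all } t\in(0,1).
\]
The point is that a finite sum stays bounded as $t\downarrow 0$.

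This is incompatible with the lower Gaussian estimate (\ref{JLZineq}). Taking $y=x$ there gives
\[
\rho(x,x,t)\geqslant \frac{1}{C\,\mathscr{H}^n(B_{\sqrt t}(x))}e^{-Ct}.
\]
By Theorem \ref{DG18cor1.7}, $\mathscr{H}^n(B_{\sqrt t}(x))\leqslant C' t^{n/2}$ for small $t$; Bishop–Gromov (Theorem \ref{BGineq}) also gives an upper bound $\leqslant C\,V_{K,n}(\sqrt t)\sim t^{n/2}$. Hence $\rho(x,x,t)\geqslant c\,t^{-n/2}\to\infty$ as $t\to0$. This contradicts the bound by $M$, so some $j\geqslant i$ has $\phi_j(x)\neq 0$.

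The argument has no real obstacle. The only point to check is that the spectral expansion holds pointwise, including on the diagonal and for every $x$, not merely almost everywhere. This follows from the continuity of the $\phi_j$, which are locally Lipschitz, together with the uniform convergence of the series for fixed $t>0$ given by the eigenvalue and $L^\infty$ bounds in Proposition \ref{prop3.1}. In the non-compact case the identical argument applies to $\rho^U$ and $\phi_j^U$, using the upper estimate transferred to $\rho^U$. For the lower bound on $\rho^U(x,x,t)$ as $t\to0$, one uses positivity and a local comparison, since near the diagonal and for small times $\rho^U$ behaves like $\rho$.
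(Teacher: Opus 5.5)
Your proposal is correct and is essentially the paper's argument. If $\phi_j(x)=0$ for all large $j$, the spectral expansion of Remark \ref{rmk3.2} makes $\rho(x,x,t)$ a finite sum, bounded as $t\downarrow 0$; the paper phrases this as $\mathscr{H}^n(B_{\sqrt t}(x))\rho(x,x,t)\to 0$. Either way this contradicts the lower Gaussian bound in (\ref{JLZineq}), and the paper handles the non-compact case in the same sketched way, through a small-time on-diagonal lower bound for $\rho^U$ obtained via the Harnack-type argument of Remark \ref{rmk2.23}.
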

\begin{proof}
	Suppose there exists $m\in\mathbb{N}$ such that $\phi_j(x)=0$ for all $j\geqslant m$. Then Remark \ref{rmk3.2} implies
	\[
	\rho(x,x,t)=\sum_{i=0}^m \exp(-\mu_i t){\phi_i}^2(x),\ \forall t>0.
	\]
	As a result, we have
\[
	\lim_{t\to 0} \mathscr{H}^n(B_{\sqrt{t}}(x))\rho(x,x,t)=0.
\]
	This contradicts (\ref{JLZineq}). Regarding the non-compact case, we remark that the same argument as in Remark \ref{rmk2.23} for Dirichlet heat kernel gives
	\[
	\lim_{t\to 0} \mathscr{H}^n(B_{\sqrt{t}}(x))\rho^U(x,x,t)>0,
	\]	
which also provides the desired contradiction. 
\end{proof}
\begin{lem}\label{lem3.6}
	Let $f=\sum_{j=0}^m a_j\phi_j$ for some $\{a_j\}_{j=0}^m\subset \mathbb{R}$. Then for every $x\in X$, there exists $r>0$ such that given any $g\in \mathrm{Lip}_{\mathrm{c}}(B_{2r}(x),\mathsf{d})\cap D(\Delta,B_{2r}(x))$, (\ref{a1.1}) holds for $(f,g)$.
\end{lem}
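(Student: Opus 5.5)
The plan is to prove (\ref{a1.1}) for $(f,g)$ directly by the chain rule and Fubini, with $V:=h\nabla g$. If this works it needs nothing specific to $f$ beyond Lipschitz continuity, so the radius $r>0$ can be arbitrary. Here $f=\sum_{j=0}^m a_j\phi_j$ is Lipschitz by Proposition \ref{prop3.1}, and bounded since $X$ is compact. Since $g\in \mathrm{Lip}_{\mathrm c}(B_{2r}(x),\mathsf{d})\cap D(\Delta,B_{2r}(x))$ and $h\in H^{1,2}$, the field $V$ is compactly supported, and by the Leibniz rule for the divergence
\[
\mathrm{div}\,V=h\Delta g+\langle\nabla h,\nabla g\rangle\in L^1,
\]
with the integration by parts $\int\langle\nabla\psi,V\rangle\,\integral=-\int\psi\,\mathrm{div}\,V\,\integral$ valid for every bounded Lipschitz $\psi$.

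\textbf{Step 1 (the test function).} For fixed $t$, set $\Psi(u):=\int_u^t\tau(s)\,\mathrm{d}s$ for $u\leqslant t$ and $\Psi(u):=0$ for $u>t$. Then $\Psi$ is continuous and piecewise $C^1$, with a single kink at $u=t$, and it is Lipschitz on the bounded range of $f$. Hence $\Psi\circ f$ is a bounded Lipschitz function.

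\textbf{Step 2 (chain rule).} I will show that
\[
\nabla(\Psi\circ f)=-\tau(f)\chi_{\{f\leqslant t\}}\nabla f \quad\mathscr H^n\text{-a.e.}
\]
On the open sets $\{f<t\}$ and $\{f>t\}$ this is the usual chain rule for a $C^1$ function composed with a Sobolev function. On $\{f=t\}$ both sides vanish a.e., because locality of the minimal relaxed slope gives $|\nabla f|=0$ and $|\nabla(\Psi\circ f)|=0$ a.e. there. This is the same observation used in the proof of Lemma \ref{alem3.4}. The kink is the only delicate point.

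\textbf{Step 3 (integrate and apply Fubini).} Integrating the identity of Step 2 against $V$ gives
\[
\int_{\Omega_t}\tau\circ f\,\langle\nabla f,h\nabla g\rangle\,\integral=-\int\langle\nabla(\Psi\circ f),V\rangle\,\integral=\int \Psi(f(y))\,\mathrm{div}\,V(y)\,\integral(y).
\]
I then write $\Psi(f(y))=\int_{-\infty}^t\tau(s)\chi_{\{f(y)\leqslant s\}}\,\mathrm{d}s$ and exchange the order of integration. This is allowed because $\tau$ is bounded on the relevant compact interval and $\mathrm{div}\,V\in L^1$. The result is $\int_{-\infty}^t\tau(s)\int_{\Omega_s}\mathrm{div}(h\nabla g)\,\integral\,\mathrm{d}s$, which is the right-hand side of (\ref{a1.1}).

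\textbf{Main obstacle.} I expect the hardest part to be justifying the integration by parts with the merely $H^{1,2}$ weight $h$ and a test function $g$ that only lies in $D(\Delta,B_{2r}(x))$. The point is that $V$ must belong to $D(\mathrm{div})$ in a sense that can be paired with the Lipschitz function $\Psi\circ f$. I would handle this first for $h\in\mathrm{Lip}\cap H^{1,2}$ and $g$ extended by zero outside its support. I would then pass to general $h$ by $H^{1,2}$-approximation, using that both sides of (\ref{a1.1}) are continuous in $h$ in $L^1$ once $\nabla f$, $\nabla g$ and $\Delta g$ are fixed.

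If this direct route turned out to miss a subtlety, the fallback I would use instead is the following. Apply Theorem \ref{str} iteratively to the eigenfunction combination $f$: on $Z:=\{|\nabla f|=0\}$ one has $\Delta^k f=0$ a.e. for every $k$. A Vandermonde argument on the distinct eigenvalues then forces every non-constant spectral component of $f$ to vanish on $Z$. Hence $Z$ lies in a single level set $\{f=c\}$, and Theorem \ref{thm:Gauss-Green.bx} together with the co-area formula (Theorem \ref{coareafor}) can be applied to the level sets $s\neq c$, after localizing to $B_{2r}(x)$.
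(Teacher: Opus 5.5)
Your main argument is correct, and it takes a genuinely different and much more elementary route than the paper.

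\textbf{The paper's route.} The paper perturbs $f$ to $f_\varepsilon=f+\varepsilon\xi$, where $\xi$ is a normalized high-frequency eigenfunction that is positive on $B_{2r}(x)$. It then uses the strong locality of the Laplacian (Theorem \ref{str}) and the gap $\mu_i\gg\mu_m$ to show $|\nabla f_\varepsilon|\neq 0$ a.e.\ on $B_{2r}(x)$. Next it applies the Gauss--Green formula of Theorem \ref{thm3.5}, with the normal identified as $-\nabla f_\varepsilon/|\nabla f_\varepsilon|$ (Proposition \ref{prop:representation.normal.vec}), together with the co-area formula. Finally it lets $\varepsilon\to 0$ via Lemma \ref{alem3.4}. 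This is why $r$ depends on $x$ and why $f$ has to be a finite eigenfunction combination.

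\textbf{Your route.} Your layer-cake argument needs only the chain rule, integration by parts and Fubini.
\begin{itemize}
\item The chain rule is fine. On the Borel set $\{f\leqslant t\}$ you have $\Psi\circ f=\tilde\Psi\circ f$ with $\tilde\Psi(u)=\int_u^t\tau$ of class $C^1$, so locality gives $\nabla(\Psi\circ f)=-\tau(f)\nabla f$ a.e.\ there. On $\{f>t\}$ the function $\Psi\circ f$ vanishes, so its gradient is $0$ a.e. Hence the kink causes no trouble.
\item The Fubini exchange is legitimate. The function $\mathrm{div}(h\nabla g)$ is integrable and vanishes a.e.\ outside $\mathrm{supp}\,g$, and $f$ is bounded there.
\item What you call the main obstacle is actually immediate. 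Since $\psi:=\Psi\circ f$ is bounded Lipschitz, $\psi h\in H^{1,2}$. Extend $g$ by zero, so that $g\in D(\Delta)$ with $\Delta g=0$ off $\mathrm{supp}\,g$. The definition of $\Delta g$ then directly gives $\int\langle\nabla\psi,h\nabla g\rangle\,\integral=-\int\psi\,(h\Delta g+\langle\nabla h,\nabla g\rangle)\,\integral$, with no approximation of $h$.
\item If you do approximate $h$, the continuity you need is in $H^{1,2}$, not in $L^1$, because $\nabla h$ appears on the right-hand side.
\end{itemize}

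\textbf{What each route buys.} Your route buys generality. It proves (\ref{a1.1}) for every Lipschitz $f$ on any $\mathrm{RCD}(K,N)$ space. It needs no non-collapsing, no strong locality, no compactness, and none of the eigenfunction or heat-flow approximation in Lemmas \ref{alem3.4}--\ref{lem3.7} and the proof of Theorem \ref{GGthm1}. The paper's detour only buys an intermediate identification, for the perturbed functions and a.e.\ $t$: the inner integral equals the boundary flux against the perimeter measure. That is not needed for the integrated identity itself.

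\textbf{Your fallback.} The fallback is unnecessary. If you did use it, it would need a version of Theorem \ref{thm:Gauss-Green.bx} that allows a gradient-vanishing set of positive measure.
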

\begin{proof}
By Lemma \ref{alem3.5}, there exists $i\gg m$ such that $\phi_i(x)\neq 0$ and $\mu_i\gg \mu_m$. Without loss of generality we may assume $\phi_i(x)>0$. Let 
\[
\xi:=\frac{\phi_i}{\|\phi_i\|_{L^\infty}+\|\nabla\phi_i\|_{L^\infty}},\, f_\varepsilon:=f+\varepsilon\,\xi,\ \forall \varepsilon>0.
\]
It is obvious that $f_\varepsilon\to f$ in $H^{1,2}$ and Lipschitz as $\varepsilon\to 0$. 

 Let $r=\xi(x)/4$. Because $\xi$ is 1-Lipschitz, $\xi\geqslant \xi(x)/2>0$ on $B_{2r}(x)$. Owing to Lemma \ref{alem3.4}, it suffices to show (\ref{a1.1}) holds for every $(f_\varepsilon,g)$ with $g\in \mathrm{Lip}_{\mathrm{c}}(B_{2r}(x),\mathsf{d})\cap D(\Delta,B_{2r}(x))$.

We claim 
\begin{align}\label{3.3}
	|\nabla f_\varepsilon|\neq 0 \ \mathscr{H}^n\text{-a.e. on}\  B_{2r}(x),\ \forall \varepsilon>0. 
\end{align}

Suppose for some $\varepsilon>0$ there exists $A\subset B_{2r}(x)$ with $\mathscr{H}^n(A)>0$ such that $|\nabla f_\varepsilon|=0$ $\mathscr{H}^n$-a.e. on $A$. Theorem \ref{stronglocality} implies $\Delta f_\varepsilon=0$ $\mathscr{H}^n$-a.e. on $A$. Applying Proposition \ref{prop1.1} and Theorem \ref{stronglocality} then shows
\[
\Delta^{(2l)} f_\varepsilon=\sum_{j=0}^m a_j {\mu_j}^{2l} \phi_j+ {\mu_i}^{2l}\xi=0\ \mathscr{H}^n\text{-a.e. on}\ A,\ \forall l\in\mathbb{N}.
\]
This leads to a contradiction since $ \xi\geqslant \xi(x)/2>0$ on $A$ and $\lim_{l\to\infty}\max_{j}(\mu_j/\mu_i)^{2l}=0$.

Finally, letting $\Omega_t^\varepsilon:=\{f_\varepsilon\leqslant t\}$, and combining (\ref{3.3}) with Theorem \ref{thm3.5} and Proposition \ref{prop:representation.normal.vec}, we obtain
\[
\tau(t)\int_{\partial \Omega_t^\varepsilon}\left\langle\frac{\nabla f_\varepsilon}{|\nabla f_\varepsilon|},h\nabla g\right\rangle\,\mathrm{d}\mathrm{Per}(\{f_\varepsilon\geqslant t\},\cdot)=\tau(t)\int_{\Omega_t^\varepsilon} (\Delta g+\langle\nabla g,\nabla h\rangle)\,\mathrm{d}\mathscr{H}^n,\, \mathscr{L}^1\text{-a.e. } t\in \mathbb{R}.
\]
This together with co-area formula then implies
\[
\int_{\Omega_t^\varepsilon}\tau\,\circ f_\varepsilon\,\langle\nabla f_\varepsilon,h\nabla g\rangle\,\mathrm{d}\mathscr{H}^n=\int_{-\infty}^t\tau(s)\int_{ \Omega_s^\varepsilon}(\Delta g+\langle\nabla g,\nabla h\rangle)\,\mathrm{d}\mathscr{H}^n\mathrm{d}s.
\]
\end{proof}
\begin{lem}\label{lem3.7}
	Under Lemma \ref{lem3.6}, given any $g\in \mathrm{Test}(X)$, (\ref{a1.1}) holds for $(f,g)$.
\end{lem}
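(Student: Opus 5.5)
The plan is to localize with a partition of unity and apply Lemma \ref{lem3.6} on each piece. Here $f=\sum_{j=0}^m a_j\phi_j$ is the finite eigenfunction combination from Lemma \ref{lem3.6}.

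\textbf{Covering.} For each $x\in X$, Lemma \ref{lem3.6} gives a radius $r_x>0$ such that (\ref{a1.1}) holds for $(f,\tilde g)$ whenever $\tilde g\in \mathrm{Lip}_{\mathrm{c}}(B_{2r_x}(x),\mathsf{d})\cap D(\Delta,B_{2r_x}(x))$. Since $X$ is compact, finitely many balls $B_{r_{x_k}}(x_k)$, $k=1,\dots,N$, cover $X$.

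\textbf{Partition of unity.} For each $k$, Remark \ref{rmk2.16} provides a cut-off $\varphi_k\in \mathrm{Test}(X)$ with $\varphi_k=1$ on $B_{r_{x_k}}(x_k)$, $\mathrm{supp}\,\varphi_k\subset B_{2r_{x_k}}(x_k)$, and bounded Laplacian and gradient. The sum $\Phi:=\sum_k\varphi_k$ satisfies $\Phi\geqslant 1$ everywhere. Set $\psi_k:=\varphi_k/\Phi$.

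The step I expect to be the main obstacle is checking that $\psi_k$ is admissible, i.e.\ $\psi_k g\in D(\Delta,B_{2r_{x_k}}(x_k))$. I would try to verify it with the Leibniz rules for the Laplacian and the chain rule for $F(\Phi)$ with $F(s)=1/s$ smooth on $[1,\infty)$. These give $\Delta(1/\Phi)=-\Phi^{-2}\Delta\Phi+2\Phi^{-3}|\nabla\Phi|^2\in L^\infty$, hence $\Delta\psi_k\in L^\infty$. Then, for $g\in\mathrm{Test}(X)$,
\[
\Delta(\psi_k g)=g\Delta\psi_k+\psi_k\Delta g+2\langle\nabla\psi_k,\nabla g\rangle\in L^2,
\]
and $\psi_k g$ is Lipschitz with compact support in $B_{2r_{x_k}}(x_k)$. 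So $g_k:=\psi_k g$ is admissible in Lemma \ref{lem3.6}, and (\ref{a1.1}) holds for each $(f,g_k)$.

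\textbf{Summation.} Because $\sum_k\psi_k\equiv1$, we have $g=\sum_k g_k$. Both sides of (\ref{a1.1}) are linear in $g$ once $f,h,\tau$ are fixed:
\begin{itemize}
\item the left side contains $\langle\nabla f,h\nabla g\rangle$, which is linear in $g$;
\item on the right, $\mathrm{div}(h\nabla g)=h\Delta g+\langle\nabla h,\nabla g\rangle$ is linear in $g$.
\end{itemize}
This identity for $\mathrm{div}(h\nabla g)$ needs only $g\in D(\Delta)$ with bounded gradient and $h\in H^{1,2}$, which holds for each $g_k$. The $s$-integral on the right converges because $\mathrm{div}(h\nabla g_k)\in L^1$ and $\tau$ is continuous; for $s$ below $\min f$ we have $\Omega_s=\emptyset$. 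Summing the $N$ identities over $k$ therefore gives (\ref{a1.1}) for $(f,g)$.

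\textbf{Non-compact case.} The same argument runs on a neighborhood of $\mathrm{supp}\, g$, which is compact, using Dirichlet eigenfunctions in place of the $\phi_j$.
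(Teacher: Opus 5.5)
Your proposal is correct and follows the paper's proof essentially step for step. The ingredients are the same: a finite cover by the balls furnished by Lemma \ref{lem3.6}, the decomposition $g=\sum_k \varphi_k g/\sum_l\varphi_l$ built from the test cut-offs of Remark \ref{rmk2.16}, and linearity of both sides of (\ref{a1.1}) in $g$. Your chain-rule/Leibniz computation showing $\Delta(\psi_k g)\in L^\infty$ makes explicit the admissibility of each piece, which the paper asserts without comment.
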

\begin{proof}
	For any $x\in X$, by Lemma \ref{lem3.6}, there exists $r=r(x)$ such that for any $g\in \mathrm{Lip}_{\mathrm{c}}(B_{2r}(x),\mathsf{d})\cap D(\Delta,B_{2r}(x))$, (\ref{a1.1}) holds for $(f,g)$. The compactness of $X$ then allows us to extract finite such balls 
	$\{B_{2r_j}(x_j)\}_{j=1}^k$ such that $X\subset \cup_{j=1}^k B_{r_j}(x_j)$. 
	
	For each $j$, take a cut-off function $\varphi_j\in \mathrm{Test}(X)$ such that $\varphi_j=1$ on $B_{r_j}(x_j)$ and $\varphi_j=0$ outside $B_{2r_j}(x_j)$. Given any test function $g$, we have the decomposition $g=\sum_{j=1}^k g_j$, where
	\[
	g_j=\frac{\varphi_j}{\sum_{l=1}^k \varphi_l}\,g,\ j=1,\ldots,k.
	\]
	Since $g_j\in \mathrm{Lip}_{\mathrm{c}}(B_{2r_j}(x_j),\mathsf{d})\cap D(\Delta,B_{2r_j}(x_j))$, the conclusion follows immediately from Lemma \ref{lem3.6}.
\end{proof}
We are now in a position to prove Theorem \ref{GGthm1}.
\begin{proof}[Proof of Theorem \ref{GGthm1}]
	Recall that $\mathrm{h}_t f$ converges to $f$ in $H^{1,2}$ as $t\to 0$. Moreover, by \cite[Theorem 6.2]{AGS14a}, we have
	\[
	\|\nabla \mathrm{h}_t f\|_{L^\infty}\leqslant \exp(-Kt)\,\|\nabla f\|_{L^\infty},\ \forall t>0.
	\]
	In view of Lemma \ref{alem3.4}, it suffices to prove that $(\ref{a1.1})$ holds for each $(\mathrm{h}_t f,g)$ with $g\in \mathrm{Test}(X)$. 
	
	Assume $f=\sum_{i=0}^\infty a_i \phi_i$. Then $\mathrm{h}_tf=\sum_{i=0}^{\infty}a_i \exp(-\mu_i t)\phi_i$. Define $f_i:=\sum_{j=0}^i a_j \exp(-\mu_j t)\phi_j$. Since $\sum_{j=0}^\infty {a_j}^2=\|f\|_{L^2}^2$, we may assume $|a_j|\leqslant 1$ for sufficiently large $j$. Proposition \ref{prop3.1} implies, 
	\[
	\|\nabla (f_i-\mathrm{h}_t f)\|_{L^\infty}\leqslant C\sum_{j=i+1}^\infty |a_j|\exp\left(-C^{-1}j^{\frac{2}{n}}t\right)j^{\frac{n+2}{2}}\to 0,\ \mathrm{as}\ i\to\infty.
	\]
	In particular, $\sup_i \|\nabla f_i\|_{L^\infty}<\infty$.
	Thus $\{f_i\}\to \mathrm{h}_t f$ as $i\to \infty$ in both $H^{1,2}$ and Lipschitz sense. Combining with Lemmas \ref{alem3.4} and \ref{lem3.7}, we conclude.

\end{proof}
As a direct consequence of Theorem \ref{GGthm1}, we obtain the following proposition.
\begin{prop}[Integral type Bochner inequality]\label{prop4.5}
	Under Theorem \ref{GGthm1}, assume in addition $h\in \mathrm{Lip}(X,\mathsf{d})$, then for any $t_1<t_2$ we have
	\[
	\begin{aligned}
			&\int_{\Omega_{t_2}\setminus \Omega_{t_1}}\tau\circ f\,\langle h\nabla {|\nabla g|}^2,\nabla f\rangle\,\integral\\
			\geqslant\ & 2\int_{t_1}^{t_2}\tau(t)\int_{\Omega_t}\left(h{|\mathrm{Hess}\, g| }^2+h\langle\nabla \Delta g,\nabla g\rangle+Kh|\nabla g|^2+\mathrm{Hess}\,g(\nabla g,\nabla h)\right)\,\integral\mathrm{d}t.
	\end{aligned}
	\]
\end{prop}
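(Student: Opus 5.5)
The plan is to integrate the Bochner inequality (\ref{abc2.14}) against a cut-off in the $f$-direction. Once we pair it with a divergence, the integral type Gauss--Green formula (\ref{a1.1}) converts the resulting boundary-type terms into integrals over the sublevel sets $\Omega_t$.

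\textbf{Step 1: rewrite the left-hand side.} The natural object is $\int_{\Omega_{t_2}\setminus\Omega_{t_1}}\tau\circ f\,\langle h\nabla|\nabla g|^2,\nabla f\rangle$. Formally (\ref{a1.1}) applies with $\nabla g$ replaced by $\nabla|\nabla g|^2$, which gives
\[
\int_{t_1}^{t_2}\tau(t)\int_{\Omega_t}\mathrm{div}\big(h\nabla|\nabla g|^2\big)\,\integral\,\mathrm{d}t .
\]
The difficulty is that $|\nabla g|^2$ is not a test function in general. I would first re-run the proof of Theorem \ref{GGthm1} with $g$ replaced by any function $w\in H^{1,2}\cap\mathrm{Lip}$ for which $\mathrm{div}(h\nabla w)$ makes sense as a measure. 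A cleaner route is to avoid this and work weakly.

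\textbf{Step 2: weak reformulation.} Fix $t$ and let $\psi_\delta:=\eta_\delta\circ f$, where $\eta_\delta$ is a Lipschitz approximation of $\chi_{(-\infty,t]}$ (equal to $1$ below $t-\delta$, $0$ above $t$, linear in between). Then $\psi_\delta$ is Lipschitz and nonnegative. Test the Bochner inequality in its $H^{1,2}$ weak form $-\frac12\int\langle\nabla\varphi,\nabla|\nabla g|^2\rangle\ge\int\varphi(|\mathrm{Hess}\,g|^2+\langle\nabla\Delta g,\nabla g\rangle+K|\nabla g|^2)$, which extends from test $\varphi$ to nonnegative Lipschitz compactly supported $\varphi$ by density, with $\varphi=h\psi_\delta$. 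This step needs $h\ge0$, or a splitting into $h^\pm$; I expect the intended reading has $h\geq 0$, and I will assume it. Expanding $\nabla(h\psi_\delta)=\psi_\delta\nabla h+h\eta_\delta'(f)\nabla f$ gives
\[
\frac{1}{2\delta}\int_{\{t-\delta<f\le t\}}h\langle\nabla|\nabla g|^2,\nabla f\rangle-\frac12\int\psi_\delta\langle\nabla h,\nabla|\nabla g|^2\rangle\ \ge\ \int\psi_\delta h(\cdots).
\]
By (\ref{11eqn2.16}), $\frac12\langle\nabla h,\nabla|\nabla g|^2\rangle=\mathrm{Hess}\,g(\nabla g,\nabla h)$, which produces the last term on the right-hand side of the claim.

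\textbf{Step 3: integrate in $t$ and remove $\delta$.} Multiply by $2\tau(t)$, which is harmless if $\tau\ge0$; that is again the implicit sign assumption, otherwise the inequality cannot hold. Integrate over $t\in[t_1,t_2]$. By Fubini, the averaged term $\int_{t_1}^{t_2}\tau(t)\frac1\delta\int_{\{t-\delta<f\le t\}}(\cdots)\,\mathrm{d}t$ converges as $\delta\to0$ to $\int_{\Omega_{t_2}\setminus\Omega_{t_1}}\tau\circ f\,(\cdots)$, by continuity of $\tau$ and dominated convergence. Meanwhile $\psi_\delta\to\chi_{\Omega_t}$ except on $\{f=t\}$, a set that is null for a.e.\ $t$. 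This yields the inequality.

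This Fubini/difference-quotient step is exactly the co-area mechanism of Theorem \ref{GGthm1}, so it is legitimate without any Sard-type input. The main obstacle is justifying the weak Bochner inequality with the merely Lipschitz, nonnegative weight $h\psi_\delta$. I would handle it by approximating $h\psi_\delta$ with $\mathrm{h}_s(h\psi_\delta)\in\mathrm{Test}(X)$, which is nonnegative, using the $H^{1,2}$ convergence together with $|\nabla g|^2\in H^{1,2}$ from Remark \ref{rmk2.7}.
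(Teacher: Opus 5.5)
Your argument is correct under $h\geqslant 0$ and $\tau\geqslant 0$, but it takes a genuinely different route from the paper.

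\textbf{The paper's route.} It uses Theorem \ref{GGthm1} as a black box. After reducing to compact $X$ and $f\in\mathrm{Test}(X)$, it puts the smoothed function $\mathrm{h}_s(|\nabla g|^2)$ into the $g$-slot of (\ref{a1.1}). This rewrites the left-hand side as $\int_{t_1}^{t_2}\tau(t)\int_{\Omega_t}\mathrm{div}(h\nabla \mathrm{h}_s(|\nabla g|^2))\,\integral\,\mathrm{d}t$. It then bounds $\int_{\Omega_t}h\,\Delta\mathrm{h}_s(|\nabla g|^2)\,\integral$ from below by testing (\ref{abc2.14}) against $\mathrm{h}_s(\phi_\varepsilon h)$, where $\phi_\varepsilon\to\chi_{\Omega_t}$. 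Finally it lets $\varepsilon\to 0$ and then $s\to 0$.

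\textbf{Your route.} You put the cut-off $\eta_\delta\circ f$ directly into the weight of the weak Bochner inequality. The $t$-average of the resulting difference quotients (Fubini plus continuity of $\tau$) then does the work that (\ref{a1.1}) does in the paper.

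\textbf{What each route buys.} You use only the chain rule, locality of minimal relaxed slopes, $|\nabla g|^2\in H^{1,2}$ and (\ref{11eqn2.16}). You need no strong locality, no eigenfunction approximation, no smoothing of $|\nabla g|^2$, and no reduction to compact $X$ or to a test function $f$. So your proof works verbatim on any $\mathrm{RCD}(K,N)$ space. In fact, running the same difference-quotient argument on $\int\langle\nabla(\eta_\delta\circ f),h\nabla g\rangle\,\integral=-\int\eta_\delta\circ f\,\mathrm{div}(h\nabla g)\,\integral$ reproduces (\ref{a1.1}) itself. In exchange, the paper's proof is a short corollary of its main theorem. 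A two-sided version of your $\eta_\delta$ gives (\ref{eqn3.13}) just as directly.

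\textbf{Two details to tidy.} First, $h\psi_\delta$ need not be bounded or compactly supported. Multiply it by a cut-off equal to $1$ near $\mathrm{supp}\,g$; this is harmless, because $\nabla|\nabla g|^2$ and the Bochner integrand vanish a.e.\ off $\mathrm{supp}\,g$. Second, the $\delta\to 0$ limit of your left-hand side is an integral over $\{t_1\leqslant f<t_2\}$, not over $\Omega_{t_2}\setminus\Omega_{t_1}=\{t_1<f\leqslant t_2\}$. The two agree because $\nabla f=0$ a.e.\ on any level set of positive measure.

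\textbf{The sign assumptions.} Your sign caveat is right, and it applies equally to the paper's proof. That proof needs $h\geqslant 0$ to use $\mathrm{h}_s(\phi_\varepsilon h)$ as a Bochner weight, and $\tau\geqslant 0$ to integrate (\ref{eqn4.5}) against $\tau$. Without them the statement fails. For example, take $\mathbb{R}^n$ viewed as an $\mathrm{RCD}(K,n)$ space with $K<0$, $f=x_1$, $\tau\equiv 1$ and $h=-1$ near $\mathrm{supp}\,g$.
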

\begin{proof}
	According to the proof of Theorem \ref{GGthm1}, since $g$ is compactly supported, we only consider the case that $X$ is compact and $f\in \mathrm{Test}(X)$.

Given that ${|\nabla g|}^2\in H^{1,2}$, $\mathrm{h}_s({|\nabla g|}^2)$ $H^{1,2}$-converges to ${|\nabla g|}^2$ as $s\to 0$. For any $s>0$, Theorem \ref{GGthm1} implies
	\begin{equation}\label{eqn4.4}
		\int_{\Omega_{t_2}\setminus \Omega_{t_1}}\tau\circ f \,\langle h \nabla \mathrm{h}_s({|\nabla g|}^2),\nabla f\rangle\,\integral=\int_{t_1}^{t_2}\tau(t)\int_{ \Omega_t }\left(h\Delta \mathrm{h}_s({|\nabla g|}^2)+\langle\nabla h,\nabla {\mathrm{h}_s(|\nabla g|}^2)\rangle\right)\,\integral\mathrm{d}t.
	\end{equation}

		Let $\varphi\in C^\infty([0,\infty))$ be such that $0\leqslant\varphi\leqslant 1$, $\varphi\equiv 1$ on $[0,1]$ and $\varphi\equiv 0$ on $(2,\infty)$. Notice that $\mathscr{H}^n(\partial \Omega_t)=0$ for $\mathscr{L}^1$-a.e. $t\in (t_1,t_2)$. For every such $t$ and any $\varepsilon\ll 1$, define $\phi_\varepsilon \in \mathrm{Lip}_{\mathrm{c}}(\Omega_t,\mathsf{d})\cap \mathrm{Test}(X)$ by $\phi_\varepsilon:=1-\varphi(\varepsilon^{-1}(t-f))$. Then for every $s>0$, because $\lim_{\varepsilon\to 0}\|\phi_\varepsilon-\chi_{\Omega_t}\|_{L^2}=0$, it follows that $\lim_{\varepsilon\to 0}\|\mathrm{h}_s(\phi_\varepsilon- \chi_{\Omega_t})\|_{L^\infty}=0$. Hence by \cite[Proposition 5.2.7, Corollary 5.2.9]{G18a} and (\ref{abc2.14}), we obtain
\begin{equation}\label{eqn4.5}
\begin{aligned}
	&\int_{ \Omega_t }h\Delta \mathrm{h}_s({|\nabla g|}^2)\,\integral=\lim_{\varepsilon\to 0}\int_{\Omega_{t}} \phi_\varepsilon\, h\Delta \mathrm{h}_s({|\nabla g|}^2)\,\integral=\lim_{\varepsilon\to 0}\int_{\Omega_{t}} \Delta(\phi_\varepsilon \, h)\,  \mathrm{h}_s({|\nabla g|}^2)\,\integral\\
	=\ & \lim_{\varepsilon\to 0}\int_{X} \Delta (\mathrm{h}_s(\phi_\varepsilon\, h))\, {|\nabla g|}^2\,\integral\geqslant  2\,\lim_{\varepsilon\to 0}\int_{X} \mathrm{h}_s(\phi_\varepsilon\, h)\left({|\mathrm{Hess}\, g| }^2+\langle\nabla\Delta g,\nabla g\rangle+K{|\nabla g|}^2\right)\,\integral\\
	=\ &2\int_{X} \mathrm{h}_s\left(\chi_{\Omega_t}\, h\right)\left({|\mathrm{Hess}\, g| }^2+\langle\nabla\Delta g,\nabla g\rangle+K{|\nabla g|}^2\right)\,\integral.
\end{aligned}
\end{equation}

Since it is clear that
\[
\lim_{s\to 0}\int_{\Omega_{t}}\langle\nabla h,\nabla {\mathrm{h}_s(|\nabla g|}^2)\rangle\,\integral=\int_{\Omega_t}\langle\nabla h,\nabla {|\nabla g|}^2\rangle\,\integral,\ \forall t\in (t_1,t_2),
\]
we conclude by combining (\ref{eqn4.4}), (\ref{eqn4.5}) and letting $s\to 0$. 
\end{proof}
\begin{remark}
	Assume $\tau\equiv 1$. Then for any $t_2,t_3>t_1$ with $t_4-t_3=t_2-t_1$, using change of variables in (\ref{eqn4.4}) gives
	\[
	\begin{aligned}
		&\int_{\Omega_{t_4}\setminus \Omega_{t_3}} \langle h \nabla \mathrm{h}_s({|\nabla g|}^2),\nabla f\rangle\,\integral-	\int_{\Omega_{t_2}\setminus \Omega_{t_1}} \langle h \nabla \mathrm{h}_s({|\nabla g|}^2),\nabla f\rangle\,\integral\\
		=\ &\int_{t_1}^{t_2}\int_{ \Omega_{t_3-t_1+t} \setminus \Omega_t}\left(h\Delta \mathrm{h}_s({|\nabla g|}^2)+\langle\nabla h,\nabla {\mathrm{h}_s(|\nabla g|}^2)\rangle\right)\,\integral\mathrm{d}t.
	\end{aligned}
	\]
	For each $t\in (t_1,t_2)$, we can take the test function $\phi_\varepsilon=1-\varphi(\varepsilon^{-1}(f-t))\varphi(\varepsilon^{-1}(t_3-t_1+t-f))$ in (\ref{eqn4.5}) and let $\varepsilon\to 0$ and then $s\to 0$ to get
	\begin{equation}\label{eqn3.13}
		\begin{aligned}
&			\int_{\Omega_{t_4}\setminus \Omega_{t_3}} \langle h \nabla {|\nabla g|}^2,\nabla f\rangle\,\integral-	\int_{\Omega_{t_2}\setminus \Omega_{t_1}} \langle h \nabla {|\nabla g|}^2,\nabla f\rangle\,\integral\\
\geqslant \ &2\int_{t_1}^{t_2}\int_{\Omega_{t_3-t_1+t}\setminus\Omega_t }\left(h{|\mathrm{Hess}\, g| }^2+h\langle\nabla \Delta g,\nabla g\rangle+Kh|\nabla g|^2+\mathrm{Hess}\,g(\nabla g,\nabla h)\right)\,\integral\mathrm{d}t.
		\end{aligned}
	\end{equation}
\end{remark}

\section{Colding's monotonicity formula on non-smooth setting}\label{sec5}
This section is aimed at generalizing results in \cite[Section 2]{C12}. Assume $(X,\mathsf{d},\mathscr{H}^n)$ is a non-collapsed non-parabolic RCD$(0,n)$ space with $n\geqslant 3$. 
\begin{defn}[Green function]
	The Green function $G$ of $(X,\mathsf{d},\mathscr{H}^n)$ is defined by
	\[
	\begin{aligned}
		G : X \times X \setminus \operatorname{diag}(X)&\longrightarrow (0, \infty)\\
		(x, y) &\longmapsto \int_0^\infty \rho(x, y, t) \, \mathrm{d}t,
	\end{aligned}
	\]
	where $\operatorname{diag}(X) := \{(x, x) \in X \times X \mid x \in X\}$. 
\end{defn}

By (\ref{1.3}) and \cite[Theorem 1.1]{JLZ16}, $G$ is well-defined. In the sequel, we fix a point $x\in X$. It is proved in (the proof of) \cite[Lemma 2.5]{BS20} that $G(x,\cdot)$ is harmonic on $ X \setminus \{x\}$ and that $ G(x,\cdot) \in H_{\mathrm{loc}}^{1,1}(X, \mathsf{d}, \mathscr{H}^n) $ holds with
\[
\int_X \Delta f(y) \, G(x,y) \, \integral(y) = -f(x),\ \forall f\in \mathrm{Test}(X). 
\]

Let
\[
\nu:=\lim_{r\to 0}\frac{\mathscr{H}^n(B_r(x))}{ r^n},\ \ \mathcal{V}:=\lim_{r\to \infty}\frac{\mathscr{H}^n(B_r(x))}{ r^n}.
\]
It can be easily checked from Theorems \ref{BGineq} and \ref{DG18cor1.7} that $0<\nu \leqslant \omega_n$ and $\mathcal{V}\leqslant \nu$.

Define
\[
\mathsf{b}=\mathsf{b}_x:=\left(n(n-2)\,\nu\,G(x,\cdot)\right)^{\frac{1}{2-n}},\ \Omega_t:=\{\mathsf{b}\leqslant t\},
\]

Regarding the properties of $\mathsf{b}$, we summarize from \cite{CM97
	,CM97b,C12,HP25} as follows.

\begin{lem}\label{lem5.2}
	We have 
	\begin{enumerate}
		\item[$(1)$]$($Comparison with distance function$)$ There exists $C=C(n)>1$ such that 
		\begin{align}\label{a5.2}
			C^{-1}F\circ \mathrm{d}_x\leqslant \mathsf{b}\leqslant CF\circ \mathrm{d}_x\ \text{on }X\setminus \{x\},\ \text{where }	F(t):=\left(\int_t^\infty\frac{s}{\mathscr{H}^n(B_s(x))}\,\mathrm{d}s\right)^{\frac{1}{2-n}}.
			\end{align}
		\item[$(2)$]$($Explicit expression of the Laplacian$)$ On $X\setminus\{x\}$ it holds
		\begin{align}\label{5.3}
			\Delta \mathsf{b}^2=2n|\nabla \mathsf{b}|^2,\  \Delta \mathsf{b}=(n-1){|\nabla\mathsf{b}|^2/\mathsf{b}}. 
		\end{align}
		\item[$(3)$]$($Regularity of the gradient$)$ ${|\nabla \mathsf{b}|}^2\in H^{1,2}_{\mathrm{loc}}(X\setminus\{x\},\mathsf{d},\mathscr{H}^n)$ admits an upper semicontinuous representative $($still denoted by ${|\nabla \mathsf{b}|}^2)$ such that 
		\begin{align}\label{5.1}
			|\nabla \mathsf{b}|(z)=\limsup_{y\to z}|\nabla \mathsf{b}|(y)\leqslant 1,\ \forall z\in X\setminus \{x\}.
		\end{align}
		\item[$(4)$]$($Asymptotic formulas$)$
		\begin{align}\label{5.2}
			\lim_{y\to x}\mathsf{b}/\mathsf{d}_x=1,\  \lim_{y\to x} |\nabla \mathsf{b}|(y)=1.
		\end{align}
	\end{enumerate}
\end{lem}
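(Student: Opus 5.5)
The statement is a summary of properties of $\mathsf{b}$ taken from \cite{CM97,CM97b,C12,HP25}. My plan is to check that each item carries over to the non-collapsed $\mathrm{RCD}(0,n)$ setting with reference measure $\mathscr{H}^n$. Everything rests on two inputs: the two-sided Gaussian bounds (\ref{JLZineq}), and harmonicity of $G(x,\cdot)$ on $X\setminus\{x\}$, which comes from the proof of \cite[Lemma 2.5]{BS20}.

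\emph{Item (1).} Integrate (\ref{JLZineq}) with $K=0$ in time. Substituting $s=\sqrt t$ and using volume doubling from Theorem \ref{BGineq}, I expect
\[
C^{-1}\int_{\mathsf{d}_x(y)}^\infty \frac{s}{\mathscr{H}^n(B_s(x))}\,\mathrm{d}s\leqslant G(x,y)\leqslant C\int_{\mathsf{d}_x(y)}^\infty \frac{s}{\mathscr{H}^n(B_s(x))}\,\mathrm{d}s .
\]
The part of the integral with $t\leqslant \mathsf{d}^2$ is controlled because the exponential factor dominates. Raising this to the power $1/(2-n)$ gives (\ref{a5.2}).

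\emph{Item (2).} This is a chain-rule computation. Write $\mathsf{b}=\phi(G)$ with $\phi(u)=(n(n-2)\nu u)^{1/(2-n)}$. Since $G$ is harmonic and locally Lipschitz away from $x$ (gradient estimates \cite{J14}), the chain rule for the Laplacian gives $\Delta\mathsf{b}=\phi''(G)|\nabla G|^2$. Expressing $|\nabla G|$ in terms of $|\nabla\mathsf{b}|$, a direct calculation yields $\phi''/(\phi')^2=(n-1)/\phi$, hence $\Delta \mathsf{b}=(n-1)|\nabla\mathsf{b}|^2/\mathsf{b}$. The formula for $\Delta\mathsf{b}^2$ then follows from $\Delta \mathsf{b}^2=2\mathsf{b}\Delta\mathsf{b}+2|\nabla\mathsf{b}|^2$.

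\emph{Item (4).} I would use a blow-up at $x$. Rescale $X$ by $r\to0$; the rescaled spaces converge to the tangent cone $C(Z)$ at $x$, which has volume ratio $\nu$ because $\nu$ is the density at $x$. The rescaled Green functions converge locally uniformly away from the vertex, and also in $H^{1,2}$ by Theorem \ref{AH18}. The limit is the Green function of the cone, which equals $\mathsf{d}^{2-n}/(n(n-2)\nu)$; the normalisation of $\mathsf{b}$ is chosen exactly to match this. Hence the limiting $\mathsf{b}$ is the distance to the vertex, so $\mathsf{b}/\mathsf{d}_x\to1$. The gradient statement comes from $H^{1,2}$-strong convergence together with the upper semicontinuity in item (3).

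\emph{Item (3).} I expect this to be the main obstacle. The approach is the Bochner-type argument of \cite{HP25}. From (\ref{5.3}) and the Bochner inequality (\ref{abc2.14}) with $K=0$, the function $|\nabla\mathsf{b}|^2$ satisfies, weighted by a power of $\mathsf{b}$, an elliptic inequality of the form $\mathrm{div}(\mathsf{b}^{a}\nabla|\nabla\mathsf{b}|^2)\geqslant0$. Subsolutions of such equations have Lebesgue-point upper semicontinuous representatives by De Giorgi–Moser theory on PI spaces. The same inequality gives $|\nabla\mathsf{b}|^2\in H^{1,2}_{\mathrm{loc}}$ via (\ref{eeeeeeqn2.6}). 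The bound $|\nabla\mathsf{b}|\leqslant1$ is the sharp gradient estimate. I would obtain it from the maximum principle for this subsolution inequality, using the boundary behaviours: the value $1$ at $x$ from item (4), and control at infinity from the cone at infinity.

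I would carry out the proof in the order (1), (2), (4), (3), since item (3) uses both the formulas of item (2) and the asymptotics of item (4).
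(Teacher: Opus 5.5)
\textbf{Main gap: the gradient half of item (4).} Blowing up at $x$, the $H^{1,2}$-strong convergence of the rescaled $\mathsf b$ to $\mathsf d_o$ on annuli of the tangent cone gives $|\nabla\mathsf b|^2\to1$ in $L^1$ on rescaled annuli. That yields only the averaged statement $\lim_{r\to0}\fint_{B_r(x)}\bigl|1-|\nabla\mathsf b|^2\bigr|\,\mathrm{d}\mathscr{H}^n=0$. Upper semicontinuity, or Moser's mean value inequality for the subsolution $|\nabla\mathsf b|^2$, only transfers \emph{upper} bounds from averages to points. So you get $\limsup_{y\to x}|\nabla\mathsf b|(y)\leqslant1$, but nothing towards $\liminf_{y\to x}|\nabla\mathsf b|(y)\geqslant1$.

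This cannot be patched, because the pointwise claim appears to fail in general. Take $p\neq x$ such that no tangent cone at $p$ splits off a line.
\begin{enumerate}
\item Near $p$, $\mathsf b$ is Lipschitz with bounded Laplacian. So, by Theorem~\ref{AH18}, the blow-ups of $\mathsf b-\mathsf b(p)$ at $p$ converge $H^{1,2}_{\mathrm{loc}}$-strongly to a harmonic function of linear growth on a cone $C(Z)$ over an $\mathrm{RCD}(n-2,n-1)$ space.
\item Such a function is a constant plus a degree-one homogeneous harmonic function. The latter requires $\lambda_1(Z)=n-1$, i.e.\ $Z$ is a spherical suspension and $C(Z)$ splits. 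Hence $\fint_{B_\rho(p)}|\nabla\mathsf b|^2\,\mathrm{d}\mathscr{H}^n\to0$ as $\rho\to0$.
\item Such points can accumulate at $x$, e.g.\ polyhedral vertices on the boundary of a suitable unbounded convex set in $\mathbb{R}^{n+1}$. Then every neighbourhood of $x$ contains a set of positive measure where $|\nabla\mathsf b|<1/2$, and no representative satisfies $\lim_{y\to x}|\nabla\mathsf b|(y)=1$.
\end{enumerate}
What you can prove is the averaged statement. That is also all the later arguments use: $A(0)=n\nu$ in Lemma~\ref{lem6.2}, and pinning down the constant value of $|\nabla\mathsf b|$ in the rigidity theorems.

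\textbf{Circular ordering.} The gradient part of (4) uses (3), while your maximum-principle proof of $|\nabla\mathsf b|\leqslant1$ in (3) uses the value at $x$ from (4). This is repairable. The maximum principle only needs $\limsup_{y\to x}|\nabla\mathsf b|(y)\leqslant1$ and its analogue at infinity. Both follow from the $L^1$-convergence of $|\nabla\mathsf b|^2$ on blow-up (respectively blow-down) annuli together with Moser's local bound for the subsolution $(|\nabla\mathsf b|^2-1)^+$. Local boundedness of $|\nabla\mathsf b|$ comes from $|\nabla G(x,\cdot)|\leqslant CG(x,\cdot)/\mathsf d_x$.

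\textbf{Constant in item (1).} Your time integration correctly gives the two-sided bound for $G(x,\cdot)$ with $C=C(n)$. Passing to $\mathsf b$, however, brings in the factor $(n(n-2)\nu)^{1/(2-n)}$. So the upper bound in (\ref{a5.2}) needs a constant depending also on $\nu$: on a cone with vertex $x$ and volume ratio $\nu$ one has $\mathsf b=\mathsf d_x$, yet $\mathsf b/F(\mathsf d_x)=((n-2)\nu)^{-1/(n-2)}$.

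\textbf{The rest.} Item (2) and the first half of (4) are fine as sketched. For the latter, record that the tail $\int_R^\infty s\,\mathscr{H}^n(B_s)^{-1}\,\mathrm{d}s$, computed in the rescaled spaces, is bounded by $CR^{2-n}+o(1)$ uniformly. That is what lets the Green functions pass to the limit.
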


For any $u\in \mathrm{Test}(X)$, let
\[
I_u(t):= t^{1-n}\int_{\partial \Omega_t}u^2|\nabla \mathsf{b}|\,\mathrm{dPer}(\Omega_t,\cdot).
\]
Similarly as in \cite[Section 2]{CM97}, we get the following result.
\begin{lem}\label{lem6.1}
	If $u$ is harmonic on $\Omega_{t_0}$ for some $t_0>0$, then $I_u$ is differentiable on $t\in (0,t_0)$ with
	\[
	I_u'(t)=2\,t^{1-n}\int_{\Omega_t} {|\nabla u|}^2\,\integral.
	\]
	
\end{lem}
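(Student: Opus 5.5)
We plan to follow the classical argument of Colding--Minicozzi and replace the pointwise boundary integrals with the integral type Gauss--Green formula (Theorem \ref{GGthm1}). First we rewrite $I_u$ in a form that can be differentiated without any boundary trace. By the co-area formula (Theorem \ref{coareafor}) applied to $v=\mathsf{b}$, the quantity $\int_{\partial\Omega_t}u^2|\nabla\mathsf{b}|\,\mathrm{dPer}(\Omega_t,\cdot)$ is, for a.e.\ $t$, the derivative of $t\mapsto\int_{\Omega_t}u^2|\nabla \mathsf{b}|^2\,\integral$. This is the left-hand side of (\ref{a1.1}) once we identify $u^2|\nabla\mathsf{b}|^2=\langle\nabla \mathsf{b},h\nabla g\rangle$. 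The natural choice is $g=\mathsf{b}^2/2$ and $h=u^2/\mathsf{b}$, with $\tau\equiv1$. Then $\langle\nabla\mathsf{b},h\nabla g\rangle=u^2|\nabla\mathsf{b}|^2$. Using (\ref{5.3}) together with harmonicity of $u$, we get
\[
\mathrm{div}(h\nabla g)=\mathrm{div}(u^2\nabla\mathsf{b})=2u\langle\nabla u,\nabla\mathsf{b}\rangle+(n-1)u^2|\nabla\mathsf{b}|^2/\mathsf{b}.
\]
Theorem \ref{GGthm1} then gives the continuous representative
\[
\int_{\partial \Omega_t}u^2|\nabla \mathsf{b}|\,\mathrm{dPer}=\int_{\Omega_t}\Big(2u\langle\nabla u,\nabla\mathsf{b}\rangle+(n-1)\frac{u^2|\nabla\mathsf{b}|^2}{\mathsf{b}}\Big)\integral .
\]
The right-hand side is absolutely continuous in $t$, since the integrand is locally integrable near $x$ by (\ref{5.2}). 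Multiplying by $t^{1-n}$ defines $I_u$ as a locally Lipschitz function on $(0,t_0)$. Here we take this representative as the meaning of $I_u$, as in the paper's Remark after Theorem \ref{GGthm}.

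Next we differentiate. Using the co-area formula once more, the derivative of the volume integral equals $\int_{\partial\Omega_t}(2u\langle\nabla u,\nabla \mathsf{b}\rangle/|\nabla\mathsf{b}| +(n-1)u^2|\nabla \mathsf{b}|/t)\,\mathrm{dPer}$ in integrated form. Hence
\[
I_u'(t)=t^{1-n}\int_{\partial\Omega_t}2u\,\frac{\langle\nabla u,\nabla\mathsf{b}\rangle}{|\nabla\mathsf{b}|}\,\mathrm{dPer}.
\]
The $(n-1)$ terms cancel against the derivative of $t^{1-n}$. To convert this boundary term, we apply Theorem \ref{GGthm1} again with $g=u$, $h=u$, after localizing $u$ by a cut-off equal to $1$ on $\Omega_{t_0}$. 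By harmonicity, $\mathrm{div}(u\nabla u)=|\nabla u|^2$. Its differentiated form gives $\int_{\partial\Omega_t}u\langle\nabla u,\nabla\mathsf{b}\rangle/|\nabla\mathsf{b}|=\int_{\Omega_t}|\nabla u|^2$ for every $t$, which yields the formula. Because the right-hand side $2t^{1-n}\int_{\Omega_t}|\nabla u|^2$ is continuous in $t$ (level sets have measure zero except countably many, and those have $|\nabla\mathsf{b}|=0$ there; $\mathsf{b}$ is harmonic-like, so in fact a.e.), the a.e.\ derivative upgrades to genuine differentiability on $(0,t_0)$.

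The main obstacle is justifying the application of Theorem \ref{GGthm1}, which requires $f$ Lipschitz, $g$ a compactly supported test function and $h\in H^{1,2}$. Here $\mathsf{b}$ is only locally Lipschitz away from $x$, and $g=\mathsf{b}^2/2$ is not a test function near $x$ or globally. We would handle this by working on $\Omega_{t}\setminus\Omega_\delta$. Multiply $g$ by cut-offs that vanish near $x$ and outside $\Omega_{t_0}$, and replace $\mathsf{b}$ by a bounded Lipschitz truncation. The identity is local (Remark \ref{rmk2.7}), and the proof of Theorem \ref{GGthm1} only uses heat-flow approximation of $f$ together with a partition of unity, so it extends to $g\in D(\Delta)$, locally Lipschitz and compactly supported. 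This yields the formula for $\int_{\Omega_t\setminus\Omega_\delta}$. We then let $\delta\to0$, using (\ref{5.2}) so that the inner boundary contribution $\delta^{1-n}\int_{\partial\Omega_\delta}u^2|\nabla\mathsf{b}|\to n\nu u(x)^2$ consistently on both sides. The resulting terms then cancel in the difference, as in the smooth case.
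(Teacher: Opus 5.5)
Your argument is correct in substance. It uses the same ingredients as the paper: Theorem \ref{GGthm} applied to a $u^2\nabla\mathsf b$-type field and to $u\nabla u$, harmonicity of $u$, the co-area formula, and a cut-off at the pole. The bookkeeping differs, however.

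The paper never lets $1/\mathsf b$ appear. It uses the field $u^2\nabla\mathsf b^2=2\mathsf b\,u^2\nabla\mathsf b$, whose divergence $\langle\nabla u^2,\nabla\mathsf b^2\rangle+2nu^2|\nabla\mathsf b|^2$ is bounded by (\ref{5.3}). It compensates for the extra factor $\mathsf b$ by taking the weight $\tau(s)=s$ in the $u\nabla u$ identity (\ref{eqn6.2}). The two combine into the twice-integrated identity (\ref{6.1a}), which is then differentiated.

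You follow the classical Colding--Minicozzi order instead. The field $u^2\nabla\mathsf b$, with its singular but integrable divergence $2u\langle\nabla u,\nabla\mathsf b\rangle+(n-1)u^2|\nabla\mathsf b|^2/\mathsf b$, gives $t^{n-1}I_u(t)$ directly as a volume integral, and hence its continuous representative. One differentiation cancels the $(n-1)$-terms, and the unweighted $u\nabla u$ identity finishes. Your route is shorter and more transparent. The paper's route keeps every divergence bounded near $x$, so its cut-off error is controlled by $\varepsilon^{-1}\mathscr H^n(B_{2\varepsilon}(x))$ with no integrability discussion.

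Two statements in your write-up should be fixed, though neither breaks the proof.

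First, the inner contribution should not be identified with $n\nu u(x)^2$. That limit does not follow from (\ref{5.2}) alone. Moreover, the paper derives $\lim_{t\to0}I_1(t)=n\nu$ in Lemma \ref{lem6.2} from the very lemma you are proving, so invoking it would be circular. All you need is one of the following:
\begin{itemize}
\item the un-normalized flux of the bounded field $u^2\nabla\mathsf b$ through the inner cut-off layer is $O(\delta^{-1}\mathscr H^n(B_{2\delta}(x)))=O(\delta^{n-1})$; or
\item for fixed $\delta$, this flux does not depend on the outer parameter $t$, so it drops out when you differentiate in $t$.
\end{itemize}

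Second, your reason for the continuity of $t\mapsto\int_{\Omega_t}|\nabla u|^2\,\integral$ is not valid. Knowing $|\nabla\mathsf b|=0$ a.e.\ on a level set of positive measure says nothing about $|\nabla u|$ there. The remark that ``$\mathsf b$ is harmonic-like'' would need a unique continuation property that is not known in this setting. Without it, you get $I_u$ locally Lipschitz, with the formula holding at every $t$ outside the countable set where $\mathscr H^n(\{\mathsf b=t\})>0$; at the exceptional values you only get one-sided derivatives. The paper's last step, differentiating (\ref{6.1a}), tacitly uses the same continuity, so on this point you are no worse off than the paper.
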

\begin{proof}
	We first show
	\begin{equation}\label{6.1a}
		t\,\frac{\mathrm{d}}{\mathrm{d}t}\int_{\Omega_t}u^2{|\nabla \mathsf{b}|}^2\,\mathrm{d}\mathscr{H}^n=	2\int_0^t  s\int_{\Omega_s}{|\nabla u|}^2\,\mathrm{d}\mathscr{H}^n\mathrm{d}s+n\int_{\Omega_t}u^2{|\nabla \mathsf{b}|}^2\,\integral,\ \forall t\in (0,t_0).
	\end{equation}
	%

	Fix $t>0$. For any $\varepsilon\ll t$, take a cut-off function $\varphi_\varepsilon$ such that $0\leqslant \varphi_\varepsilon\leqslant 1$, $\mathrm{supp}\, \varphi_\varepsilon\subset B_{2\varepsilon}(x)$, $\varphi_\varepsilon=1$ on $B_\varepsilon(x)$ and $\varepsilon\|\nabla \varphi_\varepsilon\|_{L^\infty}+\varepsilon^2\|\Delta \varphi_\varepsilon\|_{L^\infty}\leqslant C(n)$. 
	
	Provided that $u$ is harmonic, by Theorem \ref{GGthm}, one directly calculates that
	\begin{equation}\label{feuahfae}
		\begin{aligned}
			\int_{\Omega_t} (1-\varphi_\varepsilon)u\mathsf{b}\langle\nabla u,\nabla \mathsf{b}\rangle\,\mathrm{d}\mathscr{H}^n=\int_0^t s\int_{\Omega_s}\langle\nabla u,\nabla (u(1-\varphi_\varepsilon)) \rangle\,\mathrm{d}\mathscr{H}^n\mathrm{d}s.
		\end{aligned}
	\end{equation}
	Obviously the left hand side of (\ref{feuahfae}) converges to $\int_{\Omega_t} u\mathsf{b}\,\langle\nabla u,\nabla \mathsf{b}\rangle\,\mathrm{d}\mathscr{H}^n$ as $\varepsilon\to 0$. Moreover, since
	\[
	\int_{\Omega_t}|\nabla \varphi_\varepsilon|\,\mathrm{d}\mathscr{H}^n\leqslant \varepsilon^{-1}\mathscr{H}^n(B_{2\varepsilon}(x))\leqslant C(n)\,  \varepsilon^{n-1} \to 0,\ \text{as}\ \varepsilon\to 0,
	\]
	using H\"older inequality we  obtain
	\[
	\left|\int_0^t s\int_{\Omega_s}\langle \nabla u,\nabla \varphi_\varepsilon\rangle\,\mathrm{d}\mathscr{H}^n\right|\leqslant\int_0^t s\int_{\Omega_s}|\langle \nabla u,\nabla \varphi_\varepsilon\rangle|\,\mathrm{d}\mathscr{H}^n\leqslant t^2\int_{\Omega_t} |\nabla u||\nabla \varphi_\varepsilon|\,\mathrm{d}\mathscr{H}^n\to 0,\ \text{as}\ \varepsilon\to 0.
	\]
	Here we may assume  $|u|\leqslant 1$ on $\Omega_{t}$ owing to \cite{J14}. Thus, letting $\varepsilon\to 0$ in (\ref{feuahfae}) we get	
	\begin{equation}\label{eqn6.2}
		\begin{aligned}
			\int_0^t s\int_{\Omega_s}|\nabla u|^2\,\mathrm{d}\mathscr{H}^n\mathrm{d}s=\int_{\Omega_t} u\mathsf{b}\,\langle\nabla u,\nabla \mathsf{b}\rangle\,\mathrm{d}\mathscr{H}^n.
		\end{aligned}
	\end{equation}
On the other hand, by (\ref{5.3}), (\ref{5.1}),
	\[
	\begin{aligned}
		\left|\int_{0}^t\int_{\Omega_s}u\varphi_\varepsilon \Delta \mathsf{b}^2\,\mathrm{d}\mathscr{H}^n\mathrm{d}s\right|\leqslant t\int_{\Omega_t}\varphi_\varepsilon \Delta \mathsf{b}^2\,\mathrm{d}\mathscr{H}^n\leqslant C(n)\,{t \mathscr{H}^n(B_{2\varepsilon}(x))}\to 0\, \ \text{as}\  \varepsilon \to 0.
	\end{aligned}
	\]
	This similarly yields
	\begin{equation}\label{eqn6.3}
		\begin{aligned}
			&\int_{\Omega_t}  u^2\langle\nabla \mathsf{b}^2,\nabla \mathsf{b}\rangle\,\mathrm{d}\mathscr{H}^n\\
			=\ &	\int_0^t \int_{\Omega_s}\left(\langle \nabla u^2,\nabla {\mathsf{b}}^2\rangle+u^2\Delta {\mathsf{b}}^2\right)\,\mathrm{d}\mathscr{H}^n\mathrm{d}s=\int_0^t \int_{\Omega_s}\left(\langle \nabla u^2,\nabla {\mathsf{b}}^2\rangle+2n u^2{|\nabla\mathsf{b}|}^2\right)\,\mathrm{d}\mathscr{H}^n\mathrm{d}s.
		\end{aligned}
	\end{equation}
Combining (\ref{eqn6.2}) and (\ref{eqn6.3}) shows
	\[
	\int_{\Omega_t}u^2\mathsf{b}{|\nabla \mathsf{b}|}^2\,\mathrm{d}\mathscr{H}^n=	2\int_0^t \int_{0}^{s} r\int_{G_r}{|\nabla u|}^2\,\mathrm{d}\mathscr{H}^n\mathrm{d}s\mathrm{d}r+n\int_0^t\int_{\Omega_s}u^2{|\nabla \mathsf{b}|}^2\,\integral\mathrm{d}s.
	\]
	(\ref{6.1a}) then follows by taking derivative with respect to $t$ in the above equation. To conclude, it suffices to use co-area formula and differentiate (\ref{6.1a}).
\end{proof}
Let
\[
A(t):=t^{1-n}\int_{\partial \Omega_t}{|\nabla \mathsf{b}|}^3\,\mathrm{dPer}(\Omega_t,\cdot),\ V(t):=t^{-n}\int_{\Omega_t}{|\nabla \mathsf{b}|}^4\,\integral.
\]
It follows clearly from co-area formula that
\begin{equation}\label{aeqn6.4}
	tV'(t)=A-nV(t),\ \mathscr{L}^1\text{-a.e.}\ t\in (0,\infty).
\end{equation}

\begin{lem}\label{lem6.2}
	$A,V$ are continuous on $[0,\infty)$ with $
	A(0)=n\,\nu,\ V(0)=\nu$. Moreover, $A\in H^{1,1}_{\mathrm{loc}}((0,\infty))$ and $V\in C^1((0,\infty))$.
\end{lem}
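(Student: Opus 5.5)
We treat $V$ first, then $A$, and obtain the values at $0$ from the asymptotics in Lemma \ref{lem5.2}.

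\textbf{Regularity and limit of $V$.} By the co-area formula (Theorem \ref{coareafor}) applied to $\mathsf{b}$,
\[
\int_{\Omega_t}|\nabla\mathsf{b}|^4\,\integral=\int_0^t\int_{\partial\Omega_s}|\nabla\mathsf{b}|^3\,\mathrm{dPer}(\Omega_s,\cdot)\,\mathrm{d}s .
\]
Hence $t\mapsto\int_{\Omega_t}|\nabla\mathsf{b}|^4$ is absolutely continuous, and $V$ is locally absolutely continuous on $(0,\infty)$ with (\ref{aeqn6.4}). For the limit, (\ref{5.2}) gives $\mathsf{b}/\mathsf{d}_x\to1$ and $|\nabla\mathsf{b}|\to1$ as $y\to x$. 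So for small $t$ we have $B_{(1-\delta)t}(x)\subset\Omega_t\subset B_{(1+\delta)t}(x)$ and $|\nabla\mathsf{b}|^4\in[1-\delta,1]$ on $\Omega_t$. Combined with $\mathscr{H}^n(B_r(x))/r^n\to\nu$, this yields $V(t)\to\nu$.

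\textbf{Continuity of $A$ via the integral Gauss--Green formula.} The key point is to represent $A$ by a continuous expression, using Theorem \ref{GGthm1} in its differentiated form with $f=\mathsf{b}$, $g=\mathsf{b}^2$ (localized by cut-offs away from $x$ and at infinity) and $h=|\nabla\mathsf{b}|^2/\mathsf{b}$ or a similar weight. Formally,
\[
\int_{\partial\Omega_t}|\nabla\mathsf{b}|^3\,\mathrm{dPer}=\frac{1}{2t}\int_{\partial\Omega_t}\Big\langle\frac{\nabla\mathsf{b}}{|\nabla\mathsf{b}|},|\nabla\mathsf{b}|^2\nabla\mathsf{b}^2\Big\rangle\,\mathrm{dPer},
\]
which by Theorem \ref{GGthm} equals
\[
\frac{1}{2t}\int_{\Omega_t}\mathrm{div}\big(|\nabla\mathsf{b}|^2\nabla\mathsf{b}^2\big)\,\integral
\]
plus an inner boundary term at $x$. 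Using (\ref{5.3}), the divergence is
\[
2n|\nabla\mathsf{b}|^4+\langle\nabla|\nabla\mathsf{b}|^2,\nabla\mathsf{b}^2\rangle ,
\]
which lies in $L^1_{\mathrm{loc}}$ away from $x$ because $|\nabla\mathsf{b}|^2\in H^{1,2}_{\mathrm{loc}}(X\setminus\{x\})$. To handle the point $x$ rigorously, I would apply (\ref{a1.1}) on $\Omega_t\setminus\Omega_\varepsilon$ with the cut-off $1-\varphi_\varepsilon$, as in the proof of Lemma \ref{lem6.1}. The resulting identity says that the a.e.-defined function $A$ coincides a.e. with
\[
\tilde A(t):=t^{1-n}\Big(A(\varepsilon)\varepsilon^{n-1}+\frac{1}{2t}\int_{\Omega_t\setminus\Omega_\varepsilon}\big(2n|\nabla\mathsf{b}|^4+\langle\nabla|\nabla\mathsf{b}|^2,\nabla\mathsf{b}^2\rangle\big)\,\integral\Big)
\]
(up to the exact normalizations). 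The integrand is in $L^1_{\mathrm{loc}}$, so the co-area formula shows that the integral term is a function of $t$ in $W^{1,1}_{\mathrm{loc}}$. Therefore $A$ has a representative in $H^{1,1}_{\mathrm{loc}}((0,\infty))$, in particular a continuous one. Then (\ref{aeqn6.4}) gives
\[
V'=(A-nV)/t
\]
with a continuous right-hand side, hence $V\in C^1((0,\infty))$.

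\textbf{The limit $A(0)=n\nu$.} Here I would integrate (\ref{aeqn6.4}):
\[
A(t)=nV(t)+tV'(t).
\]
Since $V(t)\to\nu$, it suffices to show $tV'(t)\to0$. I expect this to be the main obstacle. One route is to use the representation above: the term $\langle\nabla|\nabla\mathsf{b}|^2,\nabla\mathsf{b}^2\rangle$ has averaged integral $o(t^n)$ on $\Omega_t$. This should follow from (\ref{5.2}) together with a Caccioppoli-type estimate for $|\nabla\mathsf{b}|^2$ on annuli, obtained from the Bochner inequality (Proposition \ref{prop4.5}). With that estimate, $A(t)=n\,t^{-n}\int_{\Omega_t}|\nabla\mathsf{b}|^4+o(1)\to n\nu$. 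Alternatively, one can argue directly: for small $t$, $|\nabla\mathsf{b}|^3\approx1$ on $\partial\Omega_t$, and the co-area formula gives
\[
t^{1-n}\,\mathrm{Per}(\Omega_t)\approx\frac{d}{dt}\big(\mathscr{H}^n(\Omega_t)\big)\,t^{1-n}.
\]
Combining this with the monotonicity of $\mathscr{H}^n(\Omega_t)t^{-n}$ under the Bishop--Gromov-type control yields $n\nu$. In either route, continuity at $0$ of both $A$ and $V$ follows.
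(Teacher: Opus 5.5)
\textbf{Gap: the value $A(0)=n\nu$ is not established.} The rest of your proposal is fine and matches the paper: $V(0)=\nu$ by the squeeze, local absolute continuity of $V$, $A\in H^{1,1}_{\mathrm{loc}}((0,\infty))$ via the integral Gauss--Green formula, and $V\in C^1((0,\infty))$ via (\ref{aeqn6.4}). Your choice $g=\mathsf{b}^2$ with division by $2t$ is equivalent to the paper's $g=\mathsf{b}$, $h=|\nabla\mathsf{b}|^2$.

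The problem is continuity of $A$ at $0$. From $t^nV(t)=\int_0^t s^{n-1}A(s)\,\mathrm{d}s$ and $V\to\nu$ you only get $\liminf_{t\to0}A\leqslant n\nu\leqslant\limsup_{t\to0}A$. You need control on the oscillation of $A$. Monotonicity of $A$ is not available, since Theorem \ref{thm6.3} relies on this lemma.

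Your second route fails. Bishop--Gromov gives monotonicity of $r^{-n}\mathscr{H}^n(B_r(x))$ for metric balls, not of $t^{-n}\mathscr{H}^n(\Omega_t)$ for sublevel sets of $\mathsf{b}$. Even granting that monotonicity, it only yields $\limsup A\leqslant n\nu$. Convergence of $t^{-n}\mathscr{H}^n(\Omega_t)$ says nothing pointwise about $t^{1-n}\frac{\mathrm{d}}{\mathrm{d}t}\mathscr{H}^n(\Omega_t)$.

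Your first route aims to show $\int_{\Omega_t}\langle\nabla|\nabla\mathsf{b}|^2,\nabla\mathsf{b}^2\rangle\,\mathrm{d}\mathscr{H}^n=o(t^n)$ by Caccioppoli on dyadic annuli. That is plausible, but it needs either a drift-subharmonicity of $|\nabla\mathsf{b}|^2$ or $L^2$-smallness of $\mathrm{Hess}\,\mathsf{b}^2-2\mathrm{g}$ near $x$. Neither follows directly from Proposition \ref{prop4.5}, and you do not prove either. As written, the whole difficulty is deferred.

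\textbf{Missing idea: an exactly computable comparison quantity.} The quantity
$I_1(t)=t^{1-n}\int_{\partial\Omega_t}|\nabla\mathsf{b}|\,\mathrm{dPer}(\Omega_t,\cdot)$ is constant.
\begin{itemize}
\item[(1)] This is Lemma \ref{lem6.1} with $u\equiv1$. Equivalently, rerun your own Gauss--Green computation with $h\equiv1$ instead of $h=|\nabla\mathsf{b}|^2$. By (\ref{5.3}) and co-area this gives $t^nI_1(t)=n\int_0^ts^{n-1}I_1(s)\,\mathrm{d}s$ for a.e.\ $t$. This forces $I_1$ to be constant, because $\Delta\mathsf{b}^2=2n|\nabla\mathsf{b}|^2$ closes the identity.
\item[(2)] Then $\int_{\Omega_t}|\nabla\mathsf{b}|^2\,\mathrm{d}\mathscr{H}^n=t^nI_1/n$. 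Your squeeze argument for $V(0)$, applied to $|\nabla\mathsf{b}|^2$, gives $I_1=n\nu$.
\item[(3)] Finally, $0\leqslant I_1-A(t)=t^{1-n}\int_{\partial\Omega_t}|\nabla\mathsf{b}|(1-|\nabla\mathsf{b}|^2)\,\mathrm{dPer}(\Omega_t,\cdot)\leqslant n\nu\sup_{\Omega_t\setminus\{x\}}(1-|\nabla\mathsf{b}|^2)\to0$ by (\ref{5.1}) and (\ref{5.2}). This holds first in integrated form via co-area, then for every small $t$ by continuity of $A$.
\end{itemize}
This is how the paper obtains $A(0)=n\nu$, with no derivative estimate on $|\nabla\mathsf{b}|^2$.
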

\begin{proof}

	To see the continuity of $A$, firstly by Theorem \ref{GGthm} and (\ref{11eqn2.16}), we have
	\[
	\begin{aligned}
		&\int_{\Omega_t}{|\nabla \mathsf{b}|}^4\,\integral=\int_0^t \int_{\Omega_s}\left(|\nabla \mathsf{b}|^2\Delta \mathsf{b}+\langle\nabla |\nabla \mathsf{b}|^2,\nabla \mathsf{b}\rangle\right)\,\integral\mathrm{d}s\\
		=\ &\int_0^t \int_{\Omega_s}\left(\frac{(n-1){|\nabla\mathsf{b}|}^4}{\mathsf{b}}+2\,\mathrm{Hess}\,\mathsf{b}(\nabla\mathsf{b},\nabla\mathsf{b})\right)\,\integral\mathrm{d}s.
	\end{aligned}
	\]
	This together with the co-area formula shows
	\begin{equation}\label{eqn6.5}
		\int_{\partial \Omega_t}{|\nabla \mathsf{b}|}^3\,\mathrm{dPer}(\Omega_t,\cdot)=\int_{\Omega_t}\left(\frac{(n-1){|\nabla\mathsf{b}|}^4}{\mathsf{b}}+2\,\mathrm{Hess}\,\mathsf{b}(\nabla\mathsf{b},\nabla\mathsf{b})\right)\,\integral.
	\end{equation}
	
	Because $s\mapsto s^{-n}\mathscr{H}^n(B_s(x))$ is non-increasing, differentiating it and using the coarea formula along with Theorem \ref{BGineq} yields, $s\mathscr{H}^{n-1}(\partial B_s(x))\leqslant n\mathscr{H}^n(B_s(x))$ for $\mathscr{L}^1$-a.e. $s\in (0,\infty)$. Thus for any $m\in [0,4)$, Theorem \ref{BGineq} and (\ref{5.2}) yield
	\[
	\lim_{t\to 0} \int_{ \Omega_t }\mathsf{b}^{-m}\,\integral\leqslant \lim_{t\to 0} \int_{ B_{2t}(x) }{\mathsf{d}_x}^{-m}\,\integral\leqslant \lim_{t\to 0} \int_{0}^t \frac{\mathscr{H}^{n-1}(\partial B_s(x))}{s^m}\,\mathrm{d}s\leqslant \lim_{t\to 0} \int_{0}^t s^{n-m}\,\mathrm{d}s=0.
	\]
	Upon choosing $R\gg 1$ with $\Omega_t\subset B_R(x)$, we know from Theorem \ref{BGineq} and (\ref{eeeeeeqn2.6}) that
	\begin{equation}\label{hfieoahfoaeifo}
	\begin{aligned}
		&\int_{\Omega_t}\left|\mathrm{Hess}\,\mathsf{b}(\nabla\mathsf{b},\nabla\mathsf{b})\right|\,\integral\leqslant 	\int_{B_R(x)}{\left|\mathrm{Hess}\,\mathsf{b}\right| }^2\,\integral\\
		\leqslant\ & C(n,R)\int_{B_{2R}(x)}{(\Delta \mathsf{b})^2}\,\integral\leqslant C(n,R)\int_{B_{2R}(x)}\mathsf{b}^{-2}\,\integral<\infty.
	\end{aligned}
	\end{equation}

	Thus by understanding $\langle\nabla{|\nabla \mathsf{b}|}^2,\nabla \mathsf{b}\rangle /|\nabla \mathsf{b}|$ as $0$ at $\{|\nabla \mathsf{b}|=0\}$, one can use co-area formula and (\ref{hfieoahfoaeifo}) to see that
	\[
	t\mapsto \int_{\partial \Omega_t} \frac{1}{|\nabla \mathsf{b}|}\langle\nabla{|\nabla \mathsf{b}|}^2,\nabla \mathsf{b}\rangle \,\mathrm{dPer}(\Omega_t,\cdot)\in \, L^{1}_{\mathrm{loc}}((0,\infty)),
	\]
	which proves $A\in H^{1,1}_{\mathrm{loc}}(0,\infty)$.

Regarding the limit $\lim_{t\to 0}A(t)$, by (\ref{5.2}),  it suffices to consider the limit $\lim_{t\to 0} I_1(t)$, where 
	\[
	I_1(t):= t^{1-n}\int_{\partial \Omega_t}{|\nabla \mathsf{b}|}\,\mathrm{dPer}(\Omega_t,\cdot).
	\]
Lemma \ref{lem6.1} verifies that $I_1$ is a constant function. Thus using co-area formula again, we see
	\[
	\int_{\Omega_t}{|\nabla\mathsf{b}|}^2\,\integral=\int_0^t s^{n-1}I_1(s)\,\mathrm{d}s=\frac{t^n I_1(1)}{n} .
	\]
	Combined with (\ref{5.2}), this gives
	\begin{align}\label{0421a}
		\lim_{t\to 0} A(t)=\lim_{t\to 0}I_1(t)=I_1(1)=n\,\lim_{t\to 0}\frac{\int_{\Omega_t}{|\nabla \mathsf{b}|}^2\,\integral}{t^n}=n\,\nu.
	\end{align}
	
	The regularity of $V$ and the limit $V(0)=\nu$ follow in the same manner.
\end{proof}

\begin{thm}[The differentiability of $A$]\label{thm6.3}
	$A$ is locally Lipschitz on $(0,\infty)$ with $A'\leqslant 0$ $\mathscr{L}^1$-a.e. 
\end{thm}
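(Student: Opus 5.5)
Fix $0<t_1<t_2$. We first express $A$ through a bulk integral, then identify its derivative and show it is a non-positive, locally bounded quantity.

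\textbf{Step 1: bulk formula for $A$.} By (\ref{eqn6.5}) (coming from Theorem \ref{GGthm}) we have
\[
t^{n-1}A(t)=\int_{\Omega_t}\left(\frac{(n-1){|\nabla\mathsf{b}|}^4}{\mathsf{b}}+2\,\mathrm{Hess}\,\mathsf{b}(\nabla\mathsf{b},\nabla\mathsf{b})\right)\integral .
\]
Write $\mathsf{b}^{1-n}$ for the weight and consider the vector field $\mathsf{b}^{1-n}\nabla|\nabla \mathsf{b}|^2$. Equivalently, $A(t)=\int_{\Omega_t}\mathrm{div}(\mathsf b^{1-n}|\nabla\mathsf b|^2\nabla\mathsf b)\,\integral$ modulo the tip at $x$; the tip contribution equals $n\nu$ by the argument of Lemma \ref{lem6.2}. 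Using (\ref{5.3}), $\mathrm{div}(\mathsf b^{1-n}\nabla \mathsf b)=\mathsf b^{1-n}(\Delta \mathsf b-(n-1)|\nabla\mathsf b|^2/\mathsf b)=0$. Hence
\[
\mathrm{div}(\mathsf b^{1-n}|\nabla\mathsf b|^2\nabla\mathsf b)=\mathsf b^{1-n}\langle\nabla|\nabla\mathsf b|^2,\nabla\mathsf b\rangle .
\]
Applying Theorem \ref{GGthm} with $f=\mathsf b$ (localized by a cut-off away from $x$ and large $R$, as in Lemma \ref{lem6.1}) then gives, for all $t_1<t_2$,
\[
A(t_2)-A(t_1)=\int_{\Omega_{t_2}\setminus\Omega_{t_1}}\mathsf b^{1-n}\langle\nabla|\nabla\mathsf b|^2,\nabla\mathsf b\rangle\,\integral .
\]
By the co-area formula, $A'(t)=t^{1-n}\int_{\partial\Omega_t}\langle\nabla|\nabla\mathsf b|^2,\nabla\mathsf b\rangle/|\nabla\mathsf b|\,\mathrm{dPer}$, consistent with Lemma \ref{lem6.2}.

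\textbf{Step 2: identify the sign.} Following \cite{C12}, I would rewrite this derivative via the Bochner inequality applied to $\mathsf b^2$ rather than $\mathsf b$, since $\mathrm{Hess}\,\mathsf b^2=2\mathsf b\,\mathrm{Hess}\,\mathsf b+2d\mathsf b\otimes d\mathsf b$ and $\Delta\mathsf b^2=2n|\nabla\mathsf b|^2$. The smooth identity to reproduce is
\[
A'(t)=-\tfrac{n-2}{2}t^{-n-1}\int_{\Omega_t}\left|\mathrm{Hess}\,\mathsf b^2-\tfrac{\Delta\mathsf b^2}{n}\mathrm g\right|^2\mathsf b^{\,\cdot}\,(\ldots)+\mathrm{Ric}\text{-terms}\le 0 .
\]
In our setting I plan to prove only the inequality. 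The procedure is: apply Proposition \ref{prop4.5}, more precisely its difference form (\ref{eqn3.13}), with $g=\mathsf b^2$ (cut off near $x$), $K=0$, and $h$ a suitable power of $\mathsf b$. This produces the lower bound $\int \tau\circ\mathsf b\,\langle h\nabla|\nabla\mathsf b^2|^2,\nabla\mathsf b\rangle\ge 2\int\int h(|\mathrm{Hess}\,\mathsf b^2|^2+\langle\nabla\Delta\mathsf b^2,\nabla\mathsf b^2\rangle)+\ldots$. Next insert $|\mathrm{Hess}\,\mathsf b^2|^2\ge(\Delta\mathsf b^2)^2/n$, which is Proposition \ref{prop1.1} plus Cauchy–Schwarz with $|\mathrm g|^2=n$. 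Then use $\Delta\mathsf b^2=2n|\nabla\mathsf b|^2$ and $|\nabla\mathsf b^2|^2=4\mathsf b^2|\nabla\mathsf b|^2$, and expand. After choosing $\tau$ and $h$ so that the left side reproduces $-(A(t_2)-A(t_1))$ times a positive factor, the remaining terms should combine into a telescoping expression in $A$ and $V$ plus a nonnegative bulk term. This yields $A(t_2)\le A(t_1)$.

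\textbf{Step 3: local Lipschitz bound.} From Step 1, $|A(t_2)-A(t_1)|\le C t_1^{1-n}\int_{\Omega_{t_2}\setminus\Omega_{t_1}}|\mathrm{Hess}\,\mathsf b||\nabla\mathsf b|^2$. Merely $L^1$ integrability in $t$ only gives $H^{1,1}$. To obtain Lipschitz I would instead use the Bochner-based identity of Step 2, which expresses $A(t_2)-A(t_1)$ as an integral over $[t_1,t_2]$ of $t$-integrands involving $\int_{\Omega_t}(\ldots)$. These integrands are bounded on compact $t$-intervals by (\ref{hfieoahfoaeifo}) and Lemma \ref{lem5.2}(3), giving Lipschitz continuity, with $A'\le0$ a.e.

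The main obstacle is Step 2: choosing the weights $h,\tau$ so that the integral Bochner inequality exactly produces $A$'s increment with the correct sign, and controlling the singularity at $x$ when cutting off $g=\mathsf b^2$. For the latter I would reuse the $\varepsilon$-cut-off estimates from Lemma \ref{lem6.1}.
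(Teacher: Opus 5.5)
Your Step 1 is correct and equivalent to the first identity in the paper's proof. The gap is Step 2, which is the heart of the theorem: the mechanism you describe cannot produce the sign $A'\leqslant 0$.

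The integral Bochner inequality (Proposition \ref{prop4.5} and its difference form (\ref{eqn3.13})) is only available for nonnegative weights $h,\tau$. Its proof tests (\ref{abc2.14}) against $\mathrm{h}_s(\phi_\varepsilon h)\geqslant 0$ and integrates against $\tau$. Take $f=\mathsf b$, $g=\mathsf b^2$, $h=h(\mathsf b)$. The identity $\langle\nabla|\nabla\mathsf b^2|^2,\nabla\mathsf b\rangle=8\mathsf b|\nabla\mathsf b|^4+4\mathsf b^2\langle\nabla|\nabla\mathsf b|^2,\nabla\mathsf b\rangle$ and the co-area formula show that the left side equals $\int_{t_1}^{t_2}\tau h\,(4t^{n+1}A'(t)+8t^nA(t))\,\mathrm{d}t$. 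So $A'$ always enters with a nonnegative coefficient, and no choice of weights turns the left side into a positive multiple of $-(A(t_2)-A(t_1))$. The inequality only ever bounds $A'$ from below.

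Carried out with $h=1$ and $|\mathrm{Hess}\,\mathsf b^2|^2\geqslant(\Delta\mathsf b^2)^2/n$, it gives exactly (\ref{0421c}), i.e. $A'\geqslant 2(n-1)(A-nV)/t$, equivalently that $A-2(n-1)V$ is non-decreasing. That is the ``telescoping in $A$ and $V$ plus a nonnegative bulk term'' you anticipate, but it is a lower bound and says nothing about the sign of $A'$. More fundamentally, an upper bound for $A'$ cannot come from a computation localized on sublevel sets. In Colding's smooth identity the bulk integral is over the exterior region $\{\mathsf b\geqslant t\}$, not over $\Omega_t$, because it is obtained by integrating the drift-subharmonicity of $|\nabla\mathsf b|^2$ from $t$ out to infinity.

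What is missing is a two-step global argument, which is how the paper proceeds.

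\textbf{First step.} Apply the difference form (\ref{eqn3.13}) with $g=\mathsf b$ and weight $\mathsf b^{4-2n}$, cut off near $x$ so that only annuli are involved. Combined with Theorem \ref{GGthm} and (\ref{5.3}), this shows that $t\mapsto t^{3-n}A'(t)$ is non-decreasing on the set $E$ of differentiability points; see (\ref{eqn5.14}) and (\ref{0421b}). This is the integral form of $\mathcal L|\nabla\mathsf b|^2\geqslant 0$.

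\textbf{Second step.} Use the global bound $A\leqslant I_1\equiv n\nu$, which follows from $|\nabla\mathsf b|\leqslant 1$ and the constancy of $I_1$ (Lemma \ref{lem6.1}). If $A'(t_1)>0$ for some $t_1\in E$, then $A'(t)\geqslant A'(t_1)(t/t_1)^{n-3}\geqslant A'(t_1)$ for all $t\in E$ with $t>t_1$. Since $A\in H^{1,1}_{\mathrm{loc}}$ is absolutely continuous, $A$ would then be unbounded. Even if your weighted $g=\mathsf b^2$ computation were arranged to yield the monotonicity of $t^{3-n}A'$, you would still need this second step.

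Only after this does Step 3 work. The two-sided bound $2(n-1)(A-nV)/t\leqslant A'\leqslant 0$ a.e., with $A$ continuous and $V\in C^1$, gives local Lipschitz continuity. As written, your Step 3 rests on the identity of Step 2, which does not exist.
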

\begin{proof}	By (\ref{11eqn2.16}), (\ref{eqn6.5}) and co-area formula,
		\[
	\begin{aligned}
		t^{n-1}	A(t)&=(n-1)\int_0^t s^{n-2}A(s)\,\mathrm{d}s+\int_{\Omega_t}\langle\nabla{|\nabla \mathsf{b}|}^2,\nabla\mathsf{b}\rangle\,\integral\\
		&=(n-3)\int_0^t s^{n-2}A(s)\,\mathrm{d}s+\int_{\Omega_t}\frac{1}{4\mathsf{b}^2}\langle\nabla{|\nabla \mathsf{b}^2|}^2,\nabla\mathsf{b}\rangle\,\integral.
	\end{aligned}
	\]
	
	Set 
	\[
	E:=\left\{t\in (0,\infty):\ \lim_{s\to t}\frac{A(s)-A(t)}{s-t}=A'(t)\right\},
	\]
	which has full measure in $(0,\infty)$.

	If we plug $g={|\nabla \mathsf{b}^2|}^2$, $h=1$ and $\tau=t^{-2}$ into Proposition \ref{prop4.5}, and take derivative with respect to $t$, we get
	\begin{equation}\label{eqn6.6}
		\begin{aligned}
			t^{n+1}A'(t)+2t^n A(t)\geqslant \frac{1}{2}\int_{\Omega_t}\left({|\mathrm{Hess}\,\mathsf{b}^2| }^2+\langle\nabla \Delta \mathsf{b}^2,\nabla\mathsf{b}^2\rangle\right)\,\integral,\ \forall t\in E.
		\end{aligned}
	\end{equation}
	Since Theorem \ref{GGthm} implies 
	\begin{equation}\label{eqn6.7}
		\int_{\Omega_t}\Delta \mathsf{b}^2\langle\nabla\mathsf{b}^2,\nabla\mathsf{b}\rangle\,\integral=\int_0^t \int_{\Omega_s}\left({(\Delta \mathsf{b}^2)}^2+\langle\nabla\Delta\mathsf{b}^2,\nabla \mathsf{b}^2\rangle\right)\,\integral\mathrm{d}s,\ \forall t>0,
	\end{equation}
taking derivative with respect to $t$ in (\ref{eqn6.7}) and applying (\ref{eqn6.6}) give
	\begin{equation}\label{eqn6.8}
		t^{n+1}A'(t)+2t^n A(t)\geqslant \frac{1}{2}\int_{\Omega_t}\left({|\mathrm{Hess}\,\mathsf{b}^2| }^2-{(\Delta\mathsf{b}^2)}^2\right)\,\integral+\int_{\partial \Omega_t} \mathsf{b}\Delta \mathsf{b}^2|\nabla\mathsf{b}|\,\mathrm{dPer}(\Omega_t,\cdot).
	\end{equation}
	
	Let $\mathrm{g}$ be the standard Riemannian metric on $X$. Then it follows from Proposition \ref{prop1.1} that
	\begin{align}\label{abcde1}
	\int_{\Omega_t}{\left|\mathrm{Hess}\,\mathsf{b}^2-\frac{\Delta \mathsf{b}^2}{n} \mathrm{g}\right| }^2\,\integral=\int_{\Omega_t}{|\mathrm{Hess}\,\mathsf{b}^2| }^2\,\integral-\int_{\Omega_t}\frac{{(\Delta\mathsf{b}^2)}^2}{n}\,\integral,\ \forall t>0.
	\end{align}
	This together with (\ref{5.3}) and (\ref{eqn6.8}) yields
	\begin{align}\label{0421c}
	t^{n+1}A'(t)+2(1-n) t^n A(t)+2n(n-1)t^n V(t)\geqslant \int_{\Omega_t}{\left|\mathrm{Hess}\,\mathsf{b}^2-\frac{\Delta\mathsf{b}^2}{n} \mathrm{g}\right| }^2\,\integral\geqslant 0.
	\end{align}

	On the other hand, for every $t_4>t_3>t_1$ with $t_4-t_3=t_2-t_1$, let us take a cut-off function $\varphi$ on $(0,\infty)$ such that $\varphi=0$ on $(0,t_1/4)$ and $\varphi=1$ on $(t_1/2,\infty)$. By letting $\tilde{\mathsf{b}}:=\varphi\circ\mathsf{b}$, plugging $g=|\nabla \mathsf{b}|^2$, $h=\tilde{\mathsf{b}}^{4-2n}$ and $\tau=t^{4-2n}$ into Proposition \ref{eqn3.13} and calculating as above we obtain
	\begin{equation}\label{eqn5.13}
		\int_{\Omega_{t_4}\setminus \Omega_{t_3}}\mathsf{b}^{4-2n}\langle\nabla {|\nabla \mathsf{b}|}^2,\nabla \mathsf{b}
		\rangle\,\integral-	\int_{\Omega_{t_2}\setminus \Omega_{t_1}}\mathsf{b}^{4-2n}\langle\nabla {|\nabla \mathsf{b}|}^2,\nabla \mathsf{b}
		\rangle\,\integral\geqslant 0.
	\end{equation}
From Theorem \ref{GGthm} we know for every $t>t_1/2$ it holds
	\[
	\begin{aligned}
		&t^{3-n} A(t)=\int_{\partial \Omega_t} {\tilde{\mathsf{b}}}^{4-2n}{|\nabla \mathsf{b}|}^3\,\mathrm{dPer}(\Omega_t,\cdot)\\
		=\ &\int_{\Omega_t}{\tilde{\mathsf{b}}}^{4-2n}|\nabla \mathsf{b}|^2\Delta\mathsf{b}\,\integral+(4-2n) \int_{\Omega_t} {\tilde{\mathsf{b}}}^{3-2n}|\nabla \mathsf{b}|^2\langle\nabla \tilde{\mathsf{b}},\nabla\mathsf{b}\rangle\,\integral+\int_{\Omega_{t}}{\tilde{\mathsf{b}}}^{4-2n}\langle\nabla {|\nabla \mathsf{b}|}^2,\nabla \mathsf{b}\rangle\,\integral,
	\end{aligned}
	\]
	which combined with (\ref{5.3}), (\ref{eqn5.13}) and co-area formula shows
	\begin{equation}\label{eqn5.14}
		\begin{aligned}
			t^{3-n}A{\big|}_{t_3}^{t_4}-t^{3-n}A{\big|}_{t_1}^{t_2}
			\geqslant (3-n)\left(\int_{t_3}^{t_4} t^{2-n} A(t)\,\mathrm{d}t-\int_{t_1}^{t_2} t^{2-n} A(t)\,\mathrm{d}t.\right).
		\end{aligned}
	\end{equation}
For every $t_1,t_3\in E$ with $t_1<t_3$, multiplying (\ref{eqn5.14}) with $1/(t_2-t_1)$ and then letting $t_2\to t_1$ yields 
	\begin{align}\label{0421b}
	{t_3}^{3-n} A'(t_3)-{t_1}^{3-n}A'(t_1)\geqslant 0.
	\end{align}
According to (\ref{6.1a}), (\ref{0421a}) and the fact that $I_1$ is a constant, $A$ is bounded above by $n \nu$. By (\ref{0421b}), $A'(t)$ must be non-positive for any $t\in E$.
    Hence (\ref{0421c}) and the continuity of $A$ and local Lipschitz regularity of $V$ prove the local Lipschitz continuity of $A$.
\end{proof}
\begin{remark}
	In the proof of (\ref{eqn5.13}) we indeed implicitly show $\mathcal{L}({|\nabla \mathsf{b}|}^2)\geqslant 0$ in the integral type Gauss-Green sense. Here $\mathcal{L}({|\nabla \mathsf{b}|}^2)$ can be interpreted as $\Delta {|\nabla \mathsf{b}|}^2+2\langle \nabla G(x,\cdot),\nabla {|\nabla \mathsf{b}|}^2\rangle$ in the measured Laplacian sense. Compare \cite[Proposition 3.21]{HP25}.
\end{remark}
\begin{cor}\label{cor4.6}
 $V$ is non-increasing and $A-2(n-1)V$ is non-decreasing on $[0,\infty)$. Moreover, $A\leqslant nV$.
\end{cor}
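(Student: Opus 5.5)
We plan to derive all three claims from the identity (\ref{aeqn6.4}), $tV'(t)=A(t)-nV(t)$, together with Theorem \ref{thm6.3} and the differential inequality (\ref{0421c}). We establish $A\leqslant nV$ first, since the other two claims rest on it.

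\textbf{Step 1: $A\leqslant nV$.} By the co-area formula,
\[
t^nV(t)=\int_0^t s^{n-1}A(s)\,\mathrm{d}s.
\]
By Theorem \ref{thm6.3}, $A$ is non-increasing on $(0,\infty)$, so $A(s)\geqslant A(t)$ for $s\leqslant t$. Hence
\[
t^nV(t)\geqslant A(t)\int_0^t s^{n-1}\,\mathrm{d}s=\frac{t^n}{n}A(t),
\]
that is, $A\leqslant nV$ on $(0,\infty)$. At $t=0$ we use the limits $A(0)=n\nu$ and $V(0)=\nu$ from Lemma \ref{lem6.2}, which give equality there.

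\textbf{Step 2: $V$ is non-increasing.} By Lemma \ref{lem6.2}, $V\in C^1((0,\infty))$. Combined with (\ref{aeqn6.4}) and Step 1, this gives $tV'(t)=A-nV\leqslant 0$. So $V$ is non-increasing on $(0,\infty)$. Continuity of $V$ at $0$ (again Lemma \ref{lem6.2}) extends this to $[0,\infty)$.

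\textbf{Step 3: $A-2(n-1)V$ is non-decreasing.} Both $A$ and $V$ are locally Lipschitz on $(0,\infty)$, so it suffices to show $A'-2(n-1)V'\geqslant 0$ for $\mathscr{L}^1$-a.e. $t$. Dividing (\ref{0421c}) by $t^{n+1}$ gives
\[
A'(t)\geqslant \frac{2(n-1)}{t}\bigl(A(t)-nV(t)\bigr).
\]
By (\ref{aeqn6.4}), the right-hand side equals $2(n-1)V'(t)$. Hence $(A-2(n-1)V)'\geqslant 0$ a.e. Since $A$ and $V$ are continuous on $[0,\infty)$, monotonicity holds on the closed half-line.

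The only delicate point is the behavior at $t=0$, where Lipschitz regularity may fail. This is handled by the continuity statement in Lemma \ref{lem6.2}. Everything else is bookkeeping with (\ref{aeqn6.4}).
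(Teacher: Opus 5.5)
Your proof is correct. Step 3 is exactly the paper's argument: dividing (\ref{0421c}) by $t^{n+1}$ and using (\ref{aeqn6.4}) gives $(A-2(n-1)V)'\geqslant 0$ a.e.

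You differ from the paper in how you get the other two claims. The paper does not use the identity $t^nV(t)=\int_0^t s^{n-1}A(s)\,\mathrm{d}s$ here. Instead it combines $(A-2(n-1)V)'\geqslant 0$ with $A'\leqslant 0$ from Theorem \ref{thm6.3} to get the chain $2(n-1)V'\leqslant A'\leqslant 0$. That gives $V'\leqslant 0$, and only then does it read off $A\leqslant nV$ from $tV'=A-nV$.

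You reverse the order. Since $nV(t)$ is the average of $A$ over $[0,t]$ against the probability weight $n s^{n-1}t^{-n}\,\mathrm{d}s$, monotonicity of $A$ alone gives $A\leqslant nV$. Then (\ref{aeqn6.4}) gives $V'\leqslant 0$.

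What each route buys:
\begin{itemize}
\item Your route shows that the claims ``$V$ non-increasing'' and ``$A\leqslant nV$'' depend only on $A$ being non-increasing and on the co-area identity. They do not need the Bochner-type inequality (\ref{0421c}).
\item The paper's route is shorter once (\ref{0421c}) is available. All three claims then follow from a single differential inequality together with Theorem \ref{thm6.3}.
\end{itemize}
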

\begin{proof}
	It follows directly from (\ref{aeqn6.4}) and (\ref{0421c}) that
	\begin{align}\label{0421d}
	(A-2(n-1)V)'(t)\geqslant \frac{1}{2t^{n+1}} \int_{\Omega_t}{\left|\mathrm{Hess}\,\mathsf{b}^2-\frac{\Delta\mathsf{b}^2}{n} \mathrm{g}\right| }^2\,\integral.
	\end{align}
By Theorem \ref{thm6.3}, the above inequality also shows $2(n-1)V'\leqslant A'\leqslant 0$.  Applying (\ref{aeqn6.4}), we conclude.
\end{proof}

Let 
\[
W(t):=\int_1^t s^{-n}\int_{ \partial \Omega_s} ({|\nabla \mathsf{b}|}^3-|\nabla \mathsf{b}|)\,\mathrm{dPer}(\Omega_s,\cdot)\mathrm{d}s. 
\]
\begin{thm}\label{thm5.4}
	$W$ is non-increasing and $A-2(n-2)W$ is non-decreasing on $(0,\infty)$.
\end{thm}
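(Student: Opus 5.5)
The plan is to reduce both claims to differential inequalities for $W$ and $A$, using the function $I_1$ and the monotonicity (\ref{0421b}) from the proof of Theorem \ref{thm6.3}.

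\textbf{Step 1: $W$ is non-increasing.} By Lemma \ref{lem6.1} with $u\equiv 1$, the function $I_1(s)=s^{1-n}\int_{\partial\Omega_s}|\nabla\mathsf{b}|\,\mathrm{dPer}(\Omega_s,\cdot)$ is constant. By (\ref{0421a}) this constant is $n\nu$. Hence the inner integrand in the definition of $W$ equals
\[
s^{-n}\int_{\partial\Omega_s}({|\nabla\mathsf{b}|}^3-|\nabla\mathsf{b}|)\,\mathrm{dPer}(\Omega_s,\cdot)=\frac{A(s)-n\nu}{s}.
\]
Since $A$ is continuous by Lemma \ref{lem6.2}, $W$ is $C^1$ on $(0,\infty)$ with $W'(t)=(A(t)-n\nu)/t$. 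In the proof of Theorem \ref{thm6.3} we saw that $A\leqslant n\nu$ (the bound comes from $A'\leqslant 0$ and $A(0)=n\nu$). Therefore $W'\leqslant 0$, so $W$ is non-increasing.

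\textbf{Step 2: a differential inequality for $A$.} Since $A$ is locally Lipschitz with $A'=W'=$ continuous only for $W$, I would prove the second claim by showing
\[
tA'(t)\geqslant 2(n-2)\bigl(A(t)-n\nu\bigr)\quad \mathscr{L}^1\text{-a.e. } t,
\]
which gives $(A-2(n-2)W)'\geqslant 0$ a.e.; local Lipschitz continuity of $A-2(n-2)W$ then yields monotonicity. Because $A-n\nu\leqslant 0$, it suffices to prove the stronger bound $tA'\geqslant (n-2)(A-n\nu)$. Note that this stronger bound is exactly the monotonicity of $t^{2-n}(A-n\nu)$.

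\textbf{Step 3: prove $tA'\geqslant (n-2)(A-n\nu)$.} Set $u(t):=t^{3-n}A'(t)$ for $t\in E$. By (\ref{0421b}), $u$ is non-decreasing on $E$, and $u\leqslant 0$. On $[0,t]$ the function $A$ is monotone and continuous with $A(0)=n\nu$ (Lemma \ref{lem6.2}), and it is locally Lipschitz on $(0,t]$. Writing $A(t)-A(\delta)=\int_\delta^t s^{n-3}u(s)\,\mathrm{d}s$ and letting $\delta\to 0$ (monotone convergence applies since the integrand has a sign), I obtain
\[
A(t)-n\nu=\int_0^t s^{n-3}u(s)\,\mathrm{d}s\leqslant u(t)\int_0^t s^{n-3}\,\mathrm{d}s=\frac{t^{n-2}u(t)}{n-2}=\frac{tA'(t)}{n-2},\qquad t\in E.
\]
The inequality uses $u(s)\leqslant u(t)$ for $s<t$ and $n\geqslant 3$. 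This is the desired bound, and multiplying by $2$ and using $A-n\nu\leqslant 0$ completes Step 2.

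The main obstacle is this step. It requires justifying the limit $\delta\to 0$ in the integral representation, that is, the absolute continuity of $A$ up to $0$ with $A(0)=n\nu$. It also requires that the monotonicity in (\ref{0421b}) holds for all pairs in $E$, so that it can be used pointwise against $s\in(0,t)$.
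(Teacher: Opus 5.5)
Your proof is correct, but it takes a more economical route than the paper's. The paper starts from the same identity $W'=(A-n\nu)/t$. It then defers to an argument modelled on Theorem \ref{thm6.3} and \cite[Lemma 2.7, Theorems 2.8 and 2.9]{C12}, i.e.\ a weighted Bochner/integral Gauss--Green computation. From this it records the quantitative bounds (\ref{0421e}) and (\ref{0421f}), whose right-hand sides are $\mathsf{b}^{-n}$-weighted integrals over $\Omega_t$ of $|\mathrm{Hess}\,\mathsf{b}^2-\frac{\Delta\mathsf{b}^2}{n}\mathrm{g}|^2$.

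You use only three ingredients: the qualitative monotonicity (\ref{0421b}) of $u=t^{3-n}A'$, the value $A(0)=n\nu$, and local Lipschitz continuity of $A$. The averaging estimate $A(t)-n\nu=\int_0^t s^{n-3}u(s)\,\mathrm{d}s\leqslant u(t)\,t^{n-2}/(n-2)$ then gives $tA'\geqslant(n-2)(A-n\nu)$ with no further Bochner computation. This already yields the sharper monotonicity of $t^{2-n}(A-n\nu)$ and of $A-(n-2)W$, which is what (\ref{0421f}) actually asserts. Your passage to $A-2(n-2)W$ via $A\leqslant n\nu$ is also right.

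The two routes are the qualitative and quantitative versions of one mechanism. The slack in your inequality is $tA'-(n-2)(A-n\nu)=(n-2)\int_0^t s^{n-3}\bigl(u(t)-u(s)\bigr)\,\mathrm{d}s$. Once $u(t)-u(s)$ is expressed through the Hessian defect on $\Omega_t\setminus\Omega_s$, this slack is the kind of weighted term that appears in (\ref{0421e}).

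What you give up is that term. Equality in your chain only tells you that $t^{3-n}A'$ is constant on $(0,t_2)$. So for the corresponding cases of Theorem \ref{thm4.11} you would still need to retain the Hessian term discarded in passing from (\ref{eqn3.13}) to (\ref{eqn5.13}).

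A minor point: $A\leqslant n\nu$ follows directly from $|\nabla\mathsf{b}|\leqslant 1$, since $A\leqslant I_1\equiv n\nu$. This is how the paper obtains it, before proving $A'\leqslant 0$. Your derivation from $A'\leqslant 0$ is legitimate only because Theorem \ref{thm6.3} is taken as given.
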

\begin{proof}
	Notice that
	\begin{align}\label{4.22}
	W'(t)=t^{-n}\int_{ \partial \Omega_t} ({|\nabla \mathsf{b}|}^3-|\nabla \mathsf{b}|)\,\mathrm{dPer}(\Omega_t,\cdot)=\frac{A(t)-n\nu }{t}.
	\end{align}
	The conclusion then follows from an argument similar to the proofs of Theorem \ref{thm6.3} and of \cite[Lemma 2.7, Theorems 2.8 and 2.9]{C12}. In particular, we have
	\begin{align}\label{0421e}
	\left(t^{2-n}(A-n\nu)\right)'(t)\geqslant\frac{1}{2t^{n-1}}\int_{\Omega_t}\frac{1}{\mathsf{b}^n}{\left|\mathrm{Hess}\,\mathsf{b}^2-\frac{\Delta\mathsf{b}^2}{n} \mathrm{g}\right| }^2\,\integral;
	\end{align}
	\begin{align}\label{0421f}
	\left(A-(n-2)W\right)'(t)\geqslant\frac{1}{2t}\int_{\Omega_t}\frac{1}{\mathsf{b}^n}{\left|\mathrm{Hess}\,\mathsf{b}^2-\frac{\Delta\mathsf{b}^2}{n} \mathrm{g}\right| }^2\,\integral.
	\end{align}
\end{proof}

Before we give the rigidity result, let us first recall the definition of metric cone.
\begin{defn}[\cite{K15}]
	An RCD$(0,N)$ space $({X},\mathsf{d},\mathfrak{m})$ is said to be a metric cone with base point $o^\ast$ if there exists an RCD$(N-2,N-1)$ space $(Z,\mathsf{d}_Z,\mathfrak{m}_Z)$ with $\mathrm{diam}(Z)\leqslant \pi$ such that the following holds. 
	\begin{enumerate}
		
		\item[$(1)$] $X=[0,\infty)\times Z/(\{0\}\times Z)$ with the origin denoted by $o^\ast$, and the measure satisfies \[\mathrm{d}\mathfrak{m}(r,x)=r^{N-1}\mathrm{d}r\otimes \mathrm{d}\mathfrak{m}_Z(x).
		\]
		\item[$(2)$] For any two points $(r,x)$ and $(s,y)$, the distance between them is defined as
		\[
		\mathsf{d}\left((r,x),(s,y)\right):=\sqrt{r^2+s^2-2rs \cos\left(\mathsf{d}_Z(x,y) \right)}.
		\]
	\end{enumerate}
\end{defn}

\begin{thm}[{\cite[Theorem 1.6]{HP25}}]\label{thm4.10}
	If there exists a point $y\in X\setminus \{x\}$ such that $|\nabla \mathsf{b}|(y)=1$, then $\mathsf{b}=\mathsf{d}_x$ and $X$ is a metric cone with base point $x$.
\end{thm}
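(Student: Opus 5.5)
The plan is to deduce rigidity from the subsolution property of $|\nabla \mathsf{b}|^2$ noted in the remark after Theorem \ref{thm6.3}, followed by a Hessian-equality argument. This is essentially the route of \cite{HP25}, so in the paper one may simply cite \cite[Theorem 1.6]{HP25}; below is how I would reconstruct it with the tools available here.

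\textbf{Step 1: $|\nabla \mathsf{b}|\equiv 1$.} The inequality (\ref{eqn5.13}) says that $u:=|\nabla \mathsf{b}|^2$ is a subsolution, in the integral Gauss--Green sense, of the weighted operator $\mathcal{L}u=\Delta u+2\langle\nabla \log G(x,\cdot),\nabla u\rangle$ on $X\setminus\{x\}$. By Lemma \ref{lem5.2}(3), $u$ has an upper semicontinuous representative satisfying (\ref{5.1}), and $u\leqslant 1$. If $u(y)=1$ at some $y\neq x$, then $u$ attains an interior maximum at $y$. I would apply the strong maximum principle for subharmonic functions of the weighted space $(X\setminus\{x\},\mathsf{d},G(x,\cdot)^2\mathscr{H}^n)$, which is locally a PI space because $G(x,\cdot)$ is locally bounded above and below away from $x$. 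This gives $u\equiv 1$ on the connected set $X\setminus\{x\}$. I expect this step to be the main obstacle. One has to upgrade the integral-type inequality (\ref{eqn5.13}) to a genuine distributional inequality $\mathcal{L}u\geqslant 0$, tested against all nonnegative Lipschitz functions compactly supported in $X\setminus\{x\}$, and not only against functions of $\mathsf{b}$. My approach would be to localize the Bochner argument of Proposition \ref{prop4.5}, replacing the cut-off $\phi_\varepsilon$ by an arbitrary nonnegative test function times the weight $\mathsf{b}^{4-2n}$.

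\textbf{Step 2: $\mathrm{Hess}\,\mathsf{b}^2=2\mathrm{g}$.} With $|\nabla \mathsf{b}|\equiv 1$, the quantities satisfy $V\equiv\nu$ and $A\equiv n\nu$, so $A-2(n-1)V$ is constant. Then (\ref{0421d}) forces $\mathrm{Hess}\,\mathsf{b}^2=\frac{\Delta \mathsf{b}^2}{n}\mathrm{g}=2\mathrm{g}$ a.e., using (\ref{5.3}).

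\textbf{Step 3: $\mathsf{b}=\mathsf{d}_x$.} Since $|\nabla \mathsf{b}|=1$, $\mathsf{b}$ is $1$-Lipschitz, and by (\ref{5.2}) it extends continuously with $\mathsf{b}(x)=0$; hence $\mathsf{b}\leqslant \mathsf{d}_x$. For the reverse inequality, follow the gradient flow of $-\mathsf{b}$ (Regular Lagrangian flow of $-\nabla\mathsf{b}$) from a.e. point $z$. It has unit speed and decreases $\mathsf{b}$ at unit rate, so it reaches $x$ in time $\mathsf{b}(z)$. This gives $\mathsf{d}_x(z)\leqslant \mathsf{b}(z)$ a.e., and then everywhere by continuity.

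\textbf{Step 4: the cone structure.} With $\mathsf{d}_x^2/2$ having Hessian $\mathrm{g}$ and Laplacian $n$, I would invoke the ``volume cone implies metric cone'' theorem of De Philippis--Gigli, or Ketterer's characterization \cite{K15}, to conclude that $X$ is a metric cone with tip $x$. Alternatively one can check directly that $r\mapsto \mathscr{H}^n(B_r(x))/r^n$ is constant, via the co-area formula applied to $A\equiv n\nu$ and $V\equiv\nu$.
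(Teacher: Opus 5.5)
The paper gives no argument for this statement. It imports it verbatim from \cite[Theorem 1.6]{HP25}, as you anticipated, and uses it only as a black box in the rigidity results (Theorems \ref{thm4.11} and \ref{thm4.12}, hence \ref{thm4.14}). So the comparison is between citing and proving, and your reconstruction holds up.

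Its main value is that it isolates the one ingredient the paper does not prove: the distributional form of (\ref{eqn5.13}) on $X\setminus\{x\}$. The remark after Theorem \ref{thm6.3} attributes this to \cite[Proposition 3.21]{HP25}. Your localization plan does produce it. Test the (measure-valued) Bochner inequality for a localization of $\mathsf{b}$ against $\psi\,\mathsf{b}^{4-2n}$, then substitute $\Delta\mathsf{b}=(n-1)|\nabla\mathsf{b}|^2/\mathsf{b}$. Finally use Proposition \ref{prop1.1} in the form $\langle\mathrm{Hess}\,\tfrac{\mathsf{b}^2}{2},\mathrm{g}\rangle=n|\nabla\mathsf{b}|^2$. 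With $u=|\nabla\mathsf{b}|^2$, the algebra gives, as measures,
\[
\mathrm{div}\left(\mathsf{b}^{4-2n}\nabla u\right)\geqslant 2\,\mathsf{b}^{2-2n}\left|\mathrm{Hess}\,\tfrac{\mathsf{b}^2}{2}-u\,\mathrm{g}\right|^2 .
\]
This yields Step 1 from two facts. First, the weak Harnack inequality holds for the nonnegative supersolution $1-u$ of a locally uniformly elliptic weighted problem. Second, $X\setminus\{x\}$ is connected, because a point has zero capacity when $n\geqslant 3$. Once $u\equiv 1$, the same inequality gives Step 2 directly, without going through $A$, $V$ and (\ref{0421d}).

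For Step 3, the Regular Lagrangian flow of $-\nabla\mathsf{b}$ is heavier than needed. It would also have to be cut off near $x$, where $\mathrm{div}(-\nabla\mathsf{b})=-(n-1)/\mathsf{b}$ is unbounded below, so bounded compression fails. A simpler route is to set $w:=\tfrac12(\mathsf{d}_x^2-\mathsf{b}^2)$.
\begin{itemize}
\item It is continuous and vanishes at $x$.
\item It is nonnegative by the first half of your Step 3.
\item It is superharmonic on all of $X$, by the Laplacian comparison $\mathbf{\Delta}\tfrac{\mathsf{d}_x^2}{2}\leqslant n\,\mathscr{H}^n$ together with $\Delta\tfrac{\mathsf{b}^2}{2}=n$. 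The latter extends across $x$ because $|\nabla\mathsf{b}^2|\leqslant 2\mathsf{b}\to 0$ there.
\end{itemize}
The strong minimum principle then gives $w\equiv 0$, i.e. $\mathsf{b}=\mathsf{d}_x$.

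In Step 4, the result that actually does the work is De Philippis--Gigli's volume-cone-to-metric-cone theorem. Apply it to $\mathscr{H}^n(\bar B_t(x))=\mathscr{H}^n(\Omega_t)=\nu t^n$ for all $t$. By contrast, \cite{K15} only identifies the cross-section as $\mathrm{RCD}(n-2,n-1)$. It does not detect a cone from a Hessian identity.
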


\begin{thm}[Rigidity I]\label{thm4.11}
	If for some $t_1,t_2\in (0,\infty)$ with $t_1<t_2$, either of the following holds 
	\[
	A(t_2)-2(n-1)V(t_2)=A(t_1)-2(n-1)V(t_1),\  \exists 0\leqslant t_1<t_2\leqslant \infty;
	\]
	\[
	{t_2}^{2-n}(A(t_2)-n\nu)={t_1}^{2-n}(A(t_1)-n\nu),\ \exists 0<t_1<t_2<\infty;\]
	\[ 
	A(t_2)-2(n-2)W(t_2)=A(t_1)-2(n-2)W(t_1),\ \exists 0<t_1<t_2<\infty,
	\]then $X$ is a metric cone with base point $x$.
\end{thm}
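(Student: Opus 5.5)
The strategy is to turn equality in one of the monotone quantities into vanishing of the traceless Hessian of $\mathsf{b}^2$, and then feed that into Theorem \ref{thm4.10}. The three cases are handled the same way, each through its own differential inequality: (\ref{0421d}) for $A-2(n-1)V$, (\ref{0421e}) for $t^{2-n}(A-n\nu)$, and (\ref{0421f}) for $A-2(n-2)W$. Each of these functions is locally Lipschitz on $(0,\infty)$ by Theorem \ref{thm6.3}, and it is continuous at $0$ and $\infty$ wherever the statement allows those endpoints, by Lemma \ref{lem6.2}. So equality at $t_1<t_2$, together with monotonicity, gives that the derivative vanishes $\mathscr{L}^1$-a.e. on $(t_1,t_2)$. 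Since the right-hand sides of the inequalities are nonnegative, we get for a.e. $t\in(t_1,t_2)$
\[
\int_{\Omega_t}\mathsf{b}^{-m}\left|\mathrm{Hess}\,\mathsf{b}^2-\frac{\Delta \mathsf{b}^2}{n}\mathrm{g}\right|^2\,\integral=0,
\]
with $m=0$ or $m=n$. The integrand is nonnegative and $\Omega_t$ increases in $t$, so this gives $\mathrm{Hess}\,\mathsf{b}^2=\frac{\Delta \mathsf{b}^2}{n}\mathrm{g}=2|\nabla\mathsf{b}|^2\mathrm{g}$ $\mathscr{H}^n$-a.e. on $\Omega_{t_2}$ (using (\ref{5.3})). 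Note that $\Omega_{t_2}$ contains a neighbourhood of $x$ by (\ref{5.2}).

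Next I would show that $|\nabla \mathsf{b}|$ is constant on $\Omega_{t_2}\setminus\{x\}$. By (\ref{11eqn2.16}), $\nabla|\nabla\mathsf{b}^2|^2=2\,\mathrm{Hess}\,\mathsf{b}^2(\nabla \mathsf{b}^2,\cdot)=4|\nabla\mathsf{b}|^2\nabla\mathsf{b}^2$. Since $|\nabla\mathsf{b}^2|^2=4\mathsf{b}^2|\nabla\mathsf{b}|^2$, this should give $\nabla(\mathsf{b}^2|\nabla \mathsf{b}|^2)=|\nabla\mathsf{b}|^2\nabla\mathsf{b}^2$, and hence $\mathsf{b}^2\nabla|\nabla\mathsf{b}|^2=0$ a.e. So $|\nabla\mathsf{b}|^2\in H^{1,2}_{\mathrm{loc}}$ has zero gradient on the connected set $\mathrm{int}\,\Omega_{t_2}\setminus\{x\}$ (for $n\geqslant 3$ removing a point does not disconnect). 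By the Poincaré inequality it is a.e. constant there. Using the limit (\ref{5.2}), together with the upper semicontinuous representative (\ref{5.1}), the constant is $1$, and some $y\neq x$ has $|\nabla \mathsf{b}|(y)=1$. Theorem \ref{thm4.10} then gives that $X$ is a metric cone with base point $x$.

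The main obstacle is this constancy/connectedness step. Connectedness of $\Omega_{t_2}$ minus a point and the use of the local Poincaré inequality need care in the RCD setting; it may be cleaner to argue on small balls $B_r(x)\setminus\{x\}$, with annuli chained together. A second issue is the endpoint cases $t_1=0$ or $t_2=\infty$ in the first condition. There I would use the continuity of $A$ and $V$ at $0$ (Lemma \ref{lem6.2}) and the monotone limits at $\infty$ to reduce to some finite subinterval on which the derivative of $A-2(n-1)V$ vanishes a.e.; this works because a monotone function that is equal at the endpoints is constant in between.
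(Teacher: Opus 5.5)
Your proposal is correct and is essentially the paper's proof. Equality forces the traceless Hessian of $\mathsf{b}^2$ to vanish via (\ref{0421d})--(\ref{0421f}); the identity (\ref{11eqn2.16}) then gives $\nabla|\nabla\mathsf{b}|^2=0$ away from $x$; and (\ref{5.1}), (\ref{5.2}) together with Theorem \ref{thm4.10} conclude. The paper simply asserts $|\nabla\mathsf{b}|\equiv 1$ at that point without addressing the constancy issue you flag.

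That step does not need connectedness of $\Omega_{t_2}\setminus\{x\}$, and it suffices to work on a small ball $B_r(x)$, since vanishing for a.e.\ $t<t_2$ only yields the Hessian identity on $\{\mathsf{b}<t_2\}$ anyway. Because $\mathscr{H}^n(B_{2\varepsilon}(x))\leqslant C\varepsilon^n$ and $n\geqslant 3$, the point $x$ has zero capacity. Hence the bounded function $|\nabla\mathsf{b}|^2$, which has zero gradient on $B_r(x)\setminus\{x\}$, lies in $H^{1,2}(B_r(x))$ with zero gradient, and the Poincar\'e inequality on $B_{r/2}(x)$ makes it a.e.\ constant, equal to $1$ by (\ref{5.2}).
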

\begin{proof}
	In either of the cases, by (\ref{0421d}) (\ref{0421e}) and (\ref{0421f})  we have 
	\[
	\mathrm{Hess}\,\mathsf{b}^2=\frac{\Delta \mathsf{b}^2}{n}\mathrm{g}=2{|\nabla \mathsf{b}|}^2\mathrm{g},\ \text{on}\ \Omega_{t_2}.
	\]
	This together with (\ref{11eqn2.16}) implies on $\Omega_{t_2}\setminus \{x\}$, 
	\begin{equation}\label{feafe}
		\begin{aligned}
			&{\left|\nabla {|\nabla \mathsf{b}|}^2\right|}^2=\langle\nabla {|\nabla \mathsf{b}|}^2,\nabla {|\nabla \mathsf{b}|}^2\rangle=2\mathrm{Hess}\,\mathsf{b}(\nabla\mathsf{b},\nabla {|\nabla \mathsf{b}|}^2)\\
			=\ &\frac{1}{\mathsf{b}}\left(\mathrm{Hess}\, \mathsf{b}^2(\nabla\mathsf{b},\nabla {|\nabla \mathsf{b}|}^2)-2\,d\,\mathsf{b}\otimes d\,\mathsf{b}(\nabla\mathsf{b},\nabla {|\nabla \mathsf{b}|}^2)\right)=0\  \mathscr{H}^n\text{-a.e.}
		\end{aligned}
	\end{equation}
	By (\ref{5.1}) and (\ref{5.2}), $|\nabla\mathsf{b}|\equiv 1$ on $\Omega_{t_2}$. The conclusion then follows from Theorem \ref{thm4.10}. 
\end{proof}
\begin{thm}[Rigidity II]\label{thm4.12}
		If for some $t_1,t_2\in (0,\infty)$ with $t_1<t_2$, either of the following holds 
		\[
		A(t_1)=A(t_2),\ V(t_1)=V(t_2),\ W(t_1)=W(t_2),
		\]
		then $X$ is a metric cone with base point $x$.
\end{thm}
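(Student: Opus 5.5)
The plan is to reduce each of the three equalities to one of the equality cases already treated in Theorem \ref{thm4.11}. Once the monotone quantity is shown to be constant on $[t_1,t_2]$, we read off $\mathrm{Hess}\,\mathsf{b}^2=\frac{\Delta\mathsf{b}^2}{n}\mathrm{g}$ on some $\Omega_{t}$. The argument of Theorem \ref{thm4.11} then gives $|\nabla \mathsf{b}|\equiv 1$ there, and Theorem \ref{thm4.10} concludes.

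\emph{Case $A(t_1)=A(t_2)$.} By Theorem \ref{thm6.3}, $A$ is locally Lipschitz and non-increasing, so $A$ is constant on $[t_1,t_2]$ and $A'=0$ a.e. there. Corollary \ref{cor4.6} gives $2(n-1)V'\leqslant A'$, so $V'\leqslant 0$ on $[t_1,t_2]$. Hence $A-2(n-1)V$ is non-increasing on $[t_1,t_2]$. Since it is also non-decreasing by Corollary \ref{cor4.6}, it is constant there, and we are in the first case of Theorem \ref{thm4.11}.

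Alternatively, one can use (\ref{0421d}) directly: $(A-2(n-1)V)'=-2(n-1)V'$. From (\ref{aeqn6.4}), $tV'=A-nV$ with $A$ constant. One then argues that the nonnegative integral in (\ref{0421d}) is bounded by $-4(n-1)t^{n+1}V'$, and that integrating it against the monotone structure forces it to vanish. The cleaner route is the first one.

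\emph{Case $V(t_1)=V(t_2)$.} Since $V$ is non-increasing, $V$ is constant on $[t_1,t_2]$. Then (\ref{aeqn6.4}) gives $A=nV$ a.e. on $[t_1,t_2]$, and by continuity everywhere there, so $A$ is constant on $[t_1,t_2]$. This reduces to the previous case.

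\emph{Case $W(t_1)=W(t_2)$.} By (\ref{4.22}), $W'(t)=(A(t)-n\nu)/t$. Since $W$ is non-increasing, $A\leqslant n\nu$ (as shown in the proof of Theorem \ref{thm6.3}). Constancy of $W$ on $[t_1,t_2]$ therefore forces $A\equiv n\nu$ on $[t_1,t_2]$ by continuity of $A$. Then $t^{2-n}(A-n\nu)\equiv 0$ is constant on $[t_1,t_2]$, which is the second case of Theorem \ref{thm4.11}. Alternatively, $A$ constant reduces to the first case.

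The main point to check is that Theorem \ref{thm4.11} really only needs the integrand in (\ref{0421d}), (\ref{0421e}) or (\ref{0421f}) to vanish on some $\Omega_{t_2}$ with $t_2>0$. This holds because the integral is over all of $\Omega_t$, whose size increases with $t$. So vanishing of the integral at $t=t_2$ (which follows from zero derivative a.e. near $t_2$) already yields $\mathrm{Hess}\,\mathsf{b}^2=2|\nabla\mathsf{b}|^2\mathrm{g}$ on $\Omega_{t_2}$. From there, (\ref{feafe}), (\ref{5.1}), (\ref{5.2}) and Theorem \ref{thm4.10} finish the proof. The mild obstacle is that the derivative inequalities hold only a.e. To handle it, pick a density point $t\in(t_1,t_2)$ of the full-measure set $E$ where (\ref{0421d}) holds with zero left-hand side. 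This gives vanishing on $\Omega_t$ with $t>t_1>0$, which suffices.
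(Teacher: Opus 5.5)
\paragraph{The case $A(t_1)=A(t_2)$.} Your reduction here has a sign error. If $A'=0$ on $[t_1,t_2]$ and $V'\leqslant 0$, then $(A-2(n-1)V)'=-2(n-1)V'\geqslant 0$. So $A-2(n-1)V$ is non-decreasing on $[t_1,t_2]$, not non-increasing. That is just Corollary~\ref{cor4.6} again, and no constancy follows.

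Your alternative route fails for the same reason. If $A\equiv c$ on $[t_1,t_2]$, then (\ref{aeqn6.4}) gives $V(t)=c/n+Kt^{-n}$ there, with $K=t_1^n(V(t_1)-c/n)\geqslant 0$. Nothing excludes $K>0$, and in that case $A-2(n-1)V$ is strictly increasing.

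In fact this case cannot be repaired, because the statement is false for it. On $\mathbb{R}^n$, $n\geqslant 3$, take the metric $\mathrm{d}r^2+f(r)^2g_{S^{n-1}}$ with $f(r)=\int_0^r h$. Here $h$ is smooth and non-increasing, $h\equiv 1$ near $0$, and $h\equiv a\in(0,1)$ on $[R_0,\infty)$. Then:
\begin{itemize}
\item $\mathrm{Ric}\geqslant 0$, since $f''\leqslant 0$ and $0<f'\leqslant 1$;
\item $\nu=\omega_n$ and $\mathcal{V}=\omega_n a^{n-1}>0$;
\item for $r\geqslant R_0$ one has $f=ar+c_0$ and $G(x,\cdot)=(ar+c_0)^{2-n}/(n(n-2)\omega_n a)$;
\item hence $\mathsf{b}=a^{1/(n-2)}(ar+c_0)$ and $|\nabla\mathsf{b}|\equiv a^{(n-1)/(n-2)}$ there;
\item so $A(t)\equiv n\omega_n a^{2(n-1)/(n-2)}<n\nu$ for all large $t$.
\end{itemize}
The end is an exact cone, which is consistent with (\ref{0421b}): that inequality only propagates $A'=0$ to larger $t$. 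Yet the space is smooth and non-flat, and $\mathscr{H}^n(B_r(x))/r^n$ is not constant, so it is not a cone with vertex $x$.

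Thus $A(t_1)=A(t_2)$ yields rigidity only under the extra hypothesis that the common value is $n\nu$. The paper's proof asserts $A'\equiv 0$ on $[0,t_2)$ in all three cases, but this is justified only for $V$ and $W$.

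\paragraph{The cases $W$ and $V$.} Your $W$ case is correct. $A\equiv n\nu$ on $[t_1,t_2]$ makes $t^{2-n}(A-n\nu)\equiv 0$ there, which is the second alternative of Theorem~\ref{thm4.11}. You could also note directly that $A$ is non-increasing with $A(0)=n\nu$, so $A\equiv n\nu$ on $[0,t_2]$.

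Your $V$ case, however, is routed through the broken $A$ case. The missing observation is this. By the co-area formula, $nV(t)=nt^{-n}\int_0^t s^{n-1}A(s)\,\mathrm{d}s$ is a weighted mean of the non-increasing function $A$ over $[0,t]$. Hence $A(t)=nV(t)$ for $t\in[t_1,t_2]$ forces $A\equiv A(0)=n\nu$ on $[0,t_2]$. This is the reduction the paper's proof actually needs.

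From there, either your $W$ argument finishes, or you can argue as the paper does. The identity $\int_{\Omega_t}|\nabla\mathsf{b}|^4\,\integral=\int_0^t s^{n-1}A(s)\,\mathrm{d}s=\int_0^t s^{n-1}I_1(s)\,\mathrm{d}s=\int_{\Omega_t}|\nabla\mathsf{b}|^2\,\integral$, together with $|\nabla\mathsf{b}|\leqslant 1$, forces $|\nabla\mathsf{b}|\in\{0,1\}$ a.e.\ on $\Omega_{t_2}$. By (\ref{5.2}) this gives $|\nabla\mathsf{b}|=1$ near $x$, and Theorem~\ref{thm4.10} concludes.
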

\begin{proof}
	In either of the cases, by Corollary \ref{cor4.6}, Theorems \ref{thm6.3} and \ref{thm5.4}, we have $A'\equiv 0$ on $[0,t_2)$. Therefore $A\equiv n\nu$ on $[0,t_2)$. Notice that 
	\[
	\int_0^t s^{n-1}A(s)\,\mathrm{d}s=\int_{\Omega_t} {|\nabla \mathsf{b}|}^4\,\integral\leqslant \int_{\Omega_t} {|\nabla \mathsf{b}|}^2\,\integral=\int_0^t s^{n-1}I_1(s)\,\mathrm{d}s,\ \forall t\in (0,t_2).
	\] Since now $A\equiv n\nu=I_1(t)$, $|\nabla \mathsf{b}|\equiv 1$ on $\Omega_{t_2}$. Hence applying Theorem \ref{thm4.10} we conclude.
\end{proof}
\begin{prop}[Asymptotic behavior at infinity]\label{prop4.13}
	We have
	\begin{align}\label{4.25}
	\lim_{t\to \infty} A=n\nu \left(\frac{\nu}{\mathcal{V}}\right)^{\frac{2}{2-n}},\ 	\lim_{t\to \infty} V=\nu \left(\frac{\nu}{\mathcal{V}}\right)^{\frac{2}{2-n}}.
	\end{align}
\end{prop}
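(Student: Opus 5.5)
Since $V$ is non-increasing (Corollary \ref{cor4.6}), $A$ is non-increasing (Theorem \ref{thm6.3}), and both are bounded below by $0$, the limits $A(\infty):=\lim_{t\to\infty}A$ and $V(\infty):=\lim_{t\to\infty}V$ exist. By (\ref{aeqn6.4}) we have $(t^nV)'=t^{n-1}A$. An L'H\^opital-type argument then gives $V(\infty)=\lim_{t\to\infty} t^{-n}\int_0^t s^{n-1}A(s)\,\mathrm{d}s=A(\infty)/n$. So it suffices to identify one of the two limits, and I will compute $V(\infty)$ by comparing it with the volume-type quantity $t^{-n}\int_{\Omega_t}{|\nabla \mathsf{b}|}^2\,\integral=I_1(1)/n=\nu$ from the proof of Lemma \ref{lem6.2}.

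The key input is the asymptotics of $\mathsf{b}$ at infinity. If $\mathcal{V}=0$, I expect both sides to be $0$, read with the convention $\mathcal{V}^{2/(n-2)}=0$. I will treat the case $\mathcal{V}>0$. Then $\mathscr{H}^n(B_s(x))\sim \mathcal{V}s^n$, so $F(t)\sim ((n-2)\mathcal{V})^{1/(n-2)}t$. The sharp statement I need is the Colding–Minicozzi type asymptotics
\[
\lim_{y\to\infty}\frac{\mathsf{b}(y)}{\mathsf{d}_x(y)}=\left(\frac{\mathcal{V}}{\nu}\right)^{\frac{1}{n-2}},\qquad \lim_{t\to\infty}\fint_{\Omega_t}\left|{|\nabla \mathsf{b}|}^2-\left(\frac{\mathcal{V}}{\nu}\right)^{\frac{2}{n-2}}\right|\,\integral=0 .
\]
To get this I will blow down. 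Take $r_i\to\infty$ and rescale $(X,r_i^{-1}\mathsf{d},r_i^{-n}\mathscr{H}^n,x)$. By Theorem \ref{GMS} and volume cone rigidity (the tangent cone at infinity is a metric cone when $\mathcal{V}>0$), this converges to a non-collapsed cone $C(Z)$ with $\mathscr{H}^n(B_1)=\mathcal{V}$. The rescaled Green functions $r_i^{n-2}G(x,\cdot)$ converge locally uniformly away from the vertex, by Lemma \ref{lem5.2}(1), Theorem \ref{AAthm} and harmonicity with Theorem \ref{AH18}. Their limit is the Green function of the cone, which equals $\mathsf{d}^{2-n}/(n(n-2)\mathcal{V})$.

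It follows that the rescalings of $\mathsf{b}$ converge to $(\nu/\mathcal{V})^{1/(2-n)}\mathsf{d}_{o^*}$, locally uniformly and $H^{1,2}_{\mathrm{loc}}$-strongly away from the vertex. Hence ${|\nabla \mathsf{b}|}^2$ converges in $L^2_{\mathrm{loc}}$ to the constant $(\nu/\mathcal{V})^{2/(2-n)}$. Near the vertex, the bound $|\nabla\mathsf{b}|\leqslant 1$ from (\ref{5.1}) and the smallness of volume of small balls control the error uniformly. Writing $c:=(\nu/\mathcal{V})^{1/(2-n)}$, the rescaled set $r_i^{-1}\Omega_{t r_i}$ converges to the ball $B_{t/c}(o^*)$ of volume $\mathcal{V}(t/c)^n$. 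Therefore
\[
V(\infty)=\lim_i (tr_i)^{-n}\int_{\Omega_{tr_i}}{|\nabla \mathsf{b}|}^4\,\integral= c^4\,\mathcal{V}\,c^{-n}=\nu\, c^{2},
\]
using $\mathcal{V}c^{-n}=\nu c^{2-n}$. This is exactly $\nu(\nu/\mathcal{V})^{2/(2-n)}$. Since the limit does not depend on the subsequence, (\ref{4.25}) follows, with $A(\infty)=nV(\infty)$.

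The main obstacle is the convergence of ${|\nabla\mathsf{b}|}^4$ integrals over sublevel sets under blow-down. This needs three things: strong $H^{1,2}$ convergence of the rescaled $\mathsf{b}$ on annuli, via Theorem \ref{AH18} applied to $\mathsf{b}^2$ with $\Delta\mathsf{b}^2=2n{|\nabla\mathsf{b}|}^2$ bounded; convergence of $\chi_{\Omega_{tr_i}}$, which uses uniform convergence of the rescaled $\mathsf{b}$ and the fact that the limit level sets have measure zero; and uniform integrability near the vertex. The case $\mathcal{V}=0$ is handled similarly: Lemma \ref{lem5.2}(1) with $\mathscr{H}^n(B_s)=o(s^n)$ gives $\mathsf{b}/\mathsf{d}_x\to 0$. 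Then $\int_{\Omega_t}{|\nabla \mathsf{b}|}^2\,\integral=\nu t^n$ while $\mathscr{H}^n(\Omega_t)\gg t^n$, so the average of ${|\nabla\mathsf{b}|}^2$ tends to $0$. Since $|\nabla\mathsf{b}|\leqslant1$, the bound ${|\nabla\mathsf{b}|}^4\leqslant{|\nabla\mathsf{b}|}^2$ alone is not sufficient, and I would refine it by applying the same blow-down argument on annuli.
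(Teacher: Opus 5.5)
Your treatment of $\mathcal{V}>0$ matches the paper's route: blow down, identify the limit Green function as $\mathsf{d}^{2-n}/(n(n-2)\mathcal{V})$ (the paper imports this convergence from \cite{HP25}), and pass $\int_{\Omega_{tr_i}}{|\nabla\mathsf{b}|}^4\,\mathrm{d}\mathscr{H}^n$ to the limit. Deriving $A(\infty)=nV(\infty)$ from $t^nV(t)=\int_0^t s^{n-1}A(s)\,\mathrm{d}s$ is an equivalent, slightly cleaner substitute for the paper's annulus computation, and it works in both cases. One slip: since $\mathcal{V}=\nu c^{n-2}$, you have $\mathcal{V}c^{-n}=\nu c^{-2}$, not $\nu c^{2-n}$. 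Your final value $\nu c^2$ is still right.

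The genuine gap is the case $\mathcal{V}=0$, which you leave unresolved. You correctly get $\mathsf{b}/\mathsf{d}_x\to0$ at infinity and $t^{-n}\int_{\Omega_t}{|\nabla\mathsf{b}|}^2\,\mathrm{d}\mathscr{H}^n=\nu$. You also correctly note that ${|\nabla\mathsf{b}|}^4\leqslant{|\nabla\mathsf{b}|}^2$ only gives $V\leqslant\nu$. To do better you need ${|\nabla\mathsf{b}|}^4\leqslant\epsilon\,{|\nabla\mathsf{b}|}^2$ outside a compact set, i.e.\ \emph{uniform} smallness of $|\nabla\mathsf{b}|$ at infinity. ``The same blow-down on annuli'' does not supply this, for two reasons. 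First, for $\mathcal{V}=0$ the measures $r_i^{-n}\mathscr{H}^n$ of unit balls tend to $0$; after renormalizing, the limit is collapsed, with no cone structure and no explicit Green function. Second, a blow-down only gives averaged ($L^p$) convergence of gradients without a rate, and that cannot compensate for $\mathscr{H}^n(\Omega_t)/t^n\to\infty$.

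The missing ingredient is the Cheng--Yau type gradient estimate for the positive harmonic function $G(x,\cdot)$ on $X\setminus\{x\}$, namely $|\nabla G(x,\cdot)|(y)\leqslant C\,G(x,y)/\mathsf{d}(x,y)$ (\cite[Theorem 1.2]{J14}, as used in the paper). Since $|\nabla\mathsf{b}|=\frac{1}{n-2}\,\mathsf{b}\,|\nabla G(x,\cdot)|/G(x,\cdot)$, it gives $|\nabla\mathsf{b}|\leqslant C\,\mathsf{b}/\mathsf{d}_x$. Hence $\epsilon_R:=\sup_{X\setminus B_R(x)}|\nabla\mathsf{b}|\to0$ by your own observation that $\mathsf{b}/\mathsf{d}_x\to0$. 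Using $|\nabla\mathsf{b}|\leqslant1$ on $B_R(x)$,
\[
V(t)\leqslant t^{-n}\mathscr{H}^n(B_R(x))+\epsilon_R^2\,t^{-n}\int_{\Omega_t}{|\nabla\mathsf{b}|}^2\,\mathrm{d}\mathscr{H}^n=t^{-n}\mathscr{H}^n(B_R(x))+\nu\,\epsilon_R^2 ,
\]
so $V(\infty)=0$, and by your identity $A(\infty)=0$.
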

\begin{proof}
	If $\mathcal{V}=0$, then by Lemma \ref{lem5.2} and Theorem \ref{BGineq}, for any $t_0>0$ and any $y\notin \Omega_{t_0}$ we have
	\[
	\begin{aligned}
			&G(x,y)\geqslant C\int_{\mathsf{d}(x,y)}^\infty \frac{s}{\mathscr{H}^n(B_s(x))}\,\mathrm{d}s=\frac{C}{\mathscr{H}^n(B_{t_0}(x))}\int_{\mathsf{d}(x,y)}^\infty \frac{s\mathscr{H}^n(B_{t_0}(x))}{\mathscr{H}^n(B_s(x))}\,\mathrm{d}s\\
			\geqslant\  &\frac{C}{\mathscr{H}^n(B_{t_0}(x))}\int_{\mathsf{d}(x,y)}^{\infty}s^{1-n}\mathrm{d}s= \frac{C{t_0}^n}{\mathscr{H}^n(B_{t_0}(x))\mathsf{d}^{n-2}(x,y)}.
	\end{aligned}
	\]
	Since it follows from \cite[Theorem 1.2]{J14} that 
	\[
	|\nabla G(x,\cdot)|(y)\leqslant \frac{CG(x,y)}{\mathsf{d}(x,y)},
	\]
	we see
	\[
	|\nabla \mathsf{b}|(y)=\frac{1}{n-2}G^{\frac{1}{2-n}-1}|\nabla G(x,\cdot)|(y)\leqslant C \left(\frac{\mathscr{H}^n(B_{t_0}(x))}{{t_0}^n}\right)^{\frac{1}{n-2}}.
	\]
	Therefore $\lim_{t\to \infty} A=\lim_{t\to \infty}V=0$.

	Regarding the case $\mathcal{V}>0$, assume $(X_i,\mathsf{d}_i,\mathscr{H}^n,x):=(X,{t_i}^{-1}\mathsf{d},\mathscr{H}^n,x)$ pmGH converges to an RCD$(0,n)$ space $(X_\infty,\mathsf{d}_\infty,\mathfrak{m}_\infty,x_\infty)$ for some $t_i \uparrow \infty$. According to \cite[Theorem 1.3]{DG18}, $\mathfrak{m}_\infty=\mathscr{H}^n$. 
	
	Let $G_i$ be the Green function of $X_i$ and $\mathsf{b}_i:=(n(n-2)\nu \, G_i(x,\cdot))^{1/(2-n)}$. By \cite{HP25}, $\mathsf{b}_i=t_i\mathsf{b}$, $|\nabla_i \mathsf{b}_i|=|\nabla_X\mathsf{b}|$ and the Green function of $X_\infty$ satisfies 
\[
G_\infty(x_\infty,\cdot)=\frac{1}{n(n-2)\mathcal{V}}\,{\mathsf{d}_\infty}^{2-n}(x_\infty,\cdot).
\]
This implies $\mathsf{b}_\infty=(\nu/\mathcal{V})^{1/(2-n)}$. Thus
\[
\lim_{t\to \infty}V=\lim_{i\to\infty } \frac{1}{{t_i}^n}\int_{\Omega_{t_i}} {|\nabla \mathsf{b}|}^4\,\integral=\int_{\Omega^\infty_1}{|\nabla^\infty \mathsf{b}_\infty |}^4\,\integral=
\int_{\{\mathsf{d}_\infty\leqslant (\nu/\mathcal{V})^{1/(n-2)}\}}\left(\frac{\nu}{\mathcal{V}}\right)^{\frac{4}{2-n}}\,\integral=\nu \left(\frac{\nu}{\mathcal{V}}\right)^{\frac{2}{2-n}}.
\]
Recall that $A$ is non-increasing and non-negative. Hence from co-area formula we obtain
\[
\frac{2^n-1}{n}\lim_{i\rightarrow \infty} A=\lim_{i\to \infty}\frac{1}{{t_i}^n}\int_{t_i}^{2t_i}s^{n-1}A(s)\,\mathrm{d}s=\lim_{i\to \infty}\frac{1}{{t_i}^n}\int_{\Omega_{2t_i}\setminus \Omega_{t_i}}{|\nabla\mathsf{b}|}^4\,\integral=(2^n-1)\lim_{t\to \infty}V.
\]
\end{proof}
\begin{thm}[Rigidity III]\label{thm4.14}
	If $\inf W>-\infty$, then $X$ is a metric cone.
\end{thm}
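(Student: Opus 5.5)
Since $W'(t)=(A(t)-n\nu)/t$ by (\ref{4.22}), and $A$ is non-increasing with $A(0)=n\nu$ (Lemma \ref{lem6.2}, Theorem \ref{thm6.3}), we have $A\leqslant n\nu$ and hence $W'\leqslant 0$. The hypothesis $\inf W>-\infty$ together with the monotonicity of $W$ means $\lim_{t\to\infty}W(t)$ is finite. The plan is to show that this forces $A$ to be constant on some interval, so that Rigidity II (Theorem \ref{thm4.12}) applies.

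First, use the monotonicity of $t^{2-n}(A-n\nu)$ from Theorem \ref{thm5.4} (inequality (\ref{0421e})). Set $\psi(t):=t^{2-n}(n\nu-A(t))\geqslant 0$. By (\ref{0421e}), $\psi$ is non-increasing. Suppose $A(t_0)<n\nu$ for some $t_0>0$. Then for $t\leqslant t_0$ we get $n\nu-A(t)\geqslant (t/t_0)^{n-2}(n\nu-A(t_0))$.

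This lower bound only controls small $t$, so I would combine it with the monotonicity of $A$ itself. Because $A$ is non-increasing, $n\nu-A(t)\geqslant n\nu-A(t_0)=:\delta>0$ for all $t\geqslant t_0$. Then
\[
W(t)=W(t_0)-\int_{t_0}^t\frac{n\nu-A(s)}{s}\,\mathrm{d}s\leqslant W(t_0)-\delta\log(t/t_0)\to-\infty
\]
as $t\to\infty$. This contradicts $\inf W>-\infty$. Hence $A\equiv n\nu$ on $(0,\infty)$, and in particular $A(t_1)=A(t_2)$ for any $t_1<t_2$, so Theorem \ref{thm4.12} gives that $X$ is a metric cone with base point $x$. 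Alternatively one can argue directly as in that proof: $A\equiv n\nu=I_1$ and $\int_{\Omega_t}|\nabla\mathsf b|^4\leqslant\int_{\Omega_t}|\nabla\mathsf b|^2$ force $|\nabla\mathsf b|\equiv1$, and then Theorem \ref{thm4.10} applies.

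The main point to check is the correct sign and normalization of $W'$ in (\ref{4.22}), namely that $t^{-n}\int_{\partial\Omega_t}|\nabla\mathsf b|\,\mathrm{dPer}=I_1/t=n\nu/t$. This uses Lemma \ref{lem6.1} (which makes $I_1$ constant) and (\ref{0421a}) (which identifies the constant as $n\nu$). I also need $W$ to be absolutely continuous, so that the integral representation holds. This follows from the local Lipschitz continuity of $A$ (Theorem \ref{thm6.3}) and the co-area formula. With these in place, the argument reduces to the elementary divergence of $\int^\infty ds/s$, and the subtler inequality (\ref{0421e}) is not actually needed.
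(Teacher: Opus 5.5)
Your argument is correct and essentially the paper's: the paper also combines $W'=(A-n\nu)/t$ with the monotonicity of $A$, phrasing the logarithmic divergence as $0=\lim_{t\to\infty}(W(2t)-W(t))\leqslant \log 2\,\lim_{t\to\infty}(A(t)-n\nu)$, to conclude $A\equiv n\nu$ and then invoke Rigidity II (Theorem \ref{thm4.12}). As you already observed, the detour through $\psi$ and (\ref{0421e}) is not needed.
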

\begin{proof}
Assume $\inf W>-\infty$. Since $A$ is non-increasing, by (\ref{4.22}) and Proposition \ref{prop4.13},
\[
0=\lim_{t\to\infty}(W(2t)-W(t))=\lim_{t\to\infty}\int_t^{2t} W'(s)\mathrm{d}s\leqslant \log 2 \lim_{t\to\infty}(A(t)-n\,\nu). 
\]
Thus $\lim_{t\to \infty} A=\lim_{t\to 0} A$. In particular, $A'\equiv 0$. The conclusion then follows from Theorem \ref{thm4.12}.

\end{proof}
\begin{remark}
	If $X$ is a metric cone with base point $x$ such that
	\[
	\left(\mathbb{R}^n,\mathsf{d}_{\mathbb{R}^n},{\omega_n}^{-1}\mathscr{L}^n,0_n\right)\in \mathrm{Tan}(X,\mathsf{d},\mathscr{H}^n,x),
	\] then since the blow up of a metric cone at the base point is itself, it is clear that $X=\mathbb{R}^n$.
\end{remark}
\section{Asymptotic formula for level sets}\label{sec4}
In this section, we prove Theorem \ref{mcthm}. We fix the function $f$ as stated in Theorem \ref{mcthm}.

\begin{defn}[Almost harmonic splitting map]\label{defn1.3}
	Let $(X,\mathsf{d},\mathfrak{m})$ be an RCD$(-1, N)$ space. Let $x \in X$ and $\delta > 0$ be given. A map $\mathbf{U} = (u_1, \ldots, u_k): B_r(x) \rightarrow \mathbb{R}^k$ is said to be a $(k,\delta)$-almost harmonic splitting  map provided:
	\begin{enumerate}
		\item[$(1)$] There exists $C>0$ such that $u_a: B_r(x) \rightarrow \mathbb{R}$ is $C$-Lipschitz continuous for every $a = 1, \ldots, k$.
		\item[$(2)$] $\|\Delta u_a\|_{L^\infty(B_r(x))}\leqslant \delta$ for every $a = 1, \ldots, k$, 
		\item[$(3)$]$r^2 \fint_{B_r(x)} \left| \mathrm{Hess}\, u_a  \right|^2  \mathop{\mathrm{d}\mathfrak{m}} \leqslant \delta$ for every $a = 1, \ldots, k$,
		\item[$(4)$] $\fint_{B_r(x)} \left| \langle\nabla u_a,\nabla u_b\rangle - \delta_{ab} \right|  \mathop{\mathrm{d}\mathfrak{m}} \leqslant \delta$ for every $a, b = 1, \ldots, k$.
	\end{enumerate}
\end{defn}
{\begin{remark}
		If moreover $\mathbf{U}$ is harmonic on $B_r(x)$, then it is said to be a $(k,\delta)$-splitting map. Such maps played essential roles in \cite{ChCo1,ChCo3}, and recently \cite{BPS21,BPS23b,BPS23}.
	\end{remark}}

\begin{prop}\label{prop4.1}
Let $x_0\in \mathcal{R}_n$ and assume $f\in D(\Delta,B_\varepsilon(x_0))$ for some $\varepsilon>0$. Additionally, assume $|\nabla f|(x_0)>0$ and $x_0$ is both a Lebesgue point of ${|\nabla f|}^2$ and $(\Delta f)^2$. Then for any $\delta>0$, there exists $\varepsilon_0\in (0,\varepsilon)$ such that for any $t\in (0,\varepsilon_0]$ one can find $(n-1)$ harmonic functions $u_1,\ldots,u_{n-1}$ such that the following map is an $(n,\delta)$-almost harmonic splitting map. 
\[
\begin{aligned}
\mathbf{U}:B_{t}(x_0)&\longrightarrow \mathbb{R}^n\\
                  x&\longmapsto \left(u_1,\ldots,u_{n-1},|\nabla f|^{-1}(x_0)\mathop{(f-f(x_0))}\right).
\end{aligned}
\]
\end{prop}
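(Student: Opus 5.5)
Write $\tilde f:=|\nabla f|^{-1}(x_0)\,(f-f(x_0))$. The plan is a blow-up and contradiction argument: compare rescalings of $X$ at $x_0$ with $\mathbb{R}^n$, use Proposition \ref{prop:blow.up.bx} to identify the limit of $\tilde f$ as a unit linear function, and use Proposition \ref{AH182} to transplant the complementary coordinates as harmonic functions.

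\textbf{Conditions on $\tilde f$ alone.} First check the four conditions of Definition \ref{defn1.3} for the last component $\tilde f$, using only that $x_0$ is a Lebesgue point.

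Condition (1): $\tilde f$ is Lipschitz on $B_t(x_0)$ because $f$ is locally Lipschitz.

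Condition (4) for the pair $(n,n)$:
\[
\fint_{B_t(x_0)}\bigl||\nabla \tilde f|^2-1\bigr| \le |\nabla f|^{-2}(x_0)\fint_{B_t(x_0)}\bigl||\nabla f|^2-|\nabla f|^2(x_0)\bigr| \to 0,
\]
since $x_0$ is a Lebesgue point of $|\nabla f|^2$.

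Condition (3): we will obtain it from (\ref{eeeeeeqn2.6}) at scale $t$. That estimate gives
\[
t^2\fint_{B_t}|\mathrm{Hess}\,\tilde f|^2\le C\Bigl(t^2\fint_{B_{2t}}\bigl((\Delta\tilde f)^2+|\nabla\tilde f|^2\bigr)+\fint_{B_{2t}}\bigl||\nabla\tilde f|^2-1\bigr|\Bigr),
\]
using Bishop--Gromov doubling to compare averages on $B_t$ and $B_{2t}$. Both terms tend to $0$: the first because $(\Delta f)^2$ has a Lebesgue point at $x_0$, so its averages stay bounded and the factor $t^2$ kills them; the second by the computation above.

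Condition (2), the $L^\infty$ bound on $\Delta\tilde f$: this is problematic, because we only know $\Delta f\in L^2$. I would read condition (2) for this component in the averaged sense, $t^2\fint(\Delta\tilde f)^2\to0$, which is what the Lebesgue point property delivers. The $u_a$ are harmonic, so condition (2) is trivial for them.

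\textbf{Choosing $u_1,\dots,u_{n-1}$.} Argue by contradiction: suppose there are $t_i\downarrow0$ such that no admissible choice exists at scale $t_i$. Since $x_0\in\mathcal R_n$, the rescalings $(X,t_i^{-1}\mathsf d,\mathscr H^n(B_{t_i}(x_0))^{-1}\mathscr H^n,x_0)$ converge to $\mathbb R^n$. By Proposition \ref{prop:blow.up.bx}, $t_i^{-1}\tilde f$ converges in $H^{1,2}_{\mathrm{loc}}$-strongly to a linear function $\ell_n$ with $|\nabla\ell_n|=1$. Complete $\ell_n$ to an orthonormal system of linear coordinates $\ell_1,\dots,\ell_n$. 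These are harmonic on $B_2(0_n)$, so Proposition \ref{AH182} produces harmonic $v_a^i$ on $B_{3/2}$ of the rescaled spaces converging $H^{1,2}$-strongly to $\ell_a$. Set $u_a:=t_iv_a^i$.

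\textbf{Verifying the conditions for the $u_a$.} Strong convergence gives $\langle\nabla v_a^i,\nabla v_b^i\rangle\to\delta_{ab}$ in $L^1$; the same holds for the pairs involving $t_i^{-1}\tilde f$, since $H^{1,2}$-strong convergence passes to inner products of gradients. This yields condition (4).

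For the Hessians of the harmonic $v_a^i$, apply (\ref{eeeeeeqn2.6}) on the rescaled spaces, where the Ricci lower bound improves to $K t_i^2\to0$. The bound reduces to the term $\inf_m\fint\bigl||\nabla v_a^i|^2-m\bigr|$, which tends to $0$. This gives condition (3). The Lipschitz bound in condition (1) follows from the gradient estimate for harmonic functions of \cite{J14} on the slightly smaller ball. Together these contradict the choice of $t_i$.

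\textbf{Main obstacle.} The delicate point is bookkeeping of radii. The harmonic approximation lives on a smaller ball, so all estimates must be run on $B_{2t}$ and then restricted to $B_t$. One also has to make the Hessian estimate uniform along the sequence; the mere existence of a limit is not enough.
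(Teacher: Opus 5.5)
Your proposal is correct and follows the paper's proof step for step. Both argue by contradiction along scales $t_i\downarrow 0$, use the blow-up of Proposition \ref{prop:blow.up.bx} to obtain a unit linear limit for the last component, and complete it to an orthonormal frame with harmonic approximants from Proposition \ref{AH182}. Both take the Lipschitz bound from \cite{J14} and use the Bochner-based estimate (\ref{eeeeeeqn2.6}), with the rescaled curvature bound $Kt_i^2\to 0$ for the harmonic components and the Lebesgue-point hypotheses for $f$.

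Your hesitation about condition (2) is justified and is not a defect of your argument. The paper's proof never checks condition (2) for the last component. As literally written it fails even for $f(y)=y_1+y_1^2$ on $\mathbb{R}^n$, where $\Delta\tilde f\equiv 2$. So the scale-invariant averaged bound $t^2\fint_{B_t(x_0)}(\Delta \tilde f)^2\,\mathrm{d}\mathscr{H}^n\to 0$ you propose is what both arguments actually establish. Be aware that Lemma \ref{lem3.4} later invokes an $L^\infty$ bound on $\Delta v_j$, so the weakened condition would have to be carried through there.
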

\begin{proof}
We proceed by contradiction. Suppose there exists a sequence $r_i\rightarrow 0$ such that there is no desired almost harmonic splitting map on any $B_{r_i}(x_0)$. Since $x_0\in \mathcal{R}_n$, we have
\[
(X_i,\mathsf{d}_i,\mathscr{H}^n,x_0):=\left(X,{r_i}^{-1}\mathsf{d},{r_i}^{-n}\mathscr{H}^n_\mathsf{d},x_0\right)\xrightarrow{\mathrm{pmGH}}(\mathbb{R}^n,\mathsf{d}_{\mathbb{R}^n},\mathscr{L}^n,0_n).
\]
By Proposition \ref{prop:blow.up.bx}, the sequence $\{f_i:=(f-f(x_0))/(r_i|\nabla f|(x_0))\}$ $H_{\mathrm{loc}}^{1,2}$-strongly converges to a linear function $u_\infty^n$ on $\mathbb{R}^n$ with $|\nabla_{\mathbb{R}^n} u_\infty^n|\equiv 1$. Choose linear functions $u_\infty^1,\ldots,u_\infty^{n-1}$ on $\mathbb{R}^n$ such that 
\[\langle \nabla_{\mathbb{R}^n}u^a_\infty, \nabla_{\mathbb{R}^n}u^b_\infty \rangle=\delta_{ab},\ a,b=1,\ldots,n.
\]Proposition \ref{AH182} ensures the existence of harmonic functions $u^1_i,\ldots,u^{n-1}_i$ on $B^{X_i}_3(x_0)$ such that $\{u^a_i\}$ $H^{1,2}$-strongly converges to $u^a_\infty$ on $B^{X_i}_2(x_0)$ for each $a$. It follows from \cite[Theorem 1.1]{J14} that each $u_i^a$ is $C(n)$-Lipschitz continuous on $B_2^{X_i}(x_0)$. Consequently, the map \[\mathbf{U}_i=\left(r_iu^1_i,\ldots,r_iu^{n-1}_i,(f-f(x_0))/|\nabla f|(x_0)\right)
\] meets the condition (1) in Definition \ref{defn1.3}. Furthermore, we have
\[
\lim_{i\rightarrow \infty} \fint_{B_{r_i}(x_0)}|\langle \nabla (r_i u_i^a) ,\nabla (r_i u_i^b)\rangle-\delta_{ab}|\mathop{\mathrm{d}\mathscr{H}^n}=\lim_{i\rightarrow \infty} \fint_{B_1^i(x_0)}|\langle \nabla_i u_i^a,\nabla_i u_i^b\rangle-\delta_{ab}|\mathop{\mathrm{d}\mathscr{H}^n}=0,
\]
and
\[
\lim_{i\rightarrow \infty} \fint_{B_{r_i}(x_0)}|\langle \nabla (r_i u_i^a) ,\nabla f\rangle|\mathop{\mathrm{d}\mathscr{H}^n}=\lim_{i\rightarrow \infty} \fint_{B_1^i(x_0)}|\langle \nabla_i u_i^a,\nabla_i f_i\rangle|\mathop{\mathrm{d}\mathscr{H}^n}=0.
\]

Let us now verify condition (4). Owing to Theorem \ref{BGineq}, (\ref{eeeeeeqn2.6}) and the $H^{1,2}$-strong convergence of $\{u_i^a\}$ on $B_2(0_n)$, for each $a$ it holds that
\[
\begin{aligned}
&{r_i}^2\fint_{B_{r_i}(x_0)}{|\mathrm{Hess}\,(r_i u^a_i)|}^2\mathop{\mathrm{d}\mathscr{H}^n}\\
\leqslant\ & C(K,n)\left(\fint_{B_2^{X_i}(x_0)}\left|{|\nabla_i u_i^a|}^2-1\right|
\mathop{\mathrm{d}\mathscr{H}^n}
+{r_i}\fint_{B_2^{X_i}(x_0)}{|\nabla_i u^a_i|}^2\mathop{\mathrm{d}\mathscr{H}^n}\right)\rightarrow 0 \ \ \mathrm{as }\ i\rightarrow \infty.
\end{aligned} 
\]
As for $f$, since $x_0$ is a Lebesgue point of ${|\nabla f|}^2$ and ${(\Delta f)}^2$, Theorem \ref{BGineq} together with (\ref{eeeeeeqn2.6}) implies
\[
\lim_{i\rightarrow \infty} {r_i}^2 \fint_{B_{r_i}(x_0)} {|\mathrm{Hess}\,f|}^2\mathop{\mathrm{d}\mathscr{H}^n}=0.
\]
Therefore we deduce the contradiction.
\end{proof}

Let us now fix a sufficiently small $\delta>0$. For every $x_0\in \mathcal{R}_n$  that is both a Lebesgue point of ${|\nabla f|}^2$ and ${(\Delta f)}^2$ with $|\nabla f|(x_0)>0$, Proposition \ref{prop4.1} verifies the existence of $\varepsilon_0>0$ and an $(n,\delta)$-almost harmonic splitting map $\mathbf{U}$ on $B_{\varepsilon_0}(x_0)$ such that $u_n:=|\nabla f|^{-1}(x_0)\mathop{(f-f(x_0))}$. Recall in \cite[Section 4]{BMS23}, there is a subset $E\subset B_{\varepsilon_0}(x_0)$ satisfying the following properties:
\begin{itemize}
 \item $E\subset \mathcal{R}_n$ and $\mathscr{H}^n(B_{\varepsilon_0}(x_0)\setminus E)\leqslant C\,\delta\,\mathscr{H}^n(B_{\varepsilon_0}(x_0))$, 
\item For every $x\in E$, we have $|\langle \nabla u_i,\nabla u_j \rangle(x)-\delta_{ij}|\leqslant \sqrt{\delta}$. Moreover, there exist $\varepsilon_x\in (0,(\varepsilon_0-\mathsf{d}(x,x_0))/2)$ and constants $\{a_{ik}\}_{i,k=1,\ldots,n}$ and $\{b^i_{jk}\}_{i,j,k=1,\ldots,n}$ such that the map 
\[
\mathbf{V}=(v_1,\ldots,v_n):B_{\varepsilon_x}(x)\to \mathbb{R}^n
\] defined by $v_i=\sum_j a_{ij} (u_j-u_j(x))+\sum_{j,k,\alpha,\beta} b^i_{jk}a_{j\alpha} a_{k\beta} (u_\alpha-u_\alpha(x))(u_\beta-u_\beta(x))$ satisfies the following properties:

\begin{enumerate}
\item[$(1)$]For any $i, j = 1, \ldots, n$ it holds
    \begin{equation}\label{eqn3.2a}
    \langle\nabla v_i , \nabla v_j\rangle(x) = \delta_{ij}, \quad \lim_{t \to 0} \fint_{B_t(x)} \left| \langle \nabla v_i,  \nabla v_j \rangle- \delta_{ij} \right| \mathop{\mathrm{d}\mathscr{H}^n} = 0.
    \end{equation}
    \item[$(2)$] For any $k = 1, \ldots, n$, it holds
    \begin{equation}\label{3.4b}
 \lim_{t \rightarrow 0} \fint_{B_t(x)} \Delta v_k \mathop{\mathrm{d}\mathscr{H}^n}\leqslant \lim_{t \rightarrow 0} \fint_{B_t(x)} \left| \mathrm{Hess}\, v_k \right|^2 \mathop{\mathrm{d}\mathscr{H}^n}=0.
\end{equation}
\end{enumerate}
\end{itemize}

Let $C>0$ be a constant which may vary from line to line. Assume each $v_i$ is $C$-Lipschitz on $B_{\varepsilon_x}(x)$. For further discussion, the following proposition is required.

 \begin{prop}[{\cite[Proposition 4.8]{BMS23}},{\cite[(4.29)]{H25}}]\label{prop3.3}
Under the same assumptions and with the same notation introduced above, the following estimates hold for the function $r=|V|: B_{\varepsilon_x}(x) \to [0, \infty)$:
\begin{equation}\label{eqn3.4d}
     \fint_{B_t(x)} \left| \langle \nabla v_i,  \nabla v_j \rangle- \delta_{ij} \right| \mathop{\mathrm{d}\mathscr{H}^n}+\fint_{B_t(x)} \left||\nabla r| - 1\right|\mathop{\mathrm{d}\mathscr{H}^n}+ \fint_{B_t(x)} |\Delta r^2 - 2n| \mathop{\mathrm{d}\mathscr{H}^n}\leqslant t\,\xi(2t),
    \end{equation}
    where $\xi$ is a positive function that satisfies $\lim_{t\rightarrow 0}\xi(t)=0$, and is given by the following expression:
\[
\xi:t\mapsto C\max_i\sup_{s<t} \fint_{B_s(z)} {|\mathrm{Hess}\mathop{v_i}|}^2\,\mathrm{d}\mathscr{H}^n.
\] 
\end{prop}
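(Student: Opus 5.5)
The plan is to reduce all three quantities in \eqref{eqn3.4d} to two ingredients. The first is the Hessians of the $v_i$, controlled through the local $(1,1)$-Poincaré inequality and a dyadic telescoping argument. The second is the pointwise normalization $\langle\nabla v_i,\nabla v_j\rangle(x)=\delta_{ij}$ from \eqref{eqn3.2a}. Set $g_{ij}:=\langle\nabla v_i,\nabla v_j\rangle-\delta_{ij}$. Since the $v_i$ are (locally) Lipschitz with Laplacian in $L^2$, Remark \ref{rmk2.7} gives $g_{ij}\in H^{1,2}$ with
\[
\nabla g_{ij}=\mathrm{Hess}\,v_i(\nabla v_j,\cdot)+\mathrm{Hess}\,v_j(\nabla v_i,\cdot),
\]
hence $|\nabla g_{ij}|\leqslant C(|\mathrm{Hess}\,v_i|+|\mathrm{Hess}\,v_j|)$ by the $C$-Lipschitz bound on the $v_i$.

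\textbf{Step 1 (the $g_{ij}$ term).} Write $t_k:=2^{-k}t$. The Poincaré inequality and Cauchy--Schwarz give
\[
\fint_{B_{t_k}(x)}|g_{ij}-(g_{ij})_{B_{t_k}}|\,\mathrm{d}\mathscr{H}^n\leqslant Ct_k\Big(\max_i\fint_{B_{2t_k}(x)}|\mathrm{Hess}\,v_i|^2\Big)^{1/2}.
\]
By Bishop--Gromov (Theorem \ref{BGineq}), consecutive averages then differ by at most $Ct_k(\cdot)^{1/2}$. Summing the geometric series, and using that $(g_{ij})_{B_{t_k}}\to g_{ij}(x)=0$ by \eqref{eqn3.2a}, gives $\fint_{B_t(x)}|g_{ij}|\leqslant Ct\,\tilde\xi(2t)$. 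Here $\tilde\xi$ is a supremum over scales below $2t$ of the (square root of the) Hessian averages.

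\textbf{Step 2 (the $|\nabla r|$ term).} Away from $\{r=0\}$ we compute $|\nabla r|^2-1=r^{-2}\sum_{i,j}v_iv_jg_{ij}$, so $\big||\nabla r|^2-1\big|\leqslant\sum|g_{ij}|$. Since $||\nabla r|-1|\leqslant||\nabla r|^2-1|$, Step 1 applies. The set $\{r=0\}$ is harmless: there $|\nabla r|=0$ a.e., and locality together with Step 1 bounds its measure in $B_t(x)$, since $|g_{ii}|=1$ on $\{v_i=0\}\cap\{|\nabla v_i|=0\}$ and such points are counted by $\sum|g_{ij}|$.

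\textbf{Step 3 (the $\Delta r^2$ term).} We have $\Delta r^2=2\sum_i|\nabla v_i|^2+2\sum_iv_i\Delta v_i$. The first piece minus $2n$ is $2\sum_ig_{ii}$, handled by Step 1. For the second piece, $|v_i|\leqslant Cs$ on $B_s(x)$ since $v_i(x)=0$ and $v_i$ is Lipschitz. Moreover $|\Delta v_i|=|\langle\mathrm{Hess}\,v_i,\mathrm{g}\rangle|\leqslant\sqrt n|\mathrm{Hess}\,v_i|$ by Proposition \ref{prop1.1}. So its average is at most $Ct(\fint|\mathrm{Hess}\,v_i|^2)^{1/2}$.

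\textbf{Main obstacle: the exponent on the Hessian average.} The Poincaré/telescoping argument naturally produces $t$ times the \emph{square root} of the $L^2$-Hessian average. This is weaker than the literal expression for $\xi$ once that average is below $1$. I would therefore first check whether the definition of $\xi$ in \cite{BMS23} is meant up to a square root (in which case the argument above proves the statement as intended). Failing that, I would aim to upgrade the estimate: use \eqref{3.4b}, and the almost-harmonic structure of the $v_i$ as quadratic polynomials in the harmonic $u_j$, to obtain an $L^2$-Poincaré bound $\fint|g_{ij}|\leqslant\big(\fint|g_{ij}|^2\big)^{1/2}$ in terms of $\fint|\mathrm{Hess}\,v_i|^2$ directly. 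I expect this upgrade to be the delicate step.
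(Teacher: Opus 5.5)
Your Steps 1--3 are correct. They use the $(1,1)$-Poincar\'e inequality for $g_{ij}$ and a dyadic telescoping anchored by \eqref{eqn3.2a}. They then use the bound $\big||\nabla r|^2-1\big|\leqslant\sum_{i,j}|g_{ij}|$, absorbing $\{r=0\}$ via $|g_{11}|=1$ there, and $|\Delta v_i|\leqslant\sqrt n\,|\mathrm{Hess}\,v_i|$ from Proposition \ref{prop1.1}. This is the standard argument behind an estimate that the paper does not prove and only cites from \cite{BMS23,H25}.

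The real issue is your ``main obstacle'', and it has to be resolved against the upgrade you propose. With $\xi$ read literally, without a square root, the inequality is false, so no argument can prove it. Take $X=\mathbb{R}^n$, $u_a=y_a$ for $a\leqslant n-1$, $f=y_n+y_n^3$ and $x=x_0=0_n$. The Gram matrix is already the identity and all Hessians vanish at $0_n$. Hence $\mathbf{V}=(y_1,\ldots,y_{n-1},y_n+y_n^3)$, with $\langle\nabla v_n,\nabla v_n\rangle-1=6y_n^2+9y_n^4$ and $|\mathrm{Hess}\,v_n|=6|y_n|$. Since $\fint_{B_s(0_n)}y_n^2\,\mathrm{d}\mathscr{L}^n=s^2/(n+2)$, the left-hand side is at least $6t^2/(n+2)$, while $t\,\xi(2t)=144\,C\,t^3/(n+2)$. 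The inequality therefore fails for $t<1/(24C)$.

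The same happens at every point $x$ with $|x_n|$ small: after the normalization, $v_n$ still has non-zero third derivative at $x$. So this is not a null-set effect. The reason is structural. The left-hand side scales linearly with the size of $\mathrm{Hess}\,v_i$, whereas $\fint|\mathrm{Hess}\,v_i|^2$ scales quadratically. Your proposed $L^2$-Poincar\'e route would again only give $t\big(\fint|\mathrm{Hess}\,v_i|^2\big)^{1/2}$.

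So the statement must be read with $\xi(t)=C\max_i\sup_{s<t}\big(\fint_{B_s(x)}|\mathrm{Hess}\,v_i|^2\,\mathrm{d}\mathscr{H}^n\big)^{1/2}$, and with $B_s(x)$ in place of $B_s(z)$. Alternatively one can use $\fint_{B_s(x)}|\mathrm{Hess}\,v_i|\,\mathrm{d}\mathscr{H}^n$, which your Step 1 yields before Cauchy--Schwarz. This correction does not affect anything downstream: Lemmas \ref{lem3.5} and \ref{lem3.4} use the estimate only through $\xi(t)\to0$, and that still holds by \eqref{3.4b}. With this reading your proposal is a complete proof, and you should drop the planned upgrade.
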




\begin{lem}\label{lem3.3}
Under the same assumptions and with the same notation introduced above we have $\lim_{y\rightarrow x}r/\mathsf{d}_x=1$.
\end{lem}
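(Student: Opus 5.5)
We argue by a blow-up at $x$, using that $x\in E\subset\mathcal{R}_n$ and the first-order control (\ref{eqn3.2a}) of the map $\mathbf{V}$. Fix any sequence $y_i\to x$ with $y_i\neq x$ and set $s_i:=\mathsf{d}(x,y_i)$. It suffices to show that every such sequence has a subsequence along which $r(y_i)/s_i\to 1$. Since $x\in\mathcal{R}_n$, the rescaled spaces $(X,s_i^{-1}\mathsf{d},s_i^{-n}\mathscr{H}^n,x)$ converge in the pmGH sense to $(\mathbb{R}^n,\mathsf{d}_{\mathbb{R}^n},\mathscr{L}^n,0_n)$. Along this convergence, passing to a subsequence, the points $y_i$ converge to some $y_\infty$ with $|y_\infty|=1$.

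Next we blow up $\mathbf{V}$. Put $v_k^i:=s_i^{-1}v_k$; note $v_k(x)=0$ by construction. These functions are uniformly $C$-Lipschitz in the rescaled metrics and vanish at $x$. By Theorem \ref{AAthm} they converge locally uniformly, after passing to a subsequence, to Lipschitz functions $v_k^\infty$ on $\mathbb{R}^n$ with $v_k^\infty(0_n)=0$.

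To identify the limits, observe that $\Delta_i v_k^i=s_i\Delta v_k$. By (\ref{3.4b}) and the Lipschitz bound on the $u_j$, the quantity $\fint_{B_{s_iR}(x)}(\Delta v_k)^2$ stays bounded; indeed $\Delta v_k$ is bounded near $x$, since the $u_j$ have bounded Laplacian and bounded gradient. Hence $\|\Delta_i v_k^i\|_{L^2(B_R)}\to 0$ after normalization. Theorem \ref{AH18} then gives $H^{1,2}_{\mathrm{loc}}$-strong convergence, and it shows the $v_k^\infty$ are harmonic with Lipschitz bound, hence linear. This is the same argument as in Proposition \ref{prop:blow.up.bx}.

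The $H^{1,2}_{\mathrm{loc}}$-strong convergence also passes inner products to the limit. Combined with (\ref{eqn3.2a}), it yields
\[
\langle\nabla v_a^\infty,\nabla v_b^\infty\rangle=\lim_{i}\fint_{B_{s_i}(x)}\langle\nabla v_a,\nabla v_b\rangle\,\mathrm{d}\mathscr{H}^n=\delta_{ab}.
\]
So $\mathbf{V}^\infty=(v_1^\infty,\ldots,v_n^\infty)$ is a linear isometry of $\mathbb{R}^n$. The step requiring care is that convergence of $\langle\nabla v^i_a,\nabla v^i_b\rangle$ in $L^1_{\mathrm{loc}}$ holds under $H^{1,2}_{\mathrm{loc}}$-strong convergence, and that the constant limit over balls of every radius $R$ forces the identity everywhere. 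Alternatively, Proposition \ref{prop3.3} gives the quantitative bound $\fint_{B_t(x)}||\nabla r|-1|\leqslant t\xi(2t)$ directly.

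Finally, uniform convergence gives
\[
r(y_i)/s_i=|\mathbf{V}^i(y_i)|\to|\mathbf{V}^\infty(y_\infty)|=|y_\infty|=1.
\]
Since the sequence was arbitrary, this proves $\lim_{y\to x}r/\mathsf{d}_x=1$.

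The main obstacle I expect is the justification of the uniform convergence of $\mathbf{V}^i$ to a limit evaluated at $y_\infty$, and the identification of that limit as an isometry independent of subsequence. The first issue is handled by the Arzelà–Ascoli Theorem \ref{AAthm} together with the equi-Lipschitz bounds. The second is handled by the Gram identity above. Neither requires more than the stated convergence results.
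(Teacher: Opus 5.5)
Your proposal is correct and follows the paper's proof. You blow up at $x\in\mathcal{R}_n$ at scale $s_i=\mathsf{d}(x,y_i)$, use the argument of Proposition \ref{prop:blow.up.bx} together with (\ref{eqn3.2a}) to obtain linear limits of $s_i^{-1}v_k$ with orthonormal gradients (vanishing at $0_n$ because $v_k(x)=0$), and evaluate at the limit of $y_i$ via Theorem \ref{AAthm}; the only difference is that you handle an arbitrary sequence directly, covering $\liminf$ and $\limsup$ at once, whereas the paper argues by contradiction for each separately.

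A minor point: it is safer to get the vanishing of the rescaled Laplacians from (\ref{3.4b}) together with $(\Delta v_k)^2\leqslant n\,|\mathrm{Hess}\,v_k|^2$ (Proposition \ref{prop1.1}) than from an $L^\infty$ bound on $\Delta u_n$, since $f$ is only assumed to have an $L^2$ Laplacian.
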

\begin{proof}
We first show $\liminf_{y\rightarrow x}r/\mathsf{d}_x=1$ by contradiction. Assume the existence of $\epsilon>0$ and a sequence of points $y_j\rightarrow x$ such that $r(y_j)\leqslant (1-\epsilon)\,\mathsf{d}_x(y_j)$. Set $s_j=\mathsf{d}(x,y_j)$. Because $x\in \mathcal{R}_n$, 
\[
(X_j,\mathsf{d}_j,\mathscr{H}^n,x):=\left(X,{s_j}^{-1}\mathsf{d},{s_j}^{-n}\mathscr{H}^n_\mathsf{d},x\right)\xrightarrow{\mathrm{pmGH}}\left(\mathbb{R}^n,\mathsf{d}_{\mathbb{R}^n},\mathscr{L}^n,0_n\right).
\] 
Let $w_j^i:={s_j}^{-1}v_i$ be functions defined on $X_j$ ($i=1,\ldots,n$, $j\in\mathbb{N}$). By Proposition \ref{prop:blow.up.bx}, (\ref{eqn3.2a}) and (\ref{3.4b}), as $j\to \infty$, $\{w^i_j\}$ $H^{1,2}_{\mathrm{loc}}$-strongly converges to a linear function $w^i$ on $\mathbb{R}^n$ with
\[\langle\nabla_{\mathbb{R}^n} w^a,\nabla_{\mathbb{R}^n} w^b\rangle \equiv \delta_{ab},\ a,b=1,\ldots,n.
\]
In particular, since $v_i$ is $C$-Lipschitz, as $y_j\to y_\infty\in \partial B_1(0_n)$, Theorem \ref{AAthm} guarantees $w_j^i(y_j)\to w^i(y_\infty)$. As a result, we deduce a contradiction by the following observation.
\[
\left(\frac{r(y_j)}{\mathsf{d}(x,y_j)}\right)^2=\sum_{i=1}^n \frac{{v_i}^2(y_j)}{{s_j}^2}=\sum_{i=1}^n (w_j^i(y_j))^2\rightarrow 1,\ \mathrm{as}\ j\to\infty.
\]
Similarly, it can be proved that $\limsup_{y\rightarrow x}r/\mathsf{d}_x=1$, which is sufficient to reach a conclusion.
\end{proof}
Let us make a convention for the remainder of this section: all $t>0$ under consideration are sufficiently small such that $\mathbf{V}(B_t(x))\subset B_{2t}(0_n)$. Moreover, when referring to $\mathbf{V}^{-1}$, we are viewing $\mathbf{V}$ as a map from $B_{2t}(x)$ to $B_{4t}(0_n)$. Note that for any bounded Borel function $\varphi$ on $B_{2t}(0_n)$, the composition $\varphi\circ\mathbf{V}$ is a well-defined bounded measurable function on $B_t(x)$.

\begin{lem}\label{lem3.5}
Under the same assumptions and with the same notation introduced above we have
\begin{equation}\label{eqn3.15ccc}
\mathscr{H}^n(B_t(x)\setminus \mathbf{V}^{-1}(B_t(0_n)))=o(t^{n+1}),\ \mathrm{as}\ t\to 0.
\end{equation}
\end{lem}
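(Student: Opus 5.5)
The plan is to control the set $B_t(x)\setminus \mathbf{V}^{-1}(B_t(0_n))=B_t(x)\cap\{r\geqslant t\}$ through the function $r=|\mathbf{V}|$ and the averaged estimate of Proposition \ref{prop3.3}, which shows that $|\nabla r|$ is close to $1$ in $L^1$ with error $t\,\xi(2t)$. On this set we have $\mathsf{d}_x<t\leqslant r$. By Lemma \ref{lem3.3}, $r\leqslant (1+o(1))\mathsf{d}_x$, so the set is contained in the thin annulus $\{(1-\eta(t))t\leqslant \mathsf{d}_x<t\}$ with $\eta(t)\to 0$. Knowing only $\eta=o(1)$ gives a volume bound of $o(t^n)$, not $o(t^{n+1})$. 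The crux is therefore a sharper, integrated comparison between $r$ and $\mathsf{d}_x$.

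The first step is to compare $r$ with $\mathsf{d}_x$ quantitatively in $L^1$. Since $r(x)=0$, consider $\psi:=r-\mathsf{d}_x$. I would show
\[
\fint_{B_{s}(x)}|\psi|\,\mathrm{d}\mathscr{H}^n\leqslant s\,\tilde\xi(s),\qquad \tilde\xi(s)\to 0 .
\]
To get this, I would use the $(1,1)$-Poincar\'e inequality together with a dyadic telescoping $\sum_k 2^{-k}s\,\xi(2^{1-k}s)$. The gradient input would be $\fint_{B_s}|\nabla r-\nabla \mathsf{d}_x|$, controlled by $|\nabla r|\approx 1$ together with $\langle\nabla r,\nabla \mathsf{d}_x\rangle\approx 1$. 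The latter I would get from $\Delta r^2\approx 2n$: write $\Delta r^2\approx 2n$ on balls, integrate it against $\mathsf{d}_x$-level sets via Theorem \ref{GGthm} applied with $f=\mathsf{d}_x$, and compare with $\mathscr{H}^n(B_s(x))=\omega_n s^n(1+o(1))$ at $x\in\mathcal{R}_n$. Alternatively, one could use the cruder route that $r^2$ and $\mathsf{d}_x^2$ both have Laplacian close to $2n$ and both vanish to second order at $x$.

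The second step converts the $L^1$ bound into a measure bound. Write the target set as $B_t(x)\cap\{r\geqslant t\}$ and split it at the annulus of width $\lambda t$: the part with $\mathsf{d}_x\geqslant (1-\lambda)t$ and the part with $\mathsf{d}_x<(1-\lambda)t\leqslant r-\lambda t$. The second part lies in $\{|\psi|\geqslant\lambda t\}$, so by Chebyshev it has measure at most $(\lambda t)^{-1}t^{n+1}\tilde\xi(t)$. For the first part, the measure would come from the coarea formula (Theorem \ref{coareafor}) and Bishop–Gromov. Note that $\mathscr{H}^n$ of the annulus is only $O(\lambda t^{n})$, which is too large for a fixed $\lambda$. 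Instead, on the annulus I would use that $r\geqslant t>\mathsf{d}_x$ forces $\psi>0$, and bound the measure by $\int_{(1-\lambda)t}^t \mathrm{Per}(B_s)\,\mathrm{d}s$ restricted to $\{\psi\geqslant t-s\}$. Then I apply the coarea formula in the variable $r$ against $\mathsf{d}_x$, and Chebyshev again, integrating the layers $s$ to get $\int_0^{\lambda t}\min\{s^{n-1}t,\ (\tau)^{-1} t^{n+1}\tilde\xi\}\,\mathrm{d}\tau$-type bounds. Choosing $\lambda=\tilde\xi^{1/2}$ should give $o(t^{n+1})$.

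I expect the main obstacle to be obtaining the first step at the rate $o(s)$ rather than just $o(1)\cdot s$ in sup-norm, and then making the layer-cake estimate near the sphere $\partial B_t(x)$ sharp enough. Lemma \ref{lem3.3} alone only yields $o(t^n)$, so the gain of one extra power of $t$ has to come from the integrated smallness $t\,\xi(2t)$ in (\ref{eqn3.4d}), fed through the Gauss–Green/coarea machinery.
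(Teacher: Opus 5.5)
There is a genuine gap, and it is structural rather than a matter of bookkeeping. First, your Step~1 bound $\fint_{B_s(x)}|r-\mathsf{d}_x|\,\mathrm{d}\mathscr{H}^n\leqslant s\,\tilde\xi(s)$ already follows from Lemma~\ref{lem3.3}, which gives $|r-\mathsf{d}_x|\leqslant \eta(s)\,s$ on $B_s(x)$. ``$o(s)$'' and ``$o(1)\cdot s$'' are the same rate, so Step~1 adds nothing. Second, the Chebyshev bound in Step~2 is $(\lambda t)^{-1}t^{n+1}\tilde\xi(t)=\lambda^{-1}t^n\tilde\xi(t)$. For $\lambda=\tilde\xi^{1/2}$ this is $t^n\tilde\xi(t)^{1/2}$, i.e.\ only $o(t^n)$. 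The layer-cake estimate on the annulus loses the same power of $t$ once the perimeter ($\approx t^{n-1}$) and the spherical integrals of $|r-\mathsf{d}_x|$ are given their actual sizes.

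No sharpening of Step~1 that is available here rescues the scheme. The annulus $\{t-\mu\leqslant\mathsf{d}_x<t\}$ has measure $\approx n\omega_n t^{n-1}\mu$, and Chebyshev bounds the remainder by $\mu^{-1}\int_{B_t(x)}|r-\mathsf{d}_x|\,\mathrm{d}\mathscr{H}^n$. Optimizing in $\mu$ gives at best $t^{(n-1)/2}\big(\int_{B_t(x)}|r-\mathsf{d}_x|\,\mathrm{d}\mathscr{H}^n\big)^{1/2}$, so you would need $\fint_{B_t(x)}|r-\mathsf{d}_x|\,\mathrm{d}\mathscr{H}^n=o(t^3)$. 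Estimate (\ref{eqn3.4d}) controls first derivatives only to first order (error $t\,\xi(2t)$), so through Poincar\'e the best one can hope for is $o(t^2)$. Your sketch does not reach even that, for three reasons:
\begin{itemize}
\item the Gauss--Green identity for $r^2$ on balls yields only a signed, $r$-weighted integral of $\langle\nabla r,\nabla\mathsf{d}_x\rangle$;
\item passing from $|\nabla r-\nabla\mathsf{d}_x|^2$ to $|\nabla r-\nabla\mathsf{d}_x|$ costs a square root;
\item the comparison $\mathscr{H}^n(B_s(x))=\omega_n s^n(1+o(1))$ is far too coarse.
\end{itemize}
Even on a smooth manifold, where $|r-\mathsf{d}_x|=O(t^3)$, this $L^1$-plus-Chebyshev bookkeeping returns only $O(t^{n+1})$. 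The smooth proof works because the bound there is pointwise, which is exactly what an RCD space does not give you.

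The missing idea is cancellation rather than closeness. The set $B_t(x)\cap\mathbf{V}^{-1}(B_t(0_n))$ is a sublevel set $\{\tilde r<t\}$ of a single Lipschitz function built from $r$ and $\mathsf{d}_x$, and it lies inside $B_t(x)$. (For the intersection this function is $\tilde r=\max\{r,\mathsf{d}_x\}$; the paper writes $\min$.) So the quantity in the lemma is exactly $\mathscr{H}^n(B_t(x))-\mathscr{H}^n(\{\tilde r<t\})$.

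The paper shows $\mathscr{H}^n(\{\tilde r\leqslant t\})=\omega_n t^n+o(t^{n+1})$ by applying Theorem~\ref{GGthm} with $f=\tilde r$, $g=r^2$ and a power weight $\tau$. Into this it feeds $\Delta r^2\approx 2n$ and $|\nabla r|\approx 1$ from (\ref{eqn3.4d}), together with a weighted $L^2$ bound on $\nabla(r-\tilde r)$ from \cite{BMS23,H25}. It then subtracts the expansion $\mathscr{H}^n(B_t(x))=\omega_n t^n+o(t^{n+1})$ proved in \cite{BMS23}. Each volume expansion needs only integrated first-order information. The extra power of $t$ comes from the subtraction, not from a pointwise comparison of $r$ with $\mathsf{d}_x$.
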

\begin{proof}

The proof closely follows the approach outlined in \cite[Lemma 3.6]{BMS23} and \cite{H25}. Let $\tilde{r}=\min\{r,\mathsf{d}_x\}$ and $\Omega_t:=B_t(x)\cap \mathbf{V}^{-1}(B_t(0_n))$. Clearly $\Omega_t=\{\tilde{r}\leqslant t\}$. Additionally the locality of minimal relaxed slopes implies that 
\begin{align}\label{4.16}
	|\nabla \tilde{r}|=\chi_{\{r\leqslant \mathsf{d}_x\}}|\nabla r|+\chi_{\{r\geqslant \mathsf{d}_x\}},\end{align} indicating $\tilde{r}$ is a $C$-Lipschitz function. According to Lemma \ref{lem3.3}, we may assume 
	\begin{align}\label{4.17}
	\mathsf{d}_x\leqslant 2r\leqslant 4\,\mathsf{d}_x \  \mathrm{on}\ B_{2t}(x).
	\end{align}

 We claim that 
 \begin{align}\label{4.gsjirhgsao}
 \mathscr{H}^n(\Omega_t)=\omega_n t^n+o(t^{n+1})\ \mathrm{as}\  t\to 0.
 \end{align}

Let us set
\[
I:=\int_{ \Omega_t }\frac{r{|\nabla(r-\tilde{r})|}^2}{\tilde{r}^{n+1}}\,\mathrm{d}\mathscr{H}^n=\int_{\Omega_t }\frac{r\left({|\nabla r|}^2+{|\nabla \tilde{r}|}^2\right)}{\tilde{r}^{n+1}}\,\mathrm{d}\mathscr{H}^n-\int_{\Omega_t }\frac{\langle\nabla \tilde{r},\nabla r^2\rangle}{\tilde{r}^{n+1}}\,\mathrm{d}\mathscr{H}^n:=I_1-I_2.
\]

By \cite{BMS23}, \cite[(4.38)]{H25}, (\ref{4.16}) and (\ref{4.17}), we have
\begin{align}\label{fhieuoawhfoaehoifeah}
I\leqslant 4\int_{B_{2t}(x)} \frac{r{|\nabla(r-\tilde{r})|}^2}{{\mathsf{d}_x}^n}\,\mathrm{d}\mathscr{H}^n\leqslant o(t),\ \mathrm{as}\ t\to 0.
\end{align}
On the one hand, Theorem \ref{GGthm} implies
\[
	I_2=\int_0^t \frac{1}{s^{n+1}}\int_{\Omega_s} \Delta r^2\,\mathrm{d}\mathscr{H}^n\mathrm{d}s=\int_0^t \frac{1}{s^{n+1}}\int_{\Omega_s} (\Delta r^2-2n)\,\mathrm{d}\mathscr{H}^n\mathrm{d}s+2n\int_0^t \frac{\mathscr{H}^n(\Omega_s)}{s^{n+1}}\,\mathrm{d}s.
	\]
	Notice that
	\begin{align}\label{0504a}
	\int_{\Omega_t}\left||\nabla \tilde{r}|-1\right|\,\integral\leqslant \int_{B_{2t}(x)} \left||\nabla r|-1\right|\,\integral\leqslant o(t^{n+1}),\ \mathrm{as}\ t\to 0.
	\end{align}
	which in conjunction with (\ref{eqn3.4d}) yields
\begin{equation}\label{hfioehajiofho}
	\begin{aligned}
		&\left|I_2-2n\int_0^t \frac{1}{s^{n+1}}\int_{\Omega_s}|\nabla \tilde{r}|\,\integral\mathrm{d}s\right|\\
		\leqslant\ &\int_0^t \frac{1}{s^{n+1}}\int_{B_{2s}(x)} |\Delta r^2-2n|\,\mathrm{d}\mathscr{H}^n+\int_0^t \frac{1}{s^{n+1}}\int_{\Omega_s}\left||\nabla \tilde{r}|-1\right|\,\integral \leqslant o(t),\ \mathrm{as}\ t\to 0.
	\end{aligned}	
\end{equation}
On the other hand, observe from (\ref{4.16}) and (\ref{4.17}) that
\begin{equation}\label{4.19}
	\begin{aligned}
	&\left|I_1-\int_{ \Omega_t }\frac{2r}{\tilde{r}^{n+1}}\,
	\mathrm{d}\mathscr{H}^n\right|\leqslant \int_{ \Omega_t } \frac{2\left|{|\nabla r|}^2+{|\nabla \tilde{r}|}^2-2\right|}{\tilde{r}^n}\,\mathrm{d}\mathscr{H}^n
	\leqslant\int_{B_{2t}(x)}\frac{\left|{|\nabla r|}^2-1\right|}{{2}^{n-1}{\mathsf{d}_x}^n}\,\mathrm{d}\mathscr{H}^n\leqslant o(t),\ \mathrm{as}\ t\to 0,
	\end{aligned}
\end{equation}
where the last equality follows from a combination of co-area formula, integration by parts and (\ref{eqn3.4d}), as shown below:
\begin{equation}\label{0429a}
\begin{aligned}
	&\int_{B_{2t}(x)}\frac{\left|{|\nabla r|}^2-1\right|}{{\mathsf{d}_x}^n}\,\mathrm{d}\mathscr{H}^n=\int_0^{2t} \frac{1}{s^n}\int_{\partial B_s(x)}\left|{|\nabla r|}^2-1\right|\,\mathrm{dPer}(B_s(x),\cdot)\\
	=\ &\frac{1}{{(2t)}^n}\int_{B_{2t}(x)}\left|{|\nabla r|}^2-1\right|\,\integral+\int_0^{2t}\frac{n}{s^{n+1}}\int_{B_{2t}(x)}\left|{|\nabla r|}^2-1\right|\,\integral\mathrm{d}s\leqslant o(t),\ \mathrm{as}\ t\to 0.
\end{aligned}
\end{equation}
Similarly, we obtain
\[
\small{\begin{aligned}
	\left|\int_{ \Omega_t }\frac{r}{\tilde{r}^{n+1}}\,
	\mathrm{d}\mathscr{H}^n-\int_{ \Omega_t }\frac{r|\nabla\tilde{r}|}{\tilde{r}^{n+1}}\,
	\mathrm{d}\mathscr{H}^n\right|
	\leqslant \int_{ \Omega_t }\frac{r\left|{|\nabla \tilde{r}|}^2-1\right|}{\tilde{r}^{n+1}}\,
	\mathrm{d}\mathscr{H}^n\leqslant\frac{1}{2^{n}}\int_{B_{2t}(x)}\frac{\left|{|\nabla r|}^2-1\right|}{{\mathsf{d}_x}^n}\,\mathrm{d}\mathscr{H}^n\leqslant o(t),\ \mathrm{as}\ t\to 0.
\end{aligned}}
\]

If we show
\begin{align}\label{4.18}
\int_{ \Omega_t }\frac{(r-\tilde{r})|\nabla \tilde{r}|}{\tilde{r}^{n+1}}\,\mathrm{d}\mathscr{H}^n\leqslant o(t),\ \mathrm{as}\ t\to 0,
\end{align}
then using (\ref{4.19}) and co-area formula we get
\[
I_1=\int_{ \Omega_t } \frac{2|\nabla \tilde{r}|}{\tilde{r}^{n+1}}\,\mathrm{d}\mathscr{H}^n+o(t)=\int_0^t\frac{2}{s^{n+1}}\left(\frac{d}{ds}\int_{\Omega_s}|\nabla \tilde{r}|\,\integral\right)\,\mathrm{d}s+o(t),\ \mathrm{as}\ t\to 0.
\]
This together with (\ref{0504a}) and (\ref{hfioehajiofho}) then proves (\ref{4.gsjirhgsao}).

To see (\ref{4.18}), it suffices to verify
\begin{align}\label{fehjaoifhjeaoifhef}
\int_{ \Omega_t }\frac{(r-\tilde{r})|\nabla \tilde{r}|}{\tilde{r}^{n+1}}\,\mathrm{d}\mathscr{H}^n\leqslant C\int_{ B_{2t}(x)}\frac{r-\tilde{r}}{{\mathsf{d}_x}^{n+1}}\,\mathrm{d}\mathscr{H}^n\leqslant o(t),\ \mathrm{as}\ t\to 0.
\end{align}
Applying (1,1)-Poincar\'e inequality and volume comparison theorem gives
\begin{equation}\label{feahiofheaiohfaioeo}
\begin{aligned}
	&\left|\fint_{ B_{2t}(x)}(r-\tilde{r})\,\mathrm{d}\mathscr{H}^n-\fint_{ B_{t}(x)}(r-\tilde{r})\,\mathrm{d}\mathscr{H}^n\right|=\fint_{ B_{t}(x)}\left|(r-\tilde{r})-\fint_{ B_{2t}(x)}(r-\tilde{r})\,\mathrm{d}\mathscr{H}^n\right|\mathrm{d}\mathscr{H}^n\\
	&\leqslant C(K,n)\fint_{ B_{2t}(x)}\left|(r-\tilde{r})-\fint_{ B_{2t}(x)}(r-\tilde{r})\,\mathrm{d}\mathscr{H}^n\right|\mathrm{d}\mathscr{H}^n\leqslant C(K,n) t \fint_{B_{4t}(x)} {|\nabla (r-\tilde{r})|}\,\mathrm{d}\mathscr{H}^n.
\end{aligned}
\end{equation}
Owing to (\ref{4.17}), (\ref{fhieuoawhfoaehoifeah}) and H\"{o}lder inequality,
\begin{equation}\label{fheaiofhoeiahfoihfoiaejh}
	\begin{aligned}
		&\left(\fint_{B_{t}(x)} {|\nabla (r-\tilde{r})|}\,\mathrm{d}\mathscr{H}^n\right)^2 \leqslant \fint_{B_{t}(x)} {|\nabla (r-\tilde{r})|}^2\,\mathrm{d}\mathscr{H}^n\\
		=\ &\fint_{B_{t}(x)}\frac{{\mathsf{d}_x}^n}{r} \frac{r{|\nabla (r-\tilde{r})|}^2}{{\mathsf{d}_x}^{n}}\,\mathrm{d}\mathscr{H}^n
		\leqslant t^{n-1}\fint_{ B_{t}(x)}\frac{r{|\nabla (r-\tilde{r})|}^2}{{\mathsf{d}_x}^{n}}\,\mathrm{d}\mathscr{H}^n\leqslant o(1),\ \mathrm{as}\ t\to 0.
	\end{aligned}
\end{equation}
Because of $\lim_{y\to x}(r(y)-\tilde{r}(y))=0$, we obtain from (\ref{feahiofheaiohfaioeo}) and (\ref{fheaiofhoeiahfoihfoiaejh}) that 
\[
\begin{aligned}
\fint_{B_{2t}(x)}(r-\tilde{r})\,\mathrm{d}\mathscr{H}^n\leqslant  \sum_{i=0}^{\infty}\left|\fint_{ B_{2^{1-i}t}(x)}(r-\tilde{r})\,\mathrm{d}\mathscr{H}^n-\fint_{ B_{2^{-i}t}(x)}(r-\tilde{r})\,\mathrm{d}\mathscr{H}^n\right|\leqslant o(t),\ \ \mathrm{as}\ t\to 0.
\end{aligned}
\]
In a way similar to (\ref{0429a}), we deduce (\ref{fehjaoifhjeaoifhef}). Therefore (\ref{4.18}) and (\ref{4.gsjirhgsao}) hold. In particular, we have
\[
\mathscr{H}^n(B_t(x)\cap \mathbf{V}^{-1}(B_t(0_n)))=\omega_n t^n+ o(t^{n+1}),\ \mathrm{as}\ t\to 0.
\]
We conclude by recalling from \cite{BMS23} that $\mathscr{H}^n(B_t(x))=\omega_n t^n+ o(t^{n+1}),\ \mathrm{as}\ t\to 0.$
\end{proof}


Prior to proving Lemma \ref{lem3.4}, the following proposition is essential.
\begin{prop}[\cite{G13,ABS19b,H21}]\label{prop3.6a}
Let $(X,\mathsf{d},\mathfrak{m})$ be an $\mathrm{RCD}(0,N)$ space. If there exist harmonic functions $\{\varphi_i\}_{i=1}^n$ on the ball $B_{2}(x)$ with $n=\mathrm{dim}_{\mathsf{d},\mathfrak{m}}(X)$ such that $\langle \nabla \varphi_i,\nabla \varphi_j\rangle=\delta_{ij}$ $\mathfrak{m}$-a.e. on $B_{2}(x)$, then the map $\Phi:=(\varphi_1,\ldots,\varphi_n):B_1(x)\rightarrow B_1(0_n)$ is an isometry.
\end{prop}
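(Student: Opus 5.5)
We plan to show first that $\Phi$ is $1$-Lipschitz, then that it is distance preserving, and finally that it is onto $B_1(0_n)$.

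\emph{Regularity.} Since $\varphi_i$ is harmonic with $|\nabla\varphi_i|=1$ a.e., the function $\langle\nabla\varphi_i,\nabla\varphi_j\rangle$ is constant. Hence (\ref{11eqn2.16}) gives $\mathrm{Hess}\,\varphi_i(\nabla\varphi_j,\cdot)=0$ on $B_2(x)$ (symmetrize in $i,j$ as in the usual argument). Because $n=\mathrm{dim}_{\mathsf{d},\mathfrak{m}}(X)$, the fields $\{\nabla\varphi_j\}$ form an a.e. orthonormal frame of the tangent module. Therefore $\mathrm{Hess}\,\varphi_i=0$. Alternatively, the Bochner inequality (\ref{abc2.14}) with $K=0$, applied to the constant $|\nabla\varphi_i|^2$, yields $|\mathrm{Hess}\,\varphi_i|=0$ directly.

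\emph{Lipschitz bound.} For every unit vector $a\in\mathbb{R}^n$, the function $\varphi_a=\sum_i a_i\varphi_i$ has $|\nabla\varphi_a|=1$ a.e. By the Sobolev-to-Lipschitz property (condition (3) in the definition of RCD spaces, localized via cut-offs and geodesics staying in $B_2(x)$), $\varphi_a$ is $1$-Lipschitz on $B_1(x)$. Taking the supremum over $a$ shows $|\Phi(y)-\Phi(z)|\le \mathsf{d}(y,z)$. Points of $B_1(x)$ are joined by geodesics contained in $B_2(x)$, which is why we need the ball of radius $2$.

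\emph{Reverse inequality.} This is the main obstacle: we must show $\mathsf{d}(y,z)\le|\Phi(y)-\Phi(z)|$. My first attempt is a splitting argument. Each $\varphi_a$ has vanishing Hessian and unit gradient, so its gradient flow is a flow by isometries along which $\varphi_a$ grows with unit speed; this is the local version of Gigli's splitting theorem \cite{G13}. Applying it to $n$ orthonormal directions splits $B_2(x)$ locally as a piece of $\mathbb{R}^n\times Z$. Since the essential dimension is $n$, the factor $Z$ must be a point, so the space is locally flat and $\Phi$ is the Euclidean chart. Concretely, for $y,z\in B_1(x)$ set $a=(\Phi(z)-\Phi(y))/|\Phi(z)-\Phi(y)|$. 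Following the gradient flow of $\varphi_a$ from $y$ for time $|\Phi(z)-\Phi(y)|$ moves $\Phi$ linearly, and it must reach $z$. This uses the injectivity of $\Phi$, which comes from $Z$ being trivial. The curve has length $|\Phi(z)-\Phi(y)|$, which gives the reverse inequality. These localization steps are exactly the content of \cite{ABS19b,H21}, and I would invoke them there.

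\emph{Surjectivity.} Starting from $x$, the same flows move $\Phi$ in any direction at unit speed. Since $\Phi(x)$ can be normalized to $0_n$ by subtracting constants, this shows that $\Phi(B_1(x))=B_1(0_n)$.
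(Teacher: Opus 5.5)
The paper gives no proof of this proposition; it simply imports it from \cite{G13,ABS19b,H21}. Your outline follows the route those sources take, and you defer the same step to them. That route is: $\mathrm{Hess}\,\varphi_i=0$ via (\ref{11eqn2.16}) or Bochner; a localized Sobolev-to-Lipschitz argument for $|\Phi(y)-\Phi(z)|\leqslant\mathsf{d}(y,z)$; then a local form of Gigli's splitting in which $n=\mathrm{dim}_{\mathsf{d},\mathfrak{m}}(X)$ forces the fibre to be a point. The Hessian, Lipschitz and surjectivity steps are fine. (Sobolev-to-Lipschitz is localized with test plans concentrated near the geodesic rather than with cut-offs, but the conclusion is standard.) In the surjectivity step, a unit-speed flow line of length $|p|<1$ started at $x$ cannot leave $B_1(x)$, so it stays where the $\varphi_i$ are defined.

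The ``concrete'' argument for the reverse inequality is not valid as written.
\begin{itemize}
\item A flow line $\gamma$ of $\varphi_a$ started at $y$ and run for time $T=|\Phi(z)-\Phi(y)|$ is only known to satisfy $\mathsf{d}(x,\gamma_t)<1+\mathsf{d}(y,z)<3$, so it may leave $B_2(x)$. Knowing $\Phi(\gamma_t)\in[\Phi(y),\Phi(z)]\subset B_1(0_n)$ does not help: deducing $\gamma_t\in B_1(x)$ from it is exactly the inequality $\mathsf{d}(x,\cdot)\leqslant|\Phi|$ you are trying to prove.
\item The claim that the flow ``must reach $z$'' is just injectivity of $\Phi$, which is the content of the proposition.
\end{itemize}

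The steps have to be ordered as follows.
\begin{enumerate}
\item Flow only from $x$. This gives $\Psi:B_1(0_n)\to B_1(x)$ with $\Phi\circ\Psi=\mathrm{id}$ and $\mathsf{d}(x,\Psi(p))=|p|$: the bound $\leqslant$ comes from the length of the flow line, and $\geqslant$ from the Lipschitz bound.
\item Show that $\Phi$ is injective on $B_1(x)$. This is where the local splitting and the dimension hypothesis are used, and it is what you take from \cite{ABS19b,H21}. It yields $\Psi=\Phi^{-1}$ and $\mathsf{d}(x,\cdot)=|\Phi|$ on $B_1(x)$.
\item Only now take $y,z\in B_1(x)$ and the flow line $\gamma$ from $y$. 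The set of $t\in[0,T]$ with $\gamma_t\in B_1(x)$ is open. It is also closed, because $\mathsf{d}(x,\gamma_t)=|\Phi(\gamma_t)|<1$ passes to limits. Hence $\gamma$ stays in $B_1(x)$, ends at $\Psi(\Phi(z))=z$, and has length at most $T$.
\end{enumerate}

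Finally, these gradient flows are a priori regular Lagrangian flows, defined only for a.e.\ starting point. The everywhere-defined isometric representative you use, in the surjectivity step as well, comes from the vanishing Hessian as in Gigli's argument and is part of what is being cited.
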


\begin{lem}\label{lem3.4}
Under the same assumptions and with the same notation introduced above, for any sufficiently small $t>0$ we have 
\[
\mathscr{L}^n\left(\mathbf{V}\left(B_t(x)\right)\right)=\omega_n t^n+ o(t^{n+1}), \ \text{as}\ t\to 0.
\]
\end{lem}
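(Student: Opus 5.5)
The plan is to compare $\mathscr{L}^n(\mathbf{V}(B_t(x)))$ with $\mathscr{H}^n(B_t(x))$. The comparison goes through the area formula for the almost-isometric map $\mathbf{V}$ near $x$, together with Lemma \ref{lem3.5} and the expansion $\mathscr{H}^n(B_t(x))=\omega_n t^n+o(t^{n+1})$ recalled from \cite{BMS23}.

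First I will show that $\mathbf{V}$ is injective on $B_{2t}(x)$ for $t$ small, and that its Jacobian $J\mathbf{V}=\sqrt{\det(\langle\nabla v_i,\nabla v_j\rangle)}$ satisfies
\[
\fint_{B_t(x)}|J\mathbf{V}-1|\,\mathrm{d}\mathscr{H}^n\leqslant t\,\xi(2t).
\]
The Jacobian bound follows from (\ref{eqn3.4d}), since the entries $\langle\nabla v_i,\nabla v_j\rangle$ are bounded by $C$. Injectivity is where Proposition \ref{prop3.6a} enters, through a blow-up and contradiction argument. Suppose two points $y_j\neq z_j$ in $B_{2t_j}(x)$ have the same image. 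Rescale by $s_j=\mathsf{d}(y_j,z_j)$ around $y_j$. Then (\ref{3.4b}) and (\ref{eqn3.4d}) show that the rescaled $v_i$ converge to harmonic functions with orthonormal gradients on the limit space. By Proposition \ref{prop3.6a}, the limit map is an isometry onto a Euclidean ball. This contradicts the two points having equal images at unit distance.

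Since the approximate-isometry estimates hold at all scales around $x$, I expect only a bi-Lipschitz-type control with constants tending to $1$, rather than exact injectivity. That is enough: the area formula on $\mathcal{R}_n$ (rectifiable, $\mathscr{H}^n$-full) then gives
\[
\mathscr{L}^n(\mathbf{V}(B_t(x)))\leqslant\int_{B_t(x)} J\mathbf{V}\,\mathrm{d}\mathscr{H}^n,
\]
with equality up to the multiplicity defect.

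Next I split $B_t(x)$ into $\Omega_t=B_t(x)\cap\mathbf{V}^{-1}(B_t(0_n))$ and its complement. By Lemma \ref{lem3.5} the complement has measure $o(t^{n+1})$, so its image has $\mathscr{L}^n$-measure at most $C^n\cdot o(t^{n+1})$. On $\Omega_t$ the Jacobian estimate gives
\[
\int_{\Omega_t}J\mathbf{V}\,\mathrm{d}\mathscr{H}^n=\mathscr{H}^n(\Omega_t)+O\bigl(t^{n+1}\xi(2t)\bigr)=\omega_n t^n+o(t^{n+1}),
\]
using (\ref{4.gsjirhgsao}). This yields the upper bound $\mathscr{L}^n(\mathbf{V}(B_t(x)))\leqslant\omega_n t^n+o(t^{n+1})$.

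For the lower bound I need that $\mathbf{V}(B_t(x))$ covers $B_t(0_n)$ up to a set of measure $o(t^{n+1})$. I would argue via a degree or covering argument: $\mathbf{V}$ is uniformly close to an isometry at every scale by Lemma \ref{lem3.3} and the blow-up above, so $\mathbf{V}(B_t(x))\supset B_{t(1-\eta(t))}(0_n)$ with $\eta\to0$. This alone only gives $o(t^n)$, which is not enough. So I instead bound the uncovered part of $B_t(0_n)$ by the multiplicity/Jacobian defect, namely $\int_{B_t(x)}|J\mathbf{V}-1|\,\mathrm{d}\mathscr{H}^n$, together with the measure comparison $\mathscr{H}^n(B_t(x))=\omega_nt^n+o(t^{n+1})$.

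The main obstacle is exactly this lower bound at order $t^{n+1}$: controlling the non-injectivity and the uncovered set of $\mathbf{V}$ quantitatively, not just qualitatively. My first attempt would be to apply the area formula with multiplicity $N(\mathbf{V},\cdot)$. I would then show $N\geqslant1$ on $B_t(0_n)$ off a set controlled by the $L^1$ Jacobian defect, using the topological degree of $\mathbf{V}$ restricted to a slightly smaller ball. The degree is $1$ by the uniform closeness to an isometry.
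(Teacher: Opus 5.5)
\textbf{There is a genuine gap}, at the step you yourself flag: the lower bound $\mathscr{L}^n(\mathbf{V}(B_t(x)))\geqslant\omega_nt^n-o(t^{n+1})$ is never established.

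By the area formula with multiplicity, $\int N(\mathbf{V}|_{B_t(x)},y)\,\mathrm{d}y=\int_{B_t(x)}J\mathbf{V}\,\mathrm{d}\mathscr{H}^n=\omega_nt^n+o(t^{n+1})$. So the lower bound is equivalent to showing that the excess multiplicity $\int (N-1)_+\,\mathrm{d}y$ is $o(t^{n+1})$. The $L^1$ Jacobian defect does not see overlaps at all: a fold $(y_1,y')\mapsto(|y_1|,y')$ has $J\equiv1$ and is two-to-one. The degree argument only gives $\mathbf{V}(B_t(x))\supset B_{t(1-\eta(t))}(0_n)$, losing $O(\eta t^n)$. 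To reach $o(t^{n+1})$ you would need $\eta=o(t)$, i.e.\ pointwise control $|r-\mathsf{d}_x|=o(t^2)$, whereas Lemma \ref{lem3.3} gives only $o(t)$. It also needs extra topological input near $x$ just to be defined.

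Your injectivity argument does not fill the gap either. The hypotheses (\ref{3.4b}) and (\ref{eqn3.4d}) concern balls centred at $x$. After rescaling around $y_j$ at scale $s_j=\mathsf{d}(y_j,z_j)$, which may be much smaller than $\mathsf{d}(x,y_j)$, you know nothing about $\langle\nabla v_i,\nabla v_j\rangle$ there, so Proposition \ref{prop3.6a} cannot be applied. What the scales around $x$ actually give is a Gromov--Hausdorff approximation property $\bigl||\mathbf{V}(z)-\mathbf{V}(w)|-\mathsf{d}(z,w)\bigr|=o(t)$ on $B_t(x)$. That is blind to scales $\ll t$, and it is not the ``bi-Lipschitz control with constants tending to $1$'' you invoke; the latter would in fact imply injectivity.

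\textbf{The missing idea} is to obtain injectivity not on the whole ball but on a good set whose complement has measure $o(t^{n+1})$. The paper lets $\Lambda_t$ be the set of $y\in B_{10t}(x)\cap\mathcal{R}_n$ with
\[
\max_{i,j}\sup_{s<4t}\fint_{B_s(y)}|\langle\nabla v_i,\nabla v_j\rangle-\delta_{ij}|\,\mathrm{d}\mathscr{H}^n\leqslant\sqrt{\xi(2t)}.
\]
The weak-$(1,1)$ maximal inequality together with (\ref{eqn3.4d}) gives (\ref{eqn3.17b}), $\mathscr{H}^n(B_{10t}(x)\setminus\Lambda_t)\leqslant t^{n+1}\sqrt{\xi(2t)}$.

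For pairs $z,w\in\Lambda_t$, almost-orthonormality now holds at every scale around $z$. So your blow-up argument with Proposition \ref{prop3.6a} goes through and gives $|\mathbf{V}(z)-\mathbf{V}(w)|/\mathsf{d}(z,w)\to1$ uniformly. In particular $\mathbf{V}$ is injective on $\tilde\Lambda_t=\Lambda_t\cap B_t(x)$.

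The area formula on $\tilde\Lambda_t$, proved via a Lebesgue-point blow-up, then gives exactly
\[
\mathscr{L}^n(\mathbf{V}(\tilde\Lambda_t))=\int_{\tilde\Lambda_t}J\mathbf{V}\,\mathrm{d}\mathscr{H}^n=\mathscr{H}^n(B_t(x))+o(t^{n+1}).
\]
Since $\mathscr{L}^n(\mathbf{V}(B_t(x)\setminus\tilde\Lambda_t))\leqslant C\,\mathscr{H}^n(B_t(x)\setminus\tilde\Lambda_t)=o(t^{n+1})$, both bounds follow at once from $\mathscr{H}^n(B_t(x))=\omega_nt^n+o(t^{n+1})$. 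Lemma \ref{lem3.5} is not needed.

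Your upper bound is fine as it stands. It also follows more directly from $\mathbf{V}(B_t(x)\cap\mathbf{V}^{-1}(B_t(0_n)))\subset B_t(0_n)$ and Lemma \ref{lem3.5}.
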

\begin{proof}

Define
\[
\Lambda_t:=\left\{y\in B_{10t}(x)\cap\mathcal{R}_n:\max_{i,j}\sup_{s\in (0,4t)}\fint_{B_s(y)}\left|\langle \nabla v_i,\nabla v_j\rangle-\delta_{ij}\right|\leqslant \sqrt{\xi(2t)}\right\}.
\]
Then it immediately follows from the standard maximal function argument and  (\ref{eqn3.4d}) that
\begin{equation}\label{eqn3.17b}
\mathscr{H}^n\left(B_{10t}(x)\setminus\Lambda_t\right)\leqslant  t^{n+1}\sqrt{\xi(2t)}.
\end{equation} 
Since each $v_i$ is $C$-Lipschitz, we know
\begin{equation}\label{eqn3.7}
	\lim_{t\rightarrow 0}\frac{1}{\omega_n t^{n+1}}\int_{B_{t}(x)} \left|\left|\det \left(\langle\nabla v_i,\nabla v_j \rangle\right)\right|^{\frac{1}{2}}-1\right|\mathop{\mathrm{d}\mathscr{H}^n}=0.
\end{equation}

We claim 
\begin{equation}\label{eqn3.17}
\lim_{t\rightarrow 0}\sup_{z,w \in \Lambda_t,\mathsf{d}(z,w)\leqslant t}\left|\frac{|\mathbf{V}(z)-\mathbf{V}(w)|}{\mathsf{d}(z,w)}-1\right|=0.
\end{equation}

Assume otherwise. There exists $\tau_0>0$, $t_i\downarrow 0$ and $\{z_i\},\{w_i\}\subset X$ such that $z_i,w_i\in \Lambda_{t_i}$, $\mathsf{d}(z_i,w_i)\leqslant t_i$ and
\[
\left|\frac{|\mathbf{V}(z_i)-\mathbf{V}(w_i)|}{\mathsf{d}(z_i,w_i)}-1\right|\geqslant \tau_0.
\] 
Let $r_i=\mathsf{d}(z_i,w_i)$, $\varphi_{i,j}:={r_i}^{-1}(v_j-v_j(w_i))$. After passing to a subsequence, assume \[
\left(Z_i,\mathsf{d}_i,\mathfrak{m}_i,z_i\right):=\left(X,{r_i}^{-1}\mathsf{d},{\mathscr{H}^n(B_{r_i}(z_i))}^{-1}\mathscr{H}^n,z_i\right)\xrightarrow{\mathrm{pmGH}}\left(Z_\infty,\mathsf{d}_\infty,\mathfrak{m}_Z,z_\infty\right)
\]
for some RCD$(0,N)$ space $\left(Z_\infty,\mathsf{d}_\infty,\mathfrak{m}_Z,z_\infty\right)$. Since $\max_{j}\|\Delta v_j\|_{L^\infty(B_{\varepsilon_x}(x))}\leqslant C$, by Theorem \ref{AH18}, each $\{\varphi_{i,j}\}$ $H^{1,2}_{\mathrm{loc}}$-strongly and uniformly converges to a harmonic function $\varphi_j$ on $Z_\infty$. In particular,
\[
\begin{aligned}
&\int_{B_{10}^\infty(z_\infty)}|\langle\nabla \varphi_j,\nabla \varphi_k\rangle-\delta_{jk}|\mathop{\mathrm{d}\mathfrak{m}_Z}=\ \lim_{i\to \infty}\int_{B_{10}^{X_i}(z_i)}|\langle\nabla_i \varphi_{i,j},\nabla_i \varphi_{i,k}\rangle-\delta_{jk}|\mathop{\mathrm{d}\mathfrak{m}_i}
\\
= &\ \lim_{i\to \infty}\fint_{B_{ 10 r_i}(z_i)}|\langle\nabla v_j,\nabla v_k\rangle-\delta_{jk}|\mathop{\mathrm{d}\mathscr{H}^n}\leqslant \lim_{i\to \infty}\sqrt{\xi(2t_i)}=0.
\end{aligned}
\]
Moreover, applying \cite[Theorem 2.6]{BPS21} and Proposition \ref{prop3.6a} implies that $\mathrm{dim}_{\mathsf{d}_Z,\mathfrak{m}_Z}(Z)= n$ and  $(\varphi_1,\ldots,\varphi_n):B_2(z_\infty)\to B_2(0_n)$ is an isometry. We deduce a contradiction since $\{w_i\}$ converges to some $w_\infty\in \partial B_1(z_\infty)$. Due to the continuity of $\mathbf{V}$, (\ref{eqn3.17}) also holds for $\tilde{\Lambda}_t:=\Lambda_t\cap B_t(x)$.

We now show
\begin{equation}\label{eqn3.18a}
\mathbf{V}_\sharp\left(\left(\mathrm{det}(\langle\nabla v_i,\nabla v_j\rangle)\right)^{\frac{1}{2}}\mathscr{H}^n \llcorner_{\tilde{\Lambda}_t}\right)=\mathscr{L}^n.
\end{equation} 

Since $\mathbf{V}$ is $C$-Lipschitz, there exists a non-negative Radon-Nikodym differential $\zeta_t$ such that $\zeta_t\llcorner_{\mathbf{V}(B_{2t}(x))}\leqslant C$ and $\mathscr{L}^n\llcorner_{\mathbf{V}(\tilde{\Lambda}_t)}=\zeta_t\mathbf{V}_\sharp \mathscr{H}^n\llcorner_{\tilde{\Lambda}_t}$.
 It suffices to verify \begin{equation}\label{3.19}
 \zeta_t(\mathbf{V}(y))=\left(\mathrm{det}(\langle\nabla v_i,\nabla v_j\rangle)(y)\right)^{-\frac{1}{2}},
 \end{equation}
  when $y\in \mathcal{R}_n$ is both a Lebesgue point of $\langle\nabla v_i,\nabla v_j\rangle$ ($i,j=1,\ldots,n$) and a Lebesgue point of $\tilde{\Lambda}_t$.

Since $(\langle \nabla v_i,\nabla v_j\rangle(y))$ is positive definite, one can find an invertible matrix $E=(e_{ij})$ such that $E(\langle \nabla v_i,\nabla v_j\rangle(y))E^T=I_n$. Define $\mathbf{W}:=(w_1,\ldots,w_n)$, where $w_i:=\sum_j e_{ij}v_j$. Then clearly
\[
\lim_{s\to 0}\fint_{B_s(y)}\langle \nabla w_i,\nabla w_j\rangle\mathop{\mathrm{d}\mathscr{H}^n}=\langle \nabla w_i,\nabla w_j\rangle(y)=\delta_{ij}.
\]By a standard blow up argument as in Proposition \ref{prop:blow.up.bx}, we obtain
\[
\begin{aligned}
1=\lim_{s\to 0} \frac{\mathscr{L}^n\left(\mathbf{W}(B_s(y))\right)}{\mathscr{H}^n(B_s(y))}=\lim_{s\to 0} \frac{\mathscr{L}^n\left(\mathbf{V}(B_s(y))\right)}{\mathscr{H}^n(B_s(y))}\,\mathrm{det}E=\lim_{s\to 0} \frac{\mathscr{L}^n\left(\mathbf{V}(B_s(y))\right)}{\mathscr{H}^n(B_s(y))}\left(\mathrm{det}(\langle\nabla v_i,\nabla v_j\rangle)(y)\right)^{-\frac{1}{2}}.
\end{aligned}
\]
Provided $y$ is a Lebesgue point of $\tilde{\Lambda}_t$, we see
\[
\begin{aligned}
 &\mathscr{L}^n(\mathbf{V}(B_s(y)))\geqslant\mathscr{L}^n(\mathbf{V}(\tilde{\Lambda}_t\cap B_s(y)))=\mathscr{L}^n(\mathbf{V}(B_s(y)))-\mathscr{L}^n(\mathbf{V}(B_s(y)\setminus\tilde{\Lambda}_t))\\
\geqslant\ & \mathscr{L}^n(\mathbf{V}(B_s(y)))-C\mathscr{H}^n(B_s(y)\setminus\tilde{\Lambda}_t)\geqslant  \mathscr{L}^n(\mathbf{V}(B_s(y)))-o(s^n),\ \text{as }s\to 0, 
\end{aligned}
\]
and similarly
\[
\begin{aligned}
&\mathscr{L}^n(\mathbf{V}(\tilde{\Lambda}_t\cap B_s(y)))=\int_{\mathbf{V}(
\tilde{\Lambda}_t\cap B_s(y))}\zeta_t \mathop{\mathrm{d}\mathbf{V}_\sharp\mathscr{H}^n}\\
=\ &\int_{\tilde{\Lambda}_t\cap B_s(y)}\,\zeta_t\circ\mathbf{V}\,\integral=\int_{B_s(y)}\,\zeta_t\circ\mathbf{V}\mathop{\mathrm{d}\mathscr{H}^n}+o(s^n), \ \text{as }s\to 0.
\end{aligned}
\]

Therefore, up to a $\mathscr{H}^n$-negligible set, we have proved (\ref{3.19}). Finally, the conclusion follows from  (\ref{eqn3.17b}), (\ref{eqn3.7}) and (\ref{eqn3.18a}).
\end{proof}
\begin{cor}\label{cor3.7}
Under the same assumptions and with the same notation introduced above, for any bounded measurable function $\varphi$ on $B_{\varepsilon_0}(0_n)$ it holds that 
\[
\begin{aligned}
\int_{B_t(0_n)} \varphi\mathop{\mathrm{d}\mathscr{L}^n}=\int_{\mathbf{V}^{-1}(B_t(0_n))} \varphi\circ \mathbf{V}\left|\det \left(\langle\nabla v_i,\nabla v_j \rangle\right)\right|^{\frac{1}{2}}\mathop{\mathrm{d}\mathscr{H}^n}
+o(t^{n+1}),\ \mathrm{as}\ t\to 0.
\end{aligned} 
\]
\end{cor}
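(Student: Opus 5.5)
The plan is to compare the two integrals by splitting the region $\mathbf{V}^{-1}(B_t(0_n))$ according to whether a point lies in the good set $\tilde{\Lambda}_t$, and then to use the pushforward identity (\ref{eqn3.18a}) on the good part. Throughout, let $J:=|\det(\langle\nabla v_i,\nabla v_j\rangle)|^{1/2}$. Since each $v_i$ is $C$-Lipschitz, $J\leqslant C$ holds $\mathscr{H}^n$-a.e. We may assume $\|\varphi\|_{L^\infty}\leqslant 1$.

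\textbf{Step 1 (reduction to a neighbourhood of $x$).} By Lemma \ref{lem3.3}, for $t$ small we have $\mathbf{V}^{-1}(B_t(0_n))\subset B_{2t}(x)$, viewing $\mathbf{V}$ on $B_{2t}(x)$ as in the convention above. Lemma \ref{lem3.5}, applied at scale $2t$ where needed, then lets us replace $\mathbf{V}^{-1}(B_t(0_n))$ by $\mathbf{V}^{-1}(B_t(0_n))\cap B_{t'}(x)$ for a suitable comparable radius $t'$. The error from the symmetric difference is at most $C\,\mathscr{H}^n(\cdot)=o(t^{n+1})$.

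\textbf{Step 2 (good part).} Apply (\ref{eqn3.18a}), taken with $\tilde{\Lambda}$ at the comparable scale, to the test function $\varphi\chi_{B_t(0_n)}$. This gives
\[
\int_{\mathbf{V}^{-1}(B_t(0_n))\cap\tilde{\Lambda}}\varphi\circ\mathbf{V}\,J\,\mathrm{d}\mathscr{H}^n=\int_{B_t(0_n)\cap \mathbf{V}(\tilde\Lambda)}\varphi\,\mathrm{d}\mathscr{L}^n .
\]
Here one needs that $\mathbf{V}$ is essentially injective on $\tilde\Lambda$, which follows from the bi-Lipschitz-type estimate (\ref{eqn3.17}). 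That estimate is also what yields the multiplicity-one form of (\ref{eqn3.18a}).

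\textbf{Step 3 (bad parts).} Two error sets remain. The first is the domain error, which is bounded by $C\,\mathscr{H}^n(B_{2t}(x)\setminus\tilde\Lambda)\leqslant Ct^{n+1}\sqrt{\xi(4t)}=o(t^{n+1})$ by (\ref{eqn3.17b}). The second is the image error $\mathscr{L}^n(B_t(0_n)\setminus\mathbf{V}(\tilde\Lambda))$. To control the latter, write
\[
\mathscr{L}^n(B_t(0_n)\setminus\mathbf{V}(\tilde\Lambda))\leqslant \mathscr{L}^n(B_t(0_n))-\mathscr{L}^n(\mathbf{V}(B_t(x)))+\mathscr{L}^n(\mathbf{V}(B_t(x))\setminus B_t(0_n))+\mathscr{L}^n(\mathbf{V}(B_t(x)\setminus\tilde\Lambda)).
\]
Each term is estimated separately. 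The first is $o(t^{n+1})$ by Lemma \ref{lem3.4}. The third is at most $C\,\mathscr{H}^n(B_t(x)\setminus\tilde\Lambda)=o(t^{n+1})$, using the Lipschitz bound and (\ref{eqn3.17b}).

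For the middle term, set $D:=B_t(x)\setminus \mathbf{V}^{-1}(B_t(0_n))$. Then $\mathbf{V}(B_t(x))\setminus B_t(0_n)\subset\mathbf{V}(D)$, so its measure is at most $\mathscr{L}^n(\mathbf{V}(D\cap\tilde\Lambda))+C\,\mathscr{H}^n(D\setminus\tilde\Lambda)$. By (\ref{eqn3.18a}) with $J\leqslant C$, this is at most $C\,\mathscr{H}^n(D)=o(t^{n+1})$, using Lemma \ref{lem3.5}.

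Combining Steps 1–3, with $|\varphi|\leqslant1$ throughout, gives the corollary.

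\textbf{Main obstacle.} The delicate point is Step 2: justifying that (\ref{eqn3.18a}) holds with multiplicity one, i.e. that the area formula has no overlap on $\tilde\Lambda$. I would handle this with (\ref{eqn3.17}). Two points of $\tilde\Lambda$ within distance $t$ of each other have distinct images, since $|\mathbf{V}(z)-\mathbf{V}(w)|\geqslant\frac12\mathsf{d}(z,w)$. On $B_{2t}(x)$ all pairs are within distance $4t$, so I would apply (\ref{eqn3.17}) at a slightly larger scale, as the definition of $\Lambda_t$ allows. This gives injectivity, which identifies $\zeta_t$ as the inverse Jacobian everywhere on $\mathbf{V}(\tilde\Lambda)$.
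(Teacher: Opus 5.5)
Your proposal is correct and is essentially the argument the paper intends: the paper states the corollary without a separate proof, as a direct consequence of the change-of-variables identity (\ref{eqn3.18a}) on the good set, with the exceptional sets controlled by (\ref{eqn3.17b}), Lemma \ref{lem3.4} and Lemma \ref{lem3.5}. Your explicit split of the image error $\mathscr{L}^n(B_t(0_n)\setminus\mathbf{V}(\tilde\Lambda))$, and your use of (\ref{eqn3.17}) at an enlarged scale to get injectivity on all of $B_{2t}(x)$, supply exactly the bookkeeping the paper leaves implicit.
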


\begin{proof}[Proof of Theorem \ref{mcthm}]
By substituting $\varphi:=\chi_{\{u_n(v_1,\ldots,v_n)\geqslant u_n(x)\}}$ into Corollary \ref{cor3.7}, and using Lemma \ref{lem3.5}, (\ref{eqn3.17b}) and (\ref{eqn3.18a}), we obtain
\begin{equation}\label{eqn3.23}
 \mathscr{H}^n(B_t(x)\setminus \{y\in X:f(y)\leqslant f(x)\})=\mathscr{L}^n(B_t(0_n)\setminus \{u_n\leqslant u_n(x)\})+o(t^{n+1}),\ \mathrm{as}\ t\to 0.
\end{equation} 
Arguing as in \cite[Proposition 4.22]{H25}, we confirm that (\ref{1.2}) holds for $\mathscr{H}^n$-a.e. $x\in E\subset B_{\varepsilon_0}(x_0)$. 

To conclude it suffices to consider the case that $X$ is compact. For any $\delta>0$ and any $x\in \mathcal{R}_n\cap \mathrm{Leb}({|\nabla f|}^2)\cap \mathrm{Leb}({(\Delta f)}^2)\cap \{|\nabla f|\neq 0\}$, we can choose $\varepsilon_x>0$ and a Borel set $E_x\subset B_{5\varepsilon_x}(x)$ such that $\mathscr{H}^n\left(B_{5\varepsilon_x}(x)\setminus E_x\right)\leqslant C(n)\,\delta \mathscr{H}^n(B_{5\varepsilon_x}(x))$ and that (\ref{1.2}) holds for every $y\in E_x$. Consider the open covering
\[
\{B_{\varepsilon_x}(x)\}_{x\in \mathcal{R}_n\cap \mathrm{Leb}({|\nabla f|}^2)\cap \mathrm{Leb}({(\Delta f)}^2)\cap \{|\nabla f|\neq 0\}}.
\]
Vitali's covering theorem then verifies the existence of a countable collection of disjoint open balls $\{B_{\varepsilon_i}(x_i)\}$ such that $\mathcal{R}_n\cap \mathrm{Leb}({|\nabla f|}^2)\cap \{|\nabla f|\neq 0\}\subset \cup_i B_{5\varepsilon_i}(x_i)$. Moreover, by Theorem \ref{BGineq}, 
\[
\begin{aligned}
&\mathscr{H}^n\left(\{|\nabla f|\neq 0\}\cap \left(\bigcup_i (B_{5 \varepsilon_i}(x_i)\setminus E_{x_i})\right)\right)\leqslant C(n)\,\delta \,\sum_{i}\mathscr{H}^n(B_{5\varepsilon_i}(x_i))\\
&\leqslant C(n)\,\delta\, \sum_{i}\mathscr{H}^n(B_{\varepsilon_i}(x_i))\leqslant C(n)\,\delta\, \mathscr{H}^n(X) \to 0,\ \ \ \mathrm{as}\ \delta\to 0.
\end{aligned}
\]
This is sufficient to reach our conclusion.
\end{proof}
%


\bibliographystyle{alpha}
\bibliography{reff}

\bigskip

\end{document}